\numberwithin{equation}{section}
\let\a=\alpha \let\b=\beta  \let\g=\gamma  \let\d=\delta \let\e=\varepsilon
\let\z=\zeta  \let\h=\eta   \let\th=\theta \let\k=\kappa \let\l=\lambda
\let\m=\mu        \let\x=\xi     \let\p=\pi    
\let\s=\sigma \let\t=\tau   \let\f=\varphi \let\c=\chi
   \let\o=\omega
\let\G=\Gamma \let\D=\Delta  \let\L=\Lambda
\def\xxi{\boldsymbol{\xi}}
\def\hhe{\boldsymbol{\eta}}
\def\ppe{\boldsymbol{p}}
\newcommand{\PPP}{{\mathcal P}}
\newcommand{\EE}{{\mathcal E}}
\newcommand{\VV}{{\mathcal V}}
\newcommand{\pp}{{\bf p}}
\newcommand{\xx}{{\bf x}}
\newcommand{\yy}{{\bf y}}
\newcommand{\zz}{{\bf z}}
\newcommand{\kk}{{\bf k}}
\newcommand{\nnn}{{\bf n}}
\newcommand{\cD}{{\mathcal D}}
\newcommand{\TT}{{\mathcal T}}
\newcommand{\BBB}{{\mathcal B}}
\newcommand{\GG}{{\mathcal G}}
\newcommand{\RR}{{\mathcal R}}
\def\SS{{\mathcal S}}
\def\PP{{\mathcal P}}
\newcommand{\LL}{{\mathcal L}}
\def\uu{{\bf u}}
\def\Pf{{\rm Pf}}
\newtheorem{Theorem}{Theorem}
\newtheorem{Remark}{Remark}
\newtheorem{Corollary}{Corollary}
\newtheorem{Claim}{Claim}
\newtheorem{Lemma}{Lemma}
\newtheorem{Definition}{Definition}
\newtheorem{Proposition}{Proposition}
\def\nn{\nonumber}
\def\\{\hfill\break}
\def\={:=}
\let\io=\infty
\def\media#1{{\langle#1\rangle}}
\let\dpr=\partial
\def\const{{\rm const}}
\def\tende#1{\,\vtop{\ialign{##\crcr\rightarrowfill\crcr\noalign{\kern-1pt
    \nointerlineskip} \hskip3.pt${\scriptstyle #1}$\hskip3.pt\crcr}}\,}
\def\otto{\,{\kern-1.truept\leftarrow\kern-5.truept\to\kern-1.truept}\,}
\def\to{\rightarrow}
\def\qed{\hfill\raise1pt\hbox{\vrule height5pt width5pt depth0pt}}
\def\Val{{\rm Val}}
\def\ul#1{{\underline#1}}
\def\lis{\overline}
\def\V#1{{\bf#1}}
\def\be{\begin{equation}}
\def\ee{\end{equation}}
\def\bea{\begin{eqnarray}}
\def\eea{\end{eqnarray}}
\def\nn{\nonumber}
\def\pref#1{(\ref{#1})}
\def\openone{{\mathbbm{1}}}
\def\bml{\begin{multline}}
\def\eml{\end{multline}}
\tikzset{
point/.style={circle,fill=black,inner sep=1pt},
vertex/.style={circle,fill=black,inner sep=1.5pt},   
bvertex/.style={circle,fill=black,inner sep=2.8pt},
Bvertex/.style={circle,fill=black,inner sep=4pt}, 
specialEP/.style={rectangle,fill=white,draw,inner sep=3pt},  
whitevex/.style={circle,fill=white,draw, inner sep=2pt},
linelabel/.style={sloped,above,very near start, inner sep=1pt,execute at begin node=$\scriptstyle,execute at end node=$},
baseline=(current  bounding  box.center),doubled/.style={double distance= 1pt,line width=1.5pt},
th/.style={line width=0.5 pt, gray},  
med/.style={line width=1 pt}  
}
\if \tikzset{vertex/.style={circle,fill=black,inner sep=2pt},
bigvertex/.style={circle,fill=black,inner sep=4pt},
specialEP/.style={rectangle,fill=white,draw,inner sep=3pt},
nuEP/.style={circle,fill=white,draw, inner sep=2pt},
linelabel/.style={sloped,above,very near start, inner sep=1pt,execute at begin node=$\scriptstyle,execute at end node=$},
baseline=(current  bounding  box.center),doubled/.style={double distance= 1pt,line width=1.5pt}
med/.style={line width=1 pt}  
}\fi
\begin{document}
\title{Height fluctuations in interacting dimers}
\author{Alessandro Giuliani}
\address{Dipartimento di Matematica e Fisica, Universit\`a degli Studi Roma Tre \\ \small{
L.go S. L. Murialdo 1, 00146 Roma, Italy}}
\email{giuliani@mat.uniroma3.it}
\author{Vieri Mastropietro}
\address{Dipartimento di Matematica, Universit\`a di Milano \\
  \small{Via Saldini, 50, I-20133 Milano, ITALY }}
\email{vieri.mastropietro@unimi.it}
\author{Fabio Lucio Toninelli}
\address{Universit\'e de Lyon, CNRS, Institut Camille Jordan, Universit\'e Claude Bernard Lyon 1\\
\small{43 bd du 11 novembre 1918,
69622 Villeurbanne cedex, France}}
\email{toninelli@math.univ-lyon1.fr}

\maketitle

\footnotetext{\copyright\, 2014 by the authors. This paper may be reproduced, in its
entirety, for non-commercial purposes.}

\begin{abstract} 
  We consider a non-integrable model for interacting dimers on the
  two-dimensional square lattice. 
  Configurations are perfect matchings of $\mathbb Z^2$, i.e. subsets
  of edges such that each vertex is covered exactly once
  (``close-packing'' condition). Dimer configurations are in bijection
  with discrete height functions, defined on faces $\xxi$ of $\mathbb
  Z^2$. The non-interacting model is ``integrable'' and solvable via
  Kasteleyn theory; it is known that all the moments of the height
  difference $h_{\xxi}-h_{\hhe}$ converge to those of the massless Gaussian
  Free Field (GFF), asymptotically as $|\xxi-\hhe|\to \infty$.  We prove
  that the same holds for small non-zero interactions, as was
  conjectured in the theoretical physics literature.  Remarkably,
  dimer-dimer correlation functions are instead not universal and
  decay with a critical exponent that depends on the interaction
  strength. Our proof is based on an exact representation of the model
  in terms of lattice interacting fermions, which are studied by
  constructive field theory methods.  In the fermionic language, the
  height difference $h_{\xxi}-h_{\hhe}$ takes the form of a non-local
  operator, consisting of a sum of monomials along
  an {\it arbitrary} path connecting $\xxi$ and
  $\hhe$. As in the non-interacting case, this path-independence plays a
   crucial role in the proof.   
\end{abstract}

\tableofcontents
\section{Introduction and main results}\label{sec1}

Two-dimensional dimer models were studied extensively in the 1960s for
their equivalence with various statistical physics models such as the Ising model. 
At close packing, dimer models are critical 
(correlations decay polynomially with distance) and, as was later
discovered, enjoy conformal invariance properties \cite{Kdom}. Their
early study culminated in the {\it exact solution} of non-interacting
dimers by Kasteleyn, Temperley and Fisher \cite{Fi,K1,TF} and the
related computation of the correlations \cite{FS}. However, even in
the presence of a solution, a number of properties used in the
physical literature were left for decades without a mathematical
justification. In particular, the height field (see Section
\ref{sec:modello}) was believed to be effectively described in terms
of a continuum Gaussian field theory.  The difficulty in
substantiating mathematically such belief is due to the {\it
  ultraviolet divergences} that arise in the continuum limit. They
produce ambiguities in the final formulas for the moments of the
height function, which require ad hoc regularizations, see
e.g. \cite{BI,DD,ZI} for an analogous discussion in the context of the
critical Ising model. It is fair to say that not only a mathematical
proof, but even a solid, convincing, non-rigorous argument, proving
the correctness of the scaling limit for the height function, was
missing until very recent.  The progress came from the mathematical
community: in the last 15 years, radically new ideas and methods have
been introduced 
\cite{Ke97,Kdom,Kdom2,K2,KOS}, which provided a firm basis for the
continuum field picture in the {\it non-interacting} dimer model.
These works take advantage of the underlying discrete
holomorphicity properties of the model, which arise from its
integrability, and can be used to prove the emergence of conformal
symmetry in the scaling limit \cite{Kdom,Kdom2}. Similar ideas also
appeared and developed in the context of percolation and of the Ising
model \cite{Sm_per, Sm_Is}. However, these methods fail as soon as
integrability is lost, and the very natural question of whether the
Gaussian Free Field (GFF) description survives for the interacting case
requires radically new ideas.  It was proposed in \cite{F} to apply
the methods of constructive Renormalization Group (RG) theory to
interacting dimers, and in this way the large-distance asymptotics of
the dimer-dimer correlations were derived, as well as certain
universality relations between critical exponents.  In this paper we
extend the approach of \cite{F} to the computation of all the moments of the
height function, and we succeed in proving their convergence to those
of the massless GFF. The control of the height
fluctuations, as compared to that of the dimer correlations, poses new
non-trivial problems, due to the non-local nature of the height
function, as opposed to the local nature of single-dimer observables.

Constructive RG methods have proven, along the
decades, to be an invaluable tool to control rigorously some non-integrable critical models and their universality properties, see
references below. On the other hand, these methods seem to be very
little known in the probability/combinatorics/discrete complex
analysis communities, despite the fact that they are interested in very similar  mathematical questions for the
Ising model, percolation, etc.  One of the aims of the present work is
to make these methods accessible to a wider audience. For
this reason, we make an effort to present the main ideas and steps in
a pedagogical way (within reasonable limits: for the technical details
of some constructive RG estimates we refer to the relevant
literature), which (partly) explains the length of the article.

\subsection{The model}
\label{sec:modello}
To be definite, we study the model of 
interacting classical dimers proposed in \cite{AL} and \cite{P}.
We consider a
periodic box $\L$ of side $L$ (with $L$ even), whose sites are
labelled as follows: $\L=\{\xx=(x_1,x_2)\in \mathbb Z^2:\
x_i=-L/2+1,\cdots, L/2\}$. ``Periodic", as usual, means that if $\hat
e_i$ are the two unit coordinate vectors, then $(L/2,x_2)+\hat e_1$
should be identified with $(-L/2+1,x_2)$, and $(x_1,L/2)+\hat e_2$
with $(x_1,-L/2+1)$.  The partition function of interest is
\be Z_\L(\l,m)=\sum_{M\in \mathcal M_{\L}}\Big[\prod_{b\in M}t^{(m)}_b\Big] e^{\l W_{\L}(M)}\equiv \sum_{M\in \mathcal M_{\L}}
\m_{\L;\l,m}(M):
\label{s2.1}\ee
\begin{itemize}
\item $\mathcal M_{\L}$ is the set of dimer coverings (or
  perfect matchings) of $\L$.  We recall that a dimer covering is a subset of edges such that each vertex of
  $\L$ is contained in exactly one edge in $M$. We choose $L$ even, otherwise $\mathcal M_{\L}$ would be empty.
\item $m>0$ is the amplitude of a periodic modulation of the
  horizontal bond weights, playing the role of an ``infrared
  regularization'' (see later), to be eventually removed after
performing the thermodynamic limit, by sending $m\to 0$. The modulation is defined as follows:
$t^{(m)}_{(\xx,\xx+\hat e_j)}=1+\d_{j,1}m(-1)^{x_1}$. Note that $\lim_{m\to 0}t_b^{(m)}=1$. 
\item 
$W_{\L}(M)=\sum_{P\subset\L}N_P(M)$, where $P$ is a plaquette (face of
$\mathbb Z^2$) and $N_P(M)=1$ if the plaquette $P$ is occupied by two
parallel dimers in $M$, and $N_P(M)=0$ otherwise.

\end{itemize}
If one sets $\lambda=m=0$, one recovers the usual integrable, translation invariant, dimer model studied
e.g. in \cite{K1,Kdom,K2}.

Since $\L$ is bipartite  we can paint white and black the sites of the two sublattices;
with no loss of generality we can assume that 
the coordinates of the white sites are either (even, even) or (odd, odd). 
The expectation w.r.t. the measure corresponding to the partition
function $Z_\L(\l,m)$ will be denoted $\media{\cdot}_{\L;\l,m}$:
if $O(M)$ is a function of the dimer configuration, we define 
\be
\langle O\rangle_{\L;\l,m}=\frac1{Z_\L(\l,m)}\sum_M \m_{\L;\l,m}(M) O(M).
\ee
Truncated expectations are denoted by a semicolon: e.g., $\langle O ;O'\rangle_{\L;\l,m}:=\langle
O O'\rangle_{\L;\l,m}-\langle O\rangle_{\L;\l,m}\langle
O'\rangle_{\L;\l,m}$. 
The massless infinite volume measure is defined via the following weak
limit (existence of the limit for local observables is part of our results):
\be \media{\cdot}_\l:=\lim_{m\to 0}\lim_{\L\nearrow\mathbb Z^2}\media{\cdot}_{\L;\l,m}\;.\label{1.3}\ee
The name ``massless" refers to the fact that $\media{\cdot}_\l$ exhibits algebraic decay of correlations, irrespective of the 
value of $\l$, see Theorem \ref{th:dascrivere} below.
If, instead of sending $m\to 0$ in \eqref{1.3}, we keep $m>0$ fixed in the thermodynamic limit, 
then the truncated correlations decay exponentially to zero at
large distances, with rate  proportional to $m$ itself. In this sense, $m$ plays the role of a mass (infrared regularization). 

Given a dimer covering $M$ and two faces of $\L$ 
centered at $\xxi$ and $\hhe$, one defines the {\it height} difference between $\xxi$ and $\hhe$ as 
\be 
h_{\xxi}-h_{\hhe}=\sum_{b\in \mathcal C_{\xxi\to
    \hhe}}\big(\openone_b(M)-\media{\openone_{b}}_{\Lambda;\lambda,m}\big)\s_b\label{1.2}\ee
where
  $\openone_b(M)$ denotes the dimer occupancy, i.e., 
 the observable that is equal to 1 if $b$ is occupied by a dimer in $M$, and 0 otherwise,
while
$C_{\xxi\to \hhe}$ is a nearest-neighbor path on the dual lattice of $ \L$
(i.e. a path on faces of $\L$). The sum runs over the edges
crossed by the path and  $\s_b=+1/-1$ depending on whether the oriented path $C_{\xxi\to \hhe}$ from $\xxi$ to $\hhe$ 
crosses $b$ with the white site on the right/left.
See figure \ref{figaltezza}.
\begin{figure}[ht]
\includegraphics[width=.5\textwidth]{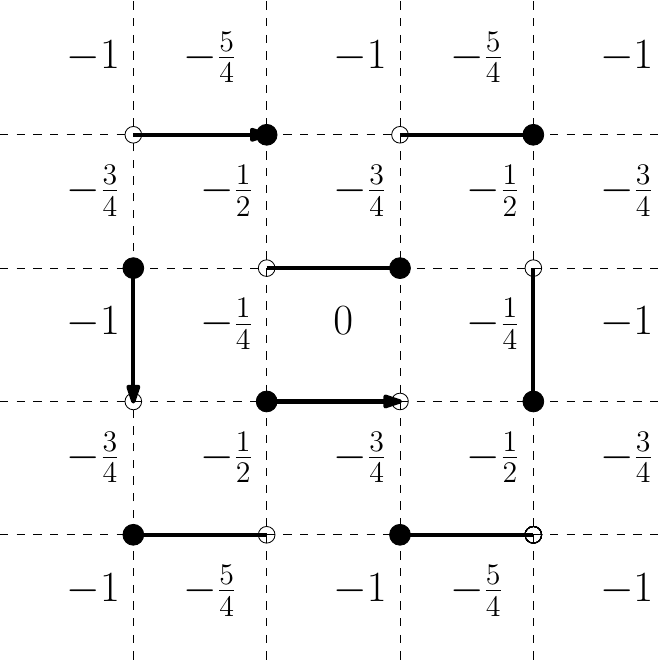}
\caption{A dimer configuration for $L=4$ and the height function
  computed according to \eqref{1.2}. In this picture we  assume that
  $\media{\openone_{b}}=1/4$ for every $b$, which is the case on the
  torus for $m=0$. Moreover, we conventionally set the value of the height in the central plaquette equal to 0.
}
\label{figaltezza}
\end{figure}

We have centered the height function to have gradients with zero
average; remark that, for $m=0$,
$\media{\openone_{b}}_{\Lambda;\lambda,m}=1/4$ by symmetry.
 A priori, the definition \eqref{1.2} depends on the choice of the path. The remarkable fact 
is that  it is actually independent of it, provided the path ``does
not wind around the torus": more precisely, the right side of
\eqref{1.2} computed along two different paths is the same, provided
the loop obtained by taking the union of the two paths does not wind
around the torus\footnote{In general, if a path wraps $n_1$ times horizontally and $n_2$
times vertically over the
torus, the right side of \eqref{1.2} picks up an additive term $n_1
T_1(M)+n_2 T_2(M)$, for suitable constants, called {\it periods}. In this sense, the height on the torus 
is additively multi-valued. The example in Fig. \ref{figaltezza} is special, in that $T_1(M)=T_2(M)=0$ for the configuration
$M$ depicted there; however, it is easy to exhibit other configurations for which these periods are non-zero.}
 \cite{K2}. We shall say that two such paths are equivalent. 
In particular, if $||\xxi-\hhe||_\io<L/2$, then all the shortest lattice paths are equivalent, and we uniquely define the height difference between $\xxi$ and $\hhe$ as the right side 
of \eqref{1.2}, computed along any path equivalent to one of the shortest lattice paths. In this way, given two faces with fixed (i.e., $L$-independent) coordinates 
$\xxi$ and $\hhe$,
their height difference is uniquely defined, for sufficiently large $L$. If we arbitrarily assign height zero to the ``central" plaquette (the one centered at $(1/2,1/2)$), then 
the height profile is uniquely determined everywhere, asymptotically as $L\to\infty$. 
In conclusion, 
each 
plaquette is associated with a value of the height function, and one can view each plaquette as the basis
of a block which extends out of the page by an amount
given by the height function. From this perspective, 
dimer covering may be viewed as a two-dimensional representation of the
 surface
of a three-dimensional crystal. 

Let us mention that the bijection between discrete interfaces and perfect matchings of planar bipartite graphs is a general fact: see for instance Figure \ref{fighexa}
for the (visually more obvious) case of the honeycomb lattice.

\begin{figure}[ht]
\includegraphics[width=.65\textwidth]{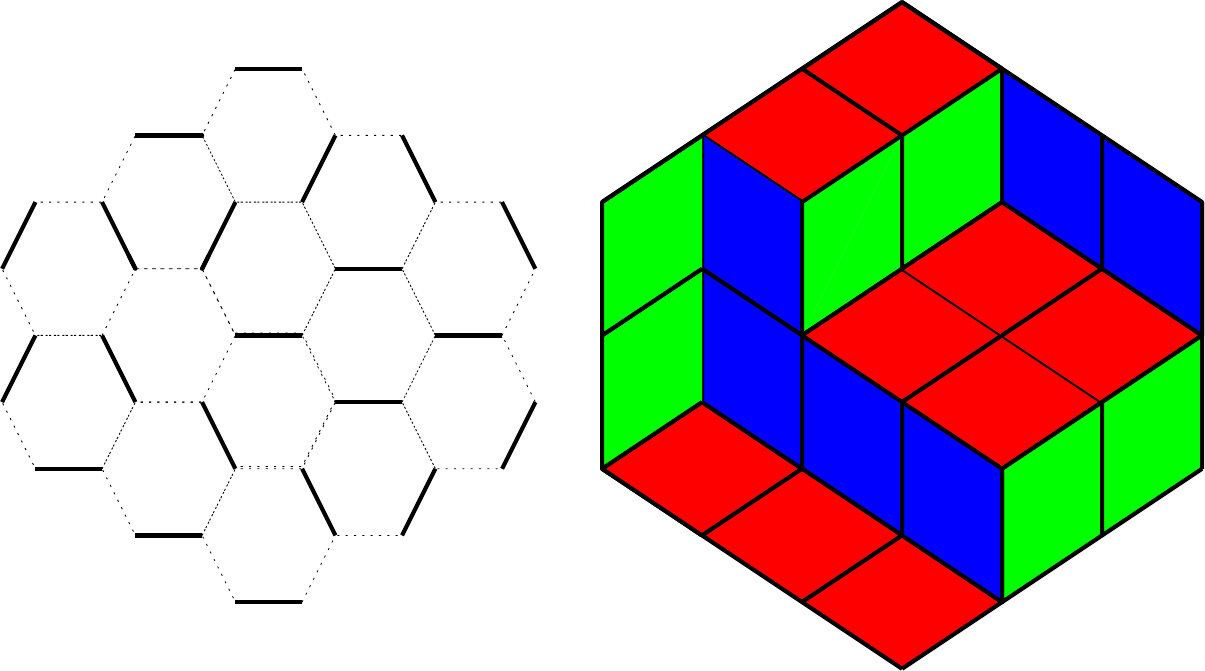}
\caption{A dimer covering of a domain of the honeycomb lattice and the corresponding discrete height function.
The correspondence is established by drawing a segment along the main diagonal of each lozenge in the figure on the right: these segments are the same as the dimers in the figure on the left, and they make apparent the fact that dimer configurations are in one-to-one correspondence with lozenge tilings of planar domains. The vertices of the lozenges correspond to the centers of the hexagonal cells in the figure on the left.}
\label{fighexa}
\end{figure}

\subsection{Correlations and expected behavior}

Among the physically interesting correlations are the {\it dimer correlations} 
$\langle\openone_{b_1};\cdots;\openone_{b_n} \rangle_{\l}$, 
the {\it height moments} $\langle (h_{\xxi}-h_{\hhe})^n\rangle_{\l}$ 
and the so-called {\it electric correlator} 
\begin{eqnarray}  \label{eq:2}
\langle e^{i \a(h_{\xxi}-h_{\hhe})}\rangle_\l.  
\end{eqnarray}
For $\l=0$ (non-interacting dimers) the partition function was
exactly computed in \cite{Fi,K1,TF}, where it was shown that it can be expressed in terms
of the Pfaffian of the Kasteleyn matrix (see below); such a Pfaffian
can be rewritten exactly as a {\it Gaussian} Grassmann integral, so
that the case $\l=0$ is also called free fermion point (see e.g. \cite{GeM,NO} for a definition and an illustration of the basic properties of Grassmann integrals).
The
{\it dimer} correlations are easily computable from their
Grassmann representation (the dimer occupancy  $\openone_b$ becomes a {\it local}
quadratic monomial in the language of Grassmann variables), by using the fermionic Wick theorem, see for instance \cite{FMS}:
one finds that if $m=0$, the dimer correlations decay as 
a power law modulated by an oscillating factor; in particular, the decay of the two-point dimer correlation 
is proportional to the inverse distance squared (see Proposition
\ref{prop:ddl0} below).

The computation of the height or electric correlations is a
completely different matter: the height and electric observables take the form of {\it non-local}
expressions in the Grassmann variables (as can be guessed from \eqref{1.2}) and their computation is much
harder. Indeed, the proof (for $\l=0$) of GFF-like behavior for the
height function \cite{Kdom2,KOS}  and the computation of the large-distance behavior of the electric
correlator \cite{D,Pinson}
are very recent.
The
dimer model with $\l=0$ strongly resembles the two-dimensional Ising model at the
critical temperature, which admits a similar fermionic representation in terms of Gaussian Grassmann integrals 
\cite{IDbook,Sa}. The dimer
correlations are the analogues of the Ising energy density
correlations (i.e. the correlation between $\sigma_x\sigma_{x'}$ and
$\sigma_{y}\sigma_{y'}$, if $(x,x')$ and $(y,y')$ are two lattice
bonds) and the
electric correlator at $\a=\p$ is the analogue of the square of the spin-spin correlation at
criticality. The analogy is not just formal, but also quantitative:
it was recently shown in \cite{Dubedat2} that there is an exact identity, valid at the lattice level and at finite volume, between the 
energy correlations of the critical Ising model and the dimer correlations, as well as between the 
(square of the) two-point spin correlation of critical Ising and the electric correlator at $\alpha=\pi$. 
These identities play the role of {\it lattice bosonization} identities, see \cite{Dubedat2}, and imply 
in particular that the critical exponents of the corresponding Ising and dimer observables are the same.

If $\l\neq 0$ the model is not solvable anymore. The Grassmann
representation, reviewed below, shows that the interacting model can
be expressed exactly in terms of a non-Gaussian Grassmann
integral. That is, the interacting dimer model is equivalent to a
model of interacting lattice fermions in two dimensions \cite{F}. The
critical exponents of the dimer observables {change}, as apparent from Theorem 
\ref{th:dascrivere} below, where a non-trivial critical
exponent $2\kappa$ appears. Nevertheless, a heuristic mapping of the
theory into a sine-gordon model \cite{AL} predicts that the height
function, at least for small $\l$, still behaves in the continuum limit as a
massless GFF:
\be  h_u- h_v \sim
\sqrt{\frac{K}\pi}(\f_u-\f_v) ,\qquad u,v\in \mathbb R^2
\label{1.00}\ee
where $\varphi$ is the massless GFF with covariance 
$-1/(2\pi)\log |u-v|$, and $K=K(\l)$  
is an analytic function of $\l$ such that $K(0)=1$.

As already noticed, the identification \pref{1.00} in the scaling limit has been rigorously proved in the {\it non-interacting}
case only \cite{Kdom,Kdom2}. In the presence of interactions, \pref{1.00}
was until now a phenomenological assumption,  not derived from the microscopic Hamiltonian but confirmed by numerical simulations, see  \cite{AL} and \cite{P}. 
From simulations, $K$ appears to be a non-trivial function of $\l$,
which  suggest that the model should be in the same universality class
of the Ashkin-Teller model, see \cite{AL,P}. On the basis of a universality relation \cite{Ka,P}, the amplitude $K$ 
is expected to be computable in terms of the exponent $\kappa$ 
of the two-point dimer correlation.

Our Theorems \ref{th:maintheorem} and \ref{th:gff} are the first rigorous confirmations of
\eqref{1.00} in the interacting case $\l\ne0$.
Let us mention that, in the same spirit, convergence of
Ginzburg-Landau type $\nabla\phi$ interface models to a GFF was
obtained for instance in \cite{GOS,Mill,NadSpe,CoSpe}. 

\subsection{Results and perspectives}

In the last years methods based on constructive Renormalization Group (RG) have been applied
to various classical and quantum statistical mechanics  models, starting from \cite{M}.
In contrast with field theoretic RG, they can be applied in the
presence of a lattice, they allow for a mathematically rigorous
control of the effects of momentum cut-offs, of the irrelevant terms, and of the convergence of perturbation theory. 
These methods have already been successfully applied to the  computation of the critical 
exponents associated with several different observables that, once re-expressed in the language of Grassmann variables, are local or quasi-local operators: examples include the energy and crossover 
observables in the eight vertex 
\cite{BFM,M} and  anisotropic Ashkin-Teller models \cite{GM}, energy density correlations in non-integrable Ising models \cite{GGM}, the  correlations  of $S^z$ (the $z$-component of the spin)
in the XXZ model \cite{BM0} and, more recently, the already mentioned dimer correlation of the interacting dimer model \cite{F}. 

In this paper, we combine this approach with the methods used in the
$\l=0$ case in \cite{KOS}, thus  applying for the first time constructive RG methods to 
the study of a {\it non-local} observable such as the height.
Our main result is the following:
\vskip.3cm
\begin{Theorem}
\label{th:maintheorem}
There exist:\begin{enumerate}
\item a positive constant $\l_0$ and a real analytic function $K(\l)$ on $|\l|<\l_0$ satisfying $K(0)=1$,
\item  positive constants $C_n$, with $n\ge 2$, and a bounded function $R(\xxi)$ satisfying $|R(\xxi)|\le C_2$, $\forall \xxi\neq\V0$,
\end{enumerate} such that the following is true:
if $\xxi\neq \hhe$, then
\bea &&
\media{(h_{\xxi}-h_{\hhe})^2}_{\l}=\frac{K(\l)}{\p^2}\log|\xxi-\hhe|+R(\xxi-\hhe)\;.\label{111}
\eea
Moreover, if $n>2$, the n-th cumulant of $(h_{\xxi}-h_{\hhe})$ is bounded 
uniformly in $|\xxi-\hhe|$ as
\bea
&& | \langle\underbrace{h_{\xxi}-h_{\hhe};\cdots;h_{\xxi}-h_{\hhe}
}_{n\ times}\rangle_{\l}|\le C_n\;.\label{222}\eea
\end{Theorem}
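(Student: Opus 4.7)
The plan is to compute all cumulants of $h_\xxi-h_\hhe$ simultaneously via the generating functional $W_\L(A):=\log\media{e^{A(h_\xxi-h_\hhe)}}_{\L;\l,m}$, whose derivatives in $A$ at $A=0$ yield the quantities $\media{h_\xxi-h_\hhe;\cdots;h_\xxi-h_\hhe}_{\L;\l,m}$. Using the path representation \pref{1.2}, the factor $e^{A(h_\xxi-h_\hhe)}$ inserts into the measure $\exp\bigl(A\sum_{b\in C_{\xxi\to\hhe}}\s_b(\openone_b-\media{\openone_b})\bigr)$; in the Grassmann representation of the interacting model reviewed in \cite{F}, each edge occupancy $\openone_b$ is a local quadratic monomial in Grassmann variables, so $W_\L(A)$ becomes the logarithm of a Grassmann integral whose action is the usual interacting fermionic action plus a sum of local quadratic insertions, one per edge of the reference path $C_{\xxi\to\hhe}$.

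Next, I would apply the fermionic multiscale renormalization group, integrating out the Grassmann fields scale by scale from the ultraviolet down to the infrared scale $m$. At $A=0$ this is essentially the analysis of \cite{F}: the interacting action flows to a Gaussian fixed point describing two massless Dirac fermions with renormalized velocity $v(\l)$ and wave-function renormalization $Z(\l)$, both analytic in $\l$; as in \cite{BFM,BM0,F}, the vanishing of the effective beta function (a non-renormalization theorem enforced by the symmetries of the reference quadratic action) is what permits the analytic control of the flow for small $\l$ and hence yields the constant $\l_0$ of the statement. When $A\neq 0$, the insertions along the path acquire an extra renormalization constant $Z_h(\l)$. The key structural input is that path independence of \pref{1.2} translates into an exact lattice continuity equation for the discrete current $b\mapsto \s_b(\openone_b-\media{\openone_b})$. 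Preserved through the RG as a Ward identity (in spite of the momentum cutoffs, which generically break it and must be carefully compensated), this identity pins $Z_h(\l)$ down in terms of $Z(\l)$ up to an explicit finite lattice factor, and forces the effective height vertex to converge in the infrared to a conserved current $j^\mu$. By the standard fermion-boson correspondence this current equals $\sqrt{K(\l)/\pi}\,\e^{\mu\nu}\partial_\nu\f$ for a free massless bosonic field $\f$, so that at leading order $h_\xxi-h_\hhe\sim \sqrt{K(\l)/\pi}\,(\f_\xxi-\f_\hhe)$, with $K(\l)$ real-analytic in $\l$ and equal to $1$ at $\l=0$.

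From here the cumulants are obtained by differentiating $W_\L(A)$ in $A$ and then sending $m\to 0$ and $\L\nearrow\mathbb Z^2$. The Gaussian boson contributes only to the second moment, $\media{(\f_\xxi-\f_\hhe)^2}_{\rm GFF}=\pi^{-1}\log|\xxi-\hhe|+\const$, producing the leading $(K(\l)/\pi^2)\log|\xxi-\hhe|$ of \pref{111}; the remainder $R$ collects contributions from irrelevant operators generated along the RG flow and from subleading scales, and is uniformly bounded by the usual dimensional estimates. For $n\ge 3$ the Gaussian piece vanishes identically, so only the irrelevant contributions survive and give the $O(1)$ bound \pref{222}. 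The main technical obstacle lies precisely in this last point: since $h_\xxi-h_\hhe$ is a sum of $O(|\xxi-\hhe|)$ local Grassmann monomials, a naive estimate of the $n$-th cumulant yields a bound growing at least like $(\log|\xxi-\hhe|)^n$. The cancellations needed to reduce this to $O(1)$ are exactly those forced by the lattice continuity equation (a discrete summation by parts reducing the path sum to boundary insertions at $\xxi$ and $\hhe$), and the hard part is to keep them intact through the multiscale construction, whose momentum decomposition intrinsically violates the lattice Ward identities on each individual shell. Verifying this compatibility, uniformly in the ultraviolet and infrared cutoffs, and then combining it with the RG estimates of \cite{F} for the dimer correlations, is the central new step beyond \cite{F,KOS}.
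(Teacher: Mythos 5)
Your proposal captures the right heuristics—the path representation, the RG framework, the implicit conserved-current/bosonization structure behind path independence, and the honest acknowledgment that the $O((\log|\xxi-\hhe|)^n)$ naive bound must be beaten by cancellations—but as written it is a research plan rather than a proof, and several of the steps you sketch would not survive contact with the actual details.

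First, the generating functional $W_\L(A)=\log\media{e^{A(h_\xxi-h_\hhe)}}$ is a dangerous organizing device here: the paper's bounds give $C_n\sim (n!)^\beta$ with $\beta>1$, so there is no reason to expect $W_\L(A)$ to be analytic at $A=0$; the paper explicitly defers the electric-correlator question for exactly this reason. Working cumulant by cumulant, as the paper does, avoids the issue. Second, you treat the height-vertex renormalization as a single constant $Z_h(\l)$ pinned by a Ward identity, but this misses the crucial structural fact (Claim \ref{claim:1}) that the source term splits into two marginal operators with distinct renormalizations $Z_h^{(1)}$ and $Z_h^{(2)}$, carrying different critical exponents $\eta_1=\eta$ and $\eta_2\neq\eta$. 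The first, ``same-chirality'' operator $F^{(1)}$ is the one your Ward-identity argument would control; the second, $F^{(2)}$, has $Z^{(2)}_h/Z_h$ genuinely divergent in $h$, and no continuity equation saves you there—the paper instead kills this piece using the residual oscillatory factor $(-1)^{(\xx_i)_{j_i}}$, which does not cancel against $\sigma_b$ and therefore acts as a discrete derivative when summed along the paths. Without separating these two pieces, the bosonization picture is simply not enough.

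Third, and most importantly, the paper does \emph{not} carry the lattice continuity equation through the RG, which you correctly identify as problematic (the single-scale cutoffs break it). The actual mechanism is the introduction of a continuum relativistic reference model (Section \ref{sec5.3.1}) that possesses \emph{exact} chiral Ward identities, from which the vanishing of the $q\ge3$ truncated same-chirality current correlations, Eq.~\eqref{terza}, and the identity $\eta_1=\eta$ follow; the dimer model is then shown to agree with this reference model only asymptotically (Propositions \ref{prop:universale} and \ref{prop:corrispondenza}), with the difference controlled by the short-memory property. The cancellation in the $n$-th cumulant is then extracted by subtracting the reference-model value (Eq.~\eqref{5.92}), not by preserving a lattice identity scale-by-scale. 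Your own proposal concedes that ``verifying this compatibility \ldots is the central new step'' and leaves it open—which is exactly the gap: the strategy of restoring the lattice Ward identity directly against the cutoffs is not what works here, and the reference-model comparison (together with the $Z^{(1)}/Z^{(2)}$ dichotomy and the careful choice of well-separated paths turning oscillations into derivatives) is the machinery you would actually need to supply.
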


In the non-interacting  case $\l=m=0$,
 the result is a refinement of previously known estimates: in fact, in that case
\eqref{111} is proven in  \cite{Ke97} and in \cite[Theorem 4.5]{KOS} 
 (in a much more general setting of bipartite planar graphs),  see  also \cite{LT} for the height moments of order
$n>2$. Neither in \cite{KOS} nor in \cite{LT}  there is a sharp control of the
error terms: for instance, for the variance the error term in \cite{KOS} is
$o(\log |\xxi-\hhe|)$ instead of $O(1)$. 

Let us also mention that the logarithmic growth of the height variance (without sharp control of the constant in front of the log) for some discrete $(2+1)$-dimensional interface models (Solid-on-Solid and discrete Gaussian model) was obtained in \cite{FroSpe}. Moreover, an asymptotic computation of the height variance in the six-vertex model was recently presented in \cite{Fa_6V}.

For the proof of \eqref{111} 
the crucial estimate is provided by the following:
\begin{Theorem}
  \label{th:dascrivere}
Let $|\lambda|\le \l_0$. There exists $K(\l)$ as in Theorem
\ref{th:maintheorem}, and two real analytic functions $\tilde K(\l),\kappa(\l)$
with $\tilde K(0)=\kappa(0)=1$ such that the following holds. Given two bonds $b=(\xx,\xx+\hat e_j)$
and $b'=(\yy,\yy+\hat e_{j'})$, then 
\bea 
&&\media{\openone_{b};\openone_{b'}}_{\l}={\bf
    1}_{\xx\neq\yy}\Big[
-\frac{K(\l)}{2\p^2}(-1)^{\xx-\yy}\;{\rm Re}\frac{(i)^{j+j'}}{\big((x_1-y_1)+i(x_2-y_2)\big)^2}\nonumber \\
&&\qquad+
\d_{j,j'}\frac{\tilde K(\l)}{2\p^2}(-1)^{x_j-y_j}\frac{1}{|\xx-\yy|^{2\k(\l)}}\Big]+R_{j,j'}(\xx-\yy),
 \label{eq:41bis}\eea
with $|R_{j,j'}(\xx-\yy)|\le C_\th(1+|\xx-\yy|)^{-2-\theta}$, for some $\frac12\le \th<1$ and $C_\th>0$.
\end{Theorem}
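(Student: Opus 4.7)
The plan is to prove Theorem \ref{th:dascrivere} via a constructive renormalization group analysis of the Grassmann representation of the interacting dimer model. The first step is to rewrite $Z_\L(\l,m)$ as a non-Gaussian Grassmann integral: Kasteleyn theory expresses the $\l=0$ partition function as $\Pf(K)$, i.e.\ as a Gaussian Grassmann integral with propagator $K^{-1}$, while the plaquette factor $e^{\l W_\L}$ becomes a local quartic Grassmann term, using that each occupation $\openone_b$ equals a quadratic monomial $\bar\psi_\xx\psi_{\xx+\hat e_j}$ (up to Kasteleyn phases); correspondingly $\media{\openone_b;\openone_{b'}}_\l$ is the Grassmann truncated correlation of two local bilinears. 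At $\l=0$, $K^{-1}$ has two conjugate simple poles at Fermi points $\pp_F^\pm$, around which it is the propagator of a massless $2D$ Dirac fermion, with $m$ providing an infrared regulator. I would then introduce a smooth multiscale decomposition $g=\sum_{h\le 0}g^{(h)}$ supported on momenta of size $2^h$ around $\pp_F^\pm$ and integrate scales one by one from $h=0$ down to the mass scale $h_m\sim\log_2 m$, organizing the effective potential via a Gallavotti-Nicol\`o tree expansion and splitting it, through a localization operator, into a local (relevant/marginal) part and an irrelevant remainder. Discrete symmetries of the lattice model (in particular the translations by $\hat e_j$ and parity) forbid mass-like bilinears, so the only surviving marginal couplings are a density-density quartic coupling $\l_h$, a Fermi velocity counterterm $\d_h$, and a wave function renormalization $Z_h$.

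The crux of the proof, and the main obstacle, is controlling the flow of $(\l_h,\d_h,Z_h)$ as $h\to-\io$: the beta function for $\l_h$ a priori contains $O(\l_h^2)$ contributions which could destabilize the flow. Following \cite{F} and the Benfatto-Mastropietro ``vanishing of the beta function'' approach, I would compare the lattice beta function with that of a reference continuum model (the massless Thirring/Luttinger model), whose exact solution and emergent $U(1)$ Ward identities force its beta function to vanish identically; the difference is an absolutely convergent sum of irrelevant contributions, yielding boundedness of $\l_h$, analyticity in $\l$ of its limit, and the asymptotic scaling $Z_h\sim 2^{-\h(\l) h}$ with an anomalous dimension $\h(\l)$ analytic in $\l$. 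The critical exponent $\k(\l)$ is then identified as the scaling dimension of the mass bilinear $\bar\psi_{\xx,+}\psi_{\xx,-}$, a combination of $Z_h$-scaling and vertex renormalizations computable in closed form from the Ward identities of the reference model.

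Once the flow is controlled, the dimer bilinear has an infrared decomposition $\openone_b\simeq\sum_{\o=\pm}c_\o\,\bar\psi_{\xx,\o}\psi_{\xx,\o}+\bigl(e^{i(\pp_F^+-\pp_F^-)\cdot\xx}\tilde c\,\bar\psi_{\xx,+}\psi_{\xx,-}+\mathrm{c.c.}\bigr)$ into density and mass-like parts. The density-density correlator produces the first contribution in \eqref{eq:41bis}, whose amplitude $K(\l)$ (and non-anomalous exponent $2$) is fixed by the Ward identities transferred from the continuum reference model; the mass-mass correlator produces the oscillating second contribution, with anomalous exponent $2\k(\l)$ and amplitude $\tilde K(\l)$ expressed through the asymptotic value of $Z_h$. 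All remaining pieces (cross terms, contributions from irrelevant operators, finite-$m$ and finite-$L$ effects) are collected into $R_{j,j'}$ and bounded as $(1+|\xx-\yy|)^{-2-\th}$, for some $\th\in[1/2,1)$, by standard dimensional estimates on the tree expansion, uniformly in $m$, so that the limit $m\to 0$ after the thermodynamic limit is harmless.
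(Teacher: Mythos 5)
Your proposal follows essentially the same route as the paper: Grassmann/Kasteleyn representation, multiscale Gallavotti--Nicol\`o expansion with localization, control of the flow of $(\l_h,Z_h,Z_h^{(j)})$ by comparison with a continuum Luttinger/Thirring reference model whose local chiral Ward identities force the vanishing of the beta function, and finally the decomposition of the dimer bilinear into a ``density''-type and a ``mass''-type fermion bilinear whose two-point functions produce respectively the non-anomalous $|\xx-\yy|^{-2}$ and the anomalous $|\xx-\yy|^{-2\kappa}$ terms of \eqref{eq:41bis}, with everything else swept into $R_{j,j'}$ by the dimensional tree bounds. The only inaccuracy is in the bookkeeping of singularities and oscillations: the square-lattice dimer propagator has four singular points $\pp_1,\ldots,\pp_4$, recombined into two Dirac fields $\psi^\pm_\o$, and in that basis \emph{both} bilinears carry oscillating prefactors --- the density channel carries $(-1)^{\xx}$ (which is what yields the $(-1)^{\xx-\yy}$ in the first line of \eqref{eq:41bis}) while the mass channel carries $(-1)^{x_j}$ --- whereas your decomposition attributes the oscillation only to the mass term; this would give the wrong phase structure if pushed to explicit formulas, but does not affect the overall strategy.
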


This result appears in 
\cite{F} together with a sketchy derivation, and its proof is
reproduced in this paper, see Section \ref{sec:di} below (it builds
on the tools introduced in Sections \ref{sec:aa}-\ref{sec:beta}).

{The estimates behind the proof of Theorem \ref{th:maintheorem} are strong
  enough to actually prove that the height field converges in law, in the
  scaling limit, to the massless Gaussian Free Field $X$ on the plane \cite{Sh}
with covariance 
\begin{eqnarray}
  \label{eq:22}
  \langle X(x)X(y)\rangle=G_\lambda(x-y):=-\frac{K(\lambda)}{2\pi^2}\log|x-y|.
\end{eqnarray}
More precisely:
  \begin{Theorem}
\label{th:gff}Recall that the height is set to zero at the central face, $h(\V0)=0$.
    For every $C^\infty$ compactly supported test function
    $\phi:\mathbb R^2\mapsto \mathbb R$ satisfying $\int \phi(x)dx=0$,
    and $\epsilon>0$,
    define
    \begin{eqnarray}
      \label{eq:20}
      h^\epsilon(\phi)=\epsilon^2
      \sum_{\boldsymbol\eta}h_{\boldsymbol\eta}\phi(\epsilon {\boldsymbol\eta})
    \end{eqnarray}
where the sum runs over the faces of $\Lambda$.
Then, for every $\alpha\in \mathbb R$,
\begin{eqnarray}
  \label{eq:21}
  \lim_{\epsilon\to0}\langle e^{i\alpha
    h^\epsilon(\phi)}\rangle_\lambda=\exp\left(-\frac{\alpha^2}2\int \phi(x)\phi(y)G_\lambda(x-y)dx\,dy\right).
\end{eqnarray}
  \end{Theorem}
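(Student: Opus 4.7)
The plan is to establish \eqref{eq:21} via the cumulant expansion
\begin{equation}
\log\langle e^{i\alpha h^\epsilon(\phi)}\rangle_\lambda=\sum_{n\ge 1}\frac{(i\alpha)^n}{n!}\,\kappa_n(h^\epsilon(\phi)),
\end{equation}
and to show that only the second cumulant contributes in the limit $\epsilon\to 0$. Since the centering in \eqref{1.2} together with the convention $h_{\V0}=0$ give $\langle h_\eta\rangle_\lambda=0$, one has $\kappa_1=0$, and the claim reduces to: (i) $\kappa_2(h^\epsilon(\phi))\to\iint G_\lambda(x-y)\phi(x)\phi(y)\,dx\,dy$; (ii) $\kappa_n(h^\epsilon(\phi))\to 0$ for every $n\ge 3$; and (iii) uniform-in-$\epsilon$ summability of the series, so that the limit can be taken termwise.

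For (i), I would use the polarization identity $\langle h_{\eta_1}h_{\eta_2}\rangle=\tfrac12[\langle h_{\eta_1}^2\rangle+\langle h_{\eta_2}^2\rangle-\langle(h_{\eta_1}-h_{\eta_2})^2\rangle]$ together with $h_{\V0}=0$ and \eqref{111} to write
\begin{equation}
\langle h_{\eta_1}h_{\eta_2}\rangle_\lambda=\frac{K(\lambda)}{2\pi^2}\bigl[\log|\eta_1|+\log|\eta_2|-\log|\eta_1-\eta_2|\bigr]+\tfrac12\bigl[R(\eta_1)+R(\eta_2)-R(\eta_1-\eta_2)\bigr].
\end{equation}
Inserted into $\kappa_2(h^\epsilon(\phi))=\epsilon^4\sum_{\eta_1,\eta_2}\phi(\epsilon\eta_1)\phi(\epsilon\eta_2)\langle h_{\eta_1}h_{\eta_2}\rangle_\lambda$, the single-variable $\log|\eta_i|$ and $R(\eta_i)$ pieces factorize as products with $\epsilon^2\sum_\eta\phi(\epsilon\eta)$, which is $O(\epsilon^k)$ for every $k$ thanks to $\int\phi=0$ and the smoothness and compact support of $\phi$; hence they vanish in the limit. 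The $-\log\epsilon$ produced by the rescaling $x=\epsilon\eta_i$ inside $-\log|\eta_1-\eta_2|$ is killed by the same mean-zero mechanism, while a standard Riemann-sum argument delivers $-\tfrac{K(\lambda)}{2\pi^2}\iint\log|x-y|\phi(x)\phi(y)\,dx\,dy=\iint G_\lambda(x-y)\phi(x)\phi(y)\,dx\,dy$. The only delicate piece is the $R(\eta_1-\eta_2)$ term: decomposing $R$ into its slowly-varying part (killed by $\int\phi=0$ via a further summation by parts transferring derivatives from $\phi$ to $R$) and its lattice-oscillating part (averaging to zero against smooth $\phi$), both natural byproducts of the RG construction underlying \eqref{111}, one verifies that it also does not contribute.

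For (ii) the bound \eqref{222} alone is insufficient: used naively it yields only $|\kappa_n(h^\epsilon(\phi))|=O(1)$. The missing decay is extracted by exploiting path-independence together with $\int\phi=0$. Writing $\phi=\nabla\cdot\psi$ with $\psi$ smooth and compactly supported (which is possible because $\int\phi=0$, via the Bogovskii construction) and performing a discrete summation by parts converts
\begin{equation}
h^\epsilon(\phi)=\epsilon\sum_{b}W_b^\epsilon\bigl(\openone_b-\langle\openone_b\rangle_\lambda\bigr)+o(1),
\end{equation}
with $\|W_b^\epsilon\|_\infty\le\|\psi\|_\infty$ and $W_b^\epsilon$ supported on $O(\epsilon^{-2})$ bonds, so that $\kappa_n(h^\epsilon(\phi))$ becomes an $n$-fold sum of truncated dimer correlations multiplied by $\epsilon^n$. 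Invoking multi-point truncated dimer estimates of the form $\sum_{b_2,\dots,b_n}|\langle\openone_{b_1};\dots;\openone_{b_n}\rangle_\lambda|\le C_n(\log(1/\epsilon))^{n-1}$ uniform in $b_1$---the natural $n$-point extension of Theorem~\ref{th:dascrivere}, provided by the same fermionic RG analysis---one gets $|\kappa_n(h^\epsilon(\phi))|=O(\epsilon^{n-2}(\log(1/\epsilon))^{n-1})$, which vanishes for $n\ge 3$. Point (iii) follows once the $C_n$ are shown to grow no faster than $n!\,C^n$, a standard output of constructive RG. The main obstacle is precisely the multi-point dimer decay used in (ii): the diagonal bound \eqref{222} is not refined enough, and one must extract sharp $n$-point truncated correlations of the \emph{local} dimer observables from the RG iteration and glue them with the non-local path representation of the height---precisely the interplay between path-independence and the RG machinery emphasized in the abstract.
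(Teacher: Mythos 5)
Your overall skeleton (reduce to cumulants, trivial first cumulant, identify the limit from the second cumulant, show higher cumulants vanish) matches the paper's, but both of your two main steps contain gaps that the paper's actual proof is specifically designed to avoid.

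For the second cumulant, the polarization identity feeds back into the statement of Theorem~\ref{th:maintheorem} a piece $\tfrac12\bigl[R(\boldsymbol\eta_1)+R(\boldsymbol\eta_2)-R(\boldsymbol\eta_1-\boldsymbol\eta_2)\bigr]$, and \eqref{111} only tells you $|R(\cdot)|\le C_2$; with a merely bounded $R$, the double Riemann sum $\epsilon^4\sum\phi(\epsilon\boldsymbol\eta_1)\phi(\epsilon\boldsymbol\eta_2)R(\boldsymbol\eta_1-\boldsymbol\eta_2)$ is $O(1)$, not $o(1)$. Your fix (decomposing $R$ into a slowly varying plus a lattice-oscillating part) is plausible but uses structure of $R$ that is nowhere in the theorem's statement: you are implicitly re-proving and re-using the finer Theorem~\ref{th:dascrivere}. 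The paper sidesteps this entirely by introducing an auxiliary face $\ppe$ far from $\support\phi(\epsilon\cdot)$, rewriting $\langle(h_{\boldsymbol\eta_1}-h_{\ppe});h_{\boldsymbol\eta_2}\rangle_\lambda$ via the path representation, and plugging in the sharp two-point dimer asymptotics \eqref{eq:41bis} whose error $R_{j,j'}$ genuinely decays like $|\xx-\yy|^{-2-\theta}$; the remaining oscillatory $\tilde K$-piece is killed by a summation-by-parts along the paths (and this also handles the diagonal $\boldsymbol\eta_1=\boldsymbol\eta_2$, which you do not discuss). So for (i) your route is not a priori wrong in spirit, but it cannot be closed from \eqref{111} alone; you need essentially the same input as the paper, namely \eqref{eq:41bis}.

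For the $n\ge 3$ cumulants, the gap is more serious. Once you write $h^\epsilon(\phi)=\epsilon\sum_b W_b^\epsilon(\openone_b-\langle\openone_b\rangle)+o(1)$ and then pass to absolute values of truncated dimer correlations, you discard exactly the two cancellations on which the paper's bound \eqref{eq:26} rests: the reference-model identity \eqref{terza}, which kills the ``chiral-conserving'' $\mathcal S^{(1)}$ chain to produce the extra $D^{-\theta}$, and the oscillating prefactors $\sigma_b(-1)^{(\xx)_{j}}$ in the $\mathcal S^{(2)},\mathcal S^{(3)}$ parts, which the paper promotes to lattice derivatives along the paths. The bound you invoke, $\sum_{b_2,\dots,b_n}|\langle\openone_{b_1};\dots;\openone_{b_n}\rangle_\lambda|\le C_n(\log(1/\epsilon))^{n-1}$ uniform in $b_1$, is not the right order of magnitude: the $\mathcal S^{(2)}$ part of the truncated $n$-point dimer correlation decays like $D^{-n\kappa(\lambda)}$ with $\kappa(\lambda)$ merely close to $1$ (and possibly $<1$), so the absolute-value sum over an $O(\epsilon^{-1})$-ball grows polynomially in $\epsilon^{-1}$, not logarithmically, and the resulting bound on $\kappa_n$ does not go to zero. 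Keeping the $\sigma_b$ factors intact until after the summation along paths --- which is what the paper does, and is the whole point of the path-independence of the height --- is not an optional refinement here but the essential mechanism. Your concluding paragraph correctly identifies this as the ``main obstacle,'' but the outline as written does not resolve it; the absolute-value route would have to be abandoned in favor of the paper's decomposition $\mathcal S^{(1)}+\mathcal S^{(2)}+\mathcal S^{(3)}$ and the oscillation-to-derivative argument.
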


  \begin{Remark}
Let us emphasise that the condition $\int\phi(x)dx=0$ is not
technical: if $\int\phi(x)dx\ne0$ the variance of $h^\epsilon(\phi)$
diverges logarithmically as $\epsilon\to0$.
    Recall also that the definition of the average $\langle\cdot
    \rangle_\lambda$ includes the thermodynamic and massless limit: in
    particular, the sum in \eqref{eq:20} is unambiguously defined
    since the support of $\phi(\epsilon\cdot)$ is of order
    $1/\epsilon\ll L$. For the same reason, the limit GFF does not
    keep trace of the periodic boundary conditions.
  \end{Remark}
}
\begin{Remark}[Electric correlator]
Take $\chi$ a smooth, positive, compactly supported function on $\mathbb
R^2$ centered around the origin and of average $1$. Then, if 
$\chi_x(\cdot):=\chi(\cdot-x)$ for $x\in \mathbb R^2$, from Theorem
\ref{th:gff} we obtain
\begin{eqnarray}
  \label{eq:29}
  \lim_{\epsilon\to0}\media{e^{i\alpha(h^\epsilon(\chi_x)-h^\epsilon(\chi_y))}}_\lambda\sim const\times|x-y|^{-{K \alpha^2/(2\pi^2)}}
\end{eqnarray}
asymptotically as $|x-y|\to\infty$.
This suggests that, 
at least for $\alpha$ small,
\begin{eqnarray}
  \label{eq:3}
\langle e^{i\a(h_{\xxi}-h_{\hhe})}\rangle_\l\sim const\times |\xxi-\hhe|^{-K\a^2/(2\p^2)}\;,
\end{eqnarray}
asymptotically at large distances. Indeed, \eqref{eq:29}
can be seen as a coarse-grained version of \eqref{eq:3} and actually \eqref{eq:3} would
follow from Theorem \ref{th:maintheorem} if we could prove that $ C_n=O(n!)$.
We hope to come back to this issue in a future publication, possibly by combining 
the methods of constructive RG with the (strong) discrete
holomorphicity used in \cite{D}, where  \eqref{eq:3} is proven for $\l=0$ (see also \cite{Pinson}). 
\end{Remark}
\begin{Remark}[Generalizations and extensions]
    The above theorems can be
straightforwardly extended to the case where the nearest neighbor
interaction $W_\L$ in \eqref{s2.1} is replaced by a general finite range interaction that respects
the symmetries of the lattice. 
Another possible generalization (in the spirit of \cite{KOS}), that we
did not work out in detail but we believe would not entail new
conceptual difficulties, is to work on different planar bipartite
lattices, like the honeycomb lattice.

{The  proof of Theorem \ref{th:maintheorem} also implies analyticity of the free energy
for $\lambda$ small. Since the free energy can be seen as the Legendre
transform of the large deviation functional of  $W_\L$,
Theorem \ref{th:maintheorem} also implies a central limit theorem for the fluctuations
of $W_\L/\sqrt{|\L|}$ around its mean.}
\end{Remark}

In principle, the proof of Theorem \ref{th:maintheorem}  provides estimates on the convergence radius $\l_0$, as well as 
on the constants $C_n$. However, since we do not expect them to be optimal, we do not spell them explicitly here (e.g., 
our estimates on $C_n$ grow proportionally to $(n!)^\b$ with $n$, for
some $\b>1$).
The proof is based on   precise asymptotics on multipoint dimer correlations, which requires the identification
of remarkable cancellations in the (renormalized,
convergent) expansion for the correlations, which follow from hidden
Ward Identities \cite{BM0} (i.e., asymptotic identities among correlation
functions). The name ``hidden" refers to the fact that these
identities are not exact in the model at hand, while they are so in a
continuum reference model (Section \ref{sec5.3.1}), which  displays the same large-distance behavior as the interacting dimer
model but on the other hand has more symmetries.

Note that in the above theorem  no continuum limit is performed. Therefore,
the $n\ge 3$ cumulants are not exactly vanishing, but are finite, while the $2$-point function is log-divergent 
as $|\xxi-\hhe|\to\infty$. 

Let us also remark that our result is not just a corollary of the estimates on the dimer correlations,
which can be inferred from (the methods of) \cite{F}. In fact, a naive substitution of these estimates into the expression 
of the $n$-th cumulant of $(h_{\xxi}-h_{\hhe})$ obtained by plugging \eqref{1.2} into the left sides of 
\eqref{111}-\eqref{222} leads to very poor bounds, growing faster than $(\log|\xxi-\hhe|)^{n/2}$ at large 
distances. A key fact that we need to implement is the path-independence of  the right side of \eqref{1.2}, which 
is a (weak) instance of the underlying discrete holomorphicity of the model, and 
relies crucially on the presence of the oscillatory factor $\s_b$: these oscillatory factors produce 
remarkable cancellations in the perturbation series, which we keep track of within our constructive multi-scale 
computation of the height correlations. 

Finally let us mention that, for $\lambda=0$, height correlations in
finite domains exhibit conformal covariance properties in the scaling
limit where the lattice spacing tends to zero; this was proven for
instance by Kenyon \cite{Kdom,Kdom2} for some suitably chosen boundary
conditions. It would be extremely interesting to prove that conformal
invariance survives for $\lambda\ne0$, where integrability is
lost. While we believe that constructive RG is again the right approach to
attack this problem, new difficulties will need to be overcome with
respect to the present work, notably due to the loss of translation
invariance arising from non-periodic boundary conditions.

 \subsection{Organization of the paper}
 In Section \ref{sec2} we show how to represent the partition
 function and the multi-dimer correlations for $\l=0$
 (resp. $\l\ne0$) as a Gaussian (resp. non-Gaussian)
 Grassmann integral. 
In Section \ref{sec4}, as a warm-up, we prove Theorem \ref{th:maintheorem}
for $\l=0$.  In Section \ref{sec:varint} we prove \eqref{111}, conditionally on
Theorem \ref{th:dascrivere}.
In Sections \ref{sec.RG} and \ref{sec.RG2} we discuss respectively formal
perturbation expansion in $\l$ and the ``renormalized expansion''. 
The
latter is convergent for $\l$ small and allows to get 
the large-distance behavior of multi-dimer correlations of the
interacting model, thereby completing the proof of Theorem \ref{th:dascrivere}. Finally, in Section \ref{sec:cisiamo} we use the results of
Section \ref{sec.RG2} to prove Theorems \ref{th:maintheorem} and \ref{th:gff}
for $\l\ne0$.

\section{Grassmann representation of partition function and correlations}\label{sec2}

In this section we explain how to derive a representation of the
interacting partition function $Z_\L(\l,m)$ and dimer correlation functions in terms of  non-Gaussian
Grassmann integrals. This representation is exact and valid as an
algebraic identity for every finite lattice $\L$. For the reader who
is not used to Grassmann variables, we refer for instance to 
\cite[Section 4]{GeM} for 
some of their basic properties. The key points to keep in mind are the
following:  Grassmann variables anti-commute, in particular $\psi_\xx^2=0$. Gaussian Grassmann integrals are just an alternative way of
writing determinants (or Pfaffians); non-Gaussian Grassmann integrals
are just an alternative, compact, way of writing certain series of
determinants (or of Pfaffians); the rewriting of $Z_\L(\l,m)$ in terms
of a non-Gaussian Grassmann integral is very convenient for its
subsequent computation via the methods of constructive field theory,
which makes the analogy with the rigorous multi-scale analysis of
perturbed Gaussian measures as apparent as possible.

\subsection{The non-interacting model}

\subsubsection{Partition function and dimer correlations}

Kasteleyn's theory \cite{K1} gives an explicit formula 
for the dimer partition function with bond-dependent activities ${\bf
  t}=\{t_b\}_{b\subset \L}$, 
\begin{eqnarray}
  \label{eq:30}
  Z_\Lambda({\bf t})=\sum_{M\in\mathcal M_\Lambda}\prod_{b\in M}t_b.
\end{eqnarray}
Introduce  the Kasteleyn matrix\footnote{There is a certain amount of freedom in choosing the
  Kasteleyn matrix.  For instance, in \cite{K1} matrix elements are all
chosen to be real. Two Kasteleyn matrices are gauge equivalent if
there exists a function $c:\Lambda\mapsto \mathbb C$ such that
  $K'_{\xx,\yy}=K_{\xx,\yy} c_\xx c_\yy$. See \cite[Sec. 3.3]{K2} for a discussion of this point.} $K_{\bf t}$, which is a
$|\L|\times|\L|$ antisymmetric matrix indexed by vertices in $\L$, such that its elements $(K_{\bf t})_{\xx,\yy}$ are non-zero if and only if $\xx$ and $\yy$ are 
nearest neighbors; in this case $(K_{\bf t})_{\xx,\xx+\hat e_1}=-(K_{\bf t})_{\xx+\hat e_1,\xx}=t_{(\xx,\xx+\hat e_1)}$, 
and $(K_{\bf t})_{\xx,\xx+\hat e_2}=-(K_{\bf t})_{\xx+\hat e_2,\xx}=it_{(\xx,\xx+\hat e_2)}$.
Also, for $\theta,\tau\in\{0,1\}$ let $K^{(\theta\tau)}_{\bf t}$ be the antisymmetric
matrix obtained from $K_{\bf t}$ by multiplying the matrix elements
$(K_{\bf t})_{\xx,\xx+\hat e_1}=-(K_{\bf t})_{\xx+\hat e_1,\xx}$ by
$(-1)^\theta$ if $\xx $ belongs to the rightmost column of $\L$ and  $(K_{\bf
  t})_{\xx,\xx+\hat e_2}=-(K_{\bf
  t})_{\xx+\hat e_2,\xx}$ by $(-1)^{\tau}$
if $\xx$ is in the top row of $\L$. Of course, $K^{(00)}_{\bf t}=K_{\bf t}$.
Then one has
(cf. \cite{K2} and \cite[Sect. 3.1.2]{KOS})
\begin{gather}
  Z_{\L}({\bf t})=
\frac12(-{\rm Pf}K^{(00)}_{\bf t}+{\rm Pf}K^{(01)}_{\bf t}+{\rm
  Pf}K^{(10)}_{\bf t}+{\rm Pf}K^{(11)}_{\bf t})\label{2.2bab}\\
\nonumber
:=\frac12\sum_{\theta,\tau=0,1}C_{\theta,\tau}\, {\rm Pf}K^{(\theta\tau)}_{\bf t}.
\end{gather}
Here, ${\rm Pf} (A)$ indicates the Pfaffian of $A$. [We recall that the Pfaffian of a
$2n\times 2n$ antisymmetric matrix $A$ is defined as
\be
{\rm Pf} A:=\frac1{2^n n!}\sum_\pi (-1)^\pi
A_{\pi(1),\pi(2)}...A_{\pi(2n-1),\pi(2n)}; \label{h1}
\ee
$\p$ is a permutation of $(1,\ldots,2n)$, $(-1)^\p$ is its signature.
One of the properties of the Pfaffian is that $({\rm Pf} A)^2={\rm det}A$.]
Since the ordering of the labels $(1,\ldots,2n)$ matters in the
definition of Pfaffian (changing the ordering, the sign of the Pfaffian
can change), in (\ref{2.2bab}) we use the convention that the sites $\xx\in \L$ that label the elements of $K_{\bf t}$ are ordered from left to right on every row, starting from the bottom and going upwards to the top row. 
Using (\ref{2.2bab}) we immediately obtain: 
\be \sum_{M\in\mathcal M_\L}\Big[\prod_{b\in M}t^{(m)}_b\Big]\openone_{b_1}\cdots\openone_{b_k}= \frac12
\sum_{\theta,\tau=0,1}C_{\theta,\tau}\;
t_{b_1}\partial_{t_{b_1}}\cdots t_{b_k}\partial_{t_{b_k}}
{\rm Pf}K^{(\theta \tau)}_{\bf t}\Big|_{\bf t=t^{(m)}}\label{2.2b}\ee
where ${\bf t}={\bf t}^{(m)}$ means $t_b= t^{(m)}_b$ for every $b$. The right side of (\ref{2.2b}) is itself a sum over Pfaffians, and can be conveniently represented in terms of Gaussian Grassmann integrals. In fact, given any $2n\times 2n$ antisymmetric matrix $A$, 
\be{\rm Pf} A  
=
\int \big[\prod_{i=1}^{2n}d\psi_i\big]\, e^{-\frac12(\psi,A\psi)}\;,\ee
where the Grassmann integration is normalized in such a way that $$\int \big[\prod_{i=1}^{2n}d\psi_i\big]
\, \psi_{2n}\cdots\psi_{1}=1.$$ For later purposes, it is also useful to recall that the averages of Grassmann monomials with respect to the Grassmann Gaussian integration can be computed in terms of the fermionic Wick rule:
\be \media{\psi_{k_1}\cdots \psi_{k_m}}_A :=\frac1{{\rm Pf }A}\int \big[\prod_{i=1}^{2n}d\psi_i\big]\,
\psi_{k_1}\cdots \psi_{k_m}e^{-\frac12(\psi,A\psi)}={\rm Pf}G\;,\label{2.2c}
\ee
where, if $m$ is even, $G$ is the $m\times m$ matrix with entries 
\begin{eqnarray}
  \label{eq:12}
G_{ij}=
\media{\psi_{k_i}\psi_{k_j}}_A=[A^{-1}]_{k_i,k_j}
\end{eqnarray}
(if $m$ is odd, the
r.h.s. of (\ref{2.2c}) should be interpreted as $0$).

Specializing these formulas to the case $A=K^{(\theta\tau)}_{\bf t}$ we find:
\begin{gather}
\label{specia}
 {\rm Pf}K^{(\theta\tau)}_{\bf t}=\int_{(\theta\tau)} \big[\prod_{\xx\in\L}d\psi_\xx\big] e^{S_{\bf t}(\psi)}\;,\\
S_{\bf
  t}(\psi)=-\frac12\sum_{\xx,\yy\in\L}\psi_{\xx}(K^{(\theta\tau)}_{\bf
  t})_{\xx,\yy}\psi_\yy\\=
-\sum_{\xx\in\L}\big[
t_{(\xx,\xx+\hat e_1)}E_{(\xx,\xx+\hat e_1)}+
t_{(\xx,\xx+\hat e_2)}E_{(\xx,\xx+\hat e_2)}
\big]\label{2.4a}  
\end{gather}
where
$E_{(\xx,\xx+\hat e_1)}=\psi_\xx\psi_{\xx+\hat e_1}$ while
$E_{(\xx,\xx+\hat e_2)}=i\psi_\xx\psi_{\xx+\hat e_2}$ and 
the index $(\theta\tau)$ under the integral means that we have to
identify $\psi_{(L/2+1,y)}\equiv \psi_{(-L/2+1,y)}(-1)^\theta$ and
similarly $\psi_{(x,L/2+1)}\equiv \psi_{(x,-L/2+1)}(-1)^\tau$. 
The choice $\theta=0$ (resp. $\theta=1$) means periodic
(resp. antiperiodic) boundary
conditions for the Grassmann field in the horizontal direction,
and similarly $\tau$ determines periodic/antiperiodic boundary
conditions in the vertical direction.

Inserting (\ref{2.4a}) into (\ref{2.2b}) we find that, {\it if the bonds $b_1,\ldots,b_k$ are all different} [here we say that two bonds are different if they are not identical; their geometrical supports may overlap], then 
\be \sum_{M\in\mathcal
  M_\L}
\Big[\prod_{b\in M}t^{(m)}_b\Big]
\openone_{b_1}\cdots\openone_{b_k}=\sum_{\theta\tau}\frac{C_{\theta,\tau}}2
\int_{(\theta\tau)} \prod_{\xx\in\L}d\psi_\xx \;(-1)^k
E^{(m)}_{b_1}\cdots E^{(m)}_{b_k}\,e^{S(\psi)}
\;,\label{2.5}\ee
where
 $S(\psi)=S_{{\bf t}^{(m)}}(\psi)$, see \eqref{2.4a},
$E^{(m)}_b=t^{(m)}_b E_b$ 
and the r.h.s. can be computed via (\ref{2.2c}).

\subsubsection{Gaussian Grassmann measures and the free propagator.}

\begin{Definition}
  Given an anti-symmetric matrix $M$ we define the Gaussian Grassmann
measure
with ``propagator'' $M$, denoted $P_M(d\psi)$, which maps a polynomial $f$
of the $\psi$ variables
into a complex number  denoted
\begin{eqnarray}
  \label{eq:1}
\int P_M(d\psi)f(\psi) \quad\text{or}\quad \media{f}_M.
\end{eqnarray}
To fix the map, we require:
\begin{itemize}
\item linearity: $\media{a f_1+b
  f_2}_M=a\media{f_1}_M+b\media{f_1}_M$ if $a,b\in \mathbb C$.
\item $\media{1}_M=1$
\item $\media{\psi_{k_1}\dots \psi_{k_m}}_M={\rm
    Pf}[M(k_i,k_j)_{i,j\le m}].$
\end{itemize}
\end{Definition}
If $M$ is invertible, then we can write more explicitly (cf. \eqref{2.2c})
\begin{eqnarray}
  \label{eq:36}
  \int P_M(d\psi)f(\psi)=\frac1{{\rm Pf} (M^{-1})}\int \Big[\prod_x d\psi_x\Big]e^{-\frac12(\psi,M^{-1}\psi)}f(\psi).
\end{eqnarray}
We
  emphasize that $P_M(d\psi)$ is not a measure in the
  usual probabilistic sense. We list two useful properties of
  Grassmann Gaussian measures, that are analogous to properties of
  usual Gaussian measures:
\begin{Proposition} The following identities hold:
  \begin{enumerate}
  \item {\bf Addition formula:} If $g_1,g_2$ are two propagators and $g:=g_1+g_2$, then $P_g(d\psi)=P_{g_1}(d\psi_1)P_{g_2}(d\psi_2)$,
in the sense that for every polynomial $f$
\be 
\int P_g(d\psi) f(\psi)=\int P_{g_1}(d\psi_1)\int P_{g_2}(d\psi_2)
f(\psi_1+\psi_2)\;.\label{fonfo1}
\ee
\item {\bf Change of measure:} Given anti-symmetric matrices $M$ and
  $V$ such that $\det(1-\mu M V)>0$ for every $\mu\in [0,1]$, we have
  \begin{eqnarray}
    \label{eq:37}
    \int P_M(d\psi)e^{\frac12(\psi,V\psi)}f(\psi)=\sqrt{\det(1-M V)}
    \int P_{M'}(d\psi)f(\psi)
  \end{eqnarray}
with $M'=(1-M V)^{-1}M$.
  \end{enumerate}

\end{Proposition}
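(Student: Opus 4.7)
The plan is to reduce both statements to the defining Pfaffian formula for Gaussian Grassmann expectations and then to verify the resulting algebraic identities. Both parts are purely algebraic and, by multilinearity, it is enough to test them on monomials $f(\psi)=\psi_{k_1}\cdots\psi_{k_m}$.

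For the addition formula, expanding $(\psi_1+\psi_2)_{k_1}\cdots(\psi_1+\psi_2)_{k_m}$ gives $\sum_{S\subseteq\{1,\dots,m\}} \epsilon_{S}\prod_{i\in S}\psi_{1,k_i}\prod_{j\notin S}\psi_{2,k_j}$, where $\epsilon_S=\pm1$ is the signature of the shuffle reordering. Since $P_{g_1}$ and $P_{g_2}$ act on disjoint sets of Grassmann variables, the double integral factorizes and, by the Pfaffian rule, produces ${\rm Pf}(g_1\big|_S)\,{\rm Pf}(g_2\big|_{\{1,\dots,m\}\setminus S})$ (automatically zero unless both cardinalities are even). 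The identity then reduces to the combinatorial expansion
\[
{\rm Pf}\bigl[(g_1+g_2)(k_i,k_j)\bigr]=\sum_{S\subseteq\{1,\dots,m\},\,|S|\text{ even}}\epsilon_{S}\,{\rm Pf}(g_1|_S)\,{\rm Pf}(g_2|_{\{1,\dots,m\}\setminus S}),
\]
which follows directly from the definition \eqref{h1}: a perfect matching of $\{1,\dots,m\}$ in $g_1+g_2$ consists of some pairs contributing $g_1$-entries and the rest contributing $g_2$-entries, and grouping matchings according to the set $S$ of vertices matched among themselves via $g_1$ gives the right-hand side, with the sign $\epsilon_S$ accounting for the reordering.

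For the change of measure, I would first assume $M$ invertible, use the explicit representation \eqref{eq:36}, and combine the quadratic forms in the exponent:
\[
\int P_M(d\psi)\,e^{\frac12(\psi,V\psi)}f(\psi)=\frac{1}{{\rm Pf}(M^{-1})}\int\Bigl[\prod_{\xx\in\L}d\psi_\xx\Bigr]\,e^{-\frac12(\psi,(M^{-1}-V)\psi)}f(\psi).
\]
Under the hypothesis $\det(1-MV)\neq 0$, the matrix $M^{-1}-V=(M')^{-1}$ with $M'=(1-MV)^{-1}M$ is invertible, so the integrand is a normalized Gaussian Grassmann measure up to a constant and the identity reduces to the scalar relation
\[
\frac{{\rm Pf}(M^{-1}-V)}{{\rm Pf}(M^{-1})}=\sqrt{\det(1-MV)}.
\]
Squaring yields $\det(M^{-1}-V)/\det(M^{-1})=\det(1-MV)$, which is immediate. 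The extension to non-invertible $M$ is then a polynomial identity: both sides of the change-of-measure formula are polynomials in the matrix entries of $M$ and $V$ (for $f$ a fixed monomial), so their coincidence on the Zariski-open set of invertible $M$'s forces equality in general.

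The main obstacle, and the only nontrivial point, is fixing the correct \emph{sign} of the square root in the change-of-measure formula. This is precisely the role of the hypothesis $\det(1-\mu M V)>0$ for all $\mu\in[0,1]$: viewing both ${\rm Pf}(M^{-1}-\mu V)/{\rm Pf}(M^{-1})$ and $\sqrt{\det(1-\mu MV)}$ as functions of $\mu$, the positivity assumption guarantees that no zero of the determinant is crossed, so the Pfaffian ratio is continuous (in fact analytic) in $\mu\in[0,1]$ and can be traced unambiguously from its value $+1$ at $\mu=0$ to the positive branch of the square root at $\mu=1$. This kind of homotopy argument is standard, and once the sign is pinned down at $\mu=0$ the equality at $\mu=1$ follows automatically.
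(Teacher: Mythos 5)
Your proof is correct, and since the paper itself gives only a citation to \cite{GeM} (with the remark that ${\rm Pf}(A)^2=\det(A)$) rather than a proof, your self-contained argument follows what is indeed the standard route: for part (1), reduce to the combinatorial Pfaffian-of-a-sum expansion via the definition \eqref{h1}, and for part (2), combine the quadratic forms using \eqref{eq:36}, square to the elementary determinant identity $\det(M^{-1}-V)=\det(M^{-1})\det(1-MV)$, and use a homotopy in $\mu$ (justified by the hypothesis $\det(1-\mu MV)>0$) to pin the sign of the Pfaffian square root. One small imprecision: in your final step of part (2) you claim both sides are \emph{polynomials} in the entries of $M$ and $V$, but the right-hand side $\sqrt{\det(1-MV)}\,{\rm Pf}\big(M'|_S\big)$ involves a matrix inverse and a square root and is not manifestly a polynomial formula. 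The conclusion is still correct: the left-hand side is polynomial in $M$ (hence continuous), the right-hand side is continuous on the open set of $M$ satisfying the determinant hypothesis, and the two agree on the dense subset of invertible $M$, so they agree on all of that set by continuity -- you do not need to exhibit the right-hand side as a polynomial. It would also be worth making explicit, in the homotopy argument, that ${\rm Pf}(M^{-1}-\mu V)\neq 0$ for all $\mu\in[0,1]$, which follows from $\det(M^{-1}-\mu V)=\det(M^{-1})\det(1-\mu MV)>0$; without this nonvanishing the continuity argument for the sign would not close.
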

For \eqref{fonfo1} see \cite[Eq. (4.21)]{GeM};
for \eqref{eq:37} see the analogous  \cite[Eq. (4.29)]{GeM}
and use the property ${\rm Pf}(A)^2=\det(A)$.

With this language, and recalling formulas \eqref{specia}-\eqref{2.5},
we see that dimer observables can be expressed as averages of suitable
fermionic polynomials under the linear combination 
\[\sum_{\theta,\tau}\frac{C_{\theta,\tau} {\rm Pf}(K^{(\theta\tau)}_{\bf
    t^{(m)}})}{\sum_{\theta,\tau}C_{\theta,\tau} {\rm Pf}(K^{(\theta\tau)}_{\bf
    t^{(m)}})}
P^{(\theta\tau)}_\L(d\psi)
\] 
of Grassmann Gaussian measures
\[P^{(\theta\tau)}_\L(d\psi):= P_{[K^{(\theta\tau)}_{\bf
    t^{(m)}}]^{-1}}.\]
It is understood that boundary conditions on $\psi$ are
$(\theta,\tau)$, as above. The propagator $[K^{(\theta\tau)}_{\bf
    t^{(m)}}]^{-1}$ can be computed exactly:
we have (cf. Appendix \ref{appB})
\begin{Lemma}
\label{lemma:g}
\bea 
\label{2.27} g^{(\theta\tau)}_{\L}(\xx,\yy):&=&
\int_{(\theta\tau)} P^{(\theta\tau)}_\L(d\psi)\psi_\xx\psi_\yy=\big[\big(K_{\bf t^{(m)}}^{(\theta\tau)}\big)^{-1}\big]_{\xx,\yy}=\\\nn
&=&\frac{1}{L^2}\sum_{\kk\in\mathcal D^{(\theta\tau)}_{\L}}
{e^{-i \kk(\xx-\yy)}}\frac{N(\kk,m,y_1)}{2D(\kk,m)}
\;,\eea
where  
\[N(\kk,m,y_1)=
i\sin k_1+\sin k_2+m(-1)^{y_1}\cos k_1,
\]
\[
D(\kk,m)=m^2+(1-m^2)(\sin k_1)^2+(\sin k_2)^2
\]
and
\be \mathcal D^{(\theta\tau)}_{\L}=\{(2\pi/L)(\nnn+(\theta,\tau)/2),\;\nnn\in\L\}
\;.\label{2.29}\ee
  
\end{Lemma}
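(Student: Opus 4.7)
My plan is to compute $[K^{(\theta\tau)}_{{\bf t}^{(m)}}]^{-1}$ by passing to Fourier space. Introduce Grassmann-valued Fourier coefficients $\hat\psi_\kk$, $\kk\in \mathcal D^{(\theta\tau)}_{\L}$, via $\psi_\xx=L^{-1}\sum_\kk e^{-i\kk\xx}\hat\psi_\kk$; the four choices of $(\theta,\tau)$ simply correspond to a rigid shift of the dual lattice. The quadratic action decomposes as $S_{{\bf t}^{(m)}}=S_0+S_m$, with $S_0$ the translation-invariant Kasteleyn action (obtained by setting $m=0$ in the horizontal weights) and $S_m=-m\sum_\xx(-1)^{x_1}\psi_\xx\psi_{\xx+\hat e_1}$. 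After symmetrizing via Grassmann antisymmetry, $S_0$ is diagonal in momentum space and reads $S_0=\sum_\kk(-i\sin k_1+\sin k_2)\hat\psi_\kk\hat\psi_{-\kk}$; in particular, it couples only $\hat\psi_\kk$ with $\hat\psi_{-\kk}$. On the other hand, since $(-1)^{x_1}=e^{i\pi x_1}$, the modulation shifts momentum by $(\pi,0)$ and couples $\hat\psi_\kk$ with $\hat\psi_{(\pi,0)-\kk}$, with coefficient proportional to $m\cos k_1$.

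The quadratic form thus decouples into independent blocks indexed by the orbits of the group generated by the two involutions $\kk\mapsto-\kk$ and $\kk\mapsto(\pi,0)-\kk$. Generically each orbit is a quartet $\{\pm\kk_0,(\pi,0)\pm\kk_0\}$, and the restriction of the quadratic form to this quartet is encoded by a $4\times 4$ antisymmetric matrix with exactly four nonzero entries above the diagonal: two from $S_0$, of the form $\pm(-i\sin k_1+\sin k_2)$ evaluated at $\kk_0$ and at $(\pi,0)+\kk_0$, and two from $S_m$ proportional to $m\cos k_{0,1}$. A direct computation shows that its Pfaffian equals $D(\kk_0,m)=m^2\cos^2 k_{0,1}+\sin^2 k_{0,1}+\sin^2 k_{0,2}$, up to an overall numerical factor; inverting the $4\times 4$ block yields two non-vanishing Fourier two-point functions, namely $\langle\hat\psi_\kk\hat\psi_{-\kk}\rangle=(i\sin k_1+\sin k_2)/(2D(\kk,m))$ and $\langle\hat\psi_\kk\hat\psi_{(\pi,0)-\kk}\rangle=m\cos k_1/(2D(\kk,m))$.

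Fourier-transforming back to position space and adding the two contributions yields the claimed formula: the diagonal two-point function gives the $(i\sin k_1+\sin k_2)$ piece of $N(\kk,m,y_1)$, while the cross two-point function, via the elementary identity $e^{-i((\pi,0)-\kk)\yy}=(-1)^{y_1}e^{i\kk\yy}$, supplies the remaining $m(-1)^{y_1}\cos k_1$ piece. The points that require care are (i) checking that $\mathcal D^{(\theta\tau)}_{\L}$ is stable under both involutions $\kk\mapsto-\kk$ and $\kk\mapsto(\pi,0)-\kk$ for all four choices of $(\theta,\tau)$, which is immediate from $L$ being even and guarantees that the quartet decomposition is well defined, and (ii) the Grassmann-sign bookkeeping when summing the four terms that contribute to the quadratic form within each quartet. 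Once the formula has been produced in this way, one may alternatively verify it \emph{a posteriori} by directly checking that $\sum_\yy (K^{(\theta\tau)}_{{\bf t}^{(m)}})_{\xx,\yy}\,g^{(\theta\tau)}_{\L}(\yy,\zz)=\delta_{\xx,\zz}$, using parity symmetries in $\kk$ to dispose of the spurious terms proportional to $\sin k_2\cos k_1/D$ and $\sin k_1\cos k_1/D$ that appear along the way.
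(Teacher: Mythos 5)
Your proposal follows essentially the same route as the paper's Appendix B proof: pass to Fourier space, observe that the translation-invariant part couples $\hat\psi_\kk$ to $\hat\psi_{-\kk}$ while the $(-1)^{x_1}$ modulation couples $\hat\psi_\kk$ to $\hat\psi_{(\pi,0)-\kk}$, invert the resulting sparse antisymmetric matrix, and transform back — your ``quartet'' description just makes explicit the block structure that the paper invokes implicitly when it writes down $A^{-1}$. One small slip: with $|\L|=L^2$, the Fourier inversion formula should carry a factor $L^{-2}$ rather than $L^{-1}$, but this does not affect the structure of the argument.
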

Note that $g^{(\theta\tau)}_{\L}(\xx,\yy)$ is zero whenever $\xx$ and $\yy$ have the same parity
(this can be seen by observing that the ratio in \eqref{2.27} changes
sign if $\kk$ is changed to $\kk+(\pi,\pi)$, while
$e^{-i\kk(\xx-\yy)}$ remains unchanged if $\xx$ has the same parity as $\yy$).
The propagator is not translation invariant, but is
invariant under translations in $2\mathbb Z\times \mathbb Z$ (because
of the horizontal
periodic modulation of the bond weights). 
 Of course, when $m=0$ full
translation invariance is recovered.

In the following we will need to evaluate the propagator for fixed
$\xx,\yy\in\mathbb Z^2$, as $\L\nearrow \mathbb Z^2$.
In this limit, the propagator takes a particularly simple form, independent of $(\theta\tau)$:
\begin{eqnarray}
  \label{eq:14}
  g(\xx,\yy)=\lim_{\L\nearrow\mathbb
  Z^2}g^{(\theta\tau)}_{\L}(\xx,\yy)=\int_{\mathbb T^2}\frac{d
\kk}{(2\pi)^2}
{e^{-i \kk(\xx-\yy)}}\frac{N(\kk,m,y_1) }{2D(\kk,m)},
\end{eqnarray}
where the torus $\mathbb T^2=\mathbb R^2/2\p\mathbb Z^2$ is also called the {\it Brillouin zone}.
In analogy with its finite volume counterpart, 
$g(\xx,\yy)$ is zero whenever $\xx$ and $\yy$ have the same parity. 
We will see in Appendix \ref{app:fscorr}  that the finite-volume corrections
to $g_\L^{(\theta\tau)}$ are exponentially small in $L$, if $m>0$.

\begin{Remark}
At this point the role of the regularization parameter $m>0$ should be apparent. If $m=0$ then the integrand in $g(\xx,\yy)$ has
poles whenever $\sin k_1=\sin k_2=0$. As we will see in next section,
the propagator then decays slowly at large distances (like
$1/|\xx-\yy|$), signalling that the system is critical (or massless). When instead $m>0$ the integrand is analytic on the
Brillouin zone and therefore $g(\xx,\yy)$ (that is its Fourier
transform) decays exponentially fast and the system is off-critical
(or massive). The exponential decay however
kicks in only when $|\xx-\yy|\gtrapprox 1/m$, and for $m\to0$ the
critical decay is recovered. In the language of \cite{KOS}, one says that the non-interacting ($\l=0$) system is in the ``liquid
phase'' when $m=0$ and in the ``gaseous phase'' when $m>0$.  
\end{Remark}

\subsubsection{Majorana fermions}
\label{fromMD}
In this section we discuss the large-distance behavior
of the non-interacting propagator $g(\xx,\yy)$ introduced above. Similar estimates (with different notations) are obtained in \cite{KOS}. The fall-off properties of $g$ play a key role in the
computation of the dimer correlations, as well as of the height
fluctuations, to be discussed in the next sections. As we will see, it
is convenient  to split $\psi_\xx$ as the sum of
oscillating functions times four new Grassmann variables
$\psi_{\xx,\gamma}$, $\gamma=1,\dots,4$, each of which has a
propagator with well-defined limiting behavior for large distances, 
\be
\media{\psi_{\xx,\gamma}\psi_{\yy,\gamma}}\sim \frac1{4\p}\frac1{(x_1-y_1)+i\
 (-1)^{\gamma+1} (x_2-y_2)}\;,\label{3.1m}\ee
when $|\xx-\yy|$ is large (but $|\xx-\yy|\lessapprox 1/m$).
For $m=0$, the  four fields 
$\psi_\gamma$ are independent (i.e. their propagator is diagonal in the $\gamma$ index) and are the lattice analogues
of ``real", massless, Majorana fermions, see \cite[Section
2.3.1]{IDbook}. These lattice Majorana fields can be also combined in pairs, to form two ``complex", massless, 
Dirac fields, $\psi^\pm_\o$, $\o=\pm1$, see next section. Besides the terminology, which is borrowed from high energy physics, the transformations from the original Grassmann field, to the Majorana, and then the Dirac fields, are just restatements of a couple of simple, and convenient, algebraic manipulations of the propagator, which are discussed in the following.

Consider \eqref{eq:14}.
The large distance
asymptotics of $g(\xx,\yy)$ is dominated by the contributions from the
momenta close to the singularity points where the denominator is small
(for $m$ small), which are $\pp_{1}=(0,0)$, $\pp_{2}=(\pi,0)$, $\pp_{3}=(\p,\p)$, $\pp_{4}=(0,\p)$. Therefore, $g(\xx,\yy)$ can be naturally written as the
superposition of four terms:
\be
g(\xx,\yy)=\sum_{\g=1}^4\ \int\limits_{\mathbb T^2}\frac{d\kk}{(2\p)^2}\c_\g(\kk){e^{-i\kk(\xx-\yy)}}
\frac{N(\kk,m,y_1)}{2D(\kk,m)}
\;,\label{3.1}\ee
where $\c_\g(\kk)$ are suitable smooth (say, $C^\infty$)
functions over the torus, centered at $\pp_{\g}$, and defining a 
partition of the identity:
$\sum_{\g=1}^4\c_{\g}(\kk)=1$. We assume that the functions $\c_\g(\kk)$ satisfy the following: first of all,
\be\c_\g(\kk)=\bar\c(\kk-\pp_\g)\;,\label{sec3.3}\ee
for a nonnegative compactly supported smooth  function $\bar \c(\kk)$, centered at the origin and
even in $\kk$. 
We also require that the support of $\bar \chi(\cdot)$ does not
include $\pp_2,\pp_3,\pp_4$. For definiteness, one should think of  $\bar \c(\cdot)$ as a 
suitably smoothed version of ${\bf 1}_{\|\cdot\|_\io\le \pi/2}$. In
Appendix \ref{app:Gevrey} we make an explicit
choice for $\bar\chi(\cdot)$, satisfying further smoothness properties.

The decomposition \eqref{3.1} with $\c_\g(\kk)$ as in \eqref{sec3.3} induces the following decomposition 
on the Grassmann fields:
\be \psi_\xx=e^{i\pp_1 \xx}\psi_{\xx,1}-i e^{i\pp_2 \xx}\psi_{\xx,2}
+ie^{i\pp_3 \xx}\psi_{\xx,3}+e^{i\pp_4 \xx}\psi_{\xx,4}\;,\label{1.2.37}\ee
with $\psi_{\xx,\g}$ Grassmann variables with propagator
\begin{eqnarray}
  \label{eq:propajo}
  \media{\psi_{\xx,\g}\psi_{\yy,\g'}}=
\int P(d\psi)\psi_{\xx,\g}\psi_{\yy,\g'}=\begin{pmatrix}G(\xx-\yy)&0\\
0&G(\xx-\yy)\end{pmatrix}_{\g,\g'}
\end{eqnarray}
where 
$G(\xx)=\{G(\xx)_{\o\o'}\},\o,\o'=\pm1$, is the $2\times 2$ matrix
\begin{eqnarray}
  \label{eq:matrG}
  G(\xx)=\int_{\mathbb T^2} \frac{d\kk}{(2\pi)^2}\bar \chi(\kk)\frac{e^{-i\kk\xx}}{2D(\kk,m)}
\left(
  \begin{array}{cc}
    i \sin k_1+\sin k_2 & i m\cos k_1\\
-im\cos k_1 & i \sin k_1-\sin k_2\\
  \end{array}
\right).
\end{eqnarray}
[The reader should simply check that with this definition the field
$\psi_\xx$ has the correct propagator $g(\xx,\yy)$ as in 
\eqref{3.1}. Keep in mind that for  $x$ integer one has
$\exp(i\pi x)=\exp(-i \pi x)$.]
Note the symmetry properties
\begin{eqnarray}
&&  \label{cc}
G_{++}(\xx)=G^*_{--}(\xx), \qquad G_{+-}(\xx)=G^*_{-+}(\xx)=-G_{-+}(\xx),\\
&& G_{\o\o'}(\xx)=-\o\o'\,G_{\o\o'}(-\xx).
\label{simmetria}
\end{eqnarray}

{At $m=0$, the large-distance behavior of $G(\xx)$  is given by
(cf. Appendix \ref{app:a1}):
\begin{Proposition}
\label{th:propalibero} If $\xx\neq\V0$ and $m=0$, 
\be
G(\xx)=\mathfrak g(\xx)+R(\xx)\;,\label{3.5}\ee
where both $\mathfrak g$ and $R$ are diagonal matrices. The diagonal elements of $\mathfrak g$ are:
\begin{eqnarray}
\label{eq:gfrak}
\mathfrak g_{\o \o}(\xx)=\int \frac{d\kk}{(2\p)^2}\frac{e^{-i\kk\xx}}{2(-ik_1+\o k_2)}=
\frac1{4\pi}\frac1{x_1+i\o x_2},\quad \o=\pm.
\end{eqnarray}
The matrix $R$ is a remainder such that 
\be
\left|
R_{\o\o}(\xx)\right|\le \frac{C}{|\xx|^{2}}\;,\label{3.4aa}
\ee
for a suitable $C>0$.
\end{Proposition}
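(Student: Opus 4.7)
At $m=0$, the off-diagonal entries of the matrix inside the integral \eqref{eq:matrG} vanish identically, so $G(\xx)$ is diagonal and hence so is $R(\xx):=G(\xx)-\mathfrak g(\xx)$. Thus it suffices to estimate $R_{\o\o}$ for $\o\in\{+,-\}$. Using the algebraic identity
\[
\frac{i\sin k_1+\o\sin k_2}{(\sin k_1)^2+(\sin k_2)^2}=\frac{1}{-i\sin k_1+\o\sin k_2},
\]
I rewrite
\[
G_{\o\o}(\xx)=\int_{\mathbb T^2}\frac{d\kk}{(2\pi)^2}\,\bar\chi(\kk)\,\frac{e^{-i\kk\xx}}{2(-i\sin k_1+\o\sin k_2)}.
\]
Taylor-expanding $-i\sin k_1+\o\sin k_2=(-ik_1+\o k_2)+\delta(\kk)$ with $\delta(\kk)=-i(\sin k_1-k_1)+\o(\sin k_2-k_2)=O(|\kk|^3)$, one obtains the decomposition
\[
\frac{1}{-i\sin k_1+\o\sin k_2}=\frac{1}{-ik_1+\o k_2}+h_\o(\kk),\qquad h_\o(\kk):=\frac{-\delta(\kk)}{(-ik_1+\o k_2)(-i\sin k_1+\o\sin k_2)},
\]
and a direct check gives $|h_\o(\kk)|\le C|\kk|$ with $|\partial^\alpha h_\o(\kk)|\le C|\kk|^{1-|\alpha|}$ for $|\alpha|\le 2$ on the support of $\bar\chi$.

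Correspondingly I split $G_{\o\o}(\xx)=I_1(\xx)+I_2(\xx)$. For $I_1$, I write $\bar\chi=1-(1-\bar\chi)$ and note that on the support of $\bar\chi$ the integrand has compact support, so the ``$1$'' piece can be extended from $\mathbb T^2$ to $\mathbb R^2$, reproducing $\mathfrak g_{\o\o}(\xx)=\frac1{4\pi(x_1+i\o x_2)}$ via the standard distributional Fourier identity $\partial_{\bar z}(1/z)=\pi\delta^{(2)}(z)$ (and its complex conjugate for $\o=-1$). The ``$1-\bar\chi$'' piece is the Fourier transform of a Schwartz function, since $1-\bar\chi$ vanishes identically in a neighborhood of the origin and thus cancels the singularity of $1/(-ik_1+\o k_2)$; hence it decays faster than any power of $|\xx|^{-1}$.

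The technical heart of the argument is the bound on $I_2(\xx)=\frac12\int\bar\chi(\kk)h_\o(\kk)e^{-i\kk\xx}\,d\kk/(2\pi)^2$. Using $\Delta_\kk e^{-i\kk\xx}=-|\xx|^2 e^{-i\kk\xx}$ and integrating by parts twice in $\kk$, one formally has
\[
|\xx|^2\, I_2(\xx)=-\frac{1}{2(2\pi)^2}\int_{\mathbb T^2}\Delta_\kk\bigl[\bar\chi(\kk)h_\o(\kk)\bigr]\,e^{-i\kk\xx}\,d\kk,
\]
and the bound $|\partial^\alpha(\bar\chi h_\o)|\le C|\kk|^{-1}$ for $|\alpha|=2$ combined with the integrability of $1/|\kk|$ near the origin in two dimensions yields $|I_2(\xx)|\le C|\xx|^{-2}$. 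The main obstacle is that $h_\o$ is only $C^1$ at $\kk=\V0$, so the two-fold integration by parts must be justified with care: the cleanest remedy is to split the integration domain into $\{|\kk|\le 1/|\xx|\}$, where $|h_\o(\kk)|\le C|\kk|$ gives directly a contribution $O(|\xx|^{-3})$, and its complement, where $\bar\chi h_\o$ is smooth enough for integration by parts to apply without boundary problems. Summing $I_1$ and $I_2$ yields the claimed bound $|R_{\o\o}(\xx)|\le C/|\xx|^2$, completing the proof.
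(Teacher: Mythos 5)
Your proof is correct and takes essentially the same route as the paper's: decompose $G_{\o\o}$ into the cutoff $\mathbb T^2$-integral against $\hat{\mathfrak g}_{\o\o}(\kk)=1/[2(-ik_1+\o k_2)]$ plus a remainder governed by $h_\o=2(\hat G_{\o\o}-\hat{\mathfrak g}_{\o\o})$, bound the remainder by two integrations by parts using the key estimates $|\partial^\alpha h_\o(\kk)|\le C|\kk|^{1-|\alpha|}$ ($|\alpha|\le 2$), then extend the clean integral from $\mathbb T^2$ to $\mathbb R^2$ and strip the cutoff at the cost of a super-polynomially small error, evaluating the final $\mathbb R^2$-integral exactly. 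The only cosmetic difference is that the paper evaluates that last integral by residues while you invoke the $\partial_{\bar z}(1/z)=\pi\delta^{(2)}$ identity, and you spell out the near-origin/far-origin split needed to justify the double integration by parts, a step the paper leaves implicit.
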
}

\begin{Remark}
Using \eqref{3.5}, we see that  $\media{\psi_{\xx,\gamma}\psi_{\yy,\gamma}}$ 
behaves asymptotically as the propagator of a real {massless} Majorana field,
in the sense of  \cite[Section 2.3.1]{IDbook}. {Similarly, if $m\neq 0$, $G$ behaves asymptotically as a {\it massive} Majorana field.} Therefore, 
the fields $\psi_{\xx,\gamma}$ are referred to as lattice Majorana fields.   
\end{Remark}

From the above discussion we see that the propagator  $G$ 
decays as the inverse of the distance, without any oscillating factor. The discrete derivatives of $G$ decay as the 
inverse distance squared, while the same is not true for $g$,
due to oscillatory factors in (\ref{1.2.37}).

The decomposition of the field $\psi$ in
terms of four Majorana fields $\psi_\gamma$ can be done analogously
in finite volume and for the boundary conditions $(\theta\tau)$. In
this case, one should simply interpret, e.g. in \eqref{eq:matrG},
integrals as sums for $\kk\in \cD_\L^{(\theta\tau)}$, the estimates in
Proposition \ref{th:propalibero} still hold and the Grassmann
integration w.r.t. $\psi_\gamma$  will be denoted
$P^{(\theta\tau)}_\L(d\psi_\gamma)$.

\subsubsection{Dirac fermions}

Since the propagator of $\psi_\gamma$ depends only on the parity of
$\gamma$, it can be convenient to group the two pairs of so-called
``real fields'' $(\psi_1,\psi_3)$ and $(\psi_2,\psi_4)$ into 
``complex fields" $\psi^\pm_\o$:
\begin{gather}
\psi^\pm_{\xx,1}:=\frac1{\sqrt2}(\psi_{\xx,1}\mp i\psi_{\xx,3})
;\quad
\psi^\pm_{\xx,-1}:=\pm\frac i{\sqrt2}(\psi_{\xx,2}\mp i\psi_{\xx,4})
\;,\label{3.12}  
\end{gather}
which is inverted (recall  \eqref{1.2.37}) as
\be \psi_\xx=\sqrt{2}\cdot\begin{cases} \psi^-_{\xx,1}+\psi^{-}_{\xx,-1}\ {\rm if}\ (x_1,x_2)=({\rm even},{\rm even})\\ 
 \psi^+_{\xx,1}-\psi^{+}_{\xx,-1}\ {\rm if}\ (x_1,x_2)=({\rm even},{\rm odd})\\
  \psi^+_{\xx,1}+\psi^{+}_{\xx,-1}\ {\rm if}\ (x_1,x_2)=({\rm odd},{\rm even})\\
 \psi^-_{\xx,1}-\psi^{-}_{\xx,-1}\ {\rm if}\ (x_1,x_2)=({\rm odd},{\rm odd}) \end{cases}.\label{1.2.39}\ee
Here $\psi^+_{\xx,\o}$ formally plays the role of complex conjugate of $\psi^-_{\xx,\o}$. Using its definition we see that 
$\media{\psi^+_{\xx,\o}\psi^{+}_{\yy,\o'}} =\media{\psi^-_{\xx,\o}\psi^{-}_{\yy,\o'}}=0$, while 
\be 
\left(
  \begin{array}{cc}
\media{\psi^-_{\xx,1}\psi^{+}_{\yy,1}} &     
\media{\psi^-_{\xx,1}\psi^{+}_{\yy,-1}} \\
\media{\psi^-_{\xx,-1}\psi^{+}_{\yy,1}} &     \media{\psi^-_{\xx,-1}\psi^{+}_{\yy,-1}} \\
  \end{array}
\right)=
\left(
  \begin{array}{cc}
    G_{++}(\xx-\yy) & i G_{+-}(\xx-\yy)\\
-i G_{-+}(\xx-\yy) & G_{--}(\xx-\yy)
  \end{array}
\right).
\label{matriciazzo}
\ee
The ``complex" nature of the field $\psi^\pm_\o$ justifies the name ``lattice Dirac field", which is used for it. 
In the following, it will be sometimes convenient to work with Majorana variables and sometimes with Dirac variables.

\subsection{Dimer-dimer correlations}

Applying formula \eqref{2.5} together with the Wick rule \eqref{2.2c}, one can easily
express the dimer-dimer correlations of the non-interacting model in
terms of the free propagator $g_\L^{(\theta\tau)}$. In the infinite
volume $L\to\infty$ and massless limit $m\to0$, using the asymptotics
in Proposition \ref{th:propalibero}, 
one recovers the
well-known result:
\begin{Proposition}
\label{prop:ddl0}
Let $\lambda=0$. 
Given two bonds $b=(\xx,\xx+\hat e_j)$
and $b'=(\yy,\yy+\hat e_{j'})$, we have 
\begin{gather}
  \label{eq:41}
\media{\openone_{b};\openone_{b'}}_{\l=0}={\bf 1}_{\xx\ne \yy}\Big[-\frac{1}{2\p^2}(-1)^{\xx-\yy}\;{\rm Re}\frac{(i)^{j+j'}}{\big((x_1-y_1)+i(x_2-y_2)\big)^2}\nonumber \\
+
\d_{j,j'}\frac{1}{2\p^2}(-1)^{x_j-y_j}\frac{1}{|\xx-\yy|^{2}}\Big]+R_{j,j'}(\xx-\yy),
\end{gather}
with $| R_{j,j'}(\xx-\yy)|\le C(1+|\xx-\yy|)^{-3}$.
\end{Proposition}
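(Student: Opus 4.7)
The proof reduces the four-point Grassmann function to products of two-point functions via the Wick rule \eqref{2.2c} and then inserts the asymptotic form of the propagator from Proposition \ref{th:propalibero}. First, starting from \eqref{2.5} with $\lambda=0$ and passing to the infinite-volume, $m\to 0$ limit (in which the boundary-condition sum becomes trivial and the finite-size corrections to $g_\L^{(\theta\tau)}$ vanish exponentially in $L$, as recorded in Appendix \ref{app:fscorr}), the truncated expectation reduces, for $\xx\neq\yy$, to
\begin{equation*}
\media{\openone_b;\openone_{b'}}_{\lambda=0}=i^{j+j'-2}\bigl[-g(\xx,\yy)\,g(\xx+\hat e_j,\yy+\hat e_{j'})+g(\xx,\yy+\hat e_{j'})\,g(\xx+\hat e_j,\yy)\bigr],
\end{equation*}
since the disconnected piece $\media{E_b}\media{E_{b'}}$ is subtracted off.

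Next, I would plug in the Majorana decomposition \eqref{1.2.37}, writing $g(\xx,\yy)=\sum_{\gamma=1}^4 c_\gamma(\xx)c_\gamma(\yy)\media{\psi_{\xx,\gamma}\psi_{\yy,\gamma}}$ with explicit phase factors $c_1\equiv 1$, $c_2=-i\,e^{i\pi x_1}$, $c_3=i\,e^{i\pi(x_1+x_2)}$, $c_4=e^{i\pi x_2}$, and use Proposition \ref{th:propalibero} at $m=0$ to get
\begin{equation*}
\media{\psi_{\xx,\gamma}\psi_{\yy,\gamma}}=\frac{1}{4\pi}\cdot\frac{1}{(x_1-y_1)+i\omega_\gamma(x_2-y_2)}+R_\gamma(\xx-\yy),
\end{equation*}
with $\omega_1=\omega_3=+1$, $\omega_2=\omega_4=-1$ and $|R_\gamma(\xx-\yy)|\le C|\xx-\yy|^{-2}$. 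Each of the two Wick products then expands into $16$ leading contributions of order $|\xx-\yy|^{-2}$, and any cross term containing a remainder is $O(|\xx-\yy|^{-3})$, hence absorbable into $R_{j,j'}$.

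To identify the two explicit terms appearing in \eqref{eq:41}, I would organise these $16$ contributions according to the chirality pair $(\omega_\gamma,\omega_{\gamma'})$. The same-chirality terms ($\omega_\gamma=\omega_{\gamma'}$) have phases combining into $(-1)^{\xx-\yy}$ and produce holomorphic squares of the form $1/[(x_1-y_1)\pm i(x_2-y_2)]^2$; summing the two chiralities $\omega=\pm$ yields the first term of \eqref{eq:41} in the manifestly real form $\mathrm{Re}\,i^{j+j'}/[(x_1-y_1)+i(x_2-y_2)]^2$. The opposite-chirality terms produce $1/|\xx-\yy|^2$ modulated by phases whose sum vanishes unless $j=j'$, in which case they add up into $(-1)^{x_j-y_j}$, producing the second term and the Kronecker delta $\delta_{j,j'}$.

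The main obstacle is the combinatorial bookkeeping: each of the $16$ terms carries the prefactor $i^{j+j'-2}$, the product $c_\gamma(\xx)c_\gamma(\yy)c_{\gamma'}(\xx+\hat e_j)c_{\gamma'}(\yy+\hat e_{j'})$ (with extra sign factors coming from the shifts $\hat e_j,\hat e_{j'}$), and the relative minus sign between the two Wick crossings; checking that all of these factors assemble into the compact form \eqref{eq:41}, and in particular that the opposite-chirality sector collapses to the $\delta_{j,j'}$ term, is the only nontrivial point and requires a patient case analysis. Once it is carried out, the error bound $|R_{j,j'}(\xx-\yy)|\le C(1+|\xx-\yy|)^{-3}$ follows directly from Proposition \ref{th:propalibero} together with the observation that the unit shifts $\hat e_j,\hat e_{j'}$ change the leading singularity $\mathfrak g$ only by $O(|\xx-\yy|^{-2})$, since $\mathfrak g$ is smooth away from the origin with derivatives bounded by $|\xx-\yy|^{-2}$.
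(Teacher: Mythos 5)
Your proposal is correct and follows the same basic strategy as the paper: Wick's rule plus the large-distance asymptotics of the Majorana propagator from Proposition \ref{th:propalibero}. The difference is organizational. You apply Wick's rule to the raw Majorana fields and expand each of the two products of propagators into $4\times 4=16$ chirality pairs, sorting same- from opposite-chirality contributions by hand and tracking the sign flips coming from the unit shifts in $c_\gamma(\xx+\hat e_j)$. The paper instead first changes variables to the Dirac fields $\psi^\pm_\omega$ via Eq.~\eqref{1.2.39} and Taylor-expands the shifted field, $\psi^\pm_{\xx+\hat e_j,\omega}=\psi^\pm_{\xx,\omega}+\partial_j\psi^\pm_{\xx,\omega}$, thereby splitting $E_b=\mathcal A_b+\mathcal R_b$ with $\mathcal A_b$ localised at a single site; see \eqref{lenti}--\eqref{veloci}. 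Since at $m=0$ the Dirac propagator is diagonal in $\omega$ (cf.\ \eqref{matriciazzo}), $\EE^T(\mathcal A_{b_1};\mathcal A_{b_2})$ decomposes at once into the two sectors produced by the $\psi^+_\omega\psi^-_\omega$ (same-$\omega$) and $\psi^+_\omega\psi^-_{-\omega}$ (opposite-$\omega$) pieces of $\mathcal A_b$: these are exactly your two chirality sectors, but with the oscillatory prefactors $(-1)^\xx$ and $(-1)^{x_j}$ already made explicit, and the $16$-term enumeration replaced by the two-term computation \eqref{8Re}. Both routes yield \eqref{eq:8}, from which the proposition follows, and the $O((1+|\xx-\yy|)^{-3})$ bound on the error is in both cases a consequence of the bound on $R$ in Proposition \ref{th:propalibero} together with the observation that discrete shifts and derivatives gain an extra $|\xx-\yy|^{-1}$. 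So your argument is sound; what the Dirac bookkeeping of the paper buys is that it automates the patient case analysis you correctly identify as the only nontrivial point, while your version stays entirely at the level of the original Majorana variables without the parity-dependent change of basis \eqref{1.2.39}.
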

(This is re-derived, as a by-product, also in Section \ref{sec:thevar} below).

\subsubsection{Multi-scale decomposition of the free propagator}
\label{sec:mult}

An important tool in constructive RG is a multi-scale decomposition of
the free propagator $G$ as a sum of terms $G^{(h)}, h\le 0$, each one collecting
contributions at a given distance $\simeq 2^h$ (in Fourier space) from the
singularities ${\bf p_1},\dots,{\bf p_4}$. 

We start from $G(\xx)$ defined in \eqref{eq:matrG} or, better, defined as the finite volume counterpart of 
\eqref{eq:matrG}
with boundary conditions $(\theta,\tau)$, in which case the integrals
over $\kk$ are sums\footnote{From now on, unless explicitly stated, 
we shall write integrals over $\kk$ just as shorthands for the corresponding finite volume sums. All the 
equations and estimates written formally in the 
thermodynamic limit are valid at finite volume as well, uniformly in
$\L$.}  in $\cD^{(11)}_{\L}$. 
Let $h^*=\lfloor \log_2 m\rfloor$ and recall that $L^{-1}\ll
m\ll1$. Recall that $\bar\chi(\cdot)$ is the cut-off function
appearing in \eqref{eq:matrG}, that should be thought of as a smoothed
version of ${\bf 1}_{\|\kk\|_\io\le \pi/2}$, see the explicit
definition \eqref{eq:34bis} in Appendix. 
Let $\chi(\cdot)$ be another
positive, $C^\infty$ cut-off function, that we require to be
rotationally invariant as  a function on the Brillouin
zone $[-\pi,\pi]^2$, see explicit definition \eqref{eq:35}. One should think of $\chi(\cdot)$
as a smoothed version of ${\bf 1}_{\|\kk\|\le \pi/2}$, with
$\|\cdot\|$ the Euclidean norm. 

We decompose $G(\xx)$ as
\be
\label{decomposazzo}
G(\xx)=\sum_{h=h^*+1}^0 G^{(h)}(\xx)+G^{(\le h^*)}(\xx)\;,
\ee
where $G^{(h)}$ (resp. $G^{(\le h^*)}$) is as in \eqref{eq:matrG}, except that $\bar\chi(\kk)$ is
replaced by 
\begin{eqnarray}
  \label{eq:fh}
f_h(\kk):=\c_h(\kk)-\c_{h-1}(\kk)
\end{eqnarray}
(resp. by $\c_{h^*}(\kk)$); here $\c_0(\kk):=\bar\c(\kk)$, while 
\be \c_h(\kk):=\c(2^{-h}\kk),\quad  \forall h<0.\label{chih}\ee
Observe that $f_h$ (resp. $\c_{h^*}$) 
has compact support contained in 
\be S_h:=\{\kk\in \mathbb T^2:\  c 2^h\le \|\kk\|^2= k_1^2+ k_2^2\le C2^h\}\;,\ee
(resp. in $\cup_{h\le h^*}S_h$) for suitable constants $c,C>0$,
and  that 
$\sum_{h=h^*+1}^0 f_h(\kk)+\chi_{h^*}(\kk)=\bar \c(\kk)$.
One easily checks that, if $\e$ in \eqref{eq:35} is small
enough, then
\be f_{h_1}(\kk)f_{h_2}(\kk)=0\;,\quad {\rm if}\quad |h_1-h_2|>1.\label{gg55.67}\ee
{At $m=0$, the decomposition \eqref{3.4aa} induces a similar decomposition for the single-scale propagator: $G^{(h)}(\xx)=\mathfrak g^{(h)}(\xx)+R^{(h)}(\xx)$.}
Finally we have:
\begin{Lemma}
\label{Lemma:Gevrey}
For $h^*< h\le 0$ {and $n_1,n_2\ge 0$}, the  matrix  $G^{(h)}(\xx)$ satisfies, {for a suitable $C_{n_1,n_2}>0$, }
{\be
\|\partial_1^{n_1}\partial_2^{n_2}G^{(h)}(\xx)\|\le C_{n_1,n_2} 2^{h(1+n_1+n_2)}e^{-c\sqrt{2^h|\xx|}}
\label{L1Linf0}\;
\ee
with ${\partial}_j$ the right discrete derivative in the $j$ direction.}
The off-diagonal elements of $G^{(h)}$ satisfy a better estimate:
{\be
|\partial_1^{n_1}\partial_2^{n_2}G^{(h)}_{+-}(\xx)|\le C_{n_1,n_2}\,m 2^{h(n_1+n_2)}
e^{-c\sqrt{2^h|\xx|}}\le C' 2^{h^*+h(n_1+n_2)}e^{-c\sqrt{2^h|\xx|}}
\label{L1Linfnd}\;.
\ee}
The propagator $G^{(\le h^*)}$ satisfies the same estimates, with $h$ replaced by $h^*$.
{If $m=0$, the propagator $\mathfrak g^{(h)}$ satisfies the same estimates as $G^{(h)}$ in 
\eqref{L1Linf0}, while $R^{(h)}$ satisfies an improved estimate: 
\be
\|\partial_1^{n_1}\partial_2^{n_2}R^{(h)}(\xx)\|\le C_{n_1,n_2} 2^{h(2+n_1+n_2)}e^{-c\sqrt{2^h|\xx|}}
\label{L1Linf0R}\;
\ee}
\end{Lemma}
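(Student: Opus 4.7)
The plan is to prove all the bounds by momentum-space dimensional analysis combined with repeated integration by parts, exploiting that the cutoffs $\bar\chi, \chi$ (and hence $f_h$) are Gevrey-regular of class $s=2$, as constructed in Appendix \ref{app:Gevrey}.

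First I would establish the $L^\infty$ bound without $\xx$-derivatives. On $\supp f_h$ one has $\|\kk\|\sim 2^h$, and since $h>h^*$ one also has $\|\kk\|\gg m$, so the denominator $D(\kk,m)$ in \eqref{eq:matrG} is of order $2^{2h}$. The diagonal entries of the numerator matrix are $O(2^h)$ via $\sin k_j=O(k_j)$, while the off-diagonal entries carry the explicit factor $m\le 2^{h^*}$. Hence the integrand is $O(2^{-h})$ on the diagonal and $O(m\,2^{-2h})$ off it; integrating over $\supp f_h$, of area $O(2^{2h})$, gives $\|G^{(h)}(\xx)\|\le C\,2^h$ and $|G^{(h)}_{+-}(\xx)|\le C\,m$. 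A discrete $\xx$-derivative $\partial_j$ corresponds in momentum space to multiplication by $(e^{-ik_j}-1)$, which is $O(2^h)$ on $\supp f_h$, so each such derivative costs an extra $2^h$ and produces the factor $2^{h(n_1+n_2)}$.

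Next, for the stretched-exponential decay, let $j\in\{1,2\}$ maximize $|x_j|$ (so that $|x_j|\ge |\xx|/2$) and integrate by parts $n$ times in $k_j$, using $e^{-i\kk\xx}=(-ix_j)^{-n}\partial_{k_j}^n e^{-i\kk\xx}$. The $n$-th derivatives of $f_h$ are bounded by $C^n(n!)^2 2^{-hn}$ by the Gevrey-$2$ regularity of $\chi$, and the derivatives of $N(\kk,m,y_1)/D(\kk,m)$ admit analogous bounds via the Faà di Bruno formula, using that the zeros of $D(\cdot,m)$ lie at distance $\gtrsim 2^h$ from $\supp f_h$. This yields a bound of the form $C\,2^h\,C^n (n!)^2 (2^h|x_j|)^{-n}$; optimizing over $n\sim \sqrt{2^h|x_j|}$ produces the factor $e^{-c\sqrt{2^h|\xx|}}$. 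Combined with Step 1 this gives the full bounds on $G^{(h)}$, including the off-diagonal improvement and the $\xx$-derivatives. The estimates for $G^{(\le h^*)}$ follow verbatim with $h^*$ in place of $h$, the infrared regularization $m\sim 2^{h^*}$ ensuring $D\gtrsim 2^{2h^*}$ throughout $\supp\chi_{h^*}$.

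For the splitting $G^{(h)}=\mathfrak g^{(h)}+R^{(h)}$ at $m=0$, the bound on $\mathfrak g^{(h)}$ is identical to the diagonal bound for $G^{(h)}$ since its integrand has the same scaling $|\kk|^{-1}$. For $R^{(h)}$, Taylor-expanding $\sin k_j = k_j+O(k_j^3)$ shows that the integrand, namely the difference of the exact and linearized propagators, is of order $|\kk|=O(2^h)$ rather than $|\kk|^{-1}=O(2^{-h})$; this provides at least the extra factor $2^h$ needed to pass from the bound $2^h$ on $G^{(h)}$ to the claimed bound $2^{2h}$ on $R^{(h)}$, and the $\xx$-derivatives and stretched-exponential decay follow by the same integration-by-parts scheme. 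The main obstacle — and the only point that is not routine dimensional counting — is the Gevrey integration-by-parts in Step 3: obtaining stretched-exponential rather than mere polynomial decay hinges on the explicit choice of $\bar\chi,\chi$ as Gevrey-$2$ cutoffs and on the resulting bounds $|\partial_\kk^n(N/D)|\le C^n (n!)^2 2^{h(1-n)}$ on $\supp f_h$; both are provided by the construction in Appendix \ref{app:Gevrey}.
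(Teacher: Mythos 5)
Your proposal is correct and follows essentially the same route as the paper's own (brief) sketch in Appendix \ref{app:Gevrey}: rescale $\kk\to 2^h\kk$ to read off the prefactor from $D\sim|\kk|^2$, $N\sim|\kk|$ (or $\sim m$ off-diagonal), $|\supp f_h|\sim 2^{2h}$, and $|e^{-ik_j}-1|\sim 2^h$, and then exploit the Gevrey-$2$ regularity of the cutoff for the stretched-exponential decay. The only difference is presentational: you unfold the standard fact "compactly supported Gevrey-$s$ function has Fourier transform decaying like $e^{-c|\xx|^{1/s}}$" into an explicit $n$-fold integration by parts with optimization over $n\sim\sqrt{2^h|\xx|}$, whereas the paper invokes it as a black box.
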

See Appendix \ref{app:Gevrey} for a sketch of proof.

\subsection{The interacting model}

\subsubsection{Partition function and dimer correlations}
Our goal here is to rewrite the partition function 
\eqref{s2.1} and the correlation functions of the interacting model as
a Grassmann integral.  For the partition function we have:
\begin{Proposition}
  \label{lemma:Zinter}
Let $\a=e^\l-1$. We have
\be
\label{zetone1}
 Z_\L(\l,m)=\frac12\sum_{\theta,\tau}C_{\theta,\tau}{\int_{(\theta\tau)}\big[ \prod_{\xx\in\L}d\psi_\xx \big]
e^{S(\psi)+V_\L(\psi)}}\;,
\ee
with 
\be V_\L(\psi)=\sum_{\g\subset \L}\x(\g)\;,\label{e2.20}\ee
where, if $b_1,\ldots,b_k$ are 
adjacent parallel bonds (with $k\ge 2$) and $\g=\{b_1,\ldots, b_k\}$, then 
\be \x(\g)=\x(\{b_1,\ldots,b_k\})=(-1)^k\a^{k-1}E^{(m)}_{b_1}\cdots E^{(m)}_{b_k}\;\label{eq2.15}\ee
and $\xi(\gamma)=0$ otherwise. Recall that $E_b^{(m)}$ was defined
just after \eqref{2.5}.
\end{Proposition}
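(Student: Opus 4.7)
The plan is to expand the Boltzmann factor $e^{\lambda W_\Lambda(M)}$ into a sum of local contributions and apply the dimer-to-Grassmann substitution \eqref{2.5} term by term. Since $N_P(M)\in\{0,1\}$, one has $e^{\lambda N_P}=1+\alpha N_P$ with $\alpha=e^\lambda-1$, and I split $N_P=N_P^H+N_P^V$ with $N_P^H=\openone_{h_1(P)}\openone_{h_2(P)}$ and $N_P^V=\openone_{v_1(P)}\openone_{v_2(P)}$ the two parallel pairs of bonds of $P$. Expanding the product $\prod_P(1+\alpha N_P^H+\alpha N_P^V)$ yields a sum over ``colorings'' $c:\{\text{plaquettes}\}\to\{0,H,V\}$, each non-zero colored plaquette contributing two bond indicators and a factor $\alpha$.

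For every coloring $c$, I first reduce the resulting product of indicators to multilinear form using $\openone_b^2=\openone_b$ (two adjacent same-colored plaquettes share a bond whose indicator otherwise appears squared), obtaining $\alpha^{|c|}\prod_{b\in B(c)}\openone_b$ with $|c|$ the number of non-trivially colored plaquettes and $B(c)$ a set of distinct bonds. The hypothesis of \eqref{2.5} is now satisfied, so the substitution $\openone_b\to -E^{(m)}_b$ translates each coloring contribution into a Grassmann monomial. The set $B(c)$ decomposes uniquely into maximal chains $\gamma_1,\dots,\gamma_n$ of adjacent parallel bonds, where a chain $\gamma_i$ of $k_i$ bonds arises from $k_i-1$ same-colored plaquettes; the contribution of $c$ thus becomes $\prod_i(-1)^{k_i}\alpha^{k_i-1}\prod_{b\in\gamma_i}E^{(m)}_b=\prod_i\xi(\gamma_i)$. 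Since each $\xi(\gamma_i)$ is even in $\psi$, the factors commute and no ordering ambiguity arises.

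The final step is the algebraic identity $\sum_c\prod_i\xi(\gamma_i(c))=e^{V_\Lambda(\psi)}$ with $V_\Lambda=\sum_\gamma\xi(\gamma)$. Because $\psi_\xx^2=0$, one has $\xi(\gamma)\xi(\gamma')=0$ whenever $\gamma$ and $\gamma'$ share any vertex, so the expansion $e^{V_\Lambda}=\sum_{n\ge 0}V_\Lambda^n/n!$ collapses to a sum over unordered vertex-disjoint collections of chains of length $\ge 2$. One matches this against the coloring sum via the bijection: each coloring maps to the collection of its maximal chains, and each vertex-disjoint collection $\{\gamma_i\}$ uniquely determines a coloring by setting all plaquettes internal to some $\gamma_i$ to $H$ (or $V$) and the rest to $0$. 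Colorings yielding vertex-sharing chains give zero on both sides and can be ignored.

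The main obstacle is this last combinatorial identification: one must verify that the higher-order chain weights $\xi(\gamma)$ with $k\ge 3$---which are \emph{absent} from the naive Grassmann product $\prod_P(1+\alpha E^{(m)}_{h_1(P)}E^{(m)}_{h_2(P)}+\alpha E^{(m)}_{v_1(P)}E^{(m)}_{v_2(P)})$---arise precisely via the $\openone_b^2=\openone_b$ reductions on the dimer side, with signs $(-1)^k$ and powers $\alpha^{k-1}$ that match the nilpotency-truncated expansion of $e^{V_\Lambda}$. A useful sanity check is the case of two vertically-stacked plaquettes with horizontal bonds $b_1,b_2,b_3$: both sides evaluate to $1+\alpha E^{(m)}_{b_1} E^{(m)}_{b_2}+\alpha E^{(m)}_{b_2} E^{(m)}_{b_3}-\alpha^2 E^{(m)}_{b_1} E^{(m)}_{b_2} E^{(m)}_{b_3}$, the last term corresponding to the chain $\{b_1,b_2,b_3\}$ of length $k=3$.
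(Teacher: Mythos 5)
Your proof follows essentially the same route as the paper's: expand $e^{\lambda W_\Lambda}$ into a product of local factors over plaquettes (the paper phrases this via pairs of neighboring parallel bonds $\langle b,b'\rangle$, you phrase it via three-valued colorings, which is the same data), reduce each resulting monomial of indicators to multilinear form using $\openone_b^2=\openone_b$, apply the substitution \eqref{2.5}, and identify the sum over resulting Grassmann monomials with $e^{V_\Lambda}$ by exploiting nilpotency.

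There is, however, a genuine imprecision in the key reduction step. You write that ``the set $B(c)$ decomposes uniquely into maximal chains $\gamma_1,\dots,\gamma_n$ of adjacent parallel bonds, where a chain $\gamma_i$ of $k_i$ bonds arises from $k_i-1$ same-colored plaquettes,'' and then use $|c|=\sum_i(k_i-1)$ to rewrite $\alpha^{|c|}(-1)^{|B(c)|}\prod_b E^{(m)}_b$ as $\prod_i\xi(\gamma_i)$. If ``maximal'' is read as a property intrinsic to the bond set $B(c)$, this is false. Consider three vertically stacked plaquettes $P_1,P_2,P_3$ and the coloring $c=(H,0,H)$: then $|c|=2$ and $B(c)$ consists of the four horizontal bonds $b_1,\dots,b_4$ at heights $0,1,2,3$. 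Since $b_2$ and $b_3$ are the two parallel horizontal bonds of the (uncolored) plaquette $P_2$, the set $B(c)$ is a \emph{single} maximal chain of length $k=4$. Thus the claimed power count gives $\alpha^{k-1}=\alpha^3$, which disagrees with the actual $\alpha^{|c|}=\alpha^2$; moreover, the same bond set is also produced by the coloring $(H,H,H)$ with $|c|=3$, so your map ``coloring $\to$ maximal chains of $B(c)$'' is not injective and the bijection in your final paragraph fails. (The total Grassmann coefficient of $E_{b_1}\cdots E_{b_4}$, namely $\alpha^2+\alpha^3$, is of course reproduced on the $e^{V_\Lambda}$ side by the two vertex-disjoint contributions $\xi(\{b_1,b_2\})\xi(\{b_3,b_4\})$ and $\xi(\{b_1,b_2,b_3,b_4\})$ — it just isn't attributed term-by-term the way your argument prescribes.)

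The fix is small and brings you exactly to the paper's organization: the decomposition must be guided by the coloring, not by $B(c)$. Group the non-trivially colored plaquettes of $c$ into maximal connected blocks of the \emph{same} color; each block of $m$ plaquettes produces a chain of $k=m+1$ bonds, and now $|c|=\sum_i(k_i-1)$ and $|B(c)|=\sum_i k_i$ hold by construction (for $(H,0,H)$ one correctly gets the two chains $\{b_1,b_2\}$, $\{b_3,b_4\}$). The bijection between colorings-whose-chains-are-vertex-disjoint and vertex-disjoint unordered collections of chains then holds, and the final identification with $e^{V_\Lambda}$ goes through as you describe. This is precisely what the paper's sum $\sum_n\sum^*_{\{\gamma_1,\dots,\gamma_n\}}\zeta(\gamma_1)\cdots\zeta(\gamma_n)$ encodes: the contours are determined by the \emph{chosen pairs} $\langle b,b'\rangle$ (equivalently, the colored plaquettes), not by the resulting bond set. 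Your sanity check on two stacked plaquettes is correct but does not detect this issue, since the coloring with a gap first appears with three plaquettes.
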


\begin{proof}[Proof of Proposition \ref{lemma:Zinter}]
We re-write
\begin{gather}
  \label{2.1}Z_\L(\l,m)=\sum_{M\in\mathcal M_{\L}}\Big[\prod_{b\in M}t^{(m)}_b\Big]\prod_{P\subset \L}(1+\a
N_P(M))\\\nonumber
=
\sum_{M\in\mathcal M_{\L}}\Big[\prod_{b\in M}t^{(m)}_b\Big]\prod_{\media{b,b'}\subset \L}(1+\a
\openone_b(M)\openone_{b'}(M))
\end{gather}
where the product $\prod_{\media{b,b'}\subset \L}$ runs over pairs of neighboring parallel bonds $b,b'$ (i.e., 
such that the union of the four vertices of $b$ and $b'$ are the four
vertices of a plaquette in $\L$).
In the second identity we used the fact that, if $P$ is the plaquette 
with sites $\xx,\xx+{\hat e}_1,\xx+{\hat e}_2, \xx+{\hat e}_1+{\hat e}_2$, then 
$$N_P=\openone_{(\xx,\xx+\hat e_1)} 
\openone_{(\xx+\hat e_2,\xx+\hat e_1+\hat e_2)}+\openone_{(\xx,\xx+\hat e_2)} 
\openone_{(\xx+\hat e_1,\xx+\hat e_1+\hat e_2)}$$ and $$\openone_{(\xx,\xx+\hat e_1)} 
\openone_{(\xx+\hat e_2,\xx+\hat e_1+\hat e_2)}\openone_{(\xx,\xx+\hat e_2)} 
\openone_{(\xx+\hat e_1,\xx+\hat e_1+\hat e_2)}=0$$ as an observable over 
dimer configurations; therefore, 
\be 1+\a N_P=(1+\a \openone_{(\xx,\xx+\hat e_1)} 
\openone_{(\xx+\hat e_2,\xx+\hat e_1+\hat e_2)})(1+\a\openone_{(\xx,\xx+\hat e_2)} 
\openone_{(\xx+\hat e_1,\xx+\hat e_1+\hat e_2)})\;.\ee
We now rewrite the last product in  (\ref{2.1}) as
\be \prod_{\media{b,b'}\subset \L}(1+\a \openone_b \openone_{b'})=
\sum_{n\ge 0}\sum^*_{\{\g_1,\ldots,\g_n\}\subset \L}\z(\g_1)\cdots \z(\g_n)\label{2.8}\ee
where $\g_i$ are ``contours", each consisting of a sequence of $2$ or more adjacent parallel bonds 
$\sum^*_{\{\g_1,\ldots,\g_n\}}$ runs over unordered {\it compatible} $n$-ples of contours (here we say that $\{\g_1,\ldots,\g_n\}$
is compatible if $\g_i\cap\g_j=\emptyset$, $\forall i\neq j$, where $\g_i\cap\g_j=\emptyset$ means that the bonds in $\g_i$ are all different from those in $\g_j$; note that the geometric supports of two compatible contours may 
overlap). 
Moreover, if $b_1,\ldots,b_k$ are 
adjacent parallel bonds (with $k\ge 2$) and $\g=\{b_1,\ldots,b_k\}$, then 
\be \z(\g)=\z(\{b_1,\ldots,b_k\})=\a^{k-1}\openone_{b_1}\cdots\openone_{b_k}\;.\ee
Finally, the term with $n=0$ in the right side of \eqref{2.8} should be interpreted as $1$. 
By inserting (\ref{2.8}) into (\ref{2.1}) we find:
\be  Z_\L(\l,m)=\sum_{n\ge 0}\sum^*_{\{\g_1,\ldots,\g_n\}\subset \L}
\sum_{M\in\mathcal M_\L}
\Big[\prod_{b\in M}t^{(m)}_b\Big]
{\z(\g_1)\cdots \z(\g_n)}\;.\label{2.9}\ee
Note that each term $\z(\g_1)\cdots \z(\g_n)$ 
in the r.h.s. of (\ref{2.9}) is proportional to a product of operators $\openone_b$ over {\it different} bonds: actually,  
having a representation involving only products of $\openone_b$ over different  bonds 
was the very purpose of grouping the bonds into contours and of rewriting the product in the l.h.s. of (\ref{2.8})
as a sum over compatible collections of contours. Therefore, 
we can evaluate the sum $\sum_{M\in\mathcal M_\L}
\Big[\prod_{b\in M}t^{(m)}_b\Big]{\z(\g_1)\cdots \z(\g_n)}$ by using (\ref{2.5}):
\be  Z_\L(\l,m)=\frac12\sum_{\theta,\tau}C_{\theta,\tau}\sum_{n\ge 0}\sum^*_{\{\g_1,\ldots,\g_n\}\subset \L}
{\int_{(\theta\tau)}\big[ \prod_{\xx\in\L}d\psi_\xx \big]
\x(\g_1)\cdots \x(\g_n)\,e^{S(\psi)}}\;.\label{2.10}\ee
Finally note that, by the Grassmann anti-commutation rules,
\be \sum_{n\ge 0}\sum^*_{\{\g_1,\ldots,\g_n\}\subset \L}
\x(\g_1)\cdots \x(\g_n)=e^{\sum_{\g\subset\L}\x(\g)}\;,
\ee
(in the expansion of the exponential, terms containing incompatible
contours vanish since $E_b^2=0$) so that (\ref{2.10}) simplifies into \eqref{zetone1}.
\end{proof}

\begin{Remark}
\label{rem:dovevalido}
   It is worth noting that $V_\L$ can be written as
\be
V_\L(\psi)=
\a\sum_{\xx\in \L}(2+2m(-1)^{x_1}+m^2)\psi_\xx \psi_{\xx+\hat e_1}\psi_{\xx+\hat e_2}\psi_{\xx+\hat e_1+\hat e_2}
+W_{\ge 6}(\psi)\;.\label{a4}\ee
where $W_{\ge 6}(\psi)$ is a sum over Grassmann monomials of order larger or equal than 6, whose kernels 
decay exponentially in space if $\lambda<\log 2$ (with rate $\kappa=-\log|\a|>0$). 
\end{Remark}

Besides the partition function $Z_\L(\l,m)$, we are interested in computing {\it truncated} multipoint dimer correlations (cumulants) of the form
\be \media{\openone_{b_1};\cdots ;\openone_{b_k}}_{\L;\l,m}=\frac{\dpr^k}{\dpr A_{b_1}\cdots \dpr A_{b_k}}
\log \mathcal Z_{\L}(\l,m,{\bf A})
\Big|_{{\bf A}=\V0}\;,\label{2.24x}\ee
with
\be \mathcal Z_{\L}(\l,m,{\bf A}):={\sum_{M\in\mathcal M_{\L}}
\Big[\prod_{b\in M}t^{(m)}_b\Big]e^{\l W_\L(M)+\sum_{b\subset \L}A_b\openone_{b}}}\label{2.261}\ee
and $b_1,\ldots,b_k$ a $k$-ple of bonds.
Moreover, ${\bf A}=\{A_b\}_{b\subset \L}$. The modified partition function
$\mathcal Z_{\L}(\l,m,{\bf A})$
can be expressed in the form of a Grassmann integral, by proceeding in the same way that we followed for 
$ Z_{\L}(\l,m)$. The result is: 
\begin{Proposition}
\be \mathcal Z_{\L}(\l,m,{\bf A})=\frac12\sum_{\theta,\tau=0,1}C_{\theta\tau}\int_{(\theta\tau)} \big[
\prod_{\xx\in\L}d\psi_\xx
\big]
e^{S(\psi)+V_\L(\psi)+\mathcal B_\L(\psi,{\bf J})}\;,\label{2.26}\ee
where ${\bf J}={\bf J}({\bf A})=\{J_b(A_b)\}_{b\subset \L}$ with $J_b=e^{A_b}-1$, and 
\begin{gather}
\mathcal B_\L(\psi,{\bf J})=
\sum_{k\ge 1}\sum_{\gamma=\{b_1,\dots,b_k\}\subset\L}\sum_{\emptyset\ne R\subset \gamma}
\tilde \xi(\gamma;R),\\
\tilde \xi(\gamma;R)=(-1)^k\alpha^{k-1}\prod_{b\in\gamma}E^{(m)}_b\prod_{b\in R}J_b.\label{ee22.2200}
\end{gather} 
 Here, as above,  $b_1,\ldots,b_k$ are 
adjacent parallel bonds.    
\end{Proposition}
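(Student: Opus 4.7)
The plan is to mimic the proof of Proposition~\ref{lemma:Zinter}, tracking also the source term $\sum_b A_b\openone_b$ in \eqref{2.261}. Using the idempotency $\openone_b^2=\openone_b$, I would first write $e^{\sum_b A_b\openone_b(M)} = \prod_b(1+J_b\openone_b(M))$ with $J_b = e^{A_b}-1$, and exactly as in Proposition~\ref{lemma:Zinter} also $e^{\l W_\L(M)} = \prod_{\langle b,b'\rangle}(1+\a\openone_b\openone_{b'})$ with $\a = e^\l-1$. This turns $\mathcal Z_\L(\l,m,{\bf A})$ into a sum over dimer coverings of a product of two ``Mayer-type'' factors that can be expanded jointly.

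Next I would expand the joint product as a sum over compatible families of ``generalized contours'' $\hat\gamma = (\gamma,R)$, where $\gamma$ is a set of $k\ge 1$ adjacent parallel bonds and $R\subset\gamma$ records which bonds are additionally decorated by a source factor $J_b$; compatibility means that the $\gamma_i$ are pairwise bond-disjoint. The bijection with the naive expansion is the following: multi-bond contours ($k\ge 2$, any $R\subset\gamma$, possibly empty) come from a contour in the sense of \eqref{2.8}, with $J_b$ factors for $b\in R$ absorbed via $\openone_b^2=\openone_b$; while a source bond $b$ not belonging to any multi-bond contour contributes a singleton $(\{b\},\{b\})$. In this way each generalized contour carries a weight $\a^{|\gamma|-1}\prod_{b\in\gamma}\openone_b\prod_{b\in R}J_b$, with the sole restriction $(|\gamma|,|R|)\ne(1,0)$ (a singleton with empty $R$ would be trivial).

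Since in each compatible family the participating bonds are pairwise distinct, I would then apply \eqref{2.5} term by term: this converts $\prod_{b\in M}t_b^{(m)}\prod_i\prod_{b\in\gamma_i}\openone_b$ into the Grassmann Pfaffian average with the substitution $\openone_b\to -E_b^{(m)}$, producing the sign $(-1)^{|\gamma_i|}$ and recognising the weight $\tilde\xi(\hat\gamma_i)$ of \eqref{ee22.2200}. Finally, the usual Grassmann resummation (identical to the last step of the proof of Proposition~\ref{lemma:Zinter}) collapses the sum over compatible families into an exponential: since $(E_b^{(m)})^2=0$, the compatibility constraint is automatically enforced by anticommutativity, so
\[
\sum_{n\ge 0}\,{\sum}^{*}_{\{\hat\gamma_1,\ldots,\hat\gamma_n\}}\prod_{i=1}^{n}\tilde\xi(\hat\gamma_i) \;=\; \exp\Big\{\sum_{\hat\gamma:(|\gamma|,|R|)\ne(1,0)}\tilde\xi(\gamma;R)\Big\} \;=\; e^{V_\L(\psi)+\mathcal B_\L(\psi,{\bf J})},
\]
after splitting the sum according to $R=\emptyset$ (which, together with the constraint $k\ge 2$, reproduces $V_\L$ as in \eqref{e2.20}--\eqref{eq2.15}) and $R\ne\emptyset$ (which is precisely $\mathcal B_\L$).

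The one genuinely delicate step is the combinatorial reorganisation into generalized contours: one must verify that the weight assigned to $\hat\gamma=(\gamma,R)$ is $\a^{|\gamma|-1}$ \emph{independent} of $R$, and that families of smaller contours whose union is a larger adjacent parallel set are counted exactly once. The point is that when a source bond $b\in R_0$ falls inside a multi-bond contour $\gamma_i$ the collision $\openone_b\openone_b$ collapses via idempotency without producing an extra $\a$, while longitudinal sub-partitions of a connected adjacent parallel block are kept distinct through the compatibility $\sum^*$. Once this bookkeeping is in place, all remaining steps are purely mechanical and parallel those of Proposition~\ref{lemma:Zinter}.
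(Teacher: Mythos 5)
Your proof is correct and takes exactly the approach the paper intends: the paper merely remarks that ``the proof is analogous to that of Proposition~\ref{lemma:Zinter}, details are left to the reader,'' and you have supplied those details, including the bookkeeping for the decorated (``generalized'') contours $(\gamma,R)$ and the use of idempotency $\openone_b^2=\openone_b$ to absorb source factors into contours. The combinatorial bijection and the weight verification you flag as delicate are indeed the only points that genuinely need checking, and your argument handles them correctly.
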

The proof is analogous to that of Proposition \ref{lemma:Zinter}, details are left to the
reader. 
Note that once the truncated correlations are known, the standard
correlations can be reconstructed via the inversion formula:
\bea &&  \media{\openone_{b_1}\cdots \openone_{b_k}}_{\L;\l,m}=\label{2.28}\\
&&\quad \hskip-.2truecm
=\sum_{\{\ul{i}^{(1)}, \ldots,\,\ul{i}^{(s)}\}\in\mathcal P[1,\ldots,k]}
\hskip-.2truecm
\langle \openone_{b_{i^{(1)}_1}};\cdots; \openone_{b_{i^{(1)}_{k_1}}}\rangle_{\L;\l,m}\cdots
\langle \openone_{b_{i^{(s)}_1}};\cdots; \openone_{b_{i^{(s)}_{k_s}}}\rangle_{\L;\l,m}\;,
\nonumber\eea
where $\ul{i}^{(j)}=\{i^{(j)}_1,\ldots,i^{(j)}_{k_j}\}\subset \{1,\ldots,k\}$, with $k_j\ge 1$, is a non-empty set of indices, 
and $\mathcal P[1,\ldots,k]$ is the set of partitions of $\{1,\ldots,k\}$. In \eqref{2.28}, 
the single-bond average $\media{\openone_b}_{\L;\L'}$, $b\subset \L$, is given by 
\be  \media{\openone_{b}}_{\L;\l,m}=\frac{\dpr}{\dpr A_{b}}\log \mathcal Z_{\L}(\l,m,{\bf A})\Big|_{{\bf A}=\V0}\;.\ee

\subsubsection{Rewriting the partition function in terms of
Majorana or Dirac fields}

The partition function (and, similarly, the generating function $\mathcal 
Z_\L(\l,m,{\bf A})$ for dimer correlations) can be rewritten in terms
of the Majorana or Dirac fields: going back to
\eqref{zetone1} and \eqref{2.26} we get for instance
\be  Z_\L(\l,m)=\frac12\sum_{\theta\tau}C_{\theta\tau}
{\rm Pf} K^{(\theta\tau)}_\L
\int_{(\theta\tau)} \prod_{\gamma=1,\dots,4}P_\L^{(\theta\tau)}(d\psi_\gamma)\exp\Big\{V_\L(\psi
)\Big\}\;,
\label{zetone2}\ee
with $\psi=\{\psi_\xx\}_{\xx\in\L}$ as in \eqref{1.2.37}.
We used the addition formula for normalized Grassmann Gaussian
integrations, cf. \eqref{fonfo1}. 
\subsection{Reduction to a single Pfaffian}
\label{sec2.1}
\label{sec:unsoloPf} We have seen in \eqref{eq:14} that the propagator
$g_\L^{(\theta\tau)}$
loses dependence on
  $(\theta,\tau)$ in the limit $\L\nearrow \mathbb Z^2$. This holds
  also for ${\rm Pf}
  K^{(\theta\tau)}_\L$,  the
normalization of the measure $P^{(\theta\tau)}_\L$. More
  precisely, while each Pfaffian grows exponentially in $L^2$, for $m>0$ one has (cf. Appendix \ref{app:fscorr})
  \begin{eqnarray}
    \label{eq:16}
    \lim_{\L\nearrow \mathbb Z^2}\frac{{\rm Pf}
  K^{(11)}_\L}{{\rm Pf}
  K^{(\theta\tau)}_\L}=1
  \end{eqnarray}
and the limit is reached exponentially fast in $L$.
This is a consequence of the fact that at very large distances the
propagator decays exponentially
(actually this is the main technical reason why we introduced the infrared
regularization $m>0$).

The observation \eqref{eq:16} implies important simplifications in the
thermodynamic limit.
Suppose that we want to compute the average of a dimer observable, say
$\openone_{b_1}\cdots\openone_{b_k}$, with $b_1,\dots,b_k$ distinct
bonds, for the non-interacting system
($\l=0$). From \eqref{2.5} we get
\begin{eqnarray}
  \label{eq:15}
  \media{\openone_{b_1}\cdots\openone_{b_k}}_{\L;0,m}=
\frac{\sum_{\theta\tau}C_{\theta,\tau}\;\big[{\rm
    Pf}K^{(\theta\tau)}_\L\big]\;\int_{(\theta\tau)}
P^{(\theta\tau)}_\L(d\psi)
\;(-1)^k
E_{b_1}^{(m)}\cdots E^{(m)}_{b_k}}{\sum_{\theta\tau}C_{\theta,\tau}\;\big[{\rm
    Pf}K^{(\theta\tau)}_\L\big]}.
\nonumber\end{eqnarray}
We have seen above that the free propagator, and therefore the
integrals in the numerator, become independent of
$(\theta\tau)$ when $\L\nearrow \mathbb Z^2$. Together with
\eqref{eq:16}, this implies that
\begin{eqnarray}
  \label{eq:17}
   \lim_{\L\nearrow \mathbb Z^2}
   \media{\openone_{b_1}\cdots\openone_{b_k}}_{\L;0,m}=
   \lim_{\L\nearrow \mathbb Z^2}
\int_{(11)}
P^{(11)}_\L(d\psi)
\;(-1)^k
E^{(m)}_{b_1}\cdots E^{(m)}_{b_k}
\\=
\int
P(d\psi)
\;(-1)^k
E^{(m)}_{b_1}\cdots E^{(m)}_{b_k},
\end{eqnarray}
with $P(d\psi)$ the Gaussian Grassmann measure with propagator
$g(\xx,\yy)$.
That is, it is sufficient to consider $(11)$ boundary
conditions in  the Grassmann integrations (these are more
convenient than $(00)$ conditions since even for $m=0$ the denominator in \eqref{2.27} is
never singular for $\kk\in\cD^{(11)}_\L$).

An analogous fact holds also for the
interacting model ($\l\ne0$), as a consequence of the fact that the
interacting propagator also decays exponentially as long as $m>0$
(the model remains off-critical even in the presence of
interactions, see Remark \ref{rem16} below).
More precisely, for $m>0$ the following holds: given $n\ge1$ distinct $\xx_1,\dots,\xx_n$,
\begin{eqnarray}
 \lim_{\L\nearrow \mathbb Z^2} \frac{\int_{(\theta\tau)}P^{(\theta\tau)}_\L e^{V_\L(\psi)}\psi_{\xx_1}\dots\psi_{\xx_n}}
{\int_{(11)}P^{(11)}_\L e^{V_\L(\psi)}\psi_{\xx_1}\dots\psi_{\xx_n}}=1
\label{vstr2}
\end{eqnarray}
and actually the limit is reached exponentially fast in $L$. The proof is a corollary of the multiscale construction described in Section \ref{sec.RG} below, 
and goes along the same lines as \cite[Appendix G]{M}. 

As a consequence of \eqref{vstr2} and \eqref{eq:16}, using \eqref{2.26}, we see that
\bea &&\label{eq:vst}
   \lim_{\L\nearrow \mathbb Z^2} \left.\frac{\partial^n}{\partial A_{b_1}\dots\partial A_{b_n}}
\log \mathcal Z_\L(\l,m,{\bf A})\right|_{\bf A=0}=\\
&&\qquad =\lim_{\L\nearrow \mathbb Z^2} \left.\frac{\partial^n}{\partial A_{b_1}\dots\partial A_{b_n}}
\log \mathcal Z^{(11)}_\L(\l,m,{\bf A})\right|_{\bf A=0}
\eea
with
\begin{gather}\label{s2.38}
\mathcal Z^{(11)}_\L(\l,m,{\bf A})= \int_{(11)}P_\L^{(11)}(d\psi)e^{V_\L(\psi)+{\mathcal B}_\L(\psi,{\bf J})}.
\end{gather}

\section{Height fluctuations in the non-interacting model: proof of
  Theorem \ref{th:maintheorem} for $\l=0$}\label{sec4}

As a warm-up, and in order to introduce some basic ideas that will be important later, here we prove Theorem \ref{th:maintheorem}
in the special but important non-interacting case, $\l=\a=0$.  
The strategy we use is convenient for
the subsequent generalization to the interacting case.

\subsection{Grassmann representation for height function fluctuations}
Let us start with some considerations that hold both for $\lambda=0 $ and
$\lambda\ne 0$.
We are interested in computing the height fluctuations, i.e., the $n$-point truncated self-correlations, $n\ge 2$:
\be \lim_{m\to0}\lim_{\L\nearrow\mathbb Z^2}\langle\, \underbrace{h_{\xxi}-h_{\hhe};\cdots;h_{\xxi}-h_{\hhe}}_{n\ {\rm times}}\, \rangle_{\L;\l,m}\;.\label{4.2}\ee
For lightness we will write here $\media{\cdot}_\L$ instead of $\media{\cdot}_{\L;\l,m}$.
The definition \eqref{1.2} allows us to re-express
\eqref{4.2} in terms of sums of multipoint dimer correlations:
\be \langle\, \underbrace{h_{\xxi}-h_{\hhe};\cdots;h_{\xxi}-h_{\hhe}}_{n\ {\rm times}}\,\rangle_{\L}=
\sum_{b_1\in \mathcal C^{(1)}_{\xxi\to \hhe}}\cdots \sum_{b_n\in \mathcal C^{(n)}_{\xxi\to \hhe}}\s_{b_1}\cdots\s_{b_n}
\media{\openone_{b_1};\cdots;\openone_{b_n}}_\L\;,\label{4.3}\ee
where $\mathcal C^{(j)}_{\xxi\to \hhe}$ are paths on $(\mathbb Z^2)^*$ from $\xxi$ to $\hhe$, which we assume not to wind around the torus and to be independent of $L$. The $n$-point dimer correlation in the r.h.s. of \eqref{4.3} can be computed via \eqref{2.24x}, so that 
\bea && \langle\, \underbrace{h_{\xxi}-h_{\hhe};\cdots;h_{\xxi}-h_{\hhe}}_{n\ {\rm times}}\,\rangle_{\L}=\label{4.3b}\\
&&\quad =
\sum_{b_1\in \mathcal C^{(1)}_{\xxi\to \hhe}}\cdots \sum_{b_n\in \mathcal C^{(n)}_{\xxi\to \hhe}}\s_{b_1}\cdots\s_{b_n}
\frac{\dpr^n}{\dpr A_{b_1}\cdots \dpr A_{b_n}}
\log \mathcal Z_\L(\l,m,{\bf A})\big|_{\bf A=0}\;.\nonumber\eea
Finally, one takes the limit $\lim_{m\to0}\lim_{\L\nearrow \mathbb Z^2}$ of the expression thus obtained.
Since the limit $\L\nearrow \mathbb Z^2$ is taken 
keeping $\xxi,\hhe$ fixed, in view of \eqref{eq:vst}
we are allowed to replace $\mathcal Z_\L(\l,m,{\bf A})$ in the right side of \eqref{4.3b} by $\mathcal Z^{(11)}_\L(\l,m,{\bf A})$, modulo an error term that is negligible in the thermodynamic limit and that we will simply forget in the following formulas. 

Using the Grassmann representation discussed in Section \ref{sec2}, we can rewrite the right side of \eqref{4.3b}
in terms of expectations of Grassmann variables. Let $\mathcal E^T$ indicate 
the truncated expectation with respect to $P^{(11)}_\L(d\psi)$, i.e., 
\be \mathcal E^T(X_1(\psi);\ldots;X_s(\psi))=\frac{\dpr^s}{\dpr\l_1\cdots\dpr\l_s}\log \int_{(11)} P_\L^{(11)}(d\psi)e^{\l_1X_1(\psi)+\cdots+\l_s X_s(\psi)}\Big|_{\l_i=0}\;.
\label{4.5}\ee
In particular,
\be \log \int_{(11)}P_\L^{(11)}(d\psi)e^{X(\psi)}=\sum_{s\ge
  1}\frac1{s!}\EE^T(\underbrace{X(\psi);\cdots;X(\psi)}_{s\ {\rm
    times}})\;.
\label{peresempio}
\ee
Therefore, recalling \eqref{s2.38}, we get
\bea &&
\log \mathcal Z^{(11)}_\L(\l,m,{\bf A})=\label{4.4zz}\\
&&\qquad =\sum_{s\ge 1}\frac1{s!}\mathcal E^T(\,
\underbrace{
V_\L(\psi)+\mathcal B_\L(\psi,{\bf J});\ldots;V_\L(\psi)+\mathcal B_\L(\psi,{\bf J})}_{s\ {\rm times}}\,)=\nonumber\\
\nonumber
&&\qquad  =E_\L(\l,m)
+\sum_{k\ge 1}\sum_{\{b_1,\ldots,b_k\}\subset \L} \Big[\prod_{j=1}^k J_{b_j}\Big]
S_{\L,k}(b_1,\ldots,b_k)\;,\nonumber
\eea
where the third line is the definition of $E_\L(\l,m)$ and of
$S_{\L,k}(b_1,\ldots,b_k)$, i.e., $E_\L(\l,m)$ (resp. $S_{\L,k}(b_1,\ldots,b_k)J_{b_1}\cdots J_{b_k}$) 
collects all the terms in the second line that are independent of ${\bf J}$ (resp. are proportional to  
$J_{b_1}\cdots J_{b_k}$ but are independent of the other $J_b$'s). In the last line, the sum over  $\{b_1,\ldots,b_k\}$ does not run just over 
$k$-ples of different bonds. Rather, $\{b_1,\ldots,b_k\}=:B$ is a {\it bond configuration} in which some bonds are allowed to coincide. Formally,
one such configuration is a function $b\to B(b)$ with nonnegative integer values such that $\sum_{b}B(b)=N(B)<+\infty$. The number $B(b)$ has the meaning of {\it 
multiplicity} of $b$ in $B$. Given $B$, we denote by $\tilde B$ the set of bonds $b$ such that $B(b)>0$; hence $\tilde B$ is the {\it support} of $B$, and it consists 
of the bonds that are in $B$, each counted without taking multiplicity into account.

{Let us remark that the fermionic truncated expectations in the previous equations can be computed 
explicitly, by using the definition \eqref{4.5} and the fermionic Wick rule \eqref{2.2c}. 
In order to understand how to evaluate \eqref{4.5}, assume that the functions $X_i(\psi)$ are Grassmann monomials (which 
is not a restrictive assumption, since the operator $\EE^T$ is multilinear in
its arguments), i.e., \be X_i(\psi) = c_i \psi_{\xx_1^{(i)}}\dots
\psi_{\xx_{n_i}^{(i)}}.\label{4.mon}\ee Then
Eq.\eqref{4.5} admits the following diagrammatical representation:
\begin{enumerate}
\item draw $s$ vertices, each representing one of the monomials $X_i$,
with a number of ``legs'' equal to the order $n_i$ of the
corresponding monomial; each leg is associated with a label $\xx_j^{(i)}$, which we will think of as the point which 
the leg exits from (or is anchored to);
\item contract in all possible {\it connected} ways the 
legs, by pairing them two by two and by graphically representing every such pair by a line
(here a contraction, or pairing, is called connected if the $s$ vertices are geometrically connected by the contracted lines). 
\end{enumerate}
In this way, each pairing is in one-to-one correspondence with its diagrammatical representation,  called 
Feynman diagram,  and 
\eqref{4.5} can be computed as follows (see e.g. \cite[App. A3.1]{GeM}): 
\begin{Proposition} {\bf (Wick rule for truncated expectations).}\label{lemma:wick}
If the functions $X_i$ are as in \eqref{4.mon}, the truncated expectation \eqref{4.5} is equal to 
the sum over connected Feynman diagrams of their values, where the value of a diagram is:
the product $\prod_{i=1}^s c_i$
of the ``kernels'' $c_1,\ldots,c_s$ of $X_1,\ldots,X_s$,  times the product of the propagators associated with the contracted lines, times a sign, which is equal to the 
sign of the permutation required for placing next to each other the contracted Grassmann fields, starting from their original ordering in 
$X_1(\psi)\cdots X_s(\psi)$, times a combinatorial factor $1/s!$.
\end{Proposition}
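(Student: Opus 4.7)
The plan is to derive the proposition by combining the standard (non-truncated) fermionic Wick rule with the linked cluster theorem, and then carefully tracking the Grassmann signs.

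First I would reduce to an algebraic identity for formal power series. By definition \eqref{4.5}, the truncated expectation is
\[
\EE^T(X_1;\ldots;X_s)=\frac{\partial^s}{\partial\lambda_1\cdots\partial\lambda_s}
\log\int_{(11)}P^{(11)}_\L(d\psi)\,e^{\sum_i\lambda_i X_i(\psi)}\Big|_{\lambda=0}.
\]
Expanding the exponential and collecting the coefficient of $\lambda_1\cdots\lambda_s$, one sees that only terms in which each $X_i$ appears exactly once survive; thus it suffices to evaluate the non-truncated expectation $\media{X_{i_1}\cdots X_{i_n}}$ of a product of monomials and then extract the part that is ``connected'' in the sense of the logarithm.

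Second, I would apply the fermionic Wick rule \eqref{2.2c} to $\media{X_{i_1}\cdots X_{i_n}}$. Since the $X_i$ are Grassmann monomials of the form \eqref{4.mon}, the product $X_1\cdots X_s$ is (up to the scalar prefactor $\prod_i c_i$) a single Grassmann monomial in the $\psi$'s whose Gaussian average equals the Pfaffian of the matrix of propagators between the attached labels. By the combinatorial expansion of the Pfaffian \eqref{h1}, this Pfaffian is precisely the sum over all pairings of the $n_1+\cdots+n_s$ legs, each pairing contributing the product of the propagators $\media{\psi_{\xx}\psi_{\yy}}$ associated with its pairs, weighted by the sign $(-1)^\pi$ of the permutation that brings the $\psi$'s next to their partners starting from their original order in $X_1(\psi)\cdots X_s(\psi)$. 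This matches exactly the Feynman-diagram prescription in the statement, with the $c_i$'s playing the role of vertex kernels and each contraction line carrying the corresponding two-point function.

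Third, I would invoke the standard ``linked cluster'' (or cumulant) identity: if a generating function has the form $Z(\lambda)=1+\sum_{n\ge 1}\frac1{n!}\sum_{i_1,\dots,i_n}\lambda_{i_1}\cdots\lambda_{i_n}\,M(i_1,\dots,i_n)$ where $M(i_1,\dots,i_n)$ is symmetric and equal to a sum of ``diagrams'', then $\log Z(\lambda)$ has the same form with $M$ replaced by the sum over connected diagrams. The proof of this identity is purely combinatorial: it follows from grouping every (possibly disconnected) diagram according to the partition of $\{1,\ldots,n\}$ into its connected components, and from the exponential formula
\[
\exp\Bigl(\sum_{n\ge 1}\tfrac1{n!}A_n\Bigr)=\sum_{n\ge 0}\tfrac{1}{n!}\sum_{\pi\in\PPP[1,\ldots,n]}\prod_{b\in\pi}A_{|b|}.
\]
Taking a derivative $\partial^s/(\partial\lambda_1\cdots\partial\lambda_s)$ of $\log Z$ at $\lambda=0$ then extracts exactly the sum over \emph{connected} Feynman diagrams with one vertex for each $X_1,\dots,X_s$, which is the content of the proposition.

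The main bookkeeping difficulty (and the only non-formal point of the proof) is the sign. The key observation is that the sign arising from the $n!$ permutations in the definition \eqref{h1} of a Pfaffian matches exactly the sign of the permutation that reorders the $\psi$'s so that each contracted pair is adjacent, in agreement with the statement of the proposition. Once this is verified, the combinatorial factor $1/s!$ appears from the $\frac{1}{s!}$ in the exponential expansion, and no further subtlety is involved. A complete line-by-line verification is carried out for instance in \cite[App.~A3.1]{GeM}, which we can invoke for the detailed sign analysis.
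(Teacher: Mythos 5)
Your proposal is correct in outline and takes the same route the paper implicitly relies on: the paper gives no proof of this proposition beyond the pointer to \cite[App.~A3.1]{GeM} and the $s=2$ example, and your sketch (fermionic Wick rule/Pfaffian expansion for the non-truncated expectation, followed by the linked-cluster theorem to extract the connected part) is precisely the content of that reference.

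One remark that is worth being more careful about. Your statement that ``the combinatorial factor $1/s!$ appears from the $1/s!$ in the exponential expansion'' is slightly misleading as written: since the $\lambda_1,\dots,\lambda_s$ are \emph{distinct} formal parameters, the derivative $\partial^s/\partial\lambda_1\cdots\partial\lambda_s$ at $\lambda=0$ picks out the coefficient of $\lambda_1\cdots\lambda_s$, and the $1/s!$ from the Taylor expansion is cancelled by the $s!$ orderings of the $X_i$'s. The $1/s!$ in the statement of the proposition is therefore only consistent if the ``Feynman diagrams'' one sums over are indexed not just by the pairing of the legs but also by the assignment (ordering) of the monomials $X_1,\dots,X_s$ to the $s$ vertices (i.e., one over-counts by a factor $s!$ which the $1/s!$ compensates); indeed, this is the only way the paper's $s=2$ example (each diagram carrying a factor $-1/2$) reproduces $\EE^T(X_1;X_2)=-g_{13}g_{24}+g_{14}g_{23}$. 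A second, more minor, point: your invocation of the linked-cluster identity requires the symmetry of $M(i_1,\dots,i_n)$ under permutations, which for Grassmann monomials holds only when each $X_i$ contains an even number of $\psi$'s (so the $X_i$ commute). This is the relevant case here, but it is worth stating explicitly.
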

For instance, if $s=2$ and $X_i(\psi)=\psi_{\xx_{2i-1}}\psi_{\xx_{2i}}$, $i=1,2$,
and we contract the leg associated with 
$\xx_1$ with $\xx_3$ and $\xx_2$ with $\xx_4$, the value of the corresponding Feynman diagram is
$(-1/2) g_\L^{(11)}(\xx_1,\xx_3)g_\L^{(11)}(\xx_2,\xx_4)$, where $-1$ is the signature of the
permutation that transforms $1234$ into $1324$. This diagrammatical representation, if applied to 
\eqref{4.4zz}, leads to the Feynman diagram expansion for the height fluctuations, discussed in Section 
\ref{sec:Fdyagram} below. }

\medskip

If $\l=0$, then \eqref{4.3b}-\eqref{4.4zz} lead to the following
explicit representation 
(observe that in this case $V_\L(\psi)=0$ and $\mathcal B_\L(\psi,{\bf J})=-\sum_b J_bE^{(m)}_b$):
\bea &&
\langle\, \underbrace{h_{\xxi}-h_{\hhe};\cdots;h_{\xxi}-h_{\hhe}}_{n\ {\rm times}}\,\rangle_{\L;\l=0,m}=\nonumber\\
&&=\sum_{b_1\in \mathcal C^{(1)}_{\xxi\to \hhe}}\hskip-.2truecm\cdots\hskip-.2truecm \sum_{b_n\in \mathcal C^{(n)}_{\xxi\to \hhe}}\s_{b_1}\cdots\s_{b_n}\sum_{m_1=1}^{B(b_1')}\cdots\sum_{m_s=1}^{B(b_s')}(-1)^{m_1+\cdots+m_s}\times
\label{4.8}
\\
&&\times P_{m_1}(B(b_1'))\cdots P_{m_s}(B(b_s'))\EE^T(\underbrace{E^{(m)}_{b'_1};\cdots;E^{(m)}_{b_1'}}_{m_1\ {\rm times}};\cdots;\underbrace{
E^{(m)}_{b_s'};\cdots;E^{(m)}_{b_{s}'}}_{m_s\ {\rm times}})\nonumber
\eea
where $\{b_1,\ldots,b_n\}=:B$ should be thought of as a bond
configuration (possibly with repetitions), $\tilde 
B=\{b_1',\ldots, b_s'\}$ as the support of $B$ and $B(b'_i)$ as the multiplicity of $b_i'$, see the discussion after \eqref{4.4zz}.
Moreover, 
 \be
 P_k(N):=\frac{\dpr^N}{\dpr A^N}\frac{(e^A-1)^k}{k!}\Big|_{A=0}.\ee
Then, we take the limit $\lim_{m\to0}\lim_{\L\nearrow\mathbb Z^2}$: this simply means 
that in the computations of the averages $\EE^T(\dots)$ all propagators 
$g_\L^{(11)}(\xx,\yy)$ are replaced by $\lim_{m\to0}g(\xx,\yy)$
and $E^{(m)}_b=t^{(m)}_b E_b$ is replaced by $E_b$.

Let us now discuss how to evaluate \eqref{4.8}, separately for the cases $n=2$ (the variance) and $n>2$. 

\subsection{The height variance}
\label{sec:thevar}
In this section we prove:
\begin{Theorem}
Let $\l=0$. There exists a uniformly bounded function $R(\xxi)$ such that
  \begin{eqnarray}
  \label{eq:31}
  \langle(h_{\xxi}-h_{\hhe})^2\rangle_{\lambda=0}=\frac1{\pi^2}\log|\xxi-\hhe|+R(\xxi-\hhe).
\end{eqnarray}
\end{Theorem}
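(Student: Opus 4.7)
The plan is to start from the explicit formula \eqref{4.8} specialized to $n=2$, which at $\lambda=0$ (where $V_\L\equiv 0$ and only the linear source piece of $\mathcal B_\L$ survives) reads
\be
\langle (h_\xxi-h_\hhe)^2\rangle_{\lambda=0} = \sum_{b_1 \in C^{(1)}_{\xxi\to\hhe}}\sum_{b_2 \in C^{(2)}_{\xxi\to\hhe}} \sigma_{b_1}\sigma_{b_2}\, \langle\openone_{b_1};\openone_{b_2}\rangle_{\lambda=0},
\ee
with the two paths $C^{(1)},C^{(2)}$ chosen independently thanks to the path-independence of the height. I would then substitute the explicit asymptotics of Proposition \ref{prop:ddl0} (re-derived in this section from the Majorana decomposition of Section \ref{fromMD}) and split the double sum into three pieces following the r.h.s.\ of \eqref{eq:41}: (i) a non-oscillatory leading kernel proportional to $(-1)^{\xx-\yy}\,{\rm Re}[i^{j+j'}/(z_{b_1}-z_{b_2})^2]$, with complex coordinate $z_\xx := x_1+ix_2$; (ii) a parallel-bond oscillating kernel $\delta_{j,j'}(-1)^{x_j-y_j}/|\xx-\yy|^2$; and (iii) a remainder decaying like $(1+|\xx-\yy|)^{-3}$.

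Pieces (ii) and (iii) should contribute only $O(1)$ and be absorbed in $R(\xxi-\hhe)$. To show this I would take $C^{(1)}=C^{(2)}$ equal to a shortest monotone lattice path and perform a discrete summation by parts along it against the alternating signs $\sigma_b$ (and, for (ii), also against $(-1)^{x_j-y_j}$): this upgrades the decay of each kernel by one power and makes the resulting double sum absolutely convergent uniformly in $|\xxi-\hhe|$. The logarithmic behavior will come entirely from piece (i): the phases $\sigma_{b_j}$, $(-1)^{\xx-\yy}$ and $i^{j_{b_1}+j_{b_2}}$ combine coherently along the two paths so that the double sum approximates the holomorphic double line integral
\be
-\frac{1}{2\pi^2}\,{\rm Re}\int_{C^{(1)}}dz_1\int_{C^{(2)}}dz_2\,\frac{1}{(z_1-z_2)^2},
\ee
which I would evaluate by two successive integrations by parts with primitives $-1/(z_1-z_2)$ and $\log(z_1-z_2)$; the endpoint contributions then produce exactly $(1/\pi^2)\log|\xxi-\hhe| + O(1)$.

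The hard part is the rigorous lattice implementation of this double integration by parts: the kernel $1/z^2$ is only conditionally summable, so naive Riemann-sum estimates do not suffice. I would control the errors between lattice primitives and their continuum counterparts by invoking path independence (equivalently, the discrete holomorphicity of the current $b\mapsto\sigma_b(\openone_b-\langle\openone_b\rangle)$) to cancel the boundary contributions at the ``wrong'' endpoints and leave only the intended $\log|\xxi-\hhe|$ divergence. Lattice corrections near the singular endpoints $\xxi,\hhe$ would be bounded by a direct short-distance estimate and absorbed into $R(\xxi-\hhe)$. This strategy serves as the pedagogical template for the interacting case treated in Section \ref{sec:varint}, where $\sigma_b\openone_b$ becomes exactly discretely holomorphic only after a multiscale renormalization of the Dirac current.
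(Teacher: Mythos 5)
Your outline matches the paper's structure --- decompose $\langle\openone_{b_1};\openone_{b_2}\rangle$ as in Proposition \ref{prop:ddl0}, extract the $\log$ from the holomorphic kernel via the continuum integral \eqref{4.int}, and absorb the oscillating and fast-decaying pieces into $R$ --- but your proposed path choice $\mathcal C^{(1)}=\mathcal C^{(2)}$ is precisely what the paper is engineered to avoid, and the argument would break there. The paper takes two paths with \emph{no common bonds}, each of length $\propto |\xxi-\hhe|$, emanating from $\xxi$ and $\hhe$ in \emph{different} asymptotic directions and staying at distance $\geq c'|\xxi-\hhe|$ from each other outside balls of radius $c|\xxi-\hhe|$ around $\xxi,\hhe$. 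This is not a cosmetic choice: with coinciding paths, (a) the diagonal $b_1=b_2$ already contributes $\sum_b\langle\openone_b;\openone_b\rangle = \tfrac{3}{16}|\mathcal C|=O(|\xxi-\hhe|)$, which would have to be cancelled against the off-diagonal sum, and the Grassmann reduction to \eqref{eq:3.11} is valid only for distinct bonds (when $b_1=b_2$ the multiplicity factors $P_m$ in \eqref{4.8} intervene, because $\EE^T(E_b;E_b)=-\langle E_b\rangle^2\neq\langle\openone_b;\openone_b\rangle$); and (b) even off-diagonal, along a single path of length $\ell$ both $\sum_{b_1\ne b_2}|\xx_1-\xx_2|^{-2}$ and $\sum_{b_1\ne b_2}|\xx_1-\xx_2|^{-3}$ are $O(\ell)$, so none of your three pieces is controllable by its decay alone --- your ``discrete summation by parts against $\sigma_b$'' would have to produce all of these cancellations, yet you do not exhibit a single one.

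The well-separated geometry dissolves these problems at once. There is no diagonal, and one has $|\xx_1-\xx_2|\gtrsim d_1+d_2$ (with $d_i$ the path-distance of $\xx_i$ from the nearest endpoint), so: the remainder sum is absolutely convergent; the parallel-bond piece becomes $O(1)$ once the oscillation $(-1)^{x_{3-j}+y_{3-j}}$ is summed over the \emph{even-length straight portions} of the paths, where it telescopes into second differences of $|\xx-\yy|^{-2}$ --- a structure your ``shortest monotone'' path does not provide; and the Riemann approximation to \eqref{4.int} has a uniformly $O(1)$ error because the two contours never come close. A single-path computation of the variance is possible in principle --- this is essentially what \cite{LT} does --- but it is considerably harder, and \cite{LT} does not obtain the sharp $O(1)$ control of the error that \eqref{eq:31} requires.
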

\begin{proof}
  
We assume for simplicity that $\xxi$ and $\hhe$ have the same parity. We choose the two paths $\mathcal C^{(1)}_{\xxi\to \hhe}, \mathcal C^{(2)}_{\xxi\to \hhe}$ in such a way that: (1) 
they are completely distinct, i.e., the bonds in $\mathcal C^{(1)}_{\xxi\to \hhe}$ are all different from those in $\mathcal C^{(2)}_{\xxi\to \hhe}$; (2) they are both of length comparable with $|\xxi-\hhe|$; (3)
they consist 
of a union of  straight portions (i.e. horizontal or vertical portions), each of  which is of even length. 
Moreover, 
we assume that $\mathcal C^{(1)}_{\xxi\to \hhe}, \mathcal C^{(2)}_{\xxi\to \hhe}$ are ``well-separated", in the following sense.
Fix $c,c'>0$. Inside balls of radius $c|\xxi-\hhe|$ around $\xxi$ and $\hhe$, the two paths are portions of length 
$c|\xxi-\hhe|$ of infinite periodic paths (that is, they are portions of straight paths - apart from
lattice discretization - see \cite[Definition 2.1]{LT}) and have mutually different asymptotic directions, say opposite. 
Outside of these balls the paths stay at distance at least
$c'|\xxi-\hhe|$ of each other and their length is of order
$|\xxi-\hhe|$. See Fig. \ref{figcammini}.
Using \eqref{4.8} for $n=2$ and the above assumptions on the paths, we can rewrite the variance as
\be\lim_{m\to 0}\lim_{\L\nearrow\mathbb Z^2}\langle\,(h_{\xxi}-h_{\hhe})^2\,\rangle_{\L;\l=0,m}=\sum_{b_1\in \mathcal C^{(1)}_{\xxi\to \hhe}} \sum_{b_2\in \mathcal C^{(2)}_{\xxi\to \hhe}}\s_{b_1}\s_{b_2}\EE^T(E_{b_1};E_{b_2})\label{eq:3.11}\ee
where the expectation $\mathcal E^T$ has propagator $\lim_{m\to 0}g(\xx,\yy)$. 
Let $b=(\xx,\xx+\hat e_{j})$ be a bond crossed say by $\mathcal
C^{(1)}_{\xxi\to \hhe}$  and observe that, since we assumed that the
white sites are on the even-even and odd-odd sub-lattice (and letting $(-1)^{\xx}:=(-1)^{x_1+x_2}$),
\begin{eqnarray}
  \label{eq:sigmas}
\s_{b}=\a_{b}(-1)^{\xx}(-1)^{j},   
\end{eqnarray}
where $\a_{b}$ is $+1/-1$, depending on 
whether the bond $b$ is crossed by the oriented path $\mathcal C^{(1)}_{\xxi\to\hhe}$ in the positive/negative 
direction 
(the positive direction is upwards for vertical portions of the paths, and rightwards for horizontal portions). 

Next, we have to rewrite $\EE^T(E_{b_1};E_{b_2})$ and we start by expressing $E_{\xx,\xx+\hat e_j}=
i^{(j-1)}\psi_{\xx}\psi_{\xx+\hat e_j}$ in terms of Dirac variables:
\begin{enumerate}
\item we replace each of the two fields by a combination of  Dirac fields
using \eqref{1.2.39};
\item whenever $\psi^\pm_{\xx+\hat e_j,\o}$ appears we replace it by
$\psi^\pm_{\xx,\o}+\dpr_{j}\psi^\pm_{\xx,\o}$ with $\dpr_{j}$ the (right) discrete derivative in the $j$ direction. 
\end{enumerate}
In this way we obtain (we skip lengthy but straightforward  computations):
\begin{gather}
\label{lenti}
  E_{\xx,\xx+\hat e_1}=\mathcal A_{\xx,\xx+\hat e_1}+
\mathcal R_{\xx,\xx+\hat e_1}\\:=-2(-1)^\xx\sum_\o \psi^+_{\xx,\o}\psi^-_{\xx,\o}-
2(-1)^{x_1}\sum_\o\psi^+_{\xx,\o}\psi^-_{\xx,-\o}
+
\mathcal R_{\xx,\xx+\hat e_1} \\
\label{veloci}  E_{\xx,\xx+\hat e_2}=\mathcal A_{\xx,\xx+\hat e_2}+
\mathcal R_{\xx,\xx+\hat e_2}\\ :=-2i (-1)^\xx\sum_\o \o\psi^+_{\xx,\o}\psi^-_{\xx,\o}-
2i (-1)^{x_2}\sum_\o\o\psi^+_{\xx,\o}\psi^-_{\xx,-\o}+
\mathcal R_{\xx,\xx+\hat e_2},
\end{gather}
where $\mathcal R$ is a linear combinations of terms of the type $\psi^\e_{\xx,\o}\partial_j\psi^{\e'}_{\xx,\o'}$, with $\e,\e'=\pm$.
Let us consider first the ``local parts'' $\mathcal A_{\xx,\xx+\hat
  e_i}$, i.e. let us neglect for the moment $\mathcal R$.   When the path crosses the bond $b$, the change of position 
$\Delta z_b$ in the complex plane is  $i \alpha_b $ if $b$ is horizontal and $\alpha_b$ is $b$ is vertical. Therefore,
\begin{eqnarray}
\label{eq:hor}
  \sigma_b \mathcal A_b=-2 i \Delta z_b\Bigl[\sum_\o \psi^+_{\xx,\o}\psi^-_{\xx,\o}+
(-1)^{x_2}\sum_\o\psi^+_{\xx,\o}\psi^-_{\xx,-\o}\Bigr]
\end{eqnarray}
if $b$ is horizontal, and 
\begin{eqnarray}
\label{eq:vert}
  \sigma_b \mathcal A_b=-2 i \Delta z_b\Bigl[\sum_\o\o \psi^+_{\xx,\o}\psi^-_{\xx,\o}+
(-1)^{x_1}\sum_\o\o\psi^+_{\xx,\o}\psi^-_{\xx,-\o}\Bigr]
\end{eqnarray}
if $b$ is vertical.
At this point we can write, assuming for the moment that both $b_1 $
and $b_2$ are horizontal bonds, i.e., $b_1=(\xx,\xx+\hat e_1)$ and $b_2=(\yy,\yy+\hat e_1)$,
\begin{gather}
\label{8Re}
  \sigma_{b_1}\sigma_{b_2}\EE^T(\mathcal A_{b_1};\mathcal A_{b_2})=-8{\rm Re}\Bigl[
 \Delta z_{b_1}\Delta z_{b_2}
\EE^T(\psi^+_{\xx,+}\psi^-_{\xx,+};\psi^+_{\yy,+}\psi^-_{\yy,+})\Bigr]\\
-8(-1)^{x_2+y_2}{\rm Re}\Bigl[
 \Delta z_{b_1}\Delta z_{b_2}
\EE^T(\psi^+_{\xx,+}\psi^-_{\xx,-};\psi^+_{\yy,-}\psi^-_{\yy,+}
)\Bigr].
\end{gather}
Here we used the fact that $\Delta z_{b_1}\Delta z_{b_2}$ is real, that
\[\EE^T({\psi^+_{\xx,\o}\psi^-_{\xx,\o};\psi^+_{\yy,\o'}\psi^-_{\yy,-\o'}})=0\]
(because $\langle \psi^-_{\xx,-}\psi^+_{\yy,+}\rangle=-i G_{-+}(\xx-\yy)$ vanishes when the mass $m$ is zero as is the case here: {recall in fact that we already sent $m\to 0$, see the left side of \eqref{eq:3.11}}) and  that
\[
\EE^T({\psi^+_{\xx_1,-1}\psi^-_{\xx_1,-1};\psi^+_{\xx_2,-1}\psi^-_{\xx_2,-1}})=
\EE^T({\psi^+_{\xx_1,1}\psi^-_{\xx_1,1};\psi^+_{\xx_2,1}\psi^-_{\xx_2,1}})^*
\]
(cf. \eqref{matriciazzo} and the first of \eqref{cc}).
Using the Wick rule {(Proposition \ref{lemma:wick})} and the first and third of \eqref{simmetria} we have, assuming that $b_1=(\xx,\xx+\hat e_1)$ and $b_2=(\yy,\yy+\hat e_1)$,
\begin{gather}
  \sigma_{b_1}\sigma_{b_2}\EE^T(\mathcal A_{b_1};\mathcal A_{b_2})=-8{\rm Re}\Bigl[
 \Delta z_{b_1}\Delta z_{b_2}
(G_{++}(\xx-\yy))^2\Bigr]\\
-8(-1)^{x_2+y_2}
 \Delta z_{b_1}\Delta z_{b_2}
|G_{++}(\xx-\yy)|^2.
\end{gather}
In the general case $b_1=(\xx,\xx+\hat e_{j_1}),b_2=(\yy,\yy+\hat e_{j_2})$ one finds
with similar computations
\begin{gather}
    \sigma_{b_1}\sigma_{b_2}\EE^T(\mathcal A_{b_1};\mathcal A_{b_2})=-8{\rm Re}\Bigl[
 \Delta z_{b_1}\Delta z_{b_2}
(G_{++}(\xx-\yy))^2\Bigr]+\\
+8\delta_{j_1=j_2}(-1)^{j_1}(-1)^{x_{3-j_1}+y_{3-j_1}}
 \Delta z_{b_1}\Delta z_{b_2}
|G_{++}(\xx-\yy)|^2.
\end{gather}
Using Proposition \ref{th:propalibero} to express $G$ at $m=0$ as 
$\mathfrak g$ plus a fast decaying remainder, we have
\begin{gather}
  \label{eq:8}
    \sigma_{b_1}\sigma_{b_2}\EE^T(\mathcal A_{b_1};\mathcal A_{b_2})= -\,{\rm Re}\,\Bigl[ \Delta z_{b_1}\Delta z_{b_2}
\frac1{2\p^2}\frac{{\bf
    1}_{\xx\neq\yy}}{(z_{\yy}-z_{\xx})^2}\Bigr]+\\
+\delta_{j_1=j_2}(-1)^{j_1}(-1)^{x_{3-j_1}+y_{3-j_1}}
 \Delta z_{b_1}\Delta z_{b_2}\frac1{2\p^2}\frac{{\bf
    1}_{\xx\neq\yy}}{|z_{\yy}-z_{\xx}|^2}
+ {R_{j_1,j_2}}(\xx-\yy)
\;,
\end{gather}
where $z_\xx=x_1+ix_2$ and $| R_{{j_1,j_2}}(\xx-\yy)|\le (\const.)(1+|\xx-\yy|)^{-3}$.
Now we can sum over $b_i$ in the paths $\mathcal C^{(i)}_{\xxi\to\hhe},i=1,2$.
The contribution from $R_{{j_1,j_2}}$ is of order $1$ uniformly in $\xxi,\hhe$:
to see this, use the properties of the paths spelled out at the beginning of this subsection.
The same holds for the second term, this time because of the
oscillating factor $(-1)^{x_{3-j_1}+y_{3-j_1}}$, that in the sum has
the effect of a discrete derivative in the direction $3-j_1$: 
in fact, recall that the paths are assumed to consist of unions of straight portions 
of even length; on each such portion, the sum over $\xx$ and $\yy$ of $(-1)^{x_{3-j_1}+y_{3-j_1}}|z_\xx-z_\yy|^{-2}$
is of the same order as the sum of $\dpr_{x_{3-j_1}}\dpr_{y_{3-j_1}}|z_\xx-z_\yy|^{-2}$, which decays at large distances like $|\xx-\yy|^{-4}$.
As for the first term, it produces 
the Riemann approximation to the integral
\be -\frac1{2\p^2}{\rm Re}\int_{z_{\xxi}}^{z_{\hhe}}dz\int_{z_{\xxi}'}^{z_{\hhe}'}dw\frac1{(z-w)^2}\label{4.int}\ee
(here $z_{\xxi}'$ and $z_{\hhe}'$ are points at a distance $O(1)$ from $z_{\xxi}$ and $z_{\hhe}$, respectively),
and differs from it by a constant, independent of $\xxi$ and $\hhe$. This integral is the same found in \cite{KOS} (see the second equation at p.1043);
it can be explicitly evaluated and gives (see the third and fourth equation at p.1043 of 
\cite{KOS}):
\begin{eqnarray}
  \label{eq:25}
   \frac1{2\p^2}{\rm Re}\log\frac{(z_{\hhe}'-z_{\xxi})(z_{\xxi}'-z_{\hhe})}{(z_{\hhe}'-z_{\hhe})(z_{\xxi}'-z_{\xxi})}=
\frac1{\p^2}\log|z_{\xxi}-z_{\hhe}|+O(1)\;.
\end{eqnarray}
It remains to study the contribution coming from the error terms
$\mathcal R_b$ in \eqref{lenti}, \eqref{veloci}, that we disregarded so far.

\begin{Remark}
\label{notaz:simb}
Each remainder $\RR_{b_i}$
  is a linear combination of terms like $\psi^\e_\o\dpr\psi^{\e'}_{\o'}$,
  all localized in the vicinity of $\xx_i$, where $\xx_i$ is such that $b_i=(\xx_i,\xx_i+\hat e_{j_i})$.  Therefore, we can 
symbolically write the contribution to the height variance from all the
terms containing at least one term $\RR_{b_i}$ as
\bea && \tilde R(\xxi-\hhe)=\sum_{\substack{b_1\in \mathcal C^{(1)}_{\xxi\to\hhe},\\
b_2\in \mathcal
C^{(2)}_{\xxi\to\hhe}}}\sum_{\substack{\o_1,\ldots,\o_2'\\ \e_1,\ldots,\e_2'}}\; 
\sum_{\substack{\a_1,\a_2\in\{0,1\}:\\
\a_1+\a_2\ge 1}}\times\label{4.18y}\\
&&\hskip1.truecm\times
  \EE^T({(\psi^{\e_1}_{\o_1}\dpr^{\a_1}\psi^{\e_1'}_{\o_1'})(\xx_1);
  (\psi^{\e_2}_{\o_2}\dpr^{\a_2}\psi^{\e_2'}_{\o_2'})(\xx_2)})\nonumber\eea
{The writing is symbolical, in the sense that the terms in the sum  should in general be multiplied by extra factors, depending on all the indices we are summing over, which are not written explicitly just for lightness of notation. Moreover, the discrete derivatives have an index depending on the orientation of the bonds $b_1,b_2$, which is not written explicitly, again for lightness. }
\end{Remark}
Using
\eqref{matriciazzo} to express the propagator of the Dirac fields
$\psi^\pm_\o$ in terms of the propagator $G(\xx)$ of the Majorana
fields $\psi_\gamma$ and the decay properties of $G(\xx)$ stated in
Proposition \ref{th:propalibero}, we can bound the expression in
square brackets by a constant times $(1+|\xx_1-\xx_2|)^{-3}$ (the
discrete derivative of $G$ decays like $1/|\xx|^2$), so that, recalling that 
$b_i=(\xx_i,\xx_i+\hat e_{j_i})$,
\be |\tilde R(\xxi-\hhe)|\le \sum_{\substack{b_1\in \mathcal C^{(1)}_{\xxi\to\hhe},\\
b_2\in \mathcal C^{(2)}_{\xxi\to\hhe}}}\frac{C}{(1+|\xx_1-\xx_2|)^3}\le C'\;,\ee
for suitable constants $C,C'>0$.
Putting all together, we find
\be \Big| \media{h_{\xxi}-h_{\hhe};h_{\xxi}-h_{\hhe}}_{\l=0}-\frac1{\p^2}\log|z_{\xxi}-z_{\hhe}|\Big|\le C''\;,\label{4.25}\ee
as desired,
since $|z_{\xxi}-z_{\hhe}|=|\xxi-\hhe|$. 
\end{proof}

To get Proposition \ref{prop:ddl0}, just recall \eqref{eq:8}, \eqref{eq:sigmas} plus the discussion above
on the contribution of the $\mathcal R$ terms appearing in
\eqref{lenti} and \eqref{veloci}.

\subsection{The $n^{th}$ cumulant }\label{sec4.1.2}
Here we prove:
\begin{Theorem}
  Let $\l=0$. For every $n\ge 3$ there exists a constant $C_n$ such
  that, uniformly in $\xxi,\hhe$,
\begin{eqnarray}
  \label{eq:32}
   |\langle\underbrace{h_{\xxi}-h_{\hhe};\cdots;h_{\xxi}-h_{\hhe}
}_{n\ times}\rangle_{\l=0}|\le C_n.
\end{eqnarray}
\end{Theorem}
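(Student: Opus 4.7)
The plan is to proceed in parallel with the proof of the variance, so I will use the same setup of multiple paths, the same decomposition $E_b=\mathcal{A}_b+\mathcal{R}_b$, and the same Wick rule, but the cycle structure of the connected Feynman diagrams will replace the logarithmic integral \eqref{4.int} by a cycle integral that is bounded uniformly for $n\geq 3$.

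More precisely, first I would choose $n$ pairwise disjoint paths $\mathcal{C}^{(1)}_{\xxi\to\hhe},\ldots,\mathcal{C}^{(n)}_{\xxi\to\hhe}$ from $\xxi$ to $\hhe$, each of length comparable to $|\xxi-\hhe|$, consisting of straight portions of even length, and pairwise at distance $\gtrsim|\xxi-\hhe|$ in the bulk with distinct asymptotic directions inside the two balls of radius $c|\xxi-\hhe|$ around the endpoints. Because the paths are pairwise disjoint (in the sense of no repeated bonds), the multiplicity factors $P_{m_i}(B(b'_i))$ in \eqref{4.8} force $k=s=n$ with all $m_i=1$, reducing the cumulant to
\begin{equation*}
\langle (h_\xxi-h_\hhe)^{;n}\rangle_{\lambda=0}=\sum_{b_1\in\mathcal{C}^{(1)}_{\xxi\to\hhe}}\cdots\sum_{b_n\in\mathcal{C}^{(n)}_{\xxi\to\hhe}}\sigma_{b_1}\cdots\sigma_{b_n}\,\mathcal{E}^T(E_{b_1};\cdots;E_{b_n}),
\end{equation*}
where $\mathcal{E}^T$ is the truncated expectation w.r.t.\ the massless Grassmann Gaussian measure with propagator $\lim_{m\to 0}g$. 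I would then rewrite each $E_{b_i}=\mathcal{A}_{b_i}+\mathcal{R}_{b_i}$ via \eqref{lenti}-\eqref{veloci} and use multilinearity of $\mathcal{E}^T$ to split the cumulant into $2^n$ pieces, treating the all-$\mathcal{A}$ piece as the main contribution.

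For this main piece, the Wick rule for truncated expectations (Proposition \ref{lemma:wick}) says that the connected Feynman diagrams on $n$ quadratic vertices are exactly the Hamiltonian cycles, each contributing a product $\prod_{k=1}^n G_{\omega_k\omega'_k}(\xx_{\pi(k)}-\xx_{\pi(k+1)})$ of $n$ Majorana propagators, weighted by the $\Delta z_{b_k}$ factors produced by $\sigma_{b_k}\mathcal{A}_{b_k}$ (cf.\ \eqref{eq:hor}-\eqref{eq:vert}) and possibly by oscillating factors $(-1)^{x_{\pi(k),j}}$ from the non-diagonal term in \eqref{lenti}-\eqref{veloci}. Writing $G=\mathfrak{g}+R$ via Proposition \ref{th:propalibero}, the leading contribution is of the form
\begin{equation*}
\sum_{\pi,\{\omega_k\}}\sum_{b_1,\ldots,b_n} \Delta z_{b_1}\cdots\Delta z_{b_n}\prod_{k=1}^n\mathfrak{g}_{\omega_k\omega_k}(\xx_{\pi(k)}-\xx_{\pi(k+1)}),
\end{equation*}
which, because $\Delta z_{b_k}$ plays the role of a differential along $\mathcal{C}^{(k)}$, is a Riemann-sum approximation of cycle integrals $\int dw_1\cdots dw_n\prod_k(w_{\pi(k)}-w_{\pi(k+1)})^{-1}$ along the $n$ paths. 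The key estimate is that these cycle integrals are bounded uniformly in $|\xxi-\hhe|$ for $n\geq 3$: in the bulk the $n$ points lie at mutual distance $\gtrsim|\xxi-\hhe|$, so the integrand is of size $|\xxi-\hhe|^{-n}$ while the integration volume is of size $|\xxi-\hhe|^n$, yielding $O(1)$; in the endpoint balls the distinct asymptotic directions of the paths give $|w_i-w_j|\gtrsim\max(|w_i-\xxi|,|w_j-\xxi|)$, and a direct scale-invariant bound shows the local contribution is also $O(1)$, in sharp contrast with $n=2$ where the short-distance behavior produced the $\log|\xxi-\hhe|$ term of \eqref{eq:25}.

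The remaining contributions are easier: each factor of $\mathcal{R}_b$ or of the remainder $R$ in $G=\mathfrak{g}+R$ improves the decay of the corresponding propagator by an extra $1/|\xx|$, which makes the resulting sum absolutely summable over at least one of the $b_i$'s and thus bounded by a constant; the oscillating prefactors $(-1)^{x_{\pi(k),j}}$ from the non-diagonal piece of $\mathcal{A}_b$ behave, after summation along the even-length straight portions of the paths, as a discrete derivative on the corresponding propagator (as in the treatment of the variance after \eqref{eq:8}), again gaining an extra factor $1/|\xx|$. The main obstacle, and the reason why the constant $C_n$ will grow factorially in $n$, is the combinatorial proliferation: the $(n-1)!/2$ Hamiltonian cycles times the sums over the chiralities $\omega_k$ produce an $n$-dependent multiplicative factor, which I would not try to optimize since the statement only claims a finite (not $n$-uniform) bound.
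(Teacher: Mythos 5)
There is a genuine gap in your treatment of the all-$\mathcal{A}$, all-$\mathfrak{g}$ main term: you claim the cycle integrals $\int dw_1\cdots dw_n\prod_k(w_{\pi(k)}-w_{\pi(k+1)})^{-1}$ are $O(1)$ uniformly in $|\xxi-\hhe|$ for $n\ge 3$, but this is false for an individual cycle. Near an endpoint, say $\xxi$, put $w_i=\xxi+r_i e^{i\theta_i}$ along the $n$ rays: the integrand is $\sim r^{-n}$ and the $n$-dimensional Lebesgue measure scales as $r^{n-1}\,dr$, so the local contribution behaves like $\int_1^{c|\xxi-\hhe|} dr/r\sim \log|\xxi-\hhe|$. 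This is scaling-dimension zero, not scaling-dimension negative, exactly as in the $n=2$ case; the "scale-invariant bound" you invoke produces a logarithm, not a constant. Thus, bounding cycle by cycle cannot give \eqref{eq:32}.

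What the paper does, and what your argument is missing, is the combinatorial cancellation \eqref{4.35}: for $n\ge 3$ and distinct $\xx_1,\dots,\xx_n$,
\begin{equation*}
\sum_{\pi\ \mathrm{on}\ \{2,\dots,n\}}\mathfrak g_{++}(\xx_1-\xx_{\pi(2)})\cdots\mathfrak g_{++}(\xx_{\pi(n)}-\xx_1)=0 .
\end{equation*}
It is only the \emph{sum} over Hamiltonian cycles that vanishes, not the individual terms, so the leading all-$\mathfrak g$ contribution to \eqref{4.29} is identically zero, and the surviving contributions all carry at least one remainder $R$ (or a $\mathcal R_b$, or an oscillating factor acting as a derivative). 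Each such factor yields the dimensional gain that makes the sums over the paths convergent. Without invoking this identity, you cannot bypass the $\log$ divergence of the main term, and the subsequent multiscale/cluster bookkeeping that the paper carries out is needed precisely to turn this pointwise gain into a uniform bound. The rest of your outline (path choice, $E_b=\mathcal A_b+\mathcal R_b$, oscillating factors as derivatives, absolute summability once there is an extra $1/|\xx|$, factorial $C_n$) is in the right spirit and matches the paper; but it all relies on first killing the all-$\mathfrak g$ cycle contribution through \eqref{4.35}, which you did not do.
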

\begin{proof}
 Again, we assume for simplicity that $\xxi$ and $\hhe$ have the same parity. 
As in the case of the variance, we fix $c_n,c_n'>0$, and we
assume that the $n$ paths satisfy the following: (i) inside balls of
radius $c_n|\xxi-\hhe|$ around $\xxi$ and $\hhe$, the $n$ paths are
portions of length $c_n|\xxi-\hhe|$ of infinite periodic paths and
have mutually different asymptotic directions, say
$(\cos\th_j,\sin\th_j)$, with $\th_j=2\p j/n$, and $j=0,\ldots,n-1$;
(ii) outside of these balls they stay at distance at least
$c_n'|\xxi-\hhe|$ of each other and their length is of order
$|\xxi-\hhe|$. See Fig. \ref{figcammini}

\begin{figure}[ht]
\includegraphics[width=.7\textwidth]{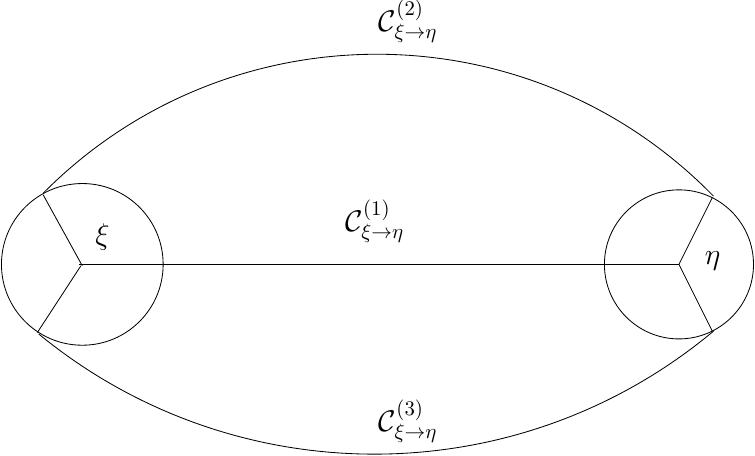}
\caption{A schematic view of the paths $\mathcal C^{(i)}_{\xxi\to\hhe}$ for $n=3$. Near $\xxi$
and $\hhe$, paths are essentially linear for a length proportional to $|\xxi-\hhe|$, with
non-zero mutual angles.}
\label{figcammini}
\end{figure}

Moreover, we require that the $n$ paths consist of unions of straight portions of even length. 
Note that if $b_i=b_j$ with $i\neq j$ in \eqref{4.8},
then $b_i$ is at a distance smaller than $r_n$ from $\xxi$, or from
$\hhe$.  Here
and below $C_n,C_n',\ldots$, and $c_n,c_n',\ldots$ denote
$n$-dependent constants, which might change from line to line.  If we
drop the index $n$, it means that the constants can be chosen
independent of $n$.

We rewrite \eqref{4.8} as the contribution from the bonds
$b_1,\ldots,b_n$ that are all outside the balls $B_{r_n}(\xxi)$ and
$B_{r_n}(\hhe)$ of radius $r_n$ around $\xxi$ and $\hhe$, plus a rest
(and the limit $\lim_{m\to0}\lim_{\L\nearrow \mathbb Z^2}$ has been
already taken):
\bea &&\langle\, \underbrace{h_{\xxi}-h_{\hhe};\cdots;h_{\xxi}-h_{\hhe}}_{n\ {\rm times}}\,\rangle_{\l=0}= D_n(\xxi,\hhe)+R_n(\xxi,\hhe)\label{4.26}\\
&&:=\sum_{b_1\in \mathcal C^{(1)}_{\xxi\to
    \hhe}}^*\hskip-.2truecm\cdots\hskip-.2truecm \sum_{b_n\in \mathcal
  C^{(n)}_{\xxi\to \hhe}}^*\s_{b_1}\cdots\s_{b_n}
(-1)^n\EE^T(E_{b_1};E_{b_2};\cdots;E_{b_{n}})
+R_n(\xxi,\hhe)\;,\nonumber\eea where the $*$ on the sums indicate the
constraints that the $b_i$'s are at a distance larger than $r_n$ from
$\xxi$ and from $\hhe$, and we used the fact that such constrained
sums involve $n$-ples of bonds that are all distinct from each other.
The rest $R_n(\xxi,\hhe)$ contains all the remaining contributions,
including those where some of the bonds are coinciding.

We start by analyzing the dominant term, namely $D_n(\xxi,\hhe)$.
With the notations of \eqref{lenti}, we write
\be D_n(\xxi,\hhe)=
\sum_{b_1\in \mathcal C^{(1)}_{\xxi\to \hhe}}
^*\cdots \sum_{b_n\in \mathcal C^{(n)}_{\xxi\to \hhe}}^*(-1)^n \sigma_{b_1}\dots\sigma_{b_n}
\EE^T(\mathcal A_{b_1};\dots;\mathcal A_{b_n})+D_n'(\xxi,\hhe)\;\label{4.28}\ee
where $D_n'(\xxi,\hhe)$ collects all the terms containing at least one remainder term $\RR_{b_i}$ and 
can be symbolically written (in the sense of  Remark \ref{notaz:simb}) as
\bea  D_n'(\xxi,\hhe)&=&\sum_{b_1\in \mathcal C^{(1)}_{\xxi\to\hhe}}^*\cdots\sum_{b_n\in \mathcal C^{(n)}_{\xxi\to\hhe}}^*
\sum_{\substack{\o_1,\ldots,\o_n'\\ \e_1,\ldots,\e_n'}}\sum_{\substack{\a_1,\ldots,\a_n:\\
\sum_{i}\a_i>0}}\times\label{4.30}\\
&&\times
\EE^T((\psi^{\e_1}_{\o_1}\dpr^{\a_1}\psi^{\e_1'}_{\o_1'})(\xx_1);\cdots;(\psi^{\e_n}_{\o_n}\dpr^{\a_n}\psi^{\e_n'}_{\o_n'})(\xx_n))
\;,\nn
\eea
where in the last sum $\a_i\in\{0,1\}$ and, once again, $\xx_i$ is one of the sites of bond $b_i$. 
{In the spirit of the diagrammatical rules explained after \eqref{4.mon}, we can graphically represent 
every monomial $(\psi^{\e_i}_{\o_i}\dpr^{\a_i}\psi^{\e_i'}_{\o_i'})(\xx_i)$ by 
a two-legged {\it vertex} $v_i$, consisting of 
two solid half-lines (indexed by ${\e_i},{\o_i}$ and ${\e_i'},{\o_i'}$, respectively) exiting from 
the point $\xx_i$, one of which has a derivative $\dpr^{\a_i}$ on top. It is customary to 
draw an extra dotted line (external field) exiting from the vertex $v_i$, thus representing it as in 
Fig.\ref{sun}(a).
\begin{figure}[ht]
\includegraphics[width=.7\textwidth]{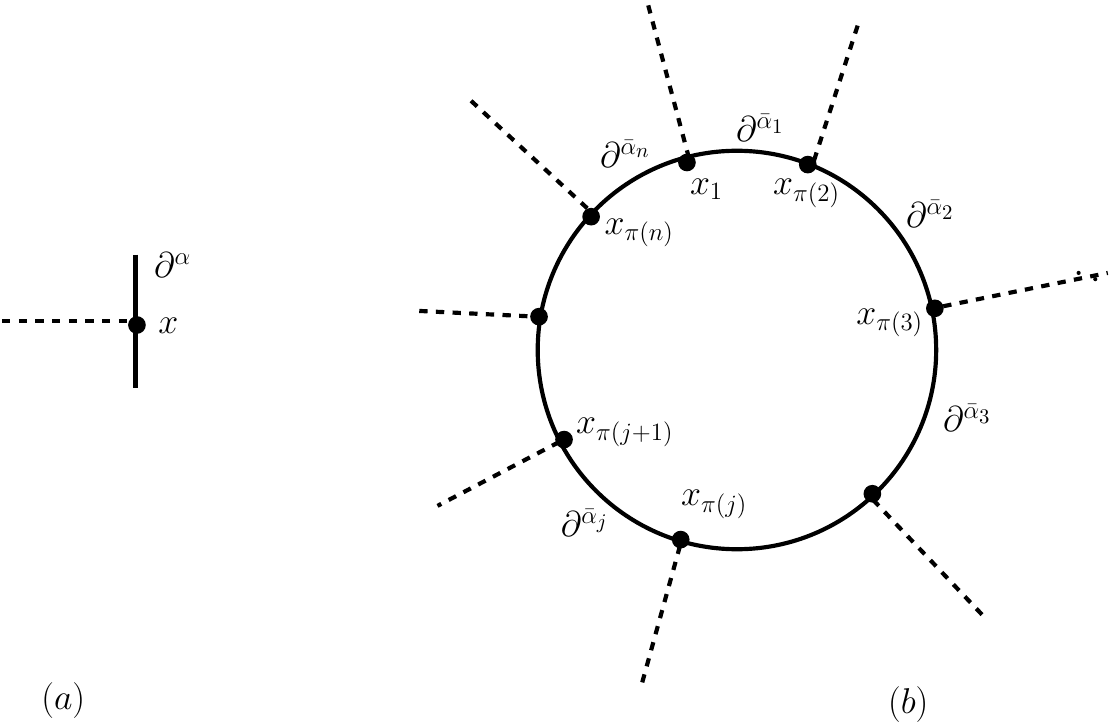}
\caption{(a) A vertex of type $\psi\dpr^{\a}\psi$. (b) A sun diagram obtained by contracting $n$ 
vertices of type $\psi\dpr^{\a}\psi$.
}
\label{sun}
\end{figure}
Using the rules explained after \eqref{4.mon}, we find that the truncated expectation in the 
right side of \eqref{4.30} is equal to the sum of ``sun diagrams", as in Fig.\ref{sun}(b). Since $m=0$, the 
allowed contractions involve pairs of legs with opposite $\e$ indices and equal $\o$ indices,
see \eqref{matriciazzo} (recall that, if $m=0$, $G_{+-}=G_{-+}=0$). Therefore, 
the value of every allowed sun diagram is equal (up to a sign) to 
\be \dpr^{\bar \a_1}G_{\bar \o_1\bar\o_1}(\xx_1-\xx_{\p(2)})\cdots \dpr^{\bar \a_n}
G_{\bar \o_{\p(n)}\bar \o_{\p(n)}}(\xx_{\p(n)}-\xx_1)\;,\label{eq:vsun}\ee
for suitable indices $\bar \a_i\in\{0,1,2\}$ (such that $\sum_i\bar\a_i=\sum_i\a_i$), $\bar\o_i\in\{\pm\}$ and a suitable permutation $\p$ of $\{2,\ldots, n\}$. }

As for $\EE^T(\sigma_{b_1}\mathcal A_{b_1};\dots;\sigma_{b_n}\mathcal
A_{b_n})$, going back to \eqref{eq:hor}-\eqref{eq:vert} we see that we
can distinguish two contributions: one that collects all
terms without oscillating pre-factors $(-1)^{x_{i}},i=1,2$ and one that contains at least one term with oscillating factor.

Let us look at the latter first. When we sum over $b_1,\dots,b_n$, we remarked
in Section \ref{sec:thevar} that the effect of an oscillating factor
$(-1)^{x_j}$  is
the same as a discrete derivative $\partial_j$ acting on a propagator. Therefore, the contribution to the $n$-th cumulant, {to be called $D_n''(\xxi,\hhe)$, 
can be symbolically written exactly like $D'_n(\xxi,\hhe)$ in
\eqref{4.30}.}

Next, we look at the term without oscillating factors. In analogy with the derivation of the first term
in the r.h.s. of \eqref{8Re}, one can check that we get
 \bea 2^n\cdot2\cdot{\rm Re}\Big[(-i)^n\, \Delta z_{b_1}\dots\Delta z_{b_n}
 \EE^T(\psi^+_{\xx_1,1}\psi^-_{\xx_1,1};\cdots;\psi^+_{\xx_n,1}\psi^-_{\xx_n,1})\Big]\;.\label{4.29}\eea
The truncated expectation in  \eqref{4.29} can be evaluated  via Wick's rule {(Proposition \ref{lemma:wick})} as:
\bea&& \EE^T(\psi^+_{\xx_1,1}\psi^-_{\xx_1,1};\cdots;\psi^+_{\xx_n,1}\psi^-_{\xx_n,1})=\label{depa}\\
&&=-\sum_{\p\, {\rm on}\, \{2,\ldots,n\}}G_{++}(\xx_1-\xx_{\p(2)})
\cdots G_{++}(\xx_{\p(n)}-\xx_1)\;.
\nn
\eea
Plugging the decomposition \eqref{3.5} into \eqref{depa} gives 
\bea
-\sum_{\p\, {\rm on}\, \{2,\ldots,n\}}\mathfrak g_{++}(\xx_1-\xx_{\p(2)})\cdots \mathfrak g_{++}(\xx_{\p(n)}-\xx_1)+R'(\xx_1,\ldots,\xx_n)
\;,\nonumber
\eea
where $R'$ collects all the terms involving at least one factor $R(\xx-\xx')$
from \eqref{3.5}.
Now, a well known combinatorial identity (see e.g. \cite[Eq. (D.29)]{Frohlich-Gotschmann-Marchetti}) states that, if $n\ge 3$ and $\xx_1,\ldots,\xx_n$ are all
distinct, then 
\be \sum_{\p\, {\rm on}\, \{2,\ldots,n\}}\mathfrak g_{++}(\xx_1-\xx_{\p(2)})\cdots \mathfrak g_{++}(\xx_{\p(n)}-\xx_1)=0\;.\label{4.35}\ee
Therefore, the only non-vanishing contributions to the expression in  \eqref{4.29} come from the terms involving at least one factor 
$R(\xx-\xx')$. {These terms can be represented by sun diagrams similar to those in 
Fig.\ref{sun}(b), with the difference that the lines can be either associated with a propagator 
$\mathfrak g$ or with $R$, and there must be at least one propagator of type $R$. They give a contribution to the $n$-th cumulant of the height that we denote by $D_n'''(\xxi,\hhe)$. }

\medskip

{In order to evaluate $D'_n(\xxi,\hhe)$, $D''_n(\xxi,\hhe)$, $D'''_n(\xxi,\hhe)$}, we 
resort to a multiscale decomposition and a tree expansion 
that are typical of constructive quantum field theory. While in the non-interacting case $\lambda=0$ this 
could be avoided, this is the right approach that can be generalized to the interacting case.
Let us focus on $D'_n(\xxi,\hhe)$ first, {the discussion for $D''_n(\xxi,\hhe)$ and $D'''_n(\xxi,\hhe)$
being completely analogous. We expand the value of every sun diagram \eqref{eq:vsun}
by using the multiscale decomposition for $G$ in \eqref{decomposazzo} (recall that $m=0$, so that  $h^*=-\infty$ in that formula), so that \eqref{eq:vsun} is replaced by 
\be \sum_{h_1,\ldots,h_n\le 0}\dpr^{\bar \a_1}G^{(h_1)}_{\bar \o_1\bar\o_1}(\xx_1-\xx_{\p(2)})\cdots \dpr^{\bar \a_n}
G^{(h_n)}_{\bar \o_{\p(n)}\bar \o_{\p(n)}}(\xx_{\p(n)}-\xx_1)\;.\label{eq:vsunh}\ee
Diagrammatically, every such contribution is associated with a labelled sun diagram, similar to the one
in Fig.\ref{sun}(b), with extra scale labels $h_i$ attached to every solid line. 
Using \eqref{L1Linf0}, we can bound every factor in \eqref{eq:vsunh} as
$$\big|\dpr^{\bar \a_k}
G^{(h_k)}_{\bar \o_{\p(k)}\bar \o_{\p(k+1)}}(\xx_{\p(k)}-\xx_{\p(k+1)})\big|\le C 2^{\bar \a_k h_k}
2^{h_k}e^{-c\sqrt{2^{h_k}|\xx_{\p(k)}-\xx_{\p(k+1)}|}}$$
which implies the following bound on $D'_n(\xxi,\hhe)$: 
\bea &&| D'_n(\xxi,\hhe)|\le C_n \sum_{b_1\in \mathcal C^{(1)}_{\xxi\to\hhe}}\cdots\sum_{b_n\in \mathcal C^{(n)}_{\xxi\to\hhe}}\,
\sum_{\p\, {\rm on}\, \{2,\ldots,n\}}\times\label{4.37}\\
&&\sum_{h_1,\ldots,h_n\le 0}2^{\bar h}\Big[\prod_{k=1}^n2^{h_k}e^{-c\sqrt{2^{h_k}|\xx_{\p(k)}-\xx_{\p(k+1)}|}}\Big]\;.
\nonumber\eea
Here: (i) $C_n$ is a suitable positive constant, (ii)
$\bar h=\max_{i=1,\ldots n}h_j$, and $2^{\bar h}$ is an upper bound on $\prod_{i}2^{\bar \a_i h_i}$,
(iii)
$\p(1)$ and $\p(n+1)$ should be interpreted as being equal to $1$.} 

Now we can 
 sum over $b_1,\ldots,b_n$ (which is the same as summing over $\xx_1,\ldots,\xx_n$), observing that each sum is one-dimensional ($b_i$ and, therefore, $\xx_i$ runs along the path $\mathcal C^{(i)}_{\xxi\to\hhe}$) and that, thanks to the way the paths were chosen,
$|\xx_i-\xx_j|\ge c_n(d_i+d_j) $, with $d_i=\min \{d(\xx_i,\xxi),d(\xx_i,\hhe)\}$
and e.g. $d(\xx_i,\xxi)$ the distance between $\xxi$ and $\xx_i$ along $\mathcal C^{(i)}_{\xxi\to\hhe}$.
Then,
{\be \prod_{k=1}^n e^{-c\sqrt{2^{h_k}|\xx_{\p(k)}-\xx_{\p(k+1)}|}}\le \prod_{k=1}^n e^{-c_n'\sqrt{ d_{\p(k)}(2^{h_k}+2^{h_{k-1}})}}\;,\label{z4.40}\ee
where $h_0$ should be interpreted as being equal to $h_n$.}
The $k$-th factor can now be easily summed over $b_{\p(k)}$ and gives:
{\be \sum_{b_{\p(k)}\in\mathcal C^{(\p(k))}_{\xxi\to\hhe}}e^{-c_n'\sqrt{ d_{\p(k)}(2^{h_k}+2^{h_{k-1}})}}\le 2 \sum_{d=0}^\infty e^{-c_n' \sqrt{d\cdot(2^{h_k}+2^{h_{k-1}})}}
\le C_n' 2^{-\max\{h_k,h_{k-1}\}}\;.\label{z4.41} \ee}
Plugging these bounds into \eqref{4.37} gives
\be | D'_n(\xxi,\hhe)|\le C_n'' \sum_{h_1,\ldots,h_n\le 0}2^{\bar h}\Big[\prod_{k=1}^n2^{h_k}2^{-\max\{h_k,h_{k-1}\}}\Big]\;.\label{4.39}\ee
The sum over the $h_i$'s in the r.h.s. of \eqref{4.39} can be performed in various 
ways. We follow a specific strategy (possibly not the most straightforward), which admits a natural generalization to the interacting case. {We think, once again, of the scale labels as being associated with 
the propagators of a labelled sun diagram. Every choice of $(h_1,\ldots,h_n)$ produces a hierarchical 
organization of the vertices of the sun diagram into {\it clusters}, defined as follows. }
We say that a group of vertices forms a cluster on scale $h$ if:
\begin{itemize}
\item the vertices are connected in the sub-graph where only lines on scale $h'\ge h$ are drawn;
\item the group of vertices is maximal (i.e. no other vertex can be added
while keeping the first property).
 \end{itemize}

With this definition, every cluster contains at least 2 vertices. 
Note that the same group of vertices can be  a  cluster on various different scales. 
Every choice of $(h_1,\ldots,h_n)$ defines a set of clusters, which are partially ordered in the natural sense induced by the 
subset relation: if a cluster $v$ 
on scale $h$ {\it strictly} contains a cluster $v'$ on scale $h'$,
then $h'>h$. If $v$ on scale $h$ contains a cluster $v'$ on scale $h'>h$, we say that  $v'$ follows $v$.
In this sense, every 
choice of $(h_1,\ldots,h_n)$ defines a cluster structure. An example
 is shown in Fig.  \ref{cluster}.

\begin{figure}[ht]
\resizebox{.5\textwidth}{!}{
\input{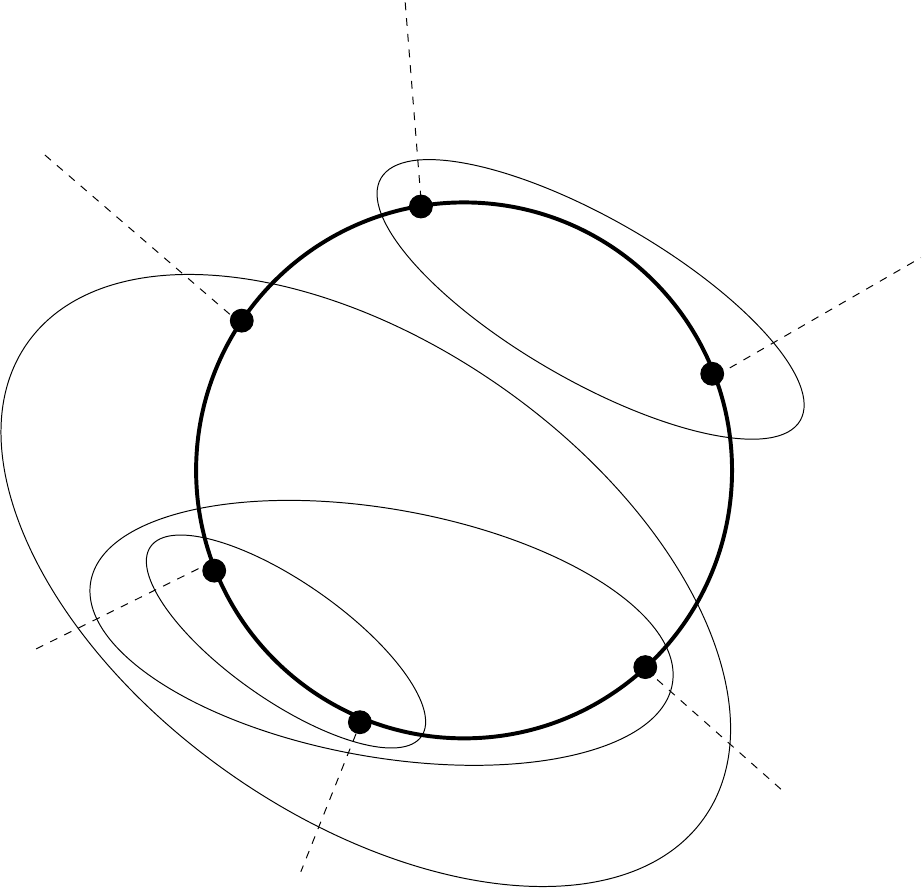_t}
}
\caption{An example of a labelled sun diagram with (part of) its cluster structure. In the picture it is assumed that $h_6<h_2<h_1$ and $h_2<h_5<h_3<h_4$. The cluster on scale 
$h_2$  is not indicated explicitly.} 
\label{cluster}
\end{figure}

The partial ordering introduced above allows to represent a cluster
structure as a tree, see Fig. \ref{tree}.
The tree can be drawn on a grid of
vertical lines, each associated with its scale label, and ordered from
left to right, 
 from the scale of the root (which is by convention one unit smaller
than $\min_j h_j$) to 1. 
Vertices $v_i$ correspond to endpoints (leaves) of the tree, which are all drawn by convention  on the vertical
line of scale $1$. 
The intersections between the vertical
lines and the tree are called nodes. All the nodes followed by at least two endpoints correspond to clusters: the cluster of scale $h_v$ associated with such a node $v$
is the set of endpoints following $v$ on $\t$; in terms of this
definition, it is natural to think of the endpoints, as well as of the nodes followed 
by just one endpoint, as (trivial) clusters.
Given the tree, the cluster structure can be reconstructed unambiguously. 
If we identify trees obtained from each other by pivoting the branches on the branching points, 
then the trees are in one-to-one correspondence with the cluster structures.

In Section \ref{sec.RG}, when analyzing the interacting model, 
we will need a more general class of trees. 

\begin{figure}[ht]
\includegraphics[width=.65\textwidth]{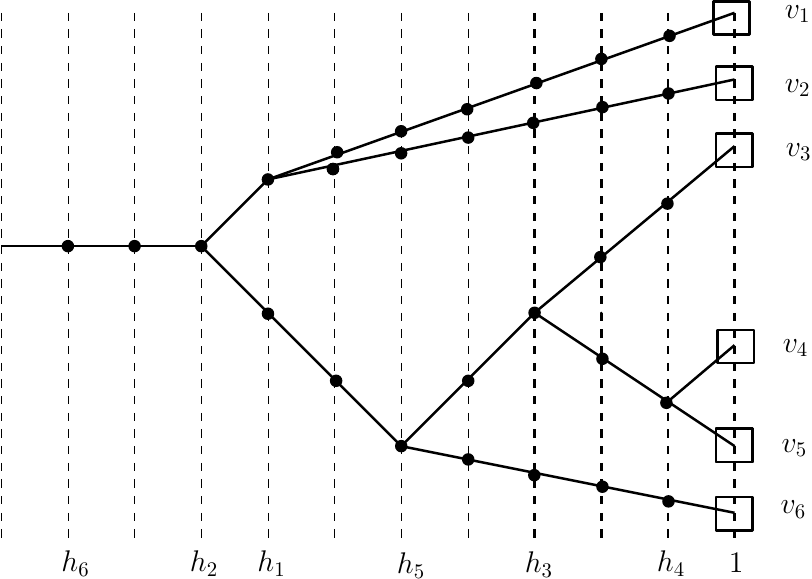}
\caption{The tree representing the hierarchical cluster structure of the labelled graph in Fig. \ref{cluster}. Dots are nodes, squares are endpoints.}
\label{tree}
\end{figure}

Every labelled tree can be naturally thought of as a ``topological" (i.e., unlabeled) tree, together with its scale labels. 
{The idea is to reinterpret the sum over $(h_1,\ldots,h_n)$ in \eqref{4.39} as a sum over trees, 
to be performed by first summing over the scale labels at fixed topological tree, and then over the topological trees.}  This can be done very easily: calling 
$\TT^0_{h;n}$ the family of labelled trees with $n$ endpoints and root on scale $h$
that we just introduced,  \eqref{4.39} implies
 \be | D'_n(\xxi,\hhe)|\le C_n''' \sum_{h<0}\sum_{\t\in\TT^0_{h;n}}2^{\bar h_\tau}\prod_{v\in V(\t)}2^{h_v\tilde n_v}
 \prod_{v\in V_{nt}(\t)}2^{-h_v\bar m^J_v}
 \;,\label{4.40}\ee
 where: (i) $V(\t)$ is the set of nodes of $\t$ that are neither endpoints nor the root; (ii)
 $V_{nt}(\t)$ is the set of branching points of $\t$; (iii)
 $\bar h_\t=\max_{v\in V_{nt}(\t)}h_v$; (iv) $\tilde n_v$ is the number of
 propagators contained in the cluster $v$ but not in any other cluster
 $v'>v$ [we say that a propagator is contained in a cluster $v$ of scale $h_v$ if it connects two endpoints in $v$, and if its scale is $\ge h_v$]; (v) if $v\in V_{nt}(\t)$, then $\bar m^J_v$ is the number of endpoints contained in the
 cluster $v$ but not in any other cluster $v'\in V_{nt}(\t)$ such that $v'>v$.  The exponent in the
 last product can be rewritten as follows. 
First note that, given a function $f_v$ on $V(\tau)$ one has
\begin{eqnarray}
  \label{eq:7}
  \sum_{v\in V(\t)}h_v f_v=h\sum_{v\in V(\t)}f_v+\sum_{v\in
    V(\t)}\sum_{\substack{w\in V(\t):\\w\ge v}}f_w
\end{eqnarray}
with $h$ as usual the scale of the root. Similarly,
\be
  \sum_{v\in V_{nt}(\t)}h_v f_v=h\sum_{v\in V_{nt}(\t)}f_v+\sum_{v\in
    V_{nt}(\t)}(h_v-h_{v'})\sum_{\substack{w\in V_{nt}(\t):\\w\ge
      v}}f_w,
\label{pippo}
\ee
where, given $v\in V_{nt}(\t)$, we denoted by $v'$ the rightmost node in $V_{nt}(\t)$ preceding $v$ on $\t$ (if $v$ is the leftmost node in $V_{nt}(\t)$, then we let 
$h_{v'}=h$).
On the other hand,  if $n^e_v$ is the number of solid lines exiting
from the cluster $v$ in the Feynman diagram, see Fig.  \ref{cluster}, and
$m^J_v$ is the number of endpoints following $v$,  
one has
 \be \sum_{\substack{v\in V(\t):\\ v\ge w}}\tilde n_v=m^J_w-\frac{n^e_w}{2}\;,\label{4.42w}\ee
which can be easily proved by induction. Similarly, if $w\in V_{nt}(\t)$, then
$\sum_{\substack{v\in V_{nt}(\t):\\ v\ge w}}\bar m^J_v=m^J_w$.
 Then, one deduces
 \bea &&\sum_{v\in V(\t)}h_v\tilde n_v-\sum_{v\in V_{nt}(\t)}h_v\bar m^J_v\label{4.41}
 =-\sum_{w\in V^*(\t)}n^e_w/2\;,\eea
where $V^*(\t)=\{v\in V(\t): m_v^J>1\}$ and we used the fact that $\sum_{v\in V(\t)}\tilde n_v=\sum_{v\in V_{nt}(\t)}\bar m^J_v$.
Moreover, $n^e_v=2$ for every cluster except the one at scale $h+1$ (just look at Fig.  \ref{cluster}). Therefore, 
plugging \eqref{4.41} back into \eqref{4.40} gives
\be | D'_n(\xxi,\hhe)|\le 2C_n''' \sum_{h<0}\sum_{\t\in\TT^0_{h;n}}2^{\bar h_\tau}\prod_{v\in V^*(\tau)}2^{-1}
\;,\label{4.43}\ee
which readily shows that the sum over the scale labels is convergent
(first sum over the scale labels $h_v$ 
at fixed $h^*_\tau$, and then over $h_\tau^*\le 0$): finally, we multiply by the number of topological trees with $n$ endpoints, which is 
a constant depending only on $n$,
so that 
\be | D'_n(\xxi,\hhe)|\le C_n''''\;,\label{4.44}\ee
as desired. {The bounds on $D_n''(\xxi,\hhe)$ and $D_n'''(\xxi,\hhe)$ are completely analogous,
because both quantities can be bounded as in \eqref{4.37}. This is obvious for $D_n''(\xxi,\hhe)$, 
for what already observed a few lines above \eqref{4.29}. For what concerns 
$D_n'''(\xxi,\hhe)$, recall that every contribution to it comes from a sun diagram 
whose lines are either of type $\mathfrak g$ or $R$, and there is at least one rest propagator 
$R$. After a multiscale decomposition of the propagators, we use the dimensional estimates on
$\mathfrak g^{(h)}$ and $R^{(h)}$ stated in Lemma \ref{Lemma:Gevrey}, and note that 
dimensionally $R^{(h)}$ behaves exactly like $\partial G^{(h)}$. This implies the analogue of 
\eqref{4.37} for $D_n'''(\xxi,\hhe)$.}

We are left with the rest $R_n(\xxi,\hhe)$ in \eqref{4.26}, which is easier to analyze. In order to estimate it, we do not even need to use the cancellation 
\eqref{4.35}. Proceeding as above\footnote{
To be precise, when applying \eqref{4.8} one should take into account the multiplicity of the coinciding bonds. Since 
these multiplicities are bounded by  $n$, this only changes the constants $C_n$ below.}, we find the analogue of \eqref{4.37}:
\be| R_n(\xxi,\hhe)|\le C_n \sum_{\substack{b_i\in \mathcal C^{(i)}_{\xxi\to\hhe}\\
i=1,\ldots,n}}^\circ
\sum_{\p\, {\rm on}\, \{2,\ldots,n\}}\sum_{h_1,\ldots,h_n\le 0}
\Big[\prod_{k=1}^n2^{h_k}e^{-c\sqrt{2^{h_k}|\xx_{\p(k)}-\xx_{\p(k+1)}|}}\Big]
\;,\label{4.45}\ee
where the $\circ$ on the sum indicates the constraint that at least one coordinate belongs to $B_{r_n}(\xxi)\cup B_{r_n}(\hhe)$. Note that, as compared to \eqref{4.37}, 
the (good) factor $2^{\max_{i}h_i}$ is now absent. 
After summing over $b_1,\ldots,b_n$, we get
\be |R_n(\xxi,\hhe)|\le C_n'' \sum_{h_1,\ldots,h_n\le 0}2^{\max_j h_j}\Big[\prod_{k=1}^n2^{h_k}2^{-\max\{h_k,h_{k-1}\}}\Big]\;,\label{4.47}\ee
where the gain factor $2^{\max_j h_j}$ arises from the fact that at least one of the coordinates $\xx_i$
is not summed over (or, more precisely, is summed over a region of size $r_n$) and, therefore, at least one of the factors $2^{-\max\{h_k,h_{k-1}\}}$ in the right side of \eqref{4.47} in reality should not be there (in fact, recall that these
factors come from \eqref{z4.41}; if the sum over $d$ from $0$ to $\io$ in \eqref{z4.41} is replaced by a sum over a finite set of nonnegative integers,  then the right side of \eqref{z4.41} 
can be replaced by a constant $C_n'$).
The right side of \eqref{4.47}
is the same as \eqref{4.39} and, therefore, leads to the analogue of \eqref{4.44}: $|R_n(\xxi,\hhe)|\le  C_n$. 
This concludes the proof of \eqref{eq:32} and of Theorem \ref{th:maintheorem} in the case $\l=0$.   
\end{proof}

\section{The height variance in the interacting case}\label{sec:varint}

In the proof of Theorem \ref{th:maintheorem} for $\lambda=0$, a
crucial role was the sharp asymptotic behavior of multi-dimer
correlations, see in particular Proposition \ref{prop:ddl0} for the
two-point function. We need analogous estimates for $\l\ne0$. In
particular, for the proof of \eqref{111} (logarithmic divergence of
the height variance) we need the sharp asymptotic estimate on the
two-point dimer correlation, provided by Theorem \ref{th:dascrivere}
(which is proved in Section \ref{sec:di}).
Given this, the proof of \eqref{111} is immediate and is presented here. The height variance 
can be written as
\be \media{h_{\boldsymbol{\xi}}-h_{\boldsymbol{\eta}};h_{\boldsymbol{\xi}}-h_{\boldsymbol{\eta}}}=
\sum_{b_1\in \mathcal C^{(1)}_{\xxi\to \hhe}} \sum_{b_2\in \mathcal C^{(2)}_{\xxi\to \hhe}}\s_{b_1}\s_{b_2}\media{\openone_{b_1};\openone_{b_2}}_{\l},\label{eq:4.2}\ee
with $\mathcal C^{(1)}_{\xxi\to \hhe}, \mathcal C^{(2)}_{\xxi\to \hhe}$ chosen as explained after \eqref{eq:31}. Plugging \eqref{eq:41bis} into \eqref{eq:4.2}, we obtain 
\bea && \media{h_{\boldsymbol{\xi}}-h_{\boldsymbol{\eta}};h_{\boldsymbol{\xi}}-h_{\boldsymbol{\eta}}}=
\sum_{\substack{b_1\in \mathcal C^{(1)}_{\xxi\to \hhe}\\ b_2\in \mathcal C^{(2)}_{\xxi\to \hhe}}}\s_{b_1}\s_{b_2}
\Big\{{\bf
    1}_{\xx_1\neq\xx_2}\Big[-\frac{K}{2\p^2}(-1)^{\xx_1-\xx_2}\cdot\nonumber\\
&&\cdot\,{\rm Re}\frac{(i)^{j_1+j_2}}{(z_{\xx_1}-z_{\xx_2})^2}+\d_{j_1,j_2}\frac{\tilde K}{2\p^2}\frac{(-1)^{(\xx_1-\xx_2)_{j_1}}}{|\xx_1-\xx_2|^{2\k}}\Big]+R_{j_1,j_2}(\xx_1-\xx_2)\Big\},\qquad 
\label{5.94}
\eea
where $\xx_1,\xx_2,j_1,j_2$ are such that $b_1=(\xx_1,\xx_1+\hat e_{j_1})$ and $b_2=(\xx_2,\xx_2+\hat e_{j_2})$, and $z_\xx=x_1+ix_2$ is the complex number associated with $\xx$. 
Recall now that $\s_{b_i}=\a_{b_i}(-1)^{\xx_i}(-1)^{j_i}$, with $\alpha_b$ defined just after \eqref{eq:sigmas}. Using this
explicit expression for $\s_b$ into \eqref{5.94}, we can rewrite the term proportional to $K$ as:
\be -\frac{K}{2\p^2}\sum_{\substack{b_1\in \mathcal C^{(1)}_{\xxi\to \hhe}\\
b_2\in \mathcal C^{(2)}_{\xxi\to \hhe}}}\s_{b_1}\s_{b_2}
\;{\rm Re}\frac{(-1)^{\xx_1-\xx_2}(i)^{j_1+j_2}}{(z_{\xx_1}-z_{\xx_2})^2}=
-\frac{K}{2\p^2}\sum_{\substack{b_1\in \mathcal C^{(1)}_{\xxi\to \hhe}\\
b_2\in \mathcal C^{(2)}_{\xxi\to \hhe}}}{\rm Re}\frac{\D z_{b_1}\D z_{b_2}}{(z_{\xx_1}-z_{\xx_2})^2}\;,\label{5.95}\ee
where $\D z_{b_i}$ is the displacement associated with the elementary portion of the path 
$\mathcal C^{(i)}$ crossing $b_i$, thought of as a complex vector of
modulus 1. This term is the $\l\neq 0$ analog of the first term in the right side of \eqref{eq:8},
which referred to the case $\l=0$.
Exactly like in the $\l=0$ situation, the right side of \eqref{5.95} is equal to $K$ times the integral in 
\eqref{4.int} (which is the desired dominant contribution to the variance of the height), plus a rest that is 
uniformly bounded in $|\xxi-\hhe|$. 

Let us now estimate the contributions to the variance coming from the other
two terms in the right side of \eqref{5.94}.  The last term, i.e., the sum over $b_1,b_2$ of 
$\s_{b_1}\s_{b_2}R_{j_1,j_2}(\xx_1-\xx_2)$, leads to a contribution that is uniformly bounded in
$|\xxi-\hhe|$, thanks to the decay estimate on $R_{j_1,j_2}$: 
{$|R_{j,j'}(\xx-\yy)|\le C_\th(1+|\xx-\yy|)^{-2-\theta}$, for some $\frac12\le \th<1$ and $C_\th>0$.}
Regarding the term proportional to
$\tilde K$, note that
$$\s_{b_1}\s_{b_2}(-1)^{(\xx_1-\xx_2)_{j_1}}=\a_{b_1}\a_{b_2}(-1)^{(\xx_1-\xx_2)_{3-j_1}}.$$
Namely, the oscillatory factor $\s_{b_1}\s_{b_2}$ {\it does not} compensate the
oscillatory factor $(-1)^{(\xx_1-\xx_2)_{j_1}}$.  Once summed over the
path, and using the fact that the paths $\mathcal C^{(i)}_{\xxi\to
  \hhe}$ consist of union of straight portions, each of which is
formed by an even number of bonds, we see that the oscillatory factor
$(-1)^{(\xx_1-\xx_2)_{3-j_1}}$ has the same effect as a discrete
derivative (we are sketchy here, but the very same argument was used in the
non-interacting model just after \eqref{eq:8}):
\be\Big|\sum_{\substack{b_1\in \mathcal C^{(1)}_{\xxi\to \hhe}\\
    b_2\in \mathcal C^{(2)}_{\xxi\to \hhe}}}\a_{b_1}\a_{b_2}
\d_{j_1,j_2}
\frac{(-1)^{(\xx_1-\xx_2)_{3-j_1}}}{|\xx_1-\xx_2|^{2+2\h'_2}}\Big|\le
c\sum_{\substack{b_1\in \mathcal C^{(1)}_{\xxi\to \hhe}
    \\
    b_2\in \mathcal C^{(2)}_{\xxi\to
      \hhe}}}\Big|\dpr_{3-j_1}\frac1{|\xx_1-\xx_2|^{2+2\h'_2}}\Big|\;,\ee
which shows that also this term is bounded uniformly in
$|\xxi-\hhe|$. This concludes the proof of \eqref{111}, i.e., of Theorem
\ref{th:maintheorem} for $n=2$.\qed

\medskip 

The proof of Theorem \ref{th:dascrivere}, which is, as we just saw, the crucial ingredient 
behind the proof of \eqref{111}, 
is very hard. It is based 
on a renormalized, convergent, perturbative expansion for the generating function 
$\mathcal Z_\L(\l,m,{\bf A})$ to be discussed in Section \ref{sec.RG2} below. The renormalized expansion $\mathcal Z_\L(\l,m,{\bf A})$ induces a convergent expansion for the multi-point dimer correlations, which is the key ingredient in the computation of the cumulants of the height fluctuations of order 3 or higher, to be discussed in Section \ref{sec:cisiamo}. 

\smallskip

\section{The interacting case: formal perturbation theory}\label{sec.RG}

Before explaining the renormalized, convergent, expansion for the
generating function for dimer correlations,  we make a
digression to explain why naive perturbation theory in $\l$ does not
work to get results like \eqref{eq:41bis}. This discussion will help the non-expert
reader understand the meaning of the renormalized perturbation
expansion of Section \ref{sec.RG2}, which is behind e.g.
Theorem \ref{th:dascrivere}.
Since strictly speaking the present section is not necessary for the proof,
our exposition here is more informal than in the rest of the article.
\begin{Remark}[Warning on the literature] Here and in Section \ref{sec.RG2}
we will often appeal to results from the literature on constructive RG,
notably \cite{GeM,BM0,BM02,BM1,BM,BFM14}. These works do not study exactly
the same model as ours: however, they all study models that can be
written as two-dimensional interacting Majorana or Dirac fermions, with potentials
having the same symmetry and decay properties as ours. The results we
refer to can be easily extended to our context.

\end{Remark}

\subsection{The Feynman diagrams expansion of the height fluctuations}

\label{sec:Fdyagram}
We restart from \eqref{4.4zz}.
We emphasize that, since $ \mathcal Z_\L(\l,m,{\bf A})$ is a polynomial
in $\lambda$ for finite $\Lambda$ and it equals $1$ when
$\lambda=0,{\bf A=0}$, the sums in the second and third lines of \eqref{4.4zz} are convergent for
sufficiently small $\l,{\bf A}$. However, proving that the radius of convergence in $\l$ does not shrink to zero 
as $\Lambda\nearrow\mathbb Z^2$ is a highly non-trivial task.
Of course, before even attempting to prove uniform convergence, we need at least to understand how to 
compute the right side of \eqref{4.4zz} formally, i.e., order by order in $\l$. A possible way of computing the perturbation 
series in $\l$ for the generating function is in terms of Feynman diagrams, {as explained after \eqref{4.mon}, see Proposition \ref{lemma:wick}. In particular,}
$E_\L(\l,m)$ equals the sum of all possible
connected Feynman diagrams obtained by contracting vertices of type
$\x(\g)$ (coming from $V_\L(\psi)$, see \eqref{e2.20}), where $\g=\{b_1,\ldots,b_k\}\subset\L$
is a collection of $k\ge 2$ parallel adjacent bonds,  
see \eqref{eq2.15}; in order to graphically represent $\x(\g)$, we imagine to represent 
$E^{(m)}_{(\xx,\xx+\hat e_j)}=(i)^{j-1}(1+\d_{j,1}m(-1)^{x_1})
\psi_\xx\psi_{\xx+\hat e_j}$ as a pair of solid half-lines, each of which can be contracted with another solid half-line to form a solid line (a propagator), while the $\a$'s can be thought of as wiggly lines from $b_1$ to $b_2$, etc, to $b_k$, see Fig. \ref{fig1}.

\begin{figure}[ht]
\resizebox{.3\textwidth}{!}{
\input{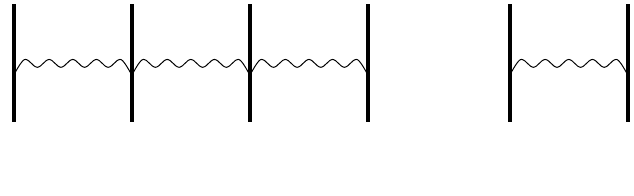_t}
}
\caption{Graphical representation of a vertex of type $\x(\{b_1,\ldots,b_k\})$.} 
\label{fig1}
\end{figure}

Moreover, 
$\big[\prod_{j=1}^k J_{b_j}\big]S_{\L,k}(b_1,\ldots,b_k)$  is the sum of all possible connected Feynman diagrams obtained by contracting vertices of type
$\x(\g)$ 
and of type
$\tilde\x(\g;R)$ (coming from $\mathcal B_\L(\psi,{\bf J})$), 
with the obvious constraint that the product of the $J_b$
factors involved produces exactly $\prod_{j=1}^k J_{b_j}$.
See Fig. \ref{fig3}.
\begin{figure}[ht]
\resizebox{.3\textwidth}{!}{
\input{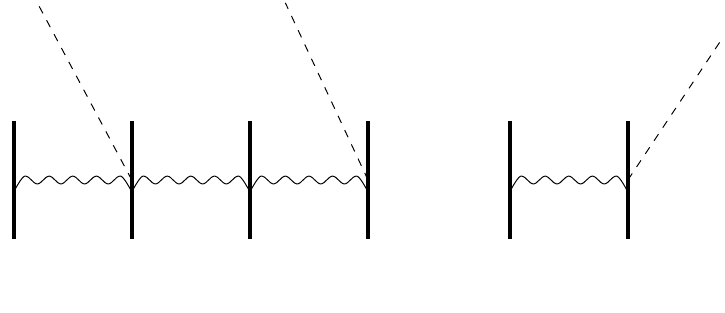_t}
}
\caption{Graphical representation of a vertex of type $\tilde \x(\{b_1,\ldots,b_k\};R)$, with $R=\{b_2,b_4,b_k\}$.
The dotted lines represent the external fields $J_{b_i}$. If
$|\gamma|=|R|=1$ the vertex is said to be of type $-J_bE_b^{(m)}$ (see
also Fig. \ref{sun}(a)).
} 
\label{fig3}
\end{figure}
For example, one of the diagrams contributing to $S_{\L,4}(b_1,\ldots,b_4)$ is shown in Fig. \ref{fig4}. 
The diagram in Fig. \ref{fig4} is obtained from a 
contraction of the vertices depicted in Fig. \ref{fig5}.

\begin{figure}[ht]
\resizebox{.35\textwidth}{!}{
\input{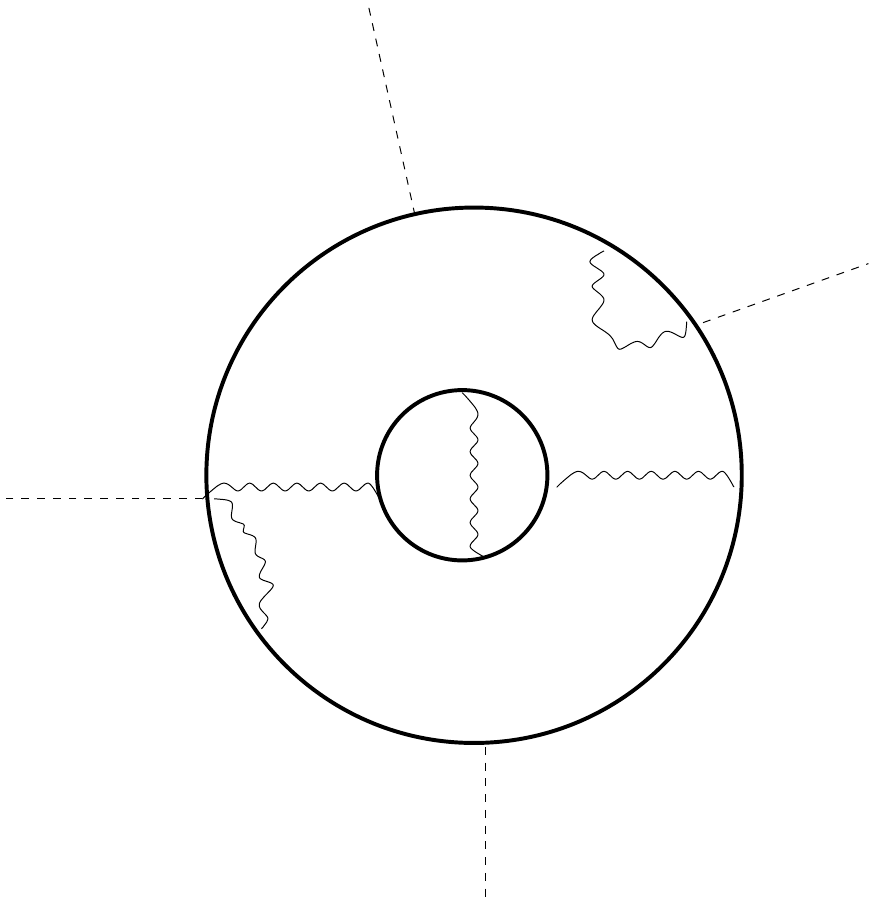_t}
}
\caption{A diagram contributing to $S_{\L,4}(b_1,\ldots,b_4)$.} 
\label{fig4}
\end{figure}

\begin{figure}[ht]
\resizebox{.6\textwidth}{!}{
\input{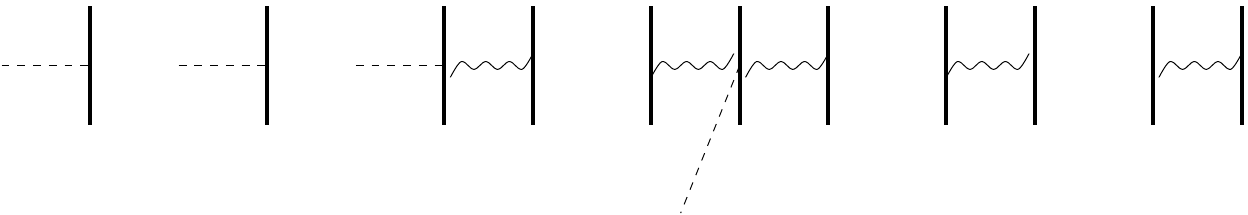_t}
}
\caption{The vertex elements producing the diagram in Fig. \ref{fig4}, after a suitable contraction of the solid half-lines.}
\label{fig5}
\end{figure}

\subsection{Failure of the Feynman diagram expansion}\label{sec.RG1}

{In order to bound the Feynman diagram expansion for the kernels $S_{\L,n}(b_1,\dots,b_n)$ in
\eqref{4.4zz}, we can try to proceed as follows (the strategy is similar to 
the one used in Section \ref{sec4.1.2}). We decompose each of the propagators $G$ 
appearing in the values of the Feynman diagrams} as in \eqref{decomposazzo} and in this way we obtain labelled Feynman graphs with solid lines (propagators) each carrying
a scale label $h^* \le h\le 0$, the label $h=h^*$ corresponding to $G^{(\le h^*)}$. 
Any labelled graph has a corresponding
cluster structure, {in the sense explained after \eqref{4.39}},
which can be conveniently represented by a tree analogous to those in
Fig. \ref{tree}; in the interacting case, these trees are known as
{\it Gallavotti-Nicol\`o} (GN) trees, first introduced in \cite{GN} for
studying the renormalization theory of the $\varphi^4_4$ Quantum Field
Theory (QFT), and later applied to several other problems in statistical mechanics and field theory
(for a detailed derivation of the tree expansion, see e.g.\ \cite{Ga} and the more recent reviews \cite{GeM,Gi,Mabook};
a description of its main features is summarized below, for completeness).
{It is now tempting to bound the value of every labelled Feynman diagram by using Lemma \ref{Lemma:Gevrey}, then sum the resulting bound over the scale labels at fixed cluster structure, 
and then sum over the cluster structures, exactly as we did in Section \ref{sec4.1.2}. }
Natural as it appears, this strategy {\it does not work}, and
actually perturbation expansion in Feynman diagrams does not provide
any information on the interacting dimer correlations. As this is a
key point in order to understand the motivations of the more elaborate
analysis in the following sections, it is convenient to 
explain why the power series expansion in Feynman diagrams does not
work, i.e., it cannot be proved directly to be convergent.

\subsubsection{The tree and the labelled Feynman diagram expansions}\label{sec.RG1.1}
In contrast with the trees we introduced for the non-interacting
model, GN trees have endpoins of different type, depending on
whether they are associated with a vertex of type $V_\L(\psi)$ (i.e., of type $\x(\g)$, see \eqref{e2.20}), in which case the endpoints will be called ``normal", or of type $\mathcal B_\L(\psi,{\bf J})$ (i.e., of type $-J_bE_b^{(m)}$ or of type $\tilde\x(\g;R)$ with $|\g|\ge 2$, see \eqref{ee22.2200}), in which case they will be called ``special". Note that 
in the non-interacting case $\a=0$ we had $V_\L(\psi)=0$ and  $\mathcal B_\L(\psi,{\bf J})=-\sum_{b\subset\L}
J_bE_b^{(m)}$, so that all the endpoints were special. 
It is important to realize that, given a labeled tree (including possibly the labels that specify the order in $\a$ of the endpoints), there may be many Feynman diagrams compatible with it, 
see e.g. Figure \ref{scervellato}. 

\begin{figure}[ht]
\includegraphics[width=.7\textwidth]{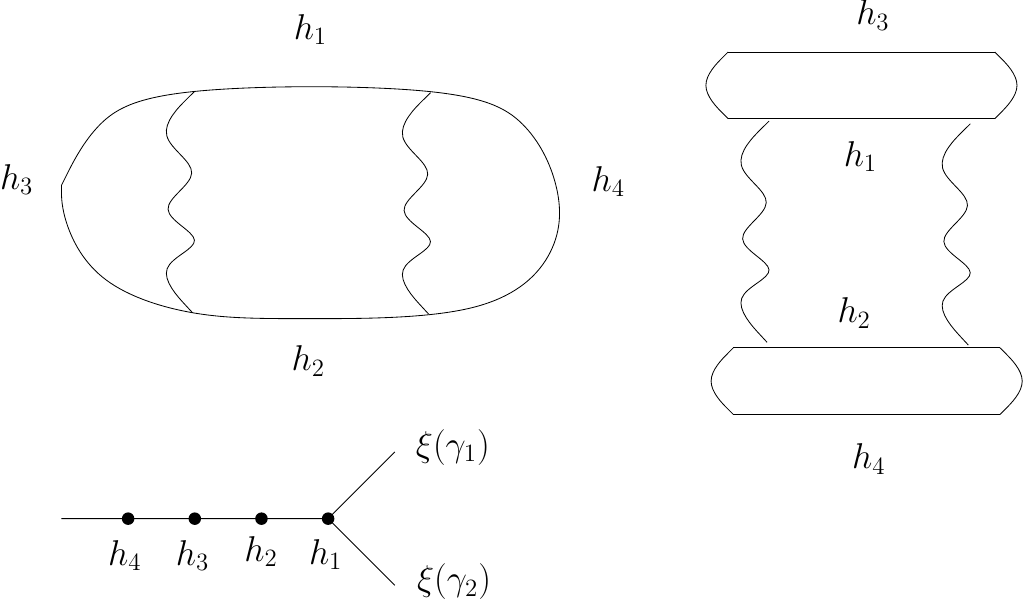}
\caption{Two different labelled Feynman graphs, coming from the
contractions of two vertices of type $\xi(\g_i), i=1,2$ with
$|\g_i|=2$, giving the same tree. Here,
$h_4<h_3<h_2<h_1$.
}
\label{scervellato}
\end{figure}

To explain precisely how to express $E_\L(\l,m)$ and $S_{\L,k}(b_1,\ldots,b_k)$ 
as a  sum over trees and over Feynman diagrams compatible with the trees, we need to make
a small detour about the main features and definitions of the GN
trees. The trees introduced in this section  are called
``non-renormalized trees'', as opposed to the ``renormalized trees''
that will be introduced in Section \ref{sec.RG2}. Let us also remark that some of the conventions introduced here are slightly different from those used in Section \ref{sec4}, such as 
the rule for identifying trees, and the meaning of the word ``vertex".

\begin{figure}[ht]
\centering
\begin{tikzpicture}[x=0.8cm,y=1cm]
    \foreach \x in {0,...,11}
    {
     \draw[very thin] (\x ,1.9) -- (\x , 7.5);
    }
    	\draw (0,1.5) node {$h$};
    	\draw (1,1.5) node {$h+1$};
	\draw (11,1.5) node {$1$};
      \draw (5,1.5) node {$h_v$};
	\draw (-0,4) node[inner sep=0,label=180:$r$] (r) {};
	\draw[med] (r) -- ++(1,0) node [vertex,label=130:$v_0$] (v0) {};
	\draw[med]  (v0) -- ++ (1,0) node[vertex] {} -- ++ (1,0) node[vertex] (v1) {};
\draw[med]  (v1) -- ++ (1,-0.15) node[vertex] {} -- ++ (1,-0.15) node[vertex] {} -- ++ (1,-0.15) node[vertex] {} -- ++ (1,-0.15) node[vertex] {} -- ++ (1,-0.15) node[vertex] {} -- ++ (1,-0.15) node[vertex] {} -- ++ (1,-0.15) node[vertex] {} -- ++ (1,-0.15) node[vertex] {};
\draw[med]  (v1) -- ++ (1,-0.24) node[vertex] {} -- ++ (1,-0.24) node[vertex] {} -- ++ (1,-0.24) node[vertex] {} -- ++ (1,-0.24) node[vertex] {} -- ++ (1,-0.24) node[vertex] {} -- ++ (1,-0.24) node[vertex] {} -- ++ (1,-0.24) node[vertex] {} -- ++ (1,-0.24) node[vertex] {};
\draw[med]  (v1) -- ++(1,0.5) node [vertex] {} -- ++ (1,0.5) node [vertex,label=130:$v$] (v) {};
\draw[med]  (v) -- ++ (1,0.5) node [vertex] {} -- ++ (1,0.5) node [vertex] {} -- ++ (1,0.5) node [vertex] (v2) {};
\draw[med]  (v2) -- ++ (1,0.25) node [vertex] {} -- ++ (1,0.25) node [vertex] {}-- ++ (1,0.25) node [specialEP] {};
\draw[med]  (v2) -- ++ (1,-0.25) node [vertex] {} -- ++ (1,-0.25) node [vertex] (v3) {} -- ++ (1,-0.25) node [vertex] {};
\draw[med]  (v3) -- ++ (1,0.25) node [vertex] {};
\draw[med]  (v) -- ++ (1,-0.25) node [vertex] {} -- ++ (1,-0.25) node [vertex] (v4) {}-- ++ (1,-0.25) node [vertex] {}-- ++ (1,-0.25) node [vertex] (v5) {}-- ++ (1,-0.3) node [vertex] {} -- ++ (1,-0.3) node [specialEP] {};
\draw[med]  (v4) -- ++(1,0.2) node[vertex] {}-- ++(1,0.2) node[vertex] {}-- ++(1,0.2) node[vertex] {}-- ++(1,0.2) node[vertex] {};
\draw[med]  (v5)  -- ++(1,0.3) node [vertex] {} -- ++(1,0.3) node [vertex] {};
\draw[med]  (v5) -- ++(1,0) node [vertex] {} -- ++(1,0) node [specialEP] {};
\end{tikzpicture}
\caption{A tree $\t\in {\tilde{\TT}^{(h)}_{N,n}}$ with $N=6$ and $n=3$: the root is on scale $h$ and the  
endpoints are all on scale $1$.\label{fig6.3}} 
\end{figure}
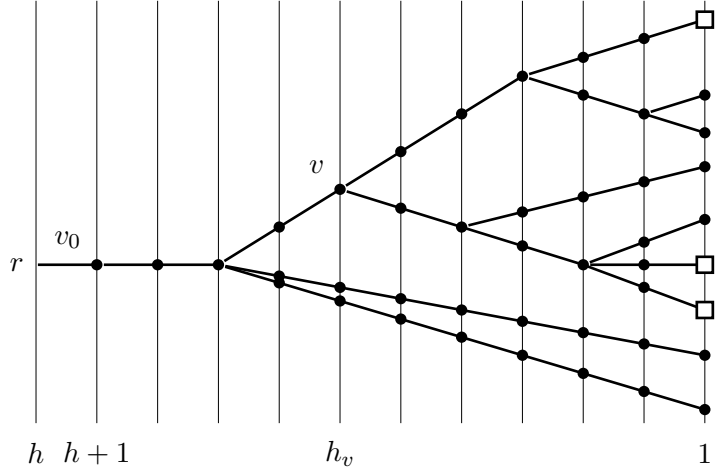

\begin{enumerate}
\item Consider the family of all trees which can be constructed
by joining a point $r$, the {\it root}, with an ordered set of $N+n\ge 1$
points, the {\it endpoints} of the {\it unlabeled tree},
so that $r$ is not a branching point. The endpoints can be of two types, either normal or special,
the former drawn as  dots, the latter as  squares, see Fig. \ref{fig6.3}; $N$ 
and $n$ are the number of normal and special endpoints, respectively.
The branching points will be called
the {\it non-trivial vertices}.
The unlabeled trees are partially ordered from the root
to the endpoints in the natural way; we shall use the symbol $<$
to denote the partial order.
Two unlabeled trees are identified if they can be superposed by a suitable
continuous deformation, so that the endpoints with the same index coincide.
We shall also consider the {\it labelled trees} (to be called
simply trees in the following); they are defined by associating
some labels with the unlabeled trees, as explained in the
following items.
\item We associate a label $h^*-1\le h<0$ with the root and we denote by
$\tilde\TT^{(h)}_{N,n}$ the corresponding set of labelled trees with $N$
normal and $n$ special endpoints (the tilde in $\tilde\TT^{(h)}_{N,n}$ reminds that the trees are non-renormalized). Moreover, we introduce a family of vertical lines,
labeled by an integer taking values in $[h,1]$, and we represent
any tree $\t\in\tilde \TT^{(h)}_{N,n}$ so that, if $v$ is an endpoint, it is contained 
in the vertical line with index $h_v=1$, while
if it is a non-trivial vertex, it is contained in a vertical line with index
$h<h_v\le 0$, to be called the {\it scale} of $v$; the root $r$ is on
the line with index $h$.
In general, the tree will intersect the vertical lines in set of
points different from the root, the endpoints and the branching
points; these points will be called {\it trivial vertices}.
The set of the {\it
vertices} will be the union of the endpoints, of the trivial
vertices and of the non-trivial vertices; note that the root is not a vertex.
Every vertex $v$ of a
tree will be associated to its scale label $h_v$, defined, as
above, as the label of the vertical line whom $v$ belongs to. Note
that, if $v_1$ and $v_2$ are two vertices and $v_1<v_2$, then
$h_{v_1}<h_{v_2}$.
\item There is only one vertex immediately following
the root, called $v_0$ and with scale label equal to $h+1$.
\item Given a vertex $v$ of $\t\in\tilde \TT^{(h)}_{N,n}$ that is not an endpoint,
we can consider the subtrees of $\t$ with root $v$, which correspond to the
connected components of the restriction of
$\t$ to the vertices $w\ge v$. If a subtree with root $v$ contains only
$v$ and one endpoint on scale $h_v+1$,
it will be called a {\it trivial subtree} (and in this case $h_v=0$).
\item If $v$ is not and endpoint, the cluster associated with it is the set of endpoints following $v$ on $\t$;
if $v$ is an endpoint, it is itself a (trivial) cluster. The tree provides an organization of endpoints 
into a labelled hierarchy of clusters (the cluster structure).
\item Normal endpoints are associated with (one of the monomials contributing to) $V_\L(\psi)$, while special endpoints are associated with (one of the monomials contributing to)
$\mathcal B_\L(\psi,{\bf J})$,
both thought of as functions of the Majorana fields $\psi_{\xx,\g}$, with ${\xx\in\L}$ and ${\g=1,\ldots,4}$.
\end{enumerate}

In order to distinguish the various contributions arising from the choices of the monomials in the
factors $V_\L(\psi)$ and $\mathcal B_\L(\psi,{\bf J})$ associated with
the endpoints, as well as the scale at which each field in these
monomials is contracted\footnote{{a remark on nomenclature: we
    refer to both Majorana variables $\psi_{\xx,\gamma}$ and to
    $J_{\xx,j}$ as ``fields'' ($\psi$ fields and $J$ fields respectively)}},
we need a few more definitions. We introduce a {\it field label} $f$ to distinguish the field variables
appearing in the monomials associated with the endpoints;
the set of field labels associated with the endpoint $v$ will be called $I_v$; 
if $v$ is not an endpoint, we shall
call $I_v$ the set of field labels associated with the endpoints following
the vertex $v$. Note  that every field can be either of type $J$ or $\psi$: correspondingly, we denote by 
$I_v^J$ and $I_v^\psi$ the set of field labels of type $J$ and $\psi$, respectively, associated with $v$. 
Furthermore, we denote by $\xx(f)$ the spatial coordinate  of the field variable with label $f$; if $f\in I_v^J$, we denote by $b(f)$ the bond label of the 
corresponding $J$ field, and we let $\xx(f)$ and $j(f)$ be such that $b(f)=(\xx(f),\xx(f)+\hat e_{j(f)})$; 
if $f\in I_v^\psi$, we denote by $\g(f)$ the Majorana label of the corresponding Grassmann field. Similarly, we let $\xx_v:=\cup_{f\in I_v}\xx(f)$, 
etc. 
\begin{Remark}[Kernels of endpoints]\label{rem:kernel}
Given an endpoint $v$ and the labels $I_v$, $\xx_v$, etc., the value of the endpoint is uniquely specified, and we denote it by $K_v(\xx_v,I_v)
J(I_v^J)\psi(I_v^\psi)$, where $K_v$ is the {\it kernel} of $v$, while $J(I)=
\prod_{f\in I}J_{b(f)}$ and $\psi(I)=\prod_{f\in
 I}\psi_{\xx(f),\g(f)}$. 
For instance, if $v$ is an endpoint of type $\xi(\gamma)$ with $\gamma$ a collection of $k$ adjacent vertical
bonds then 
$K_v=(-1)^{k}i^k\alpha^{k-1}$, see \eqref{eq2.15}.  
\end{Remark}
We associate with any vertex $v$ of the tree a subset $P^\psi_v$ of $I^\psi_v$,
the {\it set of external fermionic fields} of $v$. They correspond to the
lines exiting from the cluster $v$, in the same sense discussed after
\eqref{pippo}; in particular, 
 their cardinality is the analogue of
the quantity $n^e_v$ introduced there.
These subsets must satisfy various
constraints. First of all, if $v$ is not an endpoint and $v_1,\ldots,v_{s_v}$
are the $s_v\ge 1$ vertices immediately following it on $\t$, then
$P^\psi_v \subset \cup_i
P^\psi_{v_i}$; if $v$ is an endpoint, $P^\psi_v=I^\psi_v$.
If $v$ is not an endpoint, we shall denote by $Q^\psi_{v_i}$ the
intersection of $P^\psi_v$ and $P^\psi_{v_i}$; this definition implies that $P^\psi_v=\cup_i
Q^\psi_{v_i}$. The union of the subsets $P^\psi_{v_i}\setminus Q^\psi_{v_i}$
is, by definition, the set of the {\it internal fields} of $v$,
and is non-empty if $s_v>1$. 
For convenience, in the following we shall also indicate
$P_v^J:=I_v^J$, $P_v:=P_v^\psi\cup P_v^J$ and $Q_v:=Q_v^\psi\cup P_v^J$. 
Given $\t\in\tilde \TT^{(h)}_{N,n}$, there are many possible choices of the
subsets $P_v$, $v\in\t$, compatible with all the constraints. We
shall denote by ${\mathcal P}_\t$ the family of all these choices and by ${\bf P}$
the elements of ${\mathcal P}_\t$. For every $\t$ and ${\bf P}\in\mathcal P_\t$, we let $\G({\bf P},\t)$ be the set of labelled Feynman diagrams compatible with the tree
and the choice of the field labels. 

In terms of these trees and labels, the generating function for correlations in \eqref{4.4zz} can be written as
(see e.g. \cite[Section 6]{GeM})
\bea && \log \mathcal Z^{(11)}_\L(\l,m,{\bf A})= \sum_{\substack{N,n\geq 0:\\ N+n\ge 1}} \sum_{h=h^*-1}^{-1}\sum_{\t\in\tilde\TT_{N,n}^{(h)}}\sum_{\substack{{\bf P}\in\PP_\t:\\ P_{v_0}^\psi=\emptyset}}^*\sum_{\xx_{v_0}} J(P_{v_0}^J)
\times\label{g2.23}\\
&&\times \Big[\prod_{v\in E(\t)}K_v(\xx_v,P_v)\Big]\Big[\prod_{v\in V(\t)}
\frac{1}{s_v!}\, \EE_{h_v}^T \left( \psi(P_{v_1}\backslash Q_{v_1}), \ldots, \psi(P_{v_{s_v}}\backslash Q_{v_{s_v}}) \right)\Big]
\nonumber\eea
where the $*$ on the sum over ${\bf P}$ indicates the constraint that $P^\psi_{v_0}=\emptyset$ and the set of internal fields of $v_0$ is non-empty. Moreover, $\EE^T_h$ indicates
truncated expectation with respect to the propagator $G^{(h)}$, if $h>h^*$, or $G^{(\le h^*)}$, if $h=h^*$. Finally, $E(\t)$ is the set of endpoints of $\t$, $V(\t)$ is the set of vertices 
of $\t$ that are not in $E(\t)$; for each $v\in V(\t)$, we indicated by $v_1,
\ldots,v_{s_v}$ the vertices immediately following $v$ on $\t$. 
After re-expressing the truncated expectations in the right side as a sum over Feynman diagrams, we obtain the desired representation of the generating function in terms
of a double sum over trees and labelled Feynman diagrams:
\be \log \mathcal Z^{(11)}_\L(\l,m,{\bf A})= \sum_{\substack{N,n\geq
    0:\\ N+n\ge 1}}
\sum_{h=h^*-1}^{-1}\sum_{\t\in\tilde\TT_{N,n}^{(h)}}\sum_{\substack{{\bf
      P}\in\PP_\t:\\ P_{v_0}^\psi=\emptyset}}^*\sum_{\xx_{v_0}}
\sum_{\mathcal G\in \G(\t,{\bf P})}J(P_{v_0}^J){\rm Val}(\mathcal G)\;
\label{g2.23bis}\ee
with $\Val(\mathcal G)$ the value of the graph $\mathcal G$, including
the combinatorial factor $\prod 1/(s_v!)$.
To obtain the multiscale expansion for 
$E_\L(\l,m)$ it is enough to compute this expression for ${\bf
  J}=\V0$, so that $n=0$. Similarly, to obtain $S_{\L,k}(b_1,\ldots,b_k)$ it is enough to 
derive with respect to $J_{b_1},\ldots, J_{b_k}$ and then take ${\bf J}={\bf 0}$.

\subsubsection{Dimensional estimates}
\label{sec:de}
 At this point we can discuss how to obtain estimates on the generic term of the non-renormalized expansion just introduced,
and see whether the resulting upper bound is summable or not over all the labels and the trees.

Let us consider for simplicity a contribution to $E_\L(\l,m)$. That is, 
consider $\GG\in\G(\t,{\bf P})$, where $\t\in\tilde \TT^{(h)}_{N,0}$ and $P_{v_0}=\emptyset$; note that $P_v=P_v^\psi$, because $n=0$.
In order to estimate Val$(\GG)$ we use that, from Lemma \ref{Lemma:Gevrey},
\be
\label{stimeg}
\|G^{(h)}(\cdot)\|_1:=\sum_{\xx\in \L} \|G^{(h)}(\xx)\|\le C 2^{-h}\;,\quad\quad  \|G^{(h)}(\cdot)\|_\io\le C 2^{h}\;.
\ee
Moreover, given $v\in E(\t)$ and an arbitrary field label $f^*\in P_v$, 
\be \sum_{\xx_v\setminus \xx(f^*)}|K_v(\xx_v,P_v)|\le C^{|P_v|} \a^{|P_v|/2-1}\;,\label{g5.8}\ee
as it follows from the very definition \eqref{eq2.15} of the kernel $v$. Therefore, 
 \be
\sum_{\xx_{v_0}}|{\rm Val}(\GG)|\le |\Lambda| \Big[\prod_{v\in E(\t)}(C')^{|P_{v}|}\, \a^{|P_{v}|/2-1}\Big]\Big[ \prod_{v\in V(\t)}\frac1{s_v!}
2^{{h_v}\tilde n_v-2h_v(s_v-1)}\Big]\;,\label{asa1}\ee
where $\tilde n_v=(\sum_{i=1}^{s_v}|P_{v_i}|-|P_v|)/2$ was already introduced after \eqref{4.40}, i.e., it is the number of propagators 
contained in $v$ but not in any $v'>v$ or, equivalently, the number of propagators obtained by contracting the internal  fields of $v$. We also recall that 
$s_v$ is the number of vertices immediately following $v$ on $\t$ (i.e., the number of clusters contained in $v$ but not in any other cluster
$w>v$). To understand \eqref{asa1} note that 
the factor $|\L|$ in \eqref{asa1} comes from translation invariance (i.e. from the sum over the location of the cluster at scale $h+1$) and that 
the factor associated with the product over the endpoints comes from \eqref{g5.8}. Moreover, the factor associated with the product over $V(\t)$ comes from the following argument:
for any vertex $v\in V(\t)$ with $s_v$ descendants $v_1,\dots,v_{s_v}$, we select a minimal number, $s_v-1$, of propagators at scale $h_v$ connecting them; all the non-selected lines are estimated in the $\ell_\io$ norm and give
$C 2^{h_v}$ each, by the second of \eqref{stimeg}; when the relative positions of $v_1,\dots,v_{s_v}$ are summed over, each selected line  gives instead $C 2^{-h_v}$ by the first of \eqref{stimeg}. 

Then we proceed as in \eqref{4.41}, and in particular we use \eqref{eq:7}
for $f_v=\tilde n_v-2(s_v-1)$ and the analogues of \eqref{4.42w}, namely
\bea && 
\sum_{\substack{v\in V(\t):\\ v\ge w}}\tilde n_v=\frac12\sum_{\substack{v\in V(\t):\\ v\ge w}}\Big(\sum_{i=1}^{s_v}|P_{v_i}|-|P_v|\Big)=\frac12\big(|I_w|-|P_{w}|\big)\;,
\nonumber\\
&& \sum_{\substack{v\in V(\t):\\ v\ge w}}(s_v-1)=m_w-1\;,\label{5.9}\eea
where $m_w$ is the number of normal endpoints following $w$ on $\t$, and we get 
 \bea 
\sum_{\xx_{v_0}}|{\rm Val}(\GG)|\label{asb1}\le  |\Lambda|(C')^{|I_{v_0}|}\, \a^{\frac{|I_{v_0}|}2-N}\,2^{h(2+\frac{|I_{v_0}|}{2}-2m_{v_0})}
 \prod_{v\in V(\t)}\frac1{s_v!}
2^{2-\frac{|P_{v}|}{2}+\frac{|I_{v}|}{2}-2m_{v}}
,\nonumber\eea
where we used the fact that $P_{v_0}$ is empty.
Next we note that 
\be hm_{v_0}+\sum_{v\in V(\t)}m_v=h|I_{v_0}|+\sum_{v\in V(\t)}|I_v|=0\;,\ee
thanks to the fact that the vertices immediately preceding the endpoints on $\t$ are all on scale $0$ (otherwise, we would have
e.g. $hm_{v_0}+\sum_{v\in V(\t)}m_v=\sum_{v\in E(\t)}h_{v'}$ with $v'$
the vertex immediately preceding $v$ on $\t$).
Therefore,
 \be\sum_{\xx_{v_0}}|{\rm Val}(\GG)|\le  |\Lambda|(C')^{|I_{v_0}^\psi|}\, \a^{\frac{|I_{v_0}^\psi|}2-N}\,2^{2h}
 \Big[ \prod_{v\in V(\t)}\frac1{s_v!}
2^{2-\frac{|P_{v}^\psi|}{2}}
\Big],\label{asa2}\ee
where the apex $\psi$ on $I_v$ and $P_v$ is inserted to recall that in the case considered so far $P_v=P_v^\psi$. 
A similar estimate is valid for the contributions to $S_{\L,k}(b_1,\ldots,b_k)$ from the graphs $\G\in\GG(\t,{\bf P})$ with 
$\t\in\tilde \TT^{(h)}_{N,n}$, with the only important difference that the {\it scaling dimension} $2-|P_v^\psi|/2$ is 
replaced by $2-|P_v^\psi|/2-|P_v^J|$, to be denoted by $d_v$.

If we could assume that the scaling dimensions $d_v=2-|P_v^\psi|/2-|P_v^J|$ are $\le 0$ for all $v\in V(\t)$
and strictly negative for $v\in V^*(\t)$ (here $V^*(\t)$ is the subset of vertices of $V(\t)$ that are followed by at least two endpoints), 
then \eqref{asa2} would be summable over the scale labels, and 
after summation we would get a bound proportional to $\a^{\frac{|I_{v_0}^\psi|}2-N}$.
However, there are trees $\t$ and graphs $\G\in\GG(\t,{\bf P})$ with vertices $v\in V(\t)$  such that $d_v$  is either $0$ or $1$: this happens for 
$(|P_v^\psi|,|P_v^J|)=(2,0),(4,0),(2,1)$,
in which case 
\eqref{asa2}
is not summable, uniformly in $L$,  over the trees in $\tilde\TT^{(h)}_{N,0}$ and on the
scale label $h<0$. In the Renormalization Group language, clusters with scaling dimension $0$ are called {\it marginal}, and those with scaling dimension 1 are called (linearly) {\it relevant}.
Note that in the non-interacting case there were neither marginal, nor relevant clusters in $V^*(\t)$, 
simply because $|P_v^J|\ge 2$ for such vertices; moreover, all
vertices $v$ followed by exactly one endpoint 
had $(|P_v^\psi|,|P_v^J|)=(2,1)$, so that $d_v=0$; as a consequence we could safely sum over 
the scale labels.
In the interacting case 
the presence of trees and graphs containing marginal or relevant clusters is inevitable, and this makes the Feynman diagram expansion useless, 
because it leads to bounds on e.g. $|E_\L(\l,m)|$ that diverge as $m\to 0$ and $L\to\infty$
(recall that, in the sums on scales, $|h_v|$ ranges from $0$ to 
$|h^*|\propto -\log m\gg-\log L$). In other words, the Feynman graph expansion is {\it not} sufficient 
for gaining control on the perturbative expansion at $\a\neq 0$, not even order by order in $\a$. 

On top of the problem of divergence of Feynman diagrams outlined above, there 
is also a combinatorial issue to be faced: even if we could sum every single Feynman diagram over the scale labels, we should still sum over
the Feynman diagrams. However, assuming for definiteness that $n=0$, the number of Feynman diagrams is at least
$ ({\rm const.})^N(N!)^2$, where $N$ is the total number of normal endpoints and we used the fact that every endpoint is associated with a vertex 
with $4$ or more fermionic fields (i.e. half-lines), as well as the fact that the number of Feynman diagrams is equal to the number of possible Wick contractions of such fields
(it is easy to see that the number of possible contractions of the
half-lines exiting from $N$ vertices, each  with $4$ external half-lines,  scales
like $({\rm const.})^N(N!)^2$, and even faster if we allow vertices
with more than $4$ external half-lines). On the other hand, the factor
$\prod_{v\in V(\t)}1/s_v!$ in \eqref{asa2} behaves like 
$1/N!$ at large $N$, which means that the bound on the total contribution of order $N$ grows like (assuming for simplicity that all endpoints have $4$ external lines)
$\alpha^N N!$, which is not summable in $N$, even for $\alpha$ small. 

These two problems are the counterparts of analogous difficulties
emerging in QFT.  The divergence of Feynman diagrams
with $m$ as $m\to 0$ is called the {\it infrared problem}, and it signals that an
expansion in $\a$ is not suitable for treating the interacting system
at hand. Rather, we need to introduce scale-dependent parameters
$\lambda_h, Z_h$ which measure the effective strength of the
interaction and of the propagator at scale
$h$ (in the language of field theory, $Z_h$ is called ``wave function renormalization''). The theory depends analytically on $\l_h,Z_h$, so that all the potential divergences 
of the theory are ``absorbed"  into the definition of the {\it running
coupling constants} $\lambda_h, Z_h$, whose behavior can be studied in
terms of a {\it finite-dimensional} discrete flow equation. For example, the iterative equation for $\l_h$ leads a priori (i.e., on the basis of dimensional estimates 
of the contributions to $\b_h^\l:=\l_h-\l_{h-1}$, which are also expressed as a perturbation series in $\l_h,Z_h$) to a divergence of $\l_h$ as $h\to-\infty$ (dimensionally, 
the divergence is linear in $|h|$);
however, remarkable cancellations in the {\it beta function} $\b^\l_h$ allow one to show that $\l_h$ reaches a fixed point close to $\a$ as $h\to-\io$. The same cancellations are 
of course (a posteriori) present also in the original naive power
series expansion, but are much less visible there. 

Finally, let us comment about the combinatorial divergence due to the large number of diagrams: this divergence indicates that we should not simply expand 
in a sum over Feynman graphs, but rather over (resummed) families of such diagrams; in the fermionic context, the regrouping of Feynman 
diagrams into families leads to a {\it determinant} or Pfaffian expansion, which
is better behaved combinatorially than the original expansion (see for
instance \cite[Section 4]{GeM}). Roughly speaking, 
using the signs from the fermionic Wick rule we can regroup families of Feynman diagrams into determinants; the sum over Feynman diagrams is obtained by expanding the determinant along a row or column; however,  it is better to estimate the determinant of an $n\times n$ matrix in terms of the maximal eigenvalue, rather than in terms of the sum over the $n!$ terms in the definition of the determinant. 

Using well known methods coming from constructive QFT
one can solve the above difficulties, as explained in the following section.

\section{The interacting case: non-perturbative multiscale construction}\label{sec.RG2}
As discussed in the previous section, the perturbation theory in Feynman diagrams for the pressure and correlation functions of the model does not
appear to be convergent in $\a$, uniformly in $L$ and $m$. In this section we show
that at finite $L$ and $m$ we can reorganize the expansion, thus obtaining a new series,  the {\it renormalized expansion},
which is not a power series in $\alpha$ anymore and has better convergence properties. In particular, it will allow
us to show that the observables of interest are well-defined and
analytic in $\a$, uniformly as $L\to\infty$ and $m\to 0$ and to get 
Theorem \ref{th:dascrivere} (see
 Section \ref{sec:di}) and the corresponding statements for multi-dimer
 correlations (Section \ref{sec5.6}).
 
{The renormalized expansion has been described in detail in several 
specialized and review papers in the last 20 years, see \cite{BG,BGbook,BGPS,BM0,GeM,Mabook}, and is reviewed and adapted to the present case in this Section.
In order to derive it, we proceed roughly speaking as follows:} we first decompose the propagator in a way similar to \eqref{decomposazzo} and we integrate step by step the propagator
on scale $h=0,-1,-2,\ldots$ At each step, before integrating the next scale, we properly resum the 
expansion at hand, by isolating the divergent parts of the relevant and marginal contributions from the rest (the irrelevant terms); the relevant and marginal divergent parts are proportional to the running coupling constants, already mentioned at the end of the previous section. Moreover, at each step we express the effect of the integration on scale $h$ 
in a way similar to \eqref{g2.23}, with the important difference that the truncated expectation in the right side of 
\eqref{g2.23} is not written as a sum over Feynman diagrams, but rather as a sum over Pfaffians, each of which 
collects several contributions arising from different pairings. The resulting expansion takes the form of a multiscale
Pfaffian expansion, expressed in terms of the running coupling constants, rather than in terms of $\a$.

We start from the generating function with anti-periodic boundary conditions on the Grassmann variables in both coordinate directions: dropping, for notational 
simplicity, the label $(11)$, 
\be 
\mathcal Z_\L(\l,m,{\bf A})= \int P_\L(d\psi)e^{V_\L^{(0)}(\psi)+{\mathcal B}_\L^{(0)}(\psi,{\bf J})},
\label{5.10tris}\ee
where $V_\L^{(0)}+\mathcal B_\L^{(0)}$ is obtained from $V_\L+\mathcal B_\L$ by re-expressing the original Grassmann fields in 
terms of Majorana fields, via \eqref{1.2.37}. After the integration of the fields on scales $0,-1,\ldots,h+1$, we recast \eqref{5.10tris} into a form
similar to \eqref{5.10tris}, with $V^{(0)}$ and ${\mathcal B}^{(0)}$ replaced by scale-dependent effective potentials, depending on the {\it infrared fields} $\psi^{(\le h)}$.
This is expressed by the following lemma. 

\begin{Proposition}\label{lem5.1} For any $h\le 0$, \eqref{5.10tris} can be rewritten as 
\be  {\mathcal Z}_\L =e^{E_\L^{(h)}+ S_\L^{(h)}({\bf J})}\int P_{Z_{h},m_{h},\c_{h}}(d\psi^{(\le h)})
 e^{V^{(h)}_\L(\sqrt{Z_{h}}\psi^{(\le h)})+\BBB^{(h)}_\L(\sqrt{Z_{h}}\psi^{(\le h)},{\bf J})}.\label{g5.56}
\ee
For $h=0$, $E_\L^{(0)}=S_\L^{(0)}({\bf J})=0$, $Z_0=1$, $m_0(\kk)=m\cos k_1$, and $P_{Z_0,m_0,\c_0}(d\psi)$ is the same 
as $P_\L(d\psi)$, once written in the basis of the Majorana fields $\psi_\g$.
If $h<0$, the Gaussian integration $P_{Z_{h},m_{h},\c_{h}}(d\psi^{(\le h)})$ has propagator 
\be \frac{g^{(\le h)}(\xx-\yy)}{Z_h}:=\int\limits_{\mathbb T^2}
\frac{d\kk}{(2\pi)^2}\frac{\c_h(\kk)}{Z_{h}}e^{-i\kk(\xx-\yy)}\left(
  \begin{array}{cc}\hat G_{m_{h}}(\kk)& 0\\
  0&\hat G_{m_{h}}(\kk)\\
  \end{array}
\right)\label{g5.51}\ee
where $\c_h$ was defined in \eqref{chih} and
\be \hat G_{ m_{h}}(\kk)= \frac12\begin{pmatrix}
   - i \sin k_1+\sin k_2 & i m_{h}(\kk)\\
-im_{h}(\kk) & -i \sin k_1-\sin k_2\end{pmatrix}^{-1}\;.\label{g5.Gm}\ee
The constants $E_\L^{(h)},Z_h$, the functions $m_h(\kk)$ and the effective potentials $S_\L^{(h)}({\bf J})$, $V^{(h)}_\L(\psi)$, $\mathcal B_\L^{(h)}(\psi,{\bf J})$ are 
defined inductively in the course of the proof. 
\end{Proposition}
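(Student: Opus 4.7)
The plan is to prove Proposition \ref{lem5.1} by induction on $h$, descending from $h=0$. The base case $h=0$ is essentially a matter of notation: the identity \eqref{5.10tris} already has the required form once one rewrites the free Grassmann measure $P_\Lambda(d\psi)$ in terms of the Majorana variables $\psi_\gamma$ via \eqref{1.2.37} and observes that the propagator \eqref{eq:matrG} (with support restricted to $\bar\chi(\kk)$, and with mass term $m\cos k_1$) has exactly the structure \eqref{g5.51}--\eqref{g5.Gm} for $Z_0=1,\,m_0(\kk)=m\cos k_1$; indeed $\chi_0=\bar\chi$ by definition.

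For the inductive step, assume \eqref{g5.56} holds at scale $h\le 0$. I would first decompose $\chi_h=f_h+\chi_{h-1}$ (cf.\ \eqref{eq:fh}), which by the additive property \eqref{fonfo1} for Gaussian Grassmann integrations induces a splitting $\psi^{(\le h)}=\psi^{(h)}+\psi^{(\le h-1)}$ into independent fields, with propagator of $\psi^{(h)}/\sqrt{Z_h}$ supported on momenta $\|\kk\|\simeq 2^h$. Plugging this into \eqref{g5.56} and performing the Gaussian integration over $\psi^{(h)}$ produces an intermediate effective action
\begin{equation}
\tilde V^{(h)}_\Lambda(\sqrt{Z_h}\psi^{(\le h-1)})+\tilde{\mathcal B}^{(h)}_\Lambda(\sqrt{Z_h}\psi^{(\le h-1)},{\bf J})+\tilde E^{(h)}_\Lambda+\tilde S^{(h)}_\Lambda({\bf J}),
\end{equation}
which can be computed by the tree/Pfaffian expansion reviewed in Section \ref{sec.RG}, where the truncated expectations are now with respect to the single-scale propagator $g^{(h)}/Z_h$, and are estimated by Gram/Brydges--Battle--Federbush bounds rather than by a naive sum over Feynman diagrams. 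This cures the combinatorial $(N!)^2$ issue mentioned at the end of Section \ref{sec.RG1}.

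The crucial step is the localization/renormalization decomposition $\tilde V^{(h)}_\Lambda = \mathcal L\tilde V^{(h)}_\Lambda + \mathcal R\tilde V^{(h)}_\Lambda$. Using the symmetries of the measure (parity, reflection, the $(-1)^{x_1}$ modulation) and power counting from Section \ref{sec:de}, the only relevant/marginal kernels in $\tilde V^{(h)}_\Lambda$ are the quadratic ones ($|P_v^\psi|=2$): the diagonal piece in the Dirac basis proportional to $\sum_\xx \psi^{+}_{\xx,\omega}\psi^{-}_{\xx,\omega}$ (scaling dimension $1$, renormalizing the wave function $Z_h$), its derivative term (marginal, contributing to the kinetic operator), and the off-diagonal mass term proportional to $\sum_\xx \psi^{+}_{\xx,\omega}\psi^{-}_{\xx,-\omega}$ (renormalizing $m_h(\kk)$). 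I would absorb these three local quadratic monomials into the Gaussian measure, which defines $Z_{h-1}$, $m_{h-1}(\kk)$ and, after rescaling $\psi^{(\le h-1)}\to \sqrt{Z_{h-1}/Z_h}\,\psi^{(\le h-1)}$, produces the new measure $P_{Z_{h-1},m_{h-1},\chi_{h-1}}$. The remaining quartic local term (the only other marginal interaction compatible with the symmetries) together with $\mathcal R\tilde V^{(h)}_\Lambda$ becomes $V^{(h-1)}_\Lambda$. An analogous analysis of $\tilde{\mathcal B}^{(h)}_\Lambda$, where the relevant terms are linear in ${\bf J}$ and bilinear in $\psi$, defines $\mathcal B^{(h-1)}_\Lambda$ together with the ${\bf J}$-dependent constant $S^{(h)}_\Lambda({\bf J})$; the purely $\psi$-independent pieces accumulate into $E^{(h)}_\Lambda$.

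The main conceptual obstacle is to verify that the localization procedure produces \emph{only} the three quadratic and one quartic running couplings announced, with no other marginal/relevant structures generated at lower scales. This rests on a careful symmetry analysis of the effective potential: lattice rotations, reflections, and the conjugation symmetry \eqref{cc}--\eqref{simmetria} forbid the generation of spurious local terms. The identity $g^{(\le h)}$ in \eqref{g5.51}--\eqref{g5.Gm} is preserved in form because the renormalized quadratic kernel has the same symbolic structure as $\hat G_{m_h}^{-1}$. A purely technical point, handled as in \cite[Section 7]{GeM} and \cite[Section 3]{BM0}, is to check that after the rescaling $\sqrt{Z_h}\psi\to\sqrt{Z_{h-1}}\psi$ the new kernels are still analytic functions of the running couplings at scale $h-1$, so that the inductive step closes. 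Proposition \ref{lem5.1} is then the bare statement of the resulting recursive structure; the non-trivial analytic estimates on the kernels (convergence of the tree expansion) will be the object of the subsequent sections, where the behaviour of the running coupling constants under the beta function will also be analyzed.
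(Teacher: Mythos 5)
Your order of operations (split $\chi_h=f_h+\chi_{h-1}$, integrate $\psi^{(h)}$, localize, absorb) is the reverse of what the paper does, and as written it does not close the induction. When you absorb $\mathcal L\tilde V_2^{(h)}$ into the Gaussian measure for $\psi^{(\le h-1)}$, the change-of-measure formula \eqref{eq:37} produces a \emph{momentum-dependent} wave-function renormalization $\tilde Z_{h-1}(\kk)=Z_h(1+z'_h\chi_{h-1}(\kk))$: it is constant only where $\chi_{h-1}\equiv 1$, but varies with $\kk$ on the annulus where $\chi_{h-1}\in(0,1)$. The measure you arrive at is therefore $P_{\tilde Z_{h-1}(\cdot),m_{h-1},\chi_{h-1}}$, not $P_{Z_{h-1},m_{h-1},\chi_{h-1}}$ with a constant prefactor, so it fails to match \eqref{g5.51}, and your inductive hypothesis at scale $h-1$ is not reproduced. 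The constant field rescaling $\psi^{(\le h-1)}\to\sqrt{Z_{h-1}/Z_h}\,\psi^{(\le h-1)}$ cannot remove a non-constant momentum profile.

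The paper avoids this by localizing and absorbing $\mathcal L V_2^{(h)}$ \emph{before} splitting and before integrating $\psi^{(h)}$, cf.\ \eqref{sipuo}. In that order the $\kk$-dependence of $\tilde Z_{h-1}(\kk)=Z_h(1+z_h\chi_h(\kk))$ is confined to the annulus at scale $h$, where $\chi_h$ interpolates between $1$ and $0$; on $\supp\chi_{h-1}$ one has $\chi_h\equiv1$, so there $\tilde Z_{h-1}(\kk)$ is the constant $Z_{h-1}:=Z_h(1+z_h)$. The split \eqref{eq:39} is then performed with the \emph{modified} single-scale cutoff $\tilde f_h$ of \eqref{g5.tildef}, precisely designed so that the non-constant part of $\tilde Z_{h-1}$ is routed entirely into the scale-$h$ propagator (which is integrated out next) while $\psi^{(\le h-1)}$ carries the constant $Z_{h-1}$. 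Using the naive $f_h$ and absorbing afterwards does not isolate the annulus correctly. The absorb-first ordering also has the practical advantage that chains of scale-$h$ quadratic insertions are resummed automatically rather than appearing as perturbative vertices; you would have to control those chains by hand. To make your proof correct, adopt the paper's ordering: localize $V^{(h)}$, absorb $\mathcal L V_2^{(h)}$ into the measure via \eqref{eq:37}, define $Z_{h-1}$, $m_{h-1}$, $\tilde f_h$ as in \eqref{g5.46}--\eqref{g5.tildef}, then split by \eqref{eq:39}, rescale, and only then integrate $\psi^{(h)}$.
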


The kernels of the effective potential $V^{(h)}_\L(\psi)$ are defined in terms of the following representation:
\be
V_\L^{(h)}(\psi)=\sum_{\substack{n\ge 1:\\ n\ {\rm even}}} \sum_{\g_1,\ldots,\g_{n}}\sum_{\xx_1,\ldots,\xx_{n}}W^{(h)}_{n,{\boldsymbol\g}}(\xx_1,\ldots,\xx_{n})
\Big[\prod_{i=1}^{n} e^{i\pp_{\gamma_i}\xx_i}
\psi^{}_{\xx_i,\g_i}\Big]\label{g5.17}\ee
where $\boldsymbol\g$ is a shorthand for $(\g_1,\ldots,\g_{n})$, the sums over $\xx_1,\ldots,\xx_n$ run over $\L$, and $W^{(h)}_{n,{\boldsymbol\g}}$ depends on $\L$ (weakly, see comments after 
\eqref{g5.17h} below), but we drop the label $\L$ for simplicity of
notation.
Note that if we impose that
\begin{eqnarray}
  \label{eq:34}
  W^{(h)}_{n,{(\g_1,\dots,\g_n)}}(\xx_1,\ldots,\xx_{n})=
(-1)^\pi  W^{(h)}_{n,{(\g_{\pi(1)},\dots,\g_{\pi(n)})}}(\xx_{\pi(1)},\ldots,\xx_{\pi(n)})
\end{eqnarray}
with $\pi$ any permutation and $(-1)^\pi$ its signature, then the 
representation \eqref{g5.17} is unique. Similarly,
\bea \BBB_\L^{(h)}(\psi,{\bf
  J})&=&\sum_{\substack{n\ge 1:\\ n\ {\rm even}}}
\sum_{q\ge 1}\sum_{\substack{\g_1,\ldots,\g_{n},\\ j_1,\ldots,j_q}}\ \sum_{\substack{\xx_1,\ldots,\xx_{n}\\
    \yy_1,\ldots, \yy_q}}W^{(h)}_{n,q,{\boldsymbol\g},{\bf j}}(\xx_1,\ldots,\xx_{n};\yy_1,\ldots,\yy_q)\times\nonumber\\
&&\times \Big[\prod_{i=1}^{n} e^{i\pp_{\gamma_i}\xx_i}
\psi^{}_{\xx_i,\g_i}\Big]\Big[\prod_{i=1}^{q}
J_{\yy_i,j_i}\Big]\;,\label{g5.17h}\eea
where $J_{\xx,j}$ is an alternative symbol for $J_b$, with $b=(\xx,\xx+\hat e_j)$. The kernels $W^{(h)}_{q,{\bf j}}(\yy_1,\ldots,\yy_q)$ of $S_\L^{(h)}$ are defined
analogously. All these kernels satisfy ``natural" dimensional estimates
that can be deduced from the discussion in Section \ref{sec5.2.1} below (see in particular \eqref{5.62bbis}).
In a finite box, the kernels $W^{(h)}_{n,{\boldsymbol \g}}$, etc, depend weakly on the volume and mass, in the sense that for any $m>0$ they reach their infinite volume limit exponentially fast, and these infinite-volume kernels admit a limit as $m\to0$ (see also comments around \eqref{vstr2}). Therefore, the finite-volume, finite-mass kernels are all bounded uniformly in $L$ and $m$, provided $L\gg m^{-1}\gg 1$.

\begin{Remark}[Translation invariance properties of the kernels]
At the initial step, $h=0$, the kernels $W^{(0)}_{n,q,{\boldsymbol\g},{\bf j}}$ and $W^{(0)}_{n,{\boldsymbol\g}}$ are obtained from \eqref{e2.20} and \eqref{ee22.2200} after  re-expressing the field $\psi$ in the Majorana basis, via \eqref{1.2.37}. Because
of the factors $t_b^{(m)}$ entering the definition of $E_b^{(m)}$, these kernels are not translation invariant. 
However, the non-translation invariant terms vanish at $m=0$ (see e.g. the quartic terms in the right side of \eqref{a4} as an illustration): therefore,  $\mathcal P_0
W^{(0)}_{n,q,{\boldsymbol\g},{\bf j}}:=W^{(0)}_{n,q,{\boldsymbol\g},{\bf j}}\Big|_{m=0}$
{\it is} translation invariant (same for $\mathcal P_0 W^{(0)}_{n,{\boldsymbol\g}}$ and $\mathcal P_0 W^{(0)}_{q,{\boldsymbol j}}$), a fact that will be useful in the
following.  Similarly, for later convenience, we introduce the operator $\PPP_1$, which
 extracts the linear part in $m$ from the kernel it acts on: $\PPP_1
W_{n,q,{\boldsymbol\g},{\bf j}}^{(0)}:=m\dpr_mW_{n,q,{\boldsymbol\g},{\bf j}}^{(0)}\Big|_{m=0}$.
It is easy to see that the kernels 
$\PPP_1 W_{n,q,{\boldsymbol\g},{\bf j}}^{(0)}(\xx_1,\ldots,\xx_n;\yy_1,\ldots,\yy_q)$
are translation invariant, {\it up to an overall oscillatory
factor} $(-1)^{(\xx_1)_1}$ (see again \eqref{a4}). 
The same properties are valid for the kernels at lower scales, as it follows from the  induction below. 
\end{Remark}

\subsection{ Multi-scale integration (Proof of Proposition \ref{lem5.1})}
\label{sec:aa}
We proceed inductively.
We already discussed the validity of \eqref{g5.56} at the first step, 
$h=0$. We now need to show how to go from scale $h$ to $h-1$. The first key step that we have to perform at each iteration is the localization procedure, which consists in isolating the potentially divergent contributions in 
$V^{(h)}_\L$ and $\mathcal B_\L^{(h)}$ from the rest (Sections \ref{sec:loc} and \ref{sec:structure}); next we will
rescale the Grassmann fields and finally we will integrate out the
(rescaled) fields on scale $h$ (Section \ref{sec:resume}). 
\subsubsection{The localization procedure.}
\label{sec:loc}
We write: 
\be V^{(h)}_\L=\LL V^{(h)}_\L+\RR V^{(h)}_\L\;,\qquad \BBB^{(h)}_\L=\LL \BBB^{(h)}_\L+\RR \BBB^{(h)}_\L\label{5.LR}\ee
where $\LL$, the localization operator, is a projection operator that acts linearly on the 
effective potential as described in the following. The operator $\mathcal R$ is called the renormalization 
operator: it extracts from $V^{(h)}_\L+\mathcal B_\L^{(h)}$ the well-behaved (``irrelevant") part. 
For simplicity, in the following
we spell out the definitions of $\mathcal L$ and $\mathcal R$ in the 
$L\to\infty$ case only, 
the finite volume case being treatable in a similar, even though notationally more cumbersome, way, see 
e.g. \cite[Eqs.(2.74)-(2.75)]{BM0}. Recall that the only potentially divergent diagrams in the multiscale expansion are
those with $(|P_v^\psi|,|P_v^J|)=(2,0),(4,0),(2,1)$ (see the discussion after \eqref{asa2}), the $(2,0)$ terms 
being relevant, and $(4,0),(2,1)$ being marginal: therefore, $\LL$ acts non-trivially only on these terms.
More precisely, denoting by $V_{n,\L}^{(h)}$ the $n$-legged contribution to the effective potential, i.e.,
\be
V_{n,\L}^{(h)}(\psi)= 
\sum_{\substack{\xx_1,\ldots,\xx_{n}\\ {\boldsymbol\g}}}W^{(h)}_{n,{\boldsymbol\g}}(\xx_1,\ldots,\xx_{n})
\Big[\prod_{i=1}^{n} e^{i\pp_{\gamma_i}\xx_i}
\psi^{}_{\xx_i,\g_i}\Big]\;.\label{g5.17n}\ee
we let (dropping the $\L$ label to indicate that we are formally giving the definition in the $L\to\io$ case only)
\bea \LL V_2^{(h)}(\psi)&=& \sum_{\substack{\xx,\yy\\ \g,\g'}}e^{i\pp_\g\xx}
\psi_{\xx,\g}^{}\mathcal P_0 W_{2,(\g,\g')}^{(h)}(\xx,\yy)e^{i\pp_{\g'}\yy}\big[1+(\yy-\xx)\cdot \hat{\boldsymbol\partial}\big]
\psi^{}_{\xx,\g'}\nonumber\\
&+& \sum_{\substack{\xx,\yy\\ \g,\g'}}e^{i\pp_\g\xx}
\psi_{\xx,\g}^{}\mathcal P_1W_{2,(\g,\g')}^{(h)}(\xx,\yy)e^{i\pp_{\g'}\yy}\psi^{}_{\xx,\g'}
\label{g5.30}\eea
and
\be  \LL V_4^{(h)}(\psi)=\sum_{\substack{\xx_1,\ldots,\xx_4\\ \g_1,\ldots,\g_4}}
\mathcal P_0 W_{4,{\boldsymbol\g}}^{(h)}(\xx_1,\xx_2,\xx_3,\xx_4)\Big[\prod_{i=1}^4e^{i\pp_{\g_i}\xx_i}\psi^{}_{\xx_1,\g_i}\Big]\;,\label{g5.31}\ee
while $\LL V_n^{(h)}(\psi)=0$, $\forall n>4$.
In the first line of \eqref{g5.30}, $\hat{\boldsymbol\partial}$ indicates
the symmetric discrete gradient, whose $i$-th component acts on
lattice functions as $\hat\dpr_i f(\xx)=\frac12(f(\xx+\hat e_i)-f(\xx-\hat
e_i))$.  Note that all the fields appearing in these formulas are {\it
  localized} at the same point, or at two points at a distance 1,
which justifies the name of {\it localization operator} for
$\LL$.  The action of $\LL$ on the source term is defined similarly
(and it acts non-trivially only on the term with two $\psi$ and one
$J$ fields):
\be \LL \BBB^{(h)}(\psi)=\sum_{\substack{\xx,  \yy,\zz\\\g,\g',j}}e^{i\pp_\g\xx}
\psi_{\zz,\g}^{}\PPP_0 W_{2,1,(\g,\g'),j}^{(h)}(\xx,\yy;\zz)e^{i\pp_{\g'}\yy}
\psi^{}_{\zz,\g'}J_{\zz,j}\;.\label{g5.33}\ee
The rationale behind the definition of $\LL$ is that it 
guarantees that: (1)
the action of 
$\RR=1-\LL$ on the kernels produces a dimensional gain, which is enough to make the analogue of the dimensional estimate \eqref{asa2} for renormalized graphs (i.e. graphs such that each non-trivial subgraph is renormalized by the action of $\RR$)
convergent; (2) the algebraic structure of $\LL$ is sufficiently simple that the linear space spanned by $\LL(V+\mathcal B)$ is finite dimensional, i.e., it can be parametrized by a finite number of constants. The fact that the action of $\RR=1-\LL$ on the kernels produces a dimensional gain has been discussed in several books and review papers, see e.g. 
\cite{BGbook, GeM}. A heuristic explanation of this point, adapted to the present case, is discussed at the end of the present section, see subsection \ref{sec.meaning} below. 

\subsubsection{The structure of the local terms}
\label{sec:structure}
Let us now discuss the explicit structure of the local terms, and let us show that they are parametrized by a finite number of constants. We define 
\bea && K_{2,{\boldsymbol\g}}^{(h)}(\xx-\yy):=\PPP_0 W_{2,{\boldsymbol\g}}^{(h)}(\xx,\yy)\;,\\
&& K_{4,{\boldsymbol\g}}^{(h)}(\xx_1,\xx_2,\xx_3,\xx_4):=\PPP_0 W_{4,{\boldsymbol\g}}^{(h)}(\xx_1,\xx_2,
\xx_3,\xx_4)\;,\quad\\
&& M_{2,{\boldsymbol\g}}^{(h)}(\xx-\yy):=(-1)^{y_1}\PPP_1 W_{2,{\boldsymbol\g}}^{(h)}(\xx,\yy)\;,\\
&& B_{2,1,{\boldsymbol\g},j}^{(h)}(\xx-\yy,\xx-\zz):=\PPP_0 W_{2,1,{\boldsymbol\g},j}^{(h)}(\xx,\yy;\zz)\;,\eea
so that $K_{2,{\boldsymbol \g}}^{(h)}$, $K_{4,{\boldsymbol\g}}^{(h)}$ and $B_{2,1,{\boldsymbol\g},j}^{(h)}$ are independent of $m$, while $M_{2,{\boldsymbol\g}}^{(h)}$ is linear in $m$. They are all translation invariant. 
Let us separately rewrite in a more compact way the contributions to the local part of the effective potential associated with these 
kernels. As an illustration, let us consider the 
contribution to the local part of the effective potential associated with $K_{2,{\boldsymbol\g}}^{(h)}$, which can be
rewritten as (again, we provide formulas only in the $L\to\infty$ limit; we also add the apex $(\le h)$ to the fields to recall that they are on scale $\le h$)
\bea && \sum_{\xx,\yy}\sum_{\g,\g'}e^{i\pp_\g\xx}
\psi_{\xx,\g}^{(\le h)}K_{2,(\g,\g')}^{(h)}(\xx-\yy)e^{i\pp_{\g'}\yy}\big[1+(\yy-\xx)\cdot \hat{\boldsymbol\partial}\big]
\psi_{\xx,\g'}^{(\le h)}=\nonumber\\
&&=\sum_{\g,\g'}\int \frac{d\kk\, d\kk'}{(2\p)^2}\Big(\hat K_{2,(\g,\g')}^{(h)}(\pp_{\g'})+\sum_{j=1}^2\sin k_j'\dpr_{k_j}\hat K_{2,(\g,\g')}^{(h)}(\pp_{\g'})\Big)\times\nonumber\\
&&\qquad \times \hat \psi_{-\kk,\g}^{(\le h)}
\hat \psi_{\kk',\g'}^{(\le h)}\d(\kk+\pp_\g-\kk'-\pp_{\g'})\;,\label{gg5.23}
\eea
where, as in \eqref{eq:14},  the integrals over $\kk,\kk'$ run over the torus $\mathbb T^2$, $\d$ is a periodic Dirac delta over the torus,  
\be \hat \psi_{\kk,\g}^{(\le h)}:=\sum_\xx e^{i\kk\xx}\psi_{\xx,\g}^{(\le h)}\label{g5.23hg}\ee
and $\hat K_{2,(\g,\g')}^{(h)}(\kk):=\sum_\xx
e^{i\kk\xx}K_{2,(\g,\g')}^{(h)}(\xx)$.
In finite volume, integrals are replaced by discrete sums as in
\eqref{2.27}.
\begin{Claim}\label{claim:1}
 We have
\be\LL V_2^{(h)}(\psi)=\int \frac{d\kk}{(2\p)^2}\hat \psi^T_{-\kk}
C_{h}(\kk)\hat \psi_{\kk}\label{spno}\ee
where $\hat \psi_\kk$ is a column vector with components $\hat\psi_{\kk,\g}$, $\g=1,2,3,4$, and 
$C_{h}(\kk)$ is a block-diagonal matrix of the form:
\be C_{h}(\kk)=\begin{pmatrix} c_{h}(\kk)& 0\\
0& c_{h}(\kk)\end{pmatrix}\label{g5.37}\ee
with
\bea && c_{h}(\kk)=-\begin{pmatrix}z_{h}(-i\sin k_1 +\sin k_2) & i\s_{h} \\
-i\s_{h}& z_{h}(-i\sin k_1 -\sin k_2) \end{pmatrix}\label{g5.42}
\eea
for some $z_h,\sigma_h\in\mathbb R$ with $z_0=\sigma_0=0$. Moreover, there exist real
constants $l_h,Z^{(1)}_h,Z^{(2)}_h$ such that
\begin{eqnarray}
  \label{eq:38}
   &&  \LL
      V_4^{(h)}(\psi)=l_{h}\sum_{\xx}\psi_{\xx,1}\psi_{\xx,2}\psi_{\xx,3}\psi_{\xx,4}\;,\\
&& \mathcal L\mathcal B^{(h)}(\psi)=\frac{Z^{(1)}_{h}}{Z_h}F^{(1)}(\psi,{\bf
  J})+\frac{Z^{(2)}_{h}}{Z_h}F^{(2)}(\psi,{\bf J})\;,\label{g5.46bis}
\end{eqnarray}
and \bea && F^{(1)}(\psi,{\bf
  J})=2i\sum_{\xx}(-1)^{x_1+x_2}\Big[J_{\xx,1}(
\psi_{\xx,1}\psi_{\xx,3}+\psi_{\xx,2}\psi_{\xx,4})+\nonumber\\
&&\hskip2.5truecm+iJ_{\xx,2}(
\psi_{\xx,1}\psi_{\xx,3}-\psi_{\xx,2}\psi_{\xx,4})\Big]\;,\label{eq:f1}\\
&& F^{(2)}(\psi,{\bf
  J})=2i\sum_\xx\Big[J_{\xx,1}(-1)^{x_1+1}(\psi_{\xx,1}\psi_{\xx,2}+\psi_{\xx,3}\psi_{\xx,4})+
\nonumber\\
&&\hskip2.5truecm+J_{\xx,2}(-1)^{x_2}(\psi_{\xx,1}\psi_{\xx,4}+\psi_{\xx,2}\psi_{\xx,3})\Big]\;.\label{eq:f2}\eea
\label{cl1}
 $Z_h$ is
the same as in \eqref{g5.56}-\eqref{g5.51} and is inserted in 
\eqref{g5.46bis} for
later convenience.
\end{Claim}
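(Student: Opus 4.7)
The strategy is to exploit the lattice symmetries of the Grassmann measure and of the initial interaction $V_\L^{(0)}+\mathcal B_\L^{(0)}$, all of which are preserved by the single-scale integrations in \eqref{g5.56}, in order to reduce the allowed structures in the local parts to a finite-dimensional family. The relevant symmetries I plan to use are: (i) translations by $2\hat e_1$ and by $\hat e_2$ (respecting the period-two horizontal modulation $t_b^{(m)}$); (ii) horizontal and vertical lattice reflections, combined with the induced action on the four Majorana labels coming from the permutation of the momenta $\pp_\gamma$; (iii) the discrete $\pi/2$ rotation (exact at $m=0$, covariant at $m\neq 0$); (iv) complex conjugation combined with a gauge transformation coming from the reality of $K_{\bf t}$ up to the factor $i$ on vertical bonds; (v) a chiral-type transformation stemming from the bipartite structure of the lattice. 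Each of these is checked directly at $h=0$ on the propagator \eqref{g5.51} and on the initial vertices, and is then propagated to arbitrary $h$ by induction, since $g^{(\le h)}$ transforms covariantly.

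For the quadratic local part, the matrix $C_h(\kk)$ in \eqref{spno} is built from the values of $\hat K_{2,(\gamma,\gamma')}^{(h)}$ and its first momentum derivatives at the singular points $\pp_{\gamma'}$, together with the mass coefficient $\hat M_{2,(\gamma,\gamma')}^{(h)}(\pp_{\gamma'})$. I would show first that the translation symmetry together with the sign oscillations $e^{i\pp_\gamma \xx}$ forces $\hat K_2^{(h)}(\pp_{\gamma'})=0$ (no mass term is generated at $m=0$) and forces the $2\times 2$ blocks coupling the Majorana sector $\{1,3\}$ to $\{2,4\}$ to vanish, giving the block-diagonal structure \eqref{g5.37}. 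The reflections and rotation then constrain each diagonal block to the form \eqref{g5.42} with a single wave-function renormalization $z_h$ (on-diagonal) and a single mass-like renormalization $\sigma_h$ (off-diagonal); reality of $z_h,\sigma_h$ follows from (iv).

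For $\mathcal L V_4^{(h)}$, anticommutativity forces a \emph{local} quartic monomial to involve four distinct Majorana indices, hence, up to sign, it is $\psi_{\xx,1}\psi_{\xx,2}\psi_{\xx,3}\psi_{\xx,4}$, with real coefficient $l_h$ by (iv). For $\mathcal L\mathcal B^{(h)}$, the starting point is the observation that at $h=0$ the source term is obtained by converting $E_b^{(m)}=\pm\psi_\xx\psi_{\xx+\hat e_j}$ into Majorana variables via \eqref{1.2.37}, and a direct computation shows that the resulting local bilinears in $\psi$ coupled to $J_{\xx,j}$ fall precisely in the span of $F^{(1)}$ and $F^{(2)}$. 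Since a classification analogous to the one used for $C_h$ shows that these are the only bilinear local structures compatible with (i)--(v) (the oscillatory prefactors $(-1)^{x_1+x_2}, (-1)^{x_1}, (-1)^{x_2}$ in \eqref{eq:f1}--\eqref{eq:f2} are exactly what the translation symmetry combined with the phases $e^{i(\pp_\gamma+\pp_{\gamma'})\zz}$ produces), this span is closed under the RG flow, and \eqref{g5.46bis} follows with real $Z_h^{(1)}, Z_h^{(2)}$.

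The main technical obstacle I anticipate is the bookkeeping of how reflections and rotations act simultaneously on the spatial coordinates, on the phases $e^{i\pp_\gamma \xx}$, and on the Majorana labels via the permutations they induce on $\{\pp_1,\dots,\pp_4\}$; these combined actions produce non-trivial linear constraints among matrix elements of $\hat K_2^{(h)}$ evaluated at different singular momenta, and one needs a careful table of the symmetry action to read them off. Once this bookkeeping is organized, the restrictions yielding \eqref{g5.37}, \eqref{g5.42}, \eqref{eq:38} and \eqref{g5.46bis} drop out mechanically.
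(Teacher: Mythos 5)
Your overall strategy---propagate a set of exact lattice symmetries through the multiscale integration and use them to constrain the local parts---is exactly the paper's, which carries out precisely this program with the symmetry transformations listed in Appendix~\ref{app:symm}. However, there are two points you should fix before the argument is sound.

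First, the block structure: you attribute it to the vanishing of the ``blocks coupling the Majorana sector $\{1,3\}$ to $\{2,4\}$.'' This is a mislabelling. The pairing $(\psi_1,\psi_3)$ and $(\psi_2,\psi_4)$ is the Dirac pairing of \eqref{3.12}, but the matrix $C_h$ in \eqref{g5.37} is block-diagonal with respect to the partition $\{1,2\}\cup\{3,4\}$: the entries $(1,3),(1,4),(2,3),(2,4)$ and transposes vanish, while the entries $(1,2)$ and $(3,4)$ (the $\sigma_h$ terms) are non-zero for $m\neq 0$. These are in the symmetric difference of your claimed set and the correct one, so the statement as written does not give \eqref{g5.37}. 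Moreover, the mechanism is not ``translation symmetry'' per se but the \emph{compact momentum support} of the infrared field: for $h\le -1$, $\hat\psi^{(\le h)}_{\kk,\gamma}$ is supported where $\|\kk\|\lesssim 2^h\ll\pi$, so the momentum-conservation $\delta$-function $\delta(\kk+\pp_\gamma-\kk'-\pp_{\gamma'})$ in \eqref{gg5.23} forces $\pp_\gamma=\pp_{\gamma'}$, killing \emph{all} off-diagonal elements of $K_2^{(h)}$, while for the $m$-linear part $M_2^{(h)}$ the $\delta$-function is shifted by $(\pi,0)$ (see \eqref{g5.26}) and allows only $(\gamma,\gamma')\in\{(1,2),(2,1),(3,4),(4,3)\}$. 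This support argument, not a translation-symmetry argument, is what yields both the block structure and the pattern of surviving off-diagonal entries. You allude to the phases $e^{i\pp_\gamma\xx}$, but you should make the support constraint explicit, since without it one does not rule out, e.g., a $(1,3)$ entry. (For the source term $\mathcal L\BBB^{(h)}$ this shortcut is unavailable because $J$ has no infrared cutoff, and all twelve constants $Z_{h;(\gamma,\gamma'),j}$ are a priori allowed; the reduction to two real parameters $Z_h^{(1)},Z_h^{(2)}$ is purely a symmetry computation.)

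Second, your symmetry list (i)--(v) is not the paper's (Appendix~\ref{app:symm}: parity, horizontal-axis reflection, two ``quasi-particle interchanges,'' and diagonal reflection), and some of your candidates require caution: the $\pi/2$-rotation and diagonal-reflection symmetries hold only at $m=0$, and the ``chiral-type transformation from the bipartite structure'' is not an exact symmetry of the full lattice action (the paper's exact bosonization/chiral symmetry is a feature of the continuum \emph{reference} model, not of the dimer action). The paper is careful to note which of its symmetries survive for $m\ne 0$, and derives the reality and identification of $\sigma_h$ from symmetries valid at $m\ne0$. You acknowledge the bookkeeping is the hard part; it genuinely is, and is not ``mechanical'' until one has the explicit table of transformations from Appendix~\ref{app:symm} (or an equivalent one) together with a check that it is preserved by the single-scale integrations. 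In short: correct the block labelling, replace the translation-symmetry rationale with the momentum-support argument, and replace your heuristic symmetry list with a verified one.
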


\begin{Remark}
For future reference, it is useful to give here the expressions of Eqs.(\ref{eq:38}) through (\ref{eq:f2}) in 
terms of Dirac variables, via (\ref{3.12}):
\begin{gather}  \LL
V_4^{(h)}(\psi)=l_{h}\sum_{\xx}\psi^+_{\xx,1}\psi^-_{\xx,1}\psi^+_{\xx,-1}\psi^-_{\xx,-1},\label{eq:dirac1}\\
F^{(1)}=\sum_{\xx,\o}J^{(1)}_\o(\xx)\psi^+_{\xx,\o}\psi^-_{\xx,\o}, \qquad J^{(1)}_\o(\xx):=   2(-1)^{\xx}(J_{\xx,1}+i\o J_{\xx,2})\;,\label{eq:dirac1.1}\\
F^{(2)}=\sum_{\xx,\o}J^{(2)}_\o(\xx)\psi^+_{\xx,\o}\psi^-_{\xx,-\o}, \qquad J^{(2)}_\o(\xx):=
2\big(J_{\xx,1}(-1)^{x_1}+i\o J_{\xx,2}(-1)^{x_2}\big).\label{eq:dirac2}\end{gather} 
\end{Remark}

\begin{proof}[Proof of Claim \ref{cl1}]
Consider first the case $h=0$. Then, $\mathcal L
V^{(0)}_2=0$ (so that $z_0=\sigma_0=0$) simply because the effective potential $V_\L$ contains no
bilinear term in the fields, cf. \eqref{a4}. Suppose instead that $h\le -1$.
Then, since we are assuming the statement of Proposition \ref{lem5.1} at
scale $h$, 
the field $\hat \psi_{\kk,\g}^{(\le h)}$ has the same support as $\c_h(\kk)=\c(2^{-h}\kk)$  (in the sense that its propagator
has this support, cf. \eqref{g5.51}).  Remember from the discussion after \eqref{sec3.3} that the support of  $\chi(\cdot)$ is essentially $\{\kk
\in \mathbb T^2: ||\kk||\le \p/2\}$ where $||\cdot||$ is the Euclidean
distance on $\mathbb T^2$). Then 
the only non-vanishing terms in \eqref{gg5.23} are the diagonal ones,
i.e., those with $\g=\g'$. Note that the term $\hat
K^{(h)}_{2,(\gamma,\gamma)}$ gives zero contribution, since $\int d\kk
d\kk' \hat \psi_{-\kk,\gamma}\hat \psi_{\kk',\gamma}=0$ by anticommutation.

In a similar way we find that 
the contribution to the local part of the effective potential associated with $M_{2,{\boldsymbol\g}}^{(h)}$ can be rewritten as
\bea&& \sum_{\xx,\yy}\sum_{\g,\g'}e^{i\pp_\g\xx}\psi_{\xx,\g}^{(\le h)}M_{2,(\g,\g')}^{(h)}(\xx-\yy)e^{i(\pp_{\g'}+(\p,0))\yy}
\psi^{(\le h)}_{\xx,\g'}=\label{g5.26}\\
&&\hskip-.8truecm=\sum_{\g,\g'}\int \frac{d\kk\, d\kk'}{(2\p)^2}\hat \psi_{-\kk,\g}^{(\le h)}\hat M_{2,(\g,\g')}^{(h)}(\pp_{\g'}+(\p,0))\hat \psi^{(\le h)}_{\kk',\g'}\d(\kk+\pp_\g-\kk'-\pp_{\g'}-(\p,0))\;.\nonumber
\eea
Thanks to the above mentioned properties of the support of the field $\hat\psi_{\kk,\gamma}^{(\le h)}$,
the only non-vanishing terms in \eqref{g5.26} are those with
$(\g,\g')=(1,2),(2,1),$ $(3,4),(4,3)$. In terms of these definitions
and properties we can rewrite
$\mathcal L V^{(h)}_2$ as \eqref{spno}, with 
\be C_{h}(\kk)=\begin{pmatrix} c_{h}(\kk)& 0\\
0& d_{h}(\kk)\end{pmatrix},\label{g5.37bis}\ee
\bea && c_{h}(\kk)=\begin{pmatrix}a_{1}^{(h)}\sin k_1 +b_1^{(h)}\sin k_2 & \s_{1,2}^{(h)} \\
\s_{2,1}^{(h)}& a_2^{(h)}\sin k_1 +b_2^{(h)}\sin k_2 \end{pmatrix}\\
&& d_{h}(\kk)=\begin{pmatrix}a_3^{(h)}\sin k_1 +b_3^{(h)}\sin k_2 & \s_{3,4}^{(h)}\\
\s_{4,3}^{(h)}& a_4^{(h)}\sin k_1 +b_4^{(h)}\sin k_2 
\end{pmatrix}
\eea
and: $a_\g^{(h)}=\dpr_{k_1}\hat K^{(h)}_{2,(\g,\g)}(\pp_\g)$, $b_\g^{(h)}=\dpr_{k_2}\hat K^{(h)}_{2,(\g,\g)}(\pp_\g)$, 
$\s_{\g,\g'}^{(h)}=\frac12(\hat M_{2,(\g,\g')}^{(h)}(\pp_\g)-\hat
M_{2,(\g',\g)}^{(h)}(\pp_{\g'}))$. Even more: by using the symmetries
of the Grassmann action and of the propagator, one can check (see
Appendix \ref{app:symm} for some details)
that:
\begin{itemize}
\item $a_\g^{(h)}$ is independent of $\g$ and purely imaginary: i.e., $a_\g^{(h)}=iz_{h}$ for some real constant $z_{h}$;
\item 
$b_\g^{(h)}=(-1)^\g i a_\g^{(h)}$, so that $b_\g^{(h)}=(-1)^{\g-1}z_{h}$, for the same constant $z_{h}$;
\item   $\s_{1,2}^{(h)}=-\s_{2,1}^{(h)}=
\s_{3,4}^{(h)}=-\s_{4,3}^{(h)}=i\s_{h}$, for some real constant
$\s_{h}$ {(the fact that $\s_{1,2}=-\s_{2,1}$ and $\s_{3,4}=-\s_{4,3}$
is obvious from the definition). }
\end{itemize}
Therefore, in \eqref{g5.37bis}, $c_{h}(\kk)=d_{h}(\kk)$, and $c_h(\kk)$
is of the form \eqref{g5.42}.

As far as the quartic and source local terms are concerned, we find
\eqref{eq:38} with \be l_{h}=\sum_{\substack{\xx_2,\xx_3,\xx_4\\ \p\in S_4}}(-1)^\p K_{4,(\p(1),\p(2),\p(3),\p(4))}^{(h)}(\xx_1,\xx_2,\xx_3,\xx_4)\prod_{j=1}^4e^{i\pp_{\p(j)}\xx_j}\;,\label{elle}\ee
where $S_4$ is the set of permutations of $(1,2,3,4)$, and
\bea &&  
\LL \BBB^{(h)}(\psi)=\sum_{\g<\g'}\sum_jZ_{h;(\g,\g'),j}\sum_{\zz}e^{i(\pp_\g+\pp_{\g'})\zz}
\psi_{\zz,\g}\psi_{\zz,\g'}J_{\zz,j}\;,\label{g5.36}\eea
where the constants $Z_{h;(\g,\g'),j}$ are
\be Z_{h;(\g,\g'),j}=\hat B^{(h)}_{2,1,(\g,\g'),j}(\pp_{\g'},\pp_\g-\pp_{\g'})-\hat B^{(h)}_{2,1,(\g',\g),j}(\pp_{\g},\pp_{\g'}-\pp_\g)\;.
\label{g.Zete}
\ee
Using again the symmetries of the Grassmann  action given in Appendix \ref{app:symm}  we find that 
the constant $l_{h}$ is real, while the constants $Z_{h;(\g,\g'),j}$
are such that the source term takes the form \eqref{g5.46bis}
where $Z^{(1)}_{h}$, $Z^{(2)}_{h}$ are real.
\end{proof}

\begin{Remark}
  Note that $\LL V_2^{(h)}$ has the same structure as the inverse of the propagator in \eqref{eq:propajo}-\eqref{eq:matrG},
and it is parametrized just by two real constants $z_h$ and $\s_h$. 
In conclusion, thanks to the way $\LL$ is defined and to the symmetry
of the theory, the local part of the effective potential is
parametrized by 5 real constants, namely
$z_h,\s_h,l_h,Z^{(1)}_h,Z^{(2)}_h$. These constants are all
independent of $m$, except $\s_h$, which is exactly linear in $m$.  As
we shall see in the following, the terms proportional to $z_h$ and
$\s_h$ are inserted step by step into the Gaussian integration, thus
``dressing'' iteratively the propagator at scale $h$. 
\end{Remark}

\subsubsection{The integration of the fields on scale $h$}
\label{sec:resume} We resume the proof of Proposition \ref{lem5.1} and we proceed with the inductive proof of \eqref{g5.56}.
We assume the representation to be valid at scale $h$; since 
$\LL V^{(h)}_{2,\L}$ is bilinear in the fields and has antisymmetric kernel, we can  
apply \eqref{eq:37} to write
\be 
\label{sipuo}
P_{Z_h,m_h,\c_h}(d\psi^{(\le h)}) e^{\LL V^{(h)}_{2,\L}(\sqrt{Z_h}\psi^{(\le h)})} =e^{t^{(h)}_\L}P_{\tilde Z_{h-1},m_{h-1},\c_{h}}(d\psi^{(\le h)})
\ee
 where $\exp(t^{(h)}_\L)$ corresponds to the factor $\sqrt{\det(1-MV)}$ in
 \eqref{eq:37}, with $V=2\mathcal L V_{2}^{(h)}(\sqrt{Z_h}
 \psi^{(\le h)})$, and accounts for the change in the normalization of the two
 Gaussian Grassmann integrations. 
To be allowed to apply \eqref{eq:37} we have to check that
$\det(1-\mu MV)>0$ for every $\mu\in [0,1]$. It is not hard to check
that this is satisfied if {$\mathcal L V_{2}^{(h)}$ has the symmetry structure summarized in 
Claim \ref{claim:1},} with 
$z_h$ small and $m_{h-1}(\V0)/m_h(\V0)$ close to $1$, uniformly in $h$. We will see later (Remark \ref{rem:zh}) that this is indeed the case, provided $\l$ is small
enough.

The matrix $M'$ appearing in
 \eqref{eq:37} can be computed immediately in Fourier space, and we
 obtain after some algebra that 
the ``dressed" measure $P_{\tilde Z_{h-1},m_{h-1},\c_{h}}(d\psi^{(\le h)})$ 
has a propagator similar to \eqref{g5.51}, namely 
\be
\label{gtildo}
\int\limits_{\mathbb T^2}
\frac{d\kk}{(2\pi)^2}\frac{\c_{h}(\kk)}{\tilde Z_{h-1}(\kk)}e^{-i\kk(\xx-\yy)}\left(
  \begin{array}{cc}\hat G_{m_{h-1}}(\kk)& 0\\
  0&\hat G_{m_{h-1}}(\kk)\\
  \end{array}
\right)  =:\frac{\tilde g^{(\le h)}(\xx-\yy)}{Z_{h-1}}
\;,
\ee
where 
\begin{gather}
\tilde Z_{h-1}(\kk):=Z_{h}(1+z_{h}\c_{h}(\kk))\\m_{h-1}(\kk):=\frac{Z_{h}}{\tilde Z_{h-1}(\kk)}(m_{h}(\kk)+\s_{h}\c_{h}(\kk))\label{g5.46}\\
  Z_{h-1}:=\tilde
Z_{h-1}(\V0)=Z_h(1+z_h),
  \label{eq:40}
\end{gather}
and r.h.s. of \eqref{gtildo} defines $\tilde g^{(\le h)}$.
The constants $z_h, \sigma_h$ are computed from the effective
potential at scale $h$, following the procedure explained in the proof
of Claim \ref{cl1}. We can therefore rewrite \eqref{g5.56} as
\bea &&  {\mathcal Z}_\L
=e^{E_\L^{(h)}+t^{(h)}_\L+ S_\L^{(h)}({\bf J})}\int P_{\tilde Z_{h-1},m_{h-1},\c_{h}}(d\psi^{(\le h)})\times\label{g5.44}\\
&&\ \times 
e^{\LL V_{4,\L}^{(h)}(\sqrt{Z_h}\psi^{(\le h)})+\LL\BBB^{(h)}_\L(\sqrt{Z_h}\psi^{(\le h)},{\bf J})+\RR V^{(h)}_\L(\sqrt{Z_h}\psi^{(\le h)})+\RR\BBB^{(h)}_\L(\sqrt{Z_h}\psi^{(\le h)},{\bf J})}\;.\nonumber
 \eea

\begin{Remark}\label{rem16}
Inductively, we see that $m_h(\kk)$ is linear in $m$, simply because $\s_h$ is linear in $m$, and $m_0(\kk)=m \cos k_1$. Therefore,  
the propagator at $m=0$ is massless: this is an instance of the
fact that our theory remains critical at $m=0$, irrespective of the
value of the interaction $\l$.
\end{Remark}

We now apply the ``addition formula'' \eqref{fonfo1} to split 
$P_{\tilde Z_{h-1},m_{h-1},\c_{h}}$ as:
\begin{equation}
  \label{eq:39}
 P_{\tilde Z_{h-1},m_{h-1},\c_{h}}(d\psi^{(\le h)})=
P_{Z_{h-1},m_{h-1},\c_{h-1}}(d\psi^{(\le h-1)})P_{Z_{h-1},m_{h-1},\tilde f_{h}}(d\psi^{(h)})  
\end{equation}
where the propagator of $P_{Z_{h-1},m_{h-1},\tilde f_{h}}$ is
\be \frac{g^{(h)}(\xx-\yy)}{Z_{h-1}}=\int\limits_{\mathbb T^2}
\frac{d\kk}{(2\pi)^2}\frac{\tilde f_{h}(\kk)}{Z_{h-1}}e^{-i\kk(\xx-\yy)}\left(
  \begin{array}{cc}\hat G_{m_{h-1}}(\kk)& 0\\
  0&\hat G_{m_{h-1}}(\kk)\\
  \end{array}
\right)\label{gf5.51}\ee
and
\be \tilde f_{h}(\kk)=Z_{h-1}\Big[\frac{\c_{h}(\kk)}{\tilde
  Z_{h-1}(\kk)}-\frac{\c_{h-1}(\kk)}{Z_{h-1}}\Big]\;.\ee
(To prove \eqref{eq:39}, just check that $\tilde g^{(\le h)}(\xx-\yy)/Z_{h-1}=g^{(\le h-1)}(\xx-\yy)/Z_{h-1}+g^{(h)}(\xx-\yy)/Z_{h-1}$
). 

Note that $\tilde f_{h}(\kk)$ has the same support as $f_{h}(\kk)
=\chi_h(\kk)-\chi_{h-1}(\kk)$ defined in \eqref{eq:fh}, in
fact (using \eqref{eq:40} and the fact that $\c_{h-1}(\kk)\c_h(\kk)=\c_{h-1}(\kk)$)
 \be \tilde f_{h}(\kk)=f_{h}(\kk)\frac{1+z_h}{1+z_{h}\c_{h}(\kk)}\ge 0\;.\label{g5.tildef}\ee
Note also that $g^{(h)}$ satisfies the same estimate as in Lemma
\ref{Lemma:Gevrey} (with $m$ replaced by $m_h(\V0)$ and possibly with different constants $C,c$) provided that 
$z_h$ in \eqref{g5.tildef} stays uniformly small for all scales $h\le
0$. We now rescale the fields and define 
\bea &&
\widehat V^{(h)}_\L(\sqrt{Z_{h-1}}\psi^{(\le h)}):=\LL V^{(h)}_{4,\L}(\sqrt{Z_{h}}\psi^{(\le h)})+
\RR V^{(h)}_\L(\sqrt{Z_{h}}\psi^{(\le h)})
 \;,\nonumber\\
&&\widehat \BBB^{(h)}_\L(\sqrt{Z_{h-1}}\psi^{(\le h)}):= \BBB^{(h)}_\L(\sqrt{Z_{h}}\psi^{(\le h)})\;.\label{5.rescale}
\eea
It follows that 
\bea && \LL\widehat V_\L^{(h)}(\psi)=\l_{h}\sum_{\xx}\psi_{\xx,1}\psi_{\xx,2}\psi_{\xx,3}\psi_{\xx,4}=:\l_h F_\l(\psi)\;,\label{lhren}\\
&& \LL\widehat \BBB_\L^{(h)}(\psi)=\frac{Z_h^{(1)}}{Z_{h-1}}F^{(1)}(\psi,{\bf J})+\frac{Z_h^{(2)}}{Z_{h-1}}F^{(2)}(\psi,{\bf J})\;,
\label{glbh.57}\eea
with %
\be \l_{h}=\Big(\frac{Z_{h}}{Z_{h-1}}\Big)^2l_{h}\;.\label{g5.53}\ee
A simple computation (simply based on \eqref{1.2.37} and \eqref{a4},
plus the observation that $Z_{-1}=Z_0=1$: recall from Claim \ref{cl1}
that $z_0=0$) shows that 
\be \l_0=l_0=-32\a=-32(e^\l-1).\label{eq:l0}\ee
{Similarly, we find $Z_0^{(1)}=Z_0^{(2)}=1$.}
We now define (recall the decomposition 
$\psi^{(\le h)}=\psi^{(h)}+\psi^{(\le h-1)}$ as in \eqref{eq:39})
\begin{gather}
e^{V^{(h-1)}_\L(\sqrt{Z_{h-1}}\psi^{(\le h-1)})+\BBB^{(h-1)}_\L(\sqrt{Z_{h-1}}\psi^{(\le h-1)},{\bf J})+\tilde E^{(h-1)}_\L+\tilde S^{(h-1)}_\L({\bf J})}:=\label{g5.eff}\\
=\int
P_{Z_{h-1},m_{h-1},\tilde f_{h}}(d\psi^{(h)})\ e^{\l_h
  F_\l(\sqrt{Z_{h-1}}\psi^{(\le
    h)})+\sum_{j=1}^2Z_h^{(j)}F_j(\psi^{(\le h)},{\bf J})}
\nonumber\\
\times\,e^{\RR \widehat
  V_\L^{(h)}(\sqrt{Z_{h-1}}\psi^{(\le h)})+\RR \widehat
  \BBB^{(h)}_\L(\sqrt{Z_{h-1}}\psi^{(\le h)},{\bf J})}\nonumber  
\end{gather}
with the constants fixed by the convention that
$\BBB^{(h-1)}_\L(\psi,{\bf 0})=V_\L^{(h-1)}(0)=\tilde
S_\L^{(h-1)}({\bf 0})=0$, which proves \eqref{g5.56} with $h$ replaced
by $h-1$, if one sets $E_\L^{(h-1)}=E_\L^{(h)}+t^{(h)}_\L+\tilde
E^{(h-1)}_\L$ and $S_\L^{(h-1)}({\bf J})=S_\L^{(h)}({\bf J})+\tilde
S^{(h-1)}_\L({\bf J})$.

Using \eqref{g5.eff} and the definition of truncated expectation
(cf. e.g. \eqref{peresempio}), we can rewrite
\bea &&V^{(h-1)}_\L(\sqrt{Z_{h-1}}\psi)+\BBB^{(h-1)}_\L(\sqrt{Z_{h-1}}\psi,{\bf J})+\tilde E^{(h-1)}_\L+\tilde S^{(h-1)}_\L({\bf J})=\label{g5.60}\\
&&=\sum_{s\ge 1}\frac1{s!}\EE^T_{h}\Big(\underbrace{\widehat V_\L^{(h)}(\sqrt{Z_{h-1}}(\psi+\psi^{(h)}),{\bf J});\cdots;\widehat V_\L^{(h)}(\sqrt{Z_{h-1}}(\psi+\psi^{(h)}),{\bf J})}_{s\ {\rm times}}\Big)\;,\nonumber\eea
where $\EE^T_h$ is the truncated expectation with respect to the propagator $g^{(h)}/Z_{h-1}$ of the field $\psi^{(h)}$ (cf. \eqref{gf5.51}), 
and $\widehat V_\L^{(h)}(\psi,{\bf J})$ is a shorthand for $\widehat V_\L^{(h)}(\psi)+{\widehat{\BBB}}^{(h)}_\L(\psi,{\bf J})$.
This concludes the proof of Proposition \ref{lem5.1}. \qed

\medskip

\begin{Remark}[The beta function]\label{rem:beta}
The above procedure allows us to write the {
{\it effective constants} $\xi_h:=(\l_h,Z_h,m_h(\V0),Z^{(1)}_h,Z^{(2)}_h)$} with $h\le 0$,
in terms of $\xi_k$ with $h<k\le 0$:
\bea && \l_{h-1}=\l_h+\b^\l_h\;,\qquad 
\frac{Z_{h-1}}{Z_h}=1+\b^{Z}_h\;,\qquad 
\frac{m_{h-1}(\V0)}{m_h(\V0)}=1+\b^{m}_h\;,\nonumber\\
&& \frac{Z^{(1)}_{h-1}}{Z^{(1)}_h}=1+\b^{Z,1}_h\;,\qquad \frac{Z^{(2)}_{h-1}}{Z^{(2)}_h}=1+\b^{Z,2}_h\;,
\label{gpp} \eea
where $\b_h^\#=\b_h^\#\big(\xi_h,\ldots,\xi_0\big)$ is the so--called
{\it beta function}. By construction, $\beta^\l_h$ and $\beta^Z_h$
depend only on $(\l_k,Z_k), k\ge h$. Therefore, the first two
equations can be solved independently of the others and their
solution can be plugged into the other three. 
Note also that, applying iteratively \eqref{gpp}, and recalling that {$\xi_0=(-32(e^\l-1),1,m,1,1)$, one can also see $\b_h^\#$ as a function of $\lambda$ and (a priori) $m$. However, 
by definition and the fact that $m_{h-1}(\V0)$ is linear in $\{m_k(\V0)\}_{k\ge h}$, 
we see that $\b_h^\#$ is independent of $m$, that is, it only depends on $\l$. }
\end{Remark}

 Proposition \ref{lem5.1} is valid for all $h<0$. However, it is convenient to use it only for scales $h\ge h^*$, where $h^*$ is the first scale (with respect to the ordering $h=0,-1,-2,\ldots$) such 
that\footnote{Note that this definition is slightly different from the one given in Section \ref{sec.RG1}, which referred to the non-renormalized expansion, where the mass was not modified iteratively under the RG flow. The correct one, used from now on and keeping into account the mass renormalization, is the current one. With some abuse of notation we indicate it by the same symbol.} $m_{h-1}({\bf 0})>2^h$. 
When we reach scale $h^*$, we note that the propagator $\tilde g^{(\le h^*)}/Z_{h^*-1}$ (see \eqref{gtildo}) admits the same 
dimensional estimates as $ g^{(h^*)}/Z_{h^*-1}$ of
\eqref{gf5.51}. The  two propagators differ mainly because in the former the
cut-off function is $\chi(2^{-h^*}\kk)$ and in the latter it is
$f_{h^*}(\kk)\simeq \chi(2^{-h^*}\kk)-\chi(2^{-h^*+1}\kk)$, so that momenta
below $2^{h^*}$ are absent in the second. However, the mass
$m_{h^*-1}(\V0)$ is bounded from below by $2^{h^*}$ and it effectively
cuts-off momenta below $2^{h^*}$ also in $\tilde g^{(\le
  h^*)}/Z_{h^*-1}$. Therefore, one can integrate all at once all the scales $\le h^*$,
thus obtaining the contribution to the 
pressure and to the generating function from this last step, $\tilde
E_\L^{(h^*-1)}+\tilde S^{(h^*-1)}_\L({\bf J})$.

\subsubsection{Dimensional gains associated with the action of $\RR$}\label{sec.meaning}

Let us now turn to the discussion (promised after \eqref{g5.33}) of
why the localization procedure produces the right dimensional gains, required for making the 
multiscale expansion of the effective potentials convergent. 
Recall from Section \ref{sec:de} that the possible divergences in the
tree expansion come
from vertices $v\in V(\tau)$ with  
$(|P_v^\psi|,|P_v^J|)=(2,0),(4,0),(2,1)$. 
We focus on the case $(4,0)$ (quartic kernels).
Consider the combination 
\be V_4^{(h_v)}(\psi^{(\le h_v)})=\sum_{\substack{\xx_1,\ldots,\xx_4\\ \g_1,\ldots,\g_4}}
W_{4,{\boldsymbol\g}}^{(h_v)}(\xx_1,\xx_2,\xx_3,\xx_4)\Big[\prod_{i=1}^4e^{i\pp_{\g_i}\xx_i}\psi^{^{(\le h_v)}}_{\xx_i,\g_i}\Big]\;.
\label{5.an}\ee
Such a term appears in the computation of
the effective potentials at scale $h_v-1$, see \eqref{g5.60}.  In the
multiscale integration procedure, the ``external fields''
$\psi^{(\le h_v)}_{\xx_1,\g_1},\ldots,\psi^{(\le h_v)}_{\xx_4,\g_4}$ will
be contracted on scales $h_1,\ldots, h_4$ smaller or equal to $h_v$.
We let $h^-:=\max(h_1,\ldots, h_4)$. By
proceeding in a way similar to the one described in Section
\ref{sec.RG1}, $W^{(h_v)}_{4,{\boldsymbol\g}}$ can be written as a sum
over trees $\t_v$ with root $v$ at scale $h_v$ and over ${\bf P}\in \mathcal P_{\t_v}$ of terms
$W^{(h_v)}_{4,{\boldsymbol\g}}(\t_v,{\bf P};\xx_1,\ldots,\xx_4)$, where
$\t_v$ specifies the cluster structure of the labelled diagrams
contributing to it, while ${\bf P}=\{P_w\}_{w\in V(\t_v)\cup E(\t_v)}$
specifies the field labels associated with the vertices $w$ of $\t_v$
(recall that $|P_w|$ represents the number of fields external to the
subdiagram associated with $w$). We denote by $V^{(h_v)}_4(\t_v,{\bf P};\psi)$
the analogue of \eqref{5.an} at fixed $\t_v$ and ${\bf P}$. The kernel 
$W_{4,{\boldsymbol\g}}^{(h_v)}(\t_v,{\bf P};\underline{\xx})$, with
$\underline x:=(\xx_1,\dots,\xx_4)$, is a
combination of propagators, each having a scale strictly larger than
$h_v$. We let $h^+>h_v$ denote the smallest such scale. Since a
propagator at scale $k$ decays over a length scale of order $2^{-k}$,  
$W_{4,{\boldsymbol\g}}^{(h_v)}(\t_v,{\bf P};\underline{\xx})$ is
essentially zero whenever two variables $\xx_i,\xx_j$ are at distance
(much) larger than $2^{-h^+}$.

Recall that the quartic kernels have scaling dimension
zero. In other words, go back to   \eqref{asa2}: if $\tau$ there is a tree that contains the vertex  $v$ we are looking at (and $\tau_v$ is the
sub-tree starting from $v$), the contribution to the
r.h.s. from the portion of the tree $\tau$ from scale $h^-$
to $h^+$ containing the cluster $ v$ is $1=2^{0\times (h^--h^+)}$.  In
order to make the sum over trees $\tau$ convergent, we would need to improve
this bound by
$2^{\theta(h^--h^+)}$ for some $\theta>0$.
To see that the dimensional estimate of
$\RR V_{4}^{(h)}(\t_v,{\bf P};\psi)$ is better than the one of
$V_4^{(h)}(\t_v,{\bf P};\psi)$ precisely by such a factor (with $\theta=1$), we rewrite
(denoting $\underline{\xx}=(\xx_1,\ldots,\xx_4)$ and omitting for
lightness the index $(\le h_v)$ on the fields)
\bea && \RR V_4^{(h_v)}(\t_v,{\bf P};\psi)=\label{gg5.31}\\
&&=\sum_{\underline{\xx},{\boldsymbol\g}}\Big[\prod_{i=1}^4e^{i\pp_{\g_i}\xx_i}\Big]\Bigl\{
(1-\mathcal P_0)W_{4,{\boldsymbol\g}}^{(h_v)}(\t_v,{\bf
  P};\underline{\xx})\Big[\prod_{i=1}^4\psi_{\xx_i,\g_i}\Big]+\nonumber\\
&&+\ 
\mathcal P_0W_{4,{\boldsymbol\g}}^{(h_v)}(\t_v,{\bf P};\underline{\xx})
\Big[\psi^{}_{\xx_1,\g_1}\psi_{\xx_2,\g_2}\psi_{\xx_3,\g_3}(\psi_{\xx_4,\g_4}-\psi_{\xx_1,\g_4})+\cdots\nonumber\\
&&\qquad \cdots+
\psi_{\xx_1,\g_1}(\psi_{\xx_2,\g_2}-\psi_{\xx_1,\g_2})\psi_{\xx_1,\g_3}\psi_{\xx_1,\g_4}\Big]\Bigr\}
\;.\nonumber\eea
The kernel in the the second line has an operator $1-\mathcal P_0$ acting on it, which extracts its $m$-dependent part, i.e., 
it extracts the $m$-dependent part from at least one of the propagators contributing to its value;
recalling that every $m$ extracted from a propagator $G^{(k)}$ comes
with a dimensional gain of the order $2^{h^*-k}$ (see Lemma \ref{Lemma:Gevrey}; note that 
the terms linear in $m$ originate necessarily from the non-diagonal part of some propagator), 
we see that this term has the desired dimensional gain, simply because
$2^{h^*-k}\le 2^{h^--h^+}$. The terms in second and third line
of \eqref{gg5.31} involve a difference between two fields at different locations, of the form $\psi_{\xx_i,\g_i}-\psi_{\xx_1,\g_i}$, which is formally (i.e., forgetting lattice effects) the same as 
\be \psi_{\xx_i,\g_i}-\psi_{\xx_1,\g_i}\simeq (\xx_i-\xx_1)\cdot\int_0^1 ds\ {\boldsymbol\partial}\psi_{\xx_1+s(\xx_i-\xx_1),\g_i}\;.\label{5.inte}\ee 
Now note that the factor $\xx_i-\xx_1$ goes together with $\mathcal P_0W_{4,{\boldsymbol\g}}^{(h_v)}(\t_v,{\bf P};\underline{\xx})$
which, as discussed above,  decays over a typical length scale $2^{-h^+}$. Therefore, $|\xx_i-\xx_1|$ 
can be bounded essentially by $2^{-h^+}$.
 Similarly, the derivative ${\boldsymbol \partial}$ acting on
 $\psi_{\cdot,\g_i}$ corresponds to a dimensional contribution
 proportional to $2^{h_i}\le 2^{h^-}$, simply because
 $\psi_{\cdot,\g_i}$ is contracted at scale $h_i\le h^-$ and the
 derivative of $G^{(h_i)}$ satisfies the same qualitative estimates as
 $G^{(h_i)}$ times an extra $2^{h_i}$ (see Lemma \ref{Lemma:Gevrey}). Therefore,
all the terms appearing in $\RR V_4^{(h_v)} (\t_v,{\bf P};\psi)$ are associated
with a gain factor $2^{h^--h^+}$, which is enough to renormalize the
(marginal) quartic terms.

To summarize, the action of $\LL$ essentially corresponds to extracting the zero order term in a Taylor expansion of the kernel with respect to $m$, and of the fields with respect to $\xx-\xx_1$; 
conversely, the action of $\RR$ corresponds to taking the rest of first order of the same Taylor expansion. 
The rest of order 1 has an improved estimate by a factor $2^{h^--h^+}$ as compared to the original kernel. By proceeding similarly,
one can show that the rest of order 2 has a dimensional gain $\propto
2^{2(h^--h^+)}$, etc. The rationale behind the definition of $\LL$
should now be clear: if it acts on a marginal term, it extracts the
zero-th order term in the aforementioned Taylor expansion, so that the
renormalized part has a gain $2^{h^--h^+}$, which is enough to
eliminate the divergences in \eqref{asa2} from vertices $v$ with
$(|P_v^\psi|,|P_v^J|)=(4,0)$ or $(2,1)$; if it acts on a linearly relevant term
(i.e., a term with $(|P_v^\psi|,|P_v^J|)=(2,0)$), it extracts the zero-th plus first order terms in the Taylor expansion (this is precisely the choice done in \eqref{g5.30}), so that the renormalized part (which is a Taylor rest of order 2) has a gain $2^{2(h^--h^+)}$. 

\subsection{The renormalized tree expansion}
\label{sezionemortale}
Now that we described the inductive definition of the (renormalized) effective potential, we have to 
explain why such an expansion is well behaved: that is, we explain how to get 
estimates on the kernels of the effective potential. As already observed above, see 
in particular Remark \ref{rem:beta}, the effective potential on scale $h$
can be thought of as a function of the 
whole sequence of {{\it effective constants} $\x_k=(\l_k,Z_k,m_k(\V0),Z^{(1)}_k,Z^{(2)}_k)$, 
$h<k\le 0$. The sequence $\{\x_k\}_{k>h}$
is a solution to the beta function equation \eqref{gpp} 
with initial data $(\l_0,Z_0, m_0(\V0),Z_0^{(1)},Z_0^{(2)})=\big(-32(e^\l-1),1,m,1,1\big)$ and, 
therefore, the sequence itself, as well as the the effective potential, are just functions of $\l$ and $m$.}
Nevertheless,
it is convenient to proceed as follows.

We will first think of {$\{\x_k\}_{k>h}$}
as an {\it arbitrary} sequence, not necessarily a solution to the beta function \eqref{gpp}.
The first key result to be discussed, summarized in Proposition \ref{prop:an} and in Eq.\eqref{odm}
below, is that the kernels of the effective potential can be written 
as an {\it absolutely 
convergent} series, provided the sequence {$\{\x_k\}_{k>h}$} is such that $\l_k$, 
$(Z_k/Z_{k-1}-1)$ {and $(m_k(\V0)/m_{k-1}(\V0)-1)$} remain small (more precisely, the required assumptions are 
\eqref{ggg0}-\eqref{ggg} for $\l$ small enough). The proof of this fact requires a combinatorial representation of 
the expansion in terms 
of renormalized GN trees, reviewed in this section, and the iterative use of the Pfaffian representation 
for truncated expectations, recalled in Lemma \ref{Lemma:Pf}.

Once we know that the kernels of the effective potential are well
defined for sequences of effective constants satisfying suitable 
conditions, the next goal is, of course, to prove that the solution to the beta function equation 
do satisfy such conditions, i.e., it remains uniformly close to the initial datum for all $h\le 0$.
The flow driven by the beta function is very non-trivial and it has been investigated 
in a series of works  from the mid 1990s to the mid 2000s for very
similar models (cf. \cite{BGPS,BM02,BM1,M07a,Mabook} among others), 
by combining the use of the Schwinger-Dyson equation with local Ward
Identites. A crucial point is that the beta function can be written as the sum of two terms: 
one part is ``universal", i.e., it is the same for all the models treated in 
\cite{BGPS,BM02,BM1,M07a,Mabook} and corresponds to the beta function of a reference model
(an ultraviolet cut-off version of the Luttinger model \cite{ML}); the second part is a model-dependent 
rest, which is exponentially small and, therefore, summable as $h\to-\infty$. 
The key point is, therefore, to study the flow under the universal part of the beta function,
and to prove that such a flow remains bounded and close to the initial datum for all $h\le 0$.
We review the conceptual scheme used to 
study the flow in Section \ref{sec:beta} below. 

\smallskip

Let us now describe the tree expansion for the effective potential, and let us discuss how to 
prove its absolute convergence. The definition of the renormalized GN trees
arises naturally  from 
the iterative construction described in the proof of Proposition
\ref{lem5.1} (see \eqref{g5.60})
and it is described in detail, e.g., in 
\cite{BM0,GeM,BGbook}.  The renormalized trees are
defined in a way very similar to the one described in Section
\ref{sec.RG1.1}, with the following important differences.
\begin{enumerate}
\item A renormalized tree $\t$ contributing to $V^{(h)}_\L$, $\BBB^{(h)}_\L$, $\tilde E_\L^{(h)}$, or $\tilde S^{(h)}({\bf J})$  has root on scale $h$ and can have endpoints on all possible scales between $h+2$ and $+1$. The endpoints $v$ on scales 
$h_v\le 0$ are preceded by a node of $\t$ (on scale $h_v-1$) that is
necessarily a branching point. 
\item Normal endpoints on scale $h_v$ are associated with $\l_{h_v-1} F_\l$, if $h_v\le 0$; they are associated with (one of the monomials contributing to) $\widehat V^{(0)}$, if $h_v=1$.
Similarly, 
special endpoints on scale $h_v$ are  associated with $(Z_{h_v-1}^{(j)}/Z_{h_v-2})F_j$ (cf. \eqref{g5.46bis}), where either $j=1$ or $j=2$, 
if $h_v\le 0$;  they are associated with (one of the monomials contributing to) $\widehat{\BBB}^{(0)}$, if $h_v=1$.
\item Each vertex of the tree that is not an endpoint and that is not the special vertex $v_0$ (the leftmost vertex of the tree, immediately following the root on $\t$) is associated with 
the action of an $\RR$ operator.
\end{enumerate}

The family of renormalized trees with root on scale $h$, $N$ normal endpoints and $n$ special endpoints will be denoted by $\TT^{(h)}_{N,n}$. In terms of renormalized trees, the left side of \eqref{g5.60}
can be written as (replacing $h-1$ by $h$)
\bea && V^{(h)}_\L(\sqrt{Z_h}\psi^{(\le h)})+\BBB^{(h)}_\L(\sqrt{Z_h}\psi^{(\le h)},{\bf J})+\tilde E^{(h)}_\L+\tilde S^{(h)}_\L({\bf J})=
\nonumber\\
&&\qquad =
\sum_{\substack{N,n\ge 0\\ N+n\ge 1}}\sum_{\t\in\TT^{(h)}_{N,n}}
V^{(h)}(\t,\sqrt{Z_h}\psi^{(\le h)},{\bf J})\;,\label{f3.17}\eea
where $V^{(h)}(\t,\sqrt{Z_h}\psi^{(\le h)},{\bf J})$ is defined iteratively: if $v_0$ is the first vertex of $\t$, if $\t_1,\ldots,\t_s$ ($s=s_{v_0}\ge1$)
are the subtrees of $\t$ with root $v_0$, and if $\EE^T_{h+1}$ is the truncated expectation 
associated with the propagator $Z_h^{-1}g^{(h+1)}$,
\bea && {V}^{(h)}(\t,\sqrt{Z_h}\psi^{(\le h)},{\bf J})=\label{f3.18}\\
&&=\frac{1}{s!} \EE^T_{h+1}
\big(\lis V^{(h+1)}(\t_1,\sqrt{Z_h}\psi^{(\le h+1)},{\bf J});\ldots; \lis V^{(h+1)}
(\t_{s},\sqrt{Z_h}\psi^{(\le h+1)},{\bf J})\big)\;,\nonumber\eea
where $\psi^{(\le h+1)}=\psi^{(\le h)}+\psi^{(h+1)}$ and $\lis V^{(h+1)}(\t_i,\sqrt{Z_h}\psi^{(\le h+1)},{\bf J})$:
\begin{itemize}
\item is equal to $\RR \widehat V^{(h+1)}(\t_i,\sqrt{Z_h}\psi^{(\le h+1)},{\bf J}))$ if $\t_i$ is non-trivial. Here 
$\RR$ is the linear 
operator induced by the definitions (\ref{5.LR})--(\ref{g5.33}), and ${\widehat V}^{(h+1)}(\t_i,\sqrt{Z_h}\psi^{(\le h+1)},{\bf J})$ is defined in analogy with \eqref{5.rescale}, that is 
$${\widehat V}^{(h+1)}(\t_i,\sqrt{Z_h}\psi^{(\le h+1)},{\bf J}):=V^{(h+1)}(\t_i,\sqrt{Z_{h+1}}\psi^{(\le h+1)},{\bf J})\;;$$
\item is equal to $\l_{h+1}F_\l(\sqrt{Z_h}\psi^{(\le h+1)})$ if $\t_i$ is trivial, $h< -1$ and the endpoint
of $\t_i$ is normal;
\item is equal to $\sum_{j=1}^2(Z_{h+1}^{(j)}/Z_{h})F_j(\sqrt{Z_h}\psi^{(\le h+1)},{\bf J})$ if $\t_i$ is trivial, $h< -1$ and the endpoint
of $\t_i$ is special;
\item is equal to $\widehat V^{(0)}(\sqrt{Z_{-1}}\psi^{(\le 0)})$ (resp. $\widehat \BBB^{(0)}(\sqrt{Z_{-1}}\psi^{(\le 0)},{\bf J})$) 
if $\t_i$ is trivial, $h=-1$ and the endpoint of $\t_i$ is normal (resp.\ special).
\end{itemize}
In order to compute as explicitly as possible the tree values $V^{(h)}(\t,\psi,{\bf J})$, we can inductively 
apply \eqref{f3.18} and use the \emph{Pfaffian representation} for the
truncated expectation in its right side, 
originally due to Battle, Brydges and Federbush \cite{BF,B,BrF}, later improved and simplified \cite{BK,AR} and re-derived in several 
review papers, see e.g.\ \cite{GeM,Gi}: 
\begin{Lemma}[Pfaffian representation]
\label{Lemma:Pf}
Using a notation similar to \eqref{4.5} and \eqref{4.mon} we get 
\be\EE^T_{h}\big(X_1(\psi);\ldots;X_s(\psi)\big)=c_1\cdots c_s\,Z_{h-1}^{-p}
\!\sum_{T\in{\bf T}}\!\a_T\prod_{\ell\in T}g^{(h)}_{\ell}
\!\int\! d P_T({\bf t})\,{\rm Pf} (M^{h,T}({\bf t}))\;.\label{3.pfe}\ee
Here:\begin{itemize}
\item the constants $c_i$ are those appearing in the definition
  \eqref{4.mon} of $X_i$ and 
$2p=\sum_{i=1}^sn_i$ (recall that $n_i$ is the order
  of the monomial $X_i$);
\item the first sum runs over set of lines forming a {\it spanning tree} between
the $s$ vertices corresponding to the monomials $X_1,\ldots,X_s$, i.e., $T$ is a set
of lines that becomes a tree if one identifies all the points
in the same clusters;
\item $\a_T$ is a sign (irrelevant for the subsequent bounds);
\item $g^{(h)}_\ell$ is a shorthand for $g^{(h)}_{\g(\ell),\g'(\ell)}(\xx(\ell)-\xx'(\ell))$, where
$\g(\ell), \g'(\ell)$ and $\xx(\ell),\xx'(\ell)$ are the $\g$ and $\xx$ indices associated with 
the two ends of the line $\ell$, which should be thought of as being obtained from the pairing 
(contraction) of two fields $\psi_{\xx(\ell),\g(\ell)}$ and $\psi_{\xx'(\ell),\g'(\ell)}$;
\item if ${\bf t}=\{t_{i,i'}\in [0,1], 1\le i,i' \le s\}$, then $dP_{T}({\bf t})$
is a probability measure with support on a set of ${\bf t}$ such that
$t_{i,i'}=\uu_i\cdot\uu_{i'}$ for some family of vectors $\uu_i=\uu_i({\bf t})\in \mathbb R^s$ of
unit norm;
\item  $M^{h,T}({\bf t})$ is an antisymmetric
  $(2p-2s+2)\times (2p-2s+2)$ matrix, whose elements are given by
  $M^{h,T}_{f,f'}=t_{i(f),i(f')}g^{(h)}_{\ell(f,f')}$, where: $f,
  f'\not\in\cup_{\ell\in T}\{f^1_\ell,f^2_\ell\}$ and
  $f^1_\ell,f^2_\ell$ are two field labels associated with the two
  (entering and exiting) half-lines contracted into $\ell$;
  $i(f)\in\{1,\ldots,s\}$ is s.t. $f\in P_{v_{i(f)}}$;
  $g^{(h)}_{\ell(f,f')}$ is the propagator associated with the line
  obtained by contracting the two half-lines with indices $f$ and
  $f'$.
\end{itemize}

If $s=1$ the sum over $T$ is empty, but we can still
use  Eq.(\ref{3.pfe}) by interpreting the r.h.s.
as equal to $0$ if $P_{1}$ is empty and equal to $\Pf M^{h,T}({\bf 1})$ otherwise.
  
\end{Lemma}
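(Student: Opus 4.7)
My plan is to follow the classical Battle--Brydges--Federbush tree formula approach, adapted to Grassmann Gaussian integration. The starting point is to encode the truncated expectation $\EE^T_h(X_1;\dots;X_s)$ as a derivative of a logarithm: using \eqref{4.5} applied to the Gaussian integration with propagator $Z_{h-1}^{-1}g^{(h)}$, write
\be
\EE^T_h(X_1;\dots;X_s) = \frac{\partial^s}{\partial\lambda_1\cdots\partial\lambda_s}\log\int P_{g^{(h)}/Z_{h-1}}(d\psi)\,e^{\sum_i \lambda_i X_i(\psi)}\bigg|_{\lambda_i=0}.
\ee
Since each $X_i$ is a monomial localized at a cluster of points, I would duplicate the field by introducing one independent copy $\psi^{(i)}$ per cluster, so that the propagator between fields in clusters $i$ and $j$ becomes $s_{i,j}\,g^{(h)}$, where initially $s_{i,j}=\delta_{i,j}$ (decoupled copies) and finally $s_{i,j}\equiv 1$ (the original integration). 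The truncated part of the original expectation is obtained by interpolating from the decoupled to the fully coupled covariance.

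The heart of the proof is the Brydges--Kennedy--Abdesselam--Rivasseau tree formula applied to this interpolation: iteratively integrating by parts (equivalently, applying a Taylor formula with integral remainder) along a path of interpolating parameters produces a sum over spanning trees $T$ on the $s$ clusters, each contributing a product $\prod_{\ell\in T} g^{(h)}_\ell$ of propagators associated to the tree lines, multiplied by an integral over a probability measure $dP_T(\mathbf{t})$ on the remaining ``soft'' parameters $t_{i,i'}\in[0,1]$. The construction can be arranged (this is the classical Brydges--Kennedy choice) so that at every stage the interpolated matrix $(t_{i,i'})$ is positive semi-definite; concretely one may realize $t_{i,i'}=\mathbf{u}_i\cdot\mathbf{u}_{i'}$ with $\mathbf{u}_i\in \mathbb R^s$ of unit norm, which is exactly the representation stated in the Lemma. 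The combinatorial signs $\alpha_T$ arise from reordering the Grassmann fields along the tree.

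After the tree expansion, the residual integral at fixed $T$ and $\mathbf{t}$ is still a Gaussian Grassmann expectation of a monomial of degree $2p-2s+2$ (all fields not already pinned by the tree lines), but with the interpolated covariance $t_{i(f),i(f')}\,g^{(h)}_{\ell(f,f')}$. By the fermionic Wick rule \eqref{2.2c}, this expectation equals the Pfaffian of the antisymmetric matrix $M^{h,T}(\mathbf{t})$ defined in the statement. Assembling these ingredients and factoring out the scalars $c_1\cdots c_s$ and the $Z_{h-1}^{-p}$ coming from the normalization $g^{(h)}/Z_{h-1}$ of the propagator yields \eqref{3.pfe}. The degenerate case $s=1$ is handled by direct application of \eqref{2.2c}: either $P_1=\emptyset$, so the expectation of the ``empty'' monomial is $1$ but the truncated expectation of a constant is $0$, or $P_1\neq\emptyset$, in which case there is no tree line to sum over and the formula reduces to a single Pfaffian at $\mathbf{t}\equiv 1$.

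The main obstacle, and the only genuinely delicate point, is the positive semi-definiteness of the interpolated covariance at every step, which is what guarantees that $dP_T(\mathbf{t})$ is an honest probability measure and that the $\mathbf{u}_i$ representation exists; this is essential downstream because the Pfaffian of $M^{h,T}(\mathbf{t})$ will be bounded via Gram--Hadamard using precisely that the entries factorize through unit vectors $\mathbf{u}_i$. The remaining steps (sign bookkeeping, rearrangement of Grassmann monomials, and the fact that derivatives with respect to $\lambda_i$ extract truncated/connected parts) are standard.
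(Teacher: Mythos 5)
Your sketch is correct and follows exactly the standard route that the paper itself invokes by citation rather than by proof: the duplicate-field interpolation, the Brydges--Kennedy/Abdesselam--Rivasseau tree formula generating the spanning trees and the probability measure $dP_T(\mathbf t)$ with the $t_{i,i'}=\mathbf u_i\cdot\mathbf u_{i'}$ positive-semi-definiteness, the Grassmann Wick rule turning the residual Gaussian expectation into $\Pf(M^{h,T}(\mathbf t))$, and the degree count $2p-2s+2$ are all as in \cite{BF,B,BrF,BK,AR,GeM}. One small correction to your closing remark about the $s=1$ case: the order-one truncated expectation equals the ordinary expectation, so $\EE^T_h$ applied to a constant $c_1$ (i.e.\ $P_1=\emptyset$) gives $c_1$, not $0$; the paper's prescription of setting the r.h.s.\ of \eqref{3.pfe} to zero in that degenerate case is a bookkeeping convention inside the recursive tree expansion \eqref{f3.18} (terms with no field contracted at scale $h+1$ are excluded there), not a statement about the expectation of a constant.
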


\begin{Remark}
If the Pfaffian is expanded by using its definition \eqref{h1}, then \eqref{3.pfe} reduces to the usual 
representation of the truncated expectation in terms of connected Feynman diagrams. 
The spanning trees in (\ref{3.pfe}) guarantee the minimal connection among the 
vertices $X_1,\ldots,X_s$ and the Pfaffian can be thought of as a resummation of all the 
Feynman diagrams obtained by pairing (contracting) in all possible ways the fields outside the 
spanning tree, with the rule that 
each contracted pair $(\psi_{\xx,\g},\psi_{\yy,\g'})$ is replaced by $Z_{h-1}^{-1}g^{(h)}_{\g,\g'}(\xx-\yy)$;
the interpolation in ${\bf t}$ is necessary in order to avoid an over-counting of the diagrams.

 With respect to the Feynman graph expansion, Eq.\eqref{3.pfe} has the advantage that the  Pfaffian $\Pf(M^{h,T}(\bf t))$ can be bounded by
using the {\it Gram-Hadamard inequality}
\cite{GeM}, which leads to
\begin{eqnarray}
  \label{eq:33}
  \left|\int\! d P_T({\bf t})\,{\rm Pf} (M^{h,T}({\bf t}))\right|\le 
(C2^h)^{p-s+1}.
\end{eqnarray}
Here $2(p-s+1)$ is the size of the
antisymmetric matrix $M^{h,T}$ and $C$ is the constant appearing in the estimate of $g^{(h)}$, see the lines following 
\eqref{g5.tildef}.
This is in contrast with
the estimate scaling like $(p-s+1)!(C2^h)^{p-s+1}$ that we would
get via the Feynman expansion.  Morally speaking, recalling that
$(\Pf M)^2=\det M$, the Gram-Hadamard inequality is similar in spirit
to bounding the determinant of a $k\times k$ matrix by the largest
eigenvalue to the power $k$ (which is combinatorially optimal), rather
than by the number of terms in the determinant times the maximum of
the matrix elements to the power $k$.  
Finally, the number of spanning trees is bounded as
\begin{eqnarray}
  \label{eq:42}
  |{\bf T}|\le s! C^p,
\end{eqnarray}
 with $p$
the total number of fields appearing in $X_1,\dots,X_s$ \cite[Appendix
A3.3]{GeM}.
Note that, if formula \eqref{3.pfe} is applied to the right side of \eqref{g5.60}, then the number of spanning trees $\propto
 s!$ is compensated by the factor $1/s!$ appearing there. 
\end{Remark}

When we apply iteratively \eqref{f3.18} and Lemma \ref{Lemma:Pf}, we can naturally distinguish the various contributions arising 
from the choices of the monomials in the factors $\widehat V^{(0)}$ and $\widehat \BBB^{(0)}$
associated with the endpoints on scale $1$, as well as the scale at which each field in these 
monomials is contracted (we can keep track of these informations via the labels ${\bf P}$ attached to the trees, as 
explained in Section \ref{sec.RG1.1}). 

The resulting formula has a natural structure, slightly complicated by the presence of the $\RR$ operators 
acting at all vertices of the tree that are not endpoints. Therefore, in order to make it as transparent as possible, 
let us {\it temporarily 
neglect the action of the renormalization operator}, i.e., 
let us temporarily pretend that 
the action of $\RR$ on the nodes of $\t$ is replaced by the identity. 
Then the result of the iteration would lead to the 
following relation (the reader can easily convince himself of the formula by induction, or consult the aforementioned
reviews for more details, see in particular \cite{BM0,GeM}):
\be V_*^{(h)}(\t,\sqrt{Z_h}\psi,{\bf J})=\sum_{
{\bf P}\in\PP_\t}\sqrt{Z_h}^{|P_{v_0}^\psi|}\sum_{T\in{\bf T}}\sum_{ \xx_{v_0}}W^*_{\t,{\bf P},{\bf T}}
(\xx_{v_0})\psi({P_{v_0}^\psi})J({P_{v_0}^J})
\;,\label{f5.49a}\ee
where $T=\bigcup_{v\ {\rm not}\ {\rm e.p.}} T_v$ is the union of the spanning trees $T_v$ 
associated with all the nodes that are not endpoints in $\t$, which arise from the inductive application of the Pfaffian 
formula \eqref{3.pfe}. The star in $V_*$ is to recall that we are
ignoring the renormalization operator.
Moreover,  
$W^*_{\t,{\bf P},{\bf T}}$ is given by
\bea && W^*_{\t,{\bf P},{\bf T}}(\xx_{v_0})=\Big[\prod_{v\ {\rm not}\ {\rm e.p.}}(Z_{h_v}/Z_{h_v-1})^{|P_v^\psi|/2}\Big]
\Big[\prod_{v\ {\rm e.p.}} K^{(h_v)}_{v}(\xx_{v})\Big]\times\nonumber\\
&&\ \times
\Big\{\prod_{v\ {\rm not}\ {\rm e.p.}}\frac1{s_v!} \int
dP_{T_v}({\bf t}_v) \Pf (M^{h_v,T_v}({\bf t}_v))
\Big[\prod_{\ell\in T_v} g^{(h_v)}_\ell\Big]\Big\}\;,\label{g5.50s}\eea
where $K_v^{(h_v)}(\xx_v)$ is equal to: $\l_{h_v-1}$, if $v$ is a normal endpoint on scale $h_v<1$; 
$(Z_{h_v-1}^{(j_v)}/Z_{h_v-2})$, if $v$ is a special endpoint on scale
$h_v<1$; the kernel (see Remark \ref{rem:kernel}) of the monomial of 
$\widehat V^{(0)}$ (resp. $\widehat \BBB^{(0)}$) compatible with the assignment of external fields $P_v$, if 
$v$ is a normal (resp. special) endpoint on scale $h_v=1$.

The analogous formula for $V^{(h)}(\t,\sqrt{Z_h}\psi,{\bf J})$, in which we do not neglect the action of $\RR$, can be written in the form
\bea && V^{(h)}(\t,\sqrt{Z_h}\psi,{\bf J})=\label{f5.49ab}\\
&& \quad =\sum_{
{\bf P}\in\PP_\t}\sqrt{Z_h}^{|P_{v_0}^\psi|}\sum_{T\in{\bf T}}\sum_{\b\in B_T}\sum_{ \xx_{v_0}}W_{\t,{\bf P},{\bf T},\b}
(\xx_{v_0})\big[\psi({P_{v_0}^\psi})\big]_\b\, J({P_{v_0}^J})
\;,\nonumber\eea
where $\b$ is a multi-index that keeps track of the various terms arising from the action of $\RR$: see e.g. 
\eqref{gg5.31}, which shows that the action of $\RR$ on a four-legged kernel produces 4 different terms. Moreover,
\be \big[\psi({P_{v_0}^\psi})\big]_\b=\prod_{f\in P_{v_0}^\psi} \dpr^{q_\b(f)}_{j_\b(f)}\psi_{\xx(f),\o(f)}\;,\ee
where $j_\b(f)\in\{1,2\}$ and $q_\b$  is a nonnegative 
integer $\le 2$; the action of a derivative on the fields arises from the interpolation formula  \eqref{5.inte}, 
see \cite{BM0} for details.
In particular, the kernels $W_{\tau, {\bf P},{\bf T},\b}$ admit a
representation similar to 
\eqref{g5.50s}, see \cite[Eq. (3.81)]{BM0} for an analogous formula (the parameter $\beta$
appearing there is equal to $L$ in our case, and our $\beta$ is called
$\alpha$ there).

\subsubsection{Analyticity and dimensional estimates of the kernels}\label{sec5.2.1}

 The expressions \eqref{f5.49a}--\eqref{f5.49ab} can be bounded by using 
\eqref{eq:33}. In the absence of the action of the $\RR$ 
operators the resulting bound has the same structure as the final bound of Section \ref{sec.RG1}, see \eqref{asa2}
and following discussion, modulo an improved 
combinatorial factor due to the use of the Pfaffians rather than of the Feynman diagrams.  If, on the contrary, we take the 
action of $\RR$ into account, the dimensional factors are improved by
the gain factors discussed in Section \ref{sec.meaning}.
The net result is: 
\begin{Proposition}\label{prop:an} Let $|\lambda|\le \l_0$ for $\l_0$ suitably small. If 
\bea &&
\sup_{h'>h}|\l_{h'}|\le c_l|\l|, \hskip1.3truecm
\sup_{h'>h} |\frac{Z_{h'}} {Z_{h'-1}}|\le 2^{2c_z\l^2},\label{ggg0}\\
&&\sup_{h'>h} |\frac{m_{h'}(\V0)}{m_{h'-1}(\V0)}|\le 2^{c_m|\l|},
\quad\hskip.1truecm \sup_{h'>h}2^{-{h'}}|m_{h'}(\V0)|\le 1\label{ggg}
\eea
for some $\l$-independent constants $c_l,c_z,c_m>0$, 
then there exists a ($\l$-independent) constant $C>0$ such that, if $\tau\in\mathcal T^{(h)}_{N,n}$, 
\bea && \frac1{|\L|}\sum_{\xx_{v_0}} |W_{\t,{\bf P},T,\b}(\xx_{v_0})|\le C^{N+n}
\,(c_l|\l|)^{\frac12|I_{v_0}^\psi|-N}\,
2^{h(2-\frac12|P_{v_0}^\psi|-
|P_{v_0}^J|-Q_\b)}\label{5.62}\\
&&\times \Biggl[\prod_{v\ {\rm s.e.p.}}
\Big|\frac{Z_{h_v-1}^{(j_v)}}{Z_{h_v-1}}\Big|\Biggr]\Big[\prod_{\substack{v\,{\rm not}\\ {\rm e.p.}}}\hskip-.02truecm\frac{C^{\sum_{i=1}^{s_v}|P_{v_i}|-|P_v|}}{s_v!} \;2^{c_z\l^2{|P^\psi_v|}}2^{2-\frac12|P_v^\psi|-|P_v^J|-z(P_v)}
\Big]\;,\nonumber\eea
where the first product in the second line runs over the special endpoints, while the second over all the vertices of the tree that are not endpoints. 
Moreover $Q_\b=\sum_{f\in P_{v_0}^\psi}q_\b(f)$
and 
\be z(P_v)=\begin{cases} 1-c_m|\l| & {\rm if}\quad (P_v^\psi,P_v^J)=(4,0),(2,1)\\
2(1-c_m|\l|) & {\rm if}\quad (P_v^\psi,P_v^J)=(2,0)\\
0 & {\rm otherwise}\end{cases}.\label{eq:zpv}\ee
\label{prop:If}
\end{Proposition}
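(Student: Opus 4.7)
\smallskip

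\noindent\textbf{Proof plan for Proposition \ref{prop:an}.} The strategy is the standard multiscale bound for Gallavotti--Nicol\`o trees with the Pfaffian representation, as in \cite{BM0,GeM,BGbook}; the new feature compared to the naive Feynman expansion of Section \ref{sec.RG1} is that the action of $\mathcal R$ yields a dimensional gain, discussed informally in Section \ref{sec.meaning}, which we must now quantify.

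First I would start from the representation \eqref{f5.49ab}, whose kernel $W_{\tau,{\bf P},T,\beta}$ is obtained from the ``unrenormalized'' \eqref{g5.50s} by inserting at each non-endpoint vertex $v$ a linear combination of terms coming from $\mathcal R$; cf.\ the prototype \eqref{gg5.31}. For each such term, on each spanning tree $T_v$ I bound the Pfaffian $\mathrm{Pf}(M^{h_v,T_v}(\mathbf t))$ by Gram--Hadamard, obtaining
\[
\Bigl|\!\int dP_{T_v}(\mathbf t)\mathrm{Pf}(M^{h_v,T_v}(\mathbf t))\Bigr|\le (C 2^{h_v})^{(\sum_i |P_{v_i}|-|P_v|)/2-(s_v-1)},
\]
cf.\ \eqref{eq:33}, and I estimate each propagator in the spanning tree by its $L^1$ norm $\|g^{(h_v)}\|_1\le C 2^{-h_v}$ (cf.\ \eqref{stimeg}, which remains valid under the hypothesis \eqref{ggg} that $m_h(\V0)\lesssim 2^h$ so that $g^{(h)}$ obeys Lemma \ref{Lemma:Gevrey}). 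The number of spanning trees is bounded by \eqref{eq:42}, and the factor $s_v!$ from this bound is absorbed by the $1/s_v!$ in \eqref{g5.50s}, leaving combinatorial factors $C^{\sum_i |P_{v_i}|-|P_v|}$ per vertex. The endpoint kernels contribute $|\lambda_{h_v-1}|\le c_l|\lambda|$ for normal endpoints on scale $h_v\le 0$ and $|Z^{(j_v)}_{h_v-1}/Z_{h_v-2}|$ for special ones; for endpoints on scale $1$ (i.e.\ bare vertices of $\widehat V^{(0)},\widehat{\mathcal B}^{(0)}$), the bound \eqref{g5.8} provides the factors $|\alpha|^{|P_v|/2-1}\le (c_l|\lambda|)^{|P_v|/2-1}$. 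Collecting the factors of $Z$ telescopically through the tree, the $Z_h^{-1}$ from each propagator combines with $\prod_{v\text{ not e.p.}}(Z_{h_v}/Z_{h_v-1})^{|P_v^\psi|/2}$ in \eqref{g5.50s}; under the hypothesis \eqref{ggg0}, $|Z_{h_v}/Z_{h_v-1}|\le 2^{2c_z\lambda^2}$, which yields the claimed factor $2^{c_z\lambda^2|P_v^\psi|}$ per non-endpoint vertex.

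Next I would perform the standard scale reorganization. At this point the estimate takes the form of the analogue of \eqref{asa1}, namely a product over non-endpoint vertices of scale-dependent factors $2^{h_v(\tilde n_v - 2(s_v-1))}$, where $\tilde n_v$ counts the propagators contracted at $v$. Using the inductive identities \eqref{eq:7} together with the analogues \eqref{5.9} of \eqref{4.42w}, one can rewrite the total scale exponent as $h(2-|P_{v_0}^\psi|/2-|P_{v_0}^J|)+\sum_{v\text{ not e.p.}}(h_v-h_{v'})(2-|P_v^\psi|/2-|P_v^J|)$, where $v'$ is the vertex immediately preceding $v$ on $\tau$. This is the ``bare'' scaling dimension $d_v=2-|P_v^\psi|/2-|P_v^J|$ at each vertex, and it reproduces all factors in \eqref{5.62} except the $-z(P_v)$ and the $-Q_\beta$.

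The heart of the proof is now to account for the gain $2^{-z(P_v)(h_v-h_{v'})}$ at each renormalized vertex $v\in V^*(\tau)$. The argument is the one sketched in Section \ref{sec.meaning}: the operator $1-\mathcal P_0$ extracts, from at least one propagator along the path from $v$ to an endpoint, a matrix element of order $m_k(\V0)/2^k$ (via the off-diagonal block of \eqref{g5.Gm}), while $1-\mathcal P_1$ or the field interpolation \eqref{5.inte} produce an extra factor $(\xx_i-\xx_j)\cdot\hat\partial\psi$, bounded by $2^{h^--h^+}$ where $h^+>h_v$ is the smallest scale of a propagator inside the cluster and $h^-\le h_v$ is the largest scale at which an external field is contracted. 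For marginal vertices $(|P_v^\psi|,|P_v^J|)\in\{(4,0),(2,1)\}$ this gives a gain $2^{-(h_v-h_{v'})}$, and for relevant vertices $(|P_v^\psi|,|P_v^J|)=(2,0)$ a gain $2^{-2(h_v-h_{v'})}$; the derivatives carried by external fields (keeping track of the multi-index $\beta$) produce the additional factor $2^{-Q_\beta h}$ at the root (they are paid for by derivatives of external propagators, which by Lemma \ref{Lemma:Gevrey} cost $2^h$ per derivative).

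The main technical obstacle, and the reason why $z(P_v)$ carries the correction $-c_m|\lambda|$, is that the $m$-factor in the off-diagonal propagator is actually $m_k(\V0)$, not $m$, and $m_k(\V0)/2^k$ is not bounded by $2^{-(h_v-h_{v'})}$ alone: using \eqref{ggg} one gets only $m_k(\V0)/2^k\le 2^{c_m|\lambda|(k-h^*)}\cdot 2^{h^*-k}$, so a loss $2^{c_m|\lambda|(h_v-h_{v'})}$ must be paid to get the clean gain. The same loss applies when the Taylor remainder \eqref{5.inte} is combined with derivatives that could hit off-diagonal propagators, and to the propagation of the bound $(m_k/m_{k-1})\le 2^{c_m|\lambda|}$ through the tree. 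A careful bookkeeping (cf.\ \cite[Sec.\ 3]{BM0} for the analogous computation in the Hubbard model) shows that the net gain is exactly $2^{-z(P_v)(h_v-h_{v'})}$ with $z$ as in \eqref{eq:zpv}. Once this is established, reorganizing again via \eqref{eq:7} distributes the $-z(P_v)$ as in the right-hand side of \eqref{5.62}, and the proof of the proposition is complete.
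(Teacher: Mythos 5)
Your proposal follows essentially the same route as the paper, which delegates the full proof to the analogous estimate \cite[Eq.(3.110)]{BM0}: Pfaffian expansion with Gram--Hadamard bounds on the determinants, $L^1$ bounds on the spanning-tree propagators, telescoping of the $Z$-factors (yielding the $2^{c_z\lambda^2|P_v^\psi|}$ loss), the reorganization of scale exponents via \eqref{eq:7}--\eqref{5.9}, and the dimensional gain $2^{h^--h^+}$ from $\mathcal R$ at marginal and relevant vertices, with the $-c_m|\lambda|$ correction in $z(P_v)$ traced to the renormalized mass satisfying only $|m_{h'}/m_{h'-1}|\le 2^{c_m|\lambda|}$. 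The account of the mechanism is correct and consistent with Section \ref{sec.meaning}; the one bookkeeping point worth flagging is that the special-endpoint kernel in \eqref{g5.50s} is $Z^{(j_v)}_{h_v-1}/Z_{h_v-2}$, whose conversion to the $Z^{(j_v)}_{h_v-1}/Z_{h_v-1}$ appearing in \eqref{5.62} costs an extra $Z_{h_v-1}/Z_{h_v-2}\le 2^{2c_z\lambda^2}$ per special endpoint that must be absorbed into $C^{N+n}$.
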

This is the analogue of \cite[Eq.(3.110)]{BM0}  and the details of its 
proof can be found there. To understand the factor
$\l^{\frac12|I_{v_0}^\psi|-N}$, observe that if for instance all endpoints are
quartic monomials $\l \psi_{\xx_1}\psi_{\xx_2}\psi_{\xx_3}\psi_{\xx_4}$, then
$\l^{\frac12|I_{v_0}^\psi|-N}=\l^N$.

Note that the renormalized scaling dimension $d(P_v):=2-\frac12|P_v^\psi|-|P_v^J|-z(P_v)$ appearing at exponent in the last factor of \eqref{5.62} satisfies
\be d(P_v)+c_z{\l^2}|P^\psi_v|\le\big(-\frac16+c_z\l^2\big)(|P_v^\psi|+2|P_v^J|),
\label{odm}
\ee
which is negative for $\l$ small. Therefore, if $\l$ is small enough, 
the product $\prod_{v\ {\rm not}\ {\rm e.p.}}2^{c_z\l^2|P_v^\psi|+d(P_v)}$ in the second line of \eqref{5.62} produces an exponentially small factor smaller than, e.g., 
$2^{-\frac{|P_v|}{12}(h_v-h_{v'})}$ for each branch of the tree connecting two vertices $v$ and $v'$, with $v'<v$ and $|P_v|$ constant along the branch.
Not surprisingly, this allows to sum over the scale differences $h_v-h_{v'}$, as well as over the choices of the field labels $\{P_v\}_{v\in\t}$
(see \cite[Appendix 6.1]{GeM} for details about how to perform these summations). Using also the fact that the number of spanning trees in ${\bf T}$ is smaller than 
$({\rm const.})^{N+n}\prod_{v}s_v!$ (see \eqref{eq:42}), and that the
number of elements of $B_T$ is smaller than $({\rm const.})^{N+n}$, we get 
\bea && \frac1{|\L|}\sum_{N\ge 1}\sum_{\t\in\mathcal T^{(h)}_{N,n}}\sum_{{\bf P}\in\mathcal P_\t}\sum_{T\in{\bf T}}\sum_{\substack{\b\in B_T:\\ Q_\b=q}}\;
\sum_{\xx_{v_0}} |W_{\t,{\bf P},T,\b}(\xx_{v_0})|\le\label{5.62bbis}\\
&&\hskip3.truecm\le  C^{n+1} |\l|\Big(\sup_{\substack{h'>h\\j=1,2}}\Big|\frac{Z_{h'}^{(j)}}{Z_{h'}}\Big|\Big)^n \,
2^{h(2-\frac12|P_{v_0}^\psi|-
|P_{v_0}^J|-q)},\nonumber\eea
for a suitable, $\l$-independent, $C>0$. This is the analogue of \cite[Theorem 3.12]{BM0} and further details of its proof 
can be found there. Eq.(\ref{5.62bbis}) is the 
final dimensional estimate on the (renormalized) kernels of the effective potential, promised after 
\eqref{g5.17h}. Absolute summability of the tree expansion immediately 
implies:
\begin{Corollary}
\label{cor:an}
   The kernels on scale $h$ are {\it analytic} functions of the {\it sequences} 
$\{(\l_k,Z_k,m_k)\}_{k>h}$ in the space defined by \eqref{ggg0}-\eqref{ggg}, for $\l$ small.
\end{Corollary}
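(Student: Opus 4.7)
The plan is to deduce the corollary from Proposition \ref{prop:an} by invoking the Weierstrass theorem on uniform limits of analytic functions. The strategy has two parts: first, observe that each single term $W_{\tau,{\bf P},T,\b}(\xx_{v_0})$ in the renormalized tree expansion \eqref{f5.49ab} is an analytic function of the sequences $\{(\l_k,Z_k,m_k(\V0))\}_{k>h}$ on the polydisc defined by \eqref{ggg0}-\eqref{ggg}; second, use the bound \eqref{5.62bbis} to conclude that the full sum over $\tau,{\bf P},T,\b$ converges absolutely and uniformly on this polydisc.

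For the first step, I would inspect the explicit representation \eqref{g5.50s} (augmented, when $\mathcal R$ is not neglected, by interpolated fields and the multi-index $\b$, as described in \cite{BM0}). The dependence on the effective constants enters in three ways: (i) through the endpoint kernels $K^{(h_v)}_v$, which are polynomials in $\l_{h_v-1}$ and $Z^{(j)}_{h_v-1}/Z_{h_v-2}$ (with $\s_{h_v}$ entering linearly and being itself a fixed multiple of $m_{h_v}(\V0)$); (ii) through the prefactors $(Z_{h_v}/Z_{h_v-1})^{|P_v^\psi|/2}$; (iii) through the single-scale propagators $g^{(h_v)}_\ell$, which depend on $Z_{h'},m_{h'}$ via the explicit formula \eqref{gf5.51} with the dressed quantities \eqref{gtildo}-\eqref{eq:40}. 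All of these are rational in the effective constants, and under the assumptions \eqref{ggg0}-\eqref{ggg} the denominator $\tilde Z_{h'-1}(\kk)=Z_{h'}(1+z_{h'}\chi_{h'}(\kk))$ is bounded away from zero uniformly in $\kk$ (since $z_{h'}=Z_{h'-1}/Z_{h'}-1$ is $O(\l^2)$ by the first assumption in \eqref{ggg0}), and similarly $m_{h'}(\kk)$ depends linearly on the $m_k(\V0)$'s with bounded coefficients. Hence each $W_{\tau,{\bf P},T,\b}$ is analytic on the polydisc under consideration.

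For the second step, I would apply Proposition \ref{prop:an} (more precisely the summed-up version \eqref{5.62bbis}) and observe that, given the bound \eqref{odm} on the renormalized scaling dimension, the sum $\sum_N\sum_{\t\in\mathcal T^{(h)}_{N,n}}\sum_{{\bf P},T,\b}$ of $(1/|\L|)\sum_{\xx_{v_0}}|W_{\t,{\bf P},T,\b}(\xx_{v_0})|$ is bounded by $C^{n+1}|\l|$ uniformly over the polydisc \eqref{ggg0}-\eqref{ggg}, provided $|\l|\le\l_0$ is small enough. Since absolute convergence is uniform and each partial sum is analytic, Weierstrass' theorem gives that the full kernel is analytic in the claimed domain.

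The main (and only) non-trivial step is really the dimensional bound of Proposition \ref{prop:an}; given it, analyticity is essentially automatic. A minor technical point one must check is that the rescaling prefactors $(Z_{h_v}/Z_{h_v-1})^{|P_v^\psi|/2}$ and the ratios $Z^{(j)}_{h_v-1}/Z_{h_v-2}$ stay bounded uniformly along the tree: the first follows from the second assumption in \eqref{ggg0}, the second is automatically part of the space of sequences being considered. Once these uniform bounds are in place, the combination of analyticity of each term with the uniform summability \eqref{5.62bbis} yields the corollary.
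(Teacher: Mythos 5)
Your proposal is correct and takes essentially the same approach as the paper: the paper treats the corollary as immediate from the uniform absolute summability \eqref{5.62bbis} of the renormalized tree expansion, and your argument — termwise analyticity of each $W_{\t,{\bf P},T,\b}$ (rational dependence on the running constants via endpoint kernels, rescaling prefactors, and dressed propagators, with denominators bounded away from zero on the polydisc \eqref{ggg0}-\eqref{ggg}) combined with uniform convergence and the Weierstrass theorem — simply makes that implication explicit.
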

 Note that the factors 
 $Z^{(j)}_{h'}/Z_{h'}$, corresponding to special endpoints, may
 diverge in the infrared limit, i.e.  for $h'\to-\infty$ (and in fact this
 happens for $j=2$, see Proposition \ref{prop:4} below: the ratio
 grows like $2^{h'(\eta_2(\l)-\eta(\l))}$ and $\eta_2\ne\eta$ in general).
When we compute the $n$-th cumulant of the height function (Section
\ref{sec5.6}) we will consider diagrams with $n$
special endpoints and such diverging factors do appear. 
It will however turn out that they are irrelevant
in the computation the large-distance asymptotics of height cumulants, since they are
compensated by oscillations in the multi-dimer correlation functions.

\begin{Remark}[The short memory property]
\label{rem:memoria}
If $\l$ is small then, as discussed above (see lines after \eqref{odm}), every branch of the tree is associated with an exponentially decaying factor smaller than, e.g., 
$2^{-\frac{|P_v|}{12}(h_v-h_{v'})}$. Therefore, not only the sum over the scale and field labels converges exponentially, but we
also have that the sum restricted to the trees $\t$ with
root on scale $h$ and at least one vertex on scale $k>h$ is bounded dimensionally by the right side of \eqref{5.62bbis} times 
a dimensional gain of the form $C_\theta 2^{\th(h-k)}$, for a suitable $\th>0$ (it can be checked, in particular, that any $\th$ in $(0,1-2c_m|\l|+2c_z\l^2)$ makes the job). 
This improved
bound is usually referred to as the \emph{ short memory property} (i.e.,
trees with long branches are exponentially suppressed) and will play
an important role in the following. From now on, $\theta$ will be a
constant in $(0,1)$, uniformly bounded away from zero for $\l$
small. One can think for definiteness of $\theta=1/2$, but as we just discussed one can
actually take $\theta$ close to $1$ when $\l$ is close to zero.
\end{Remark}

\subsection{The beta function}\label{sec:beta}
The above procedure implies the absolute summability and analyticity of the tree expansion kernels,
provided the effective constants $\l_h,Z_h,m_h(\V0)$ satisfy the conditions \eqref{ggg0}-\eqref{ggg} of Proposition \ref{prop:an}. It is easy to verify that these conditions hold at the first step, $h=0$, and that they remain valid for a finite number of steps, provided $\l$ is small enough. 
The difficult issue is to show that they remain valid for {\it all} the scales such that $h^*\le h\le 0$, uniformly in $h^*$ (that is, uniformly in $m$, as $m\to0$). 

An important remark is that, as long as these conditions are verified, the beta function itself,
governing the flow of the effective constants via \eqref{gpp}, is analytic: in fact, the 
beta function is defined simply in terms  of the local parts of the 2- and 4-legged kernels of the effective potential $V^{(h)}$ and of the local part of the 3-legged kernel of ${\mathcal B}^{(h)}$, cf. 
Eqs.\eqref{elle}--\eqref{g5.46} and \eqref{lhren}--\eqref{g5.53}. Therefore, the natural strategy 
to study the flow of $\l_h,Z_h,m_h(\V0)$ is the following: write down the Taylor expansion for the beta function, which is convergent as long as \eqref{ggg0}-\eqref{ggg} are verified; truncate the 
Taylor expansion at lowest non-trivial order, and try to check whether the approximate flow 
governed by this truncated beta function verifies \eqref{ggg0}-\eqref{ggg}; 
if so, prove that the solution is stable under the addition of higher order Taylor approximations. 

In order to understand the difficulty of the problem at hand, consider the flow equation for $\l_h$, and 
suppose that the second order truncation of the beta function reads $\l_{h-1}=\l_h+a_h\l_h^2+...$. The 
qualitative properties of the flow are encoded in $a_h$: if, e.g., $a_h\ge a>0$, uniformly in $h$, then 
the truncated flow is divergent as $h\to-\infty$, and the same holds for the non-truncated flow; in this 
case, 
the multiscale construction in the form described above would have to be stopped at a critical scale,
below which perturbation theory in $\l_h$ is not applicable anymore. If, on the contrary,
$a_h\le -a<0$, uniformly in $h$, then the truncated flow would be convergent, $\l_h\to 0$ as 
$h\to-\infty$, and the same would hold for the non-truncated flow; such a scenario is usually called 
{\it asymptotic freedom} in the Renormalization Group language. Quite remarkably, our case of interest 
realizes a critical, intermediate, scenario: an explicit computation
of the lowest order contribution to the beta function shows 
that in the case of interacting dimers $a_h=0$.
Therefore, the truncated flow of $\l_h$ remains analytically close to the initial datum $\l_0$,
uniformly in $h$. The problem, of course, is that, since $a_h$ is vanishing, the
truncated flow is unstable, and one needs to show that a similar cancellation 
takes place at all orders in perturbation theory, which is very hard (if not impossible) 
to prove by direct computation.

The idea to be pursued is that 
the beta function of the dimer model is asymptotically close as $h\to-\infty$ 
to that of several other models,
all belonging to a family called, in the RG language, a {\it universality class} (the Luttinger liquid universality class). 
Other statistical mechanics or field theory models belonging to the same class are: the Luttinger model 
\cite{ML,BM1}, the Thirring model \cite{Th,J,BFM07}, the XXZ spin chain
\cite{YY,BM0}, the repulsive 1D Hubbard model \cite{BFM14}, the 8-vertex
model \cite{Ba,BM}, the Ashkin-Teller model at criticality \cite{M, GM},
etc. All these are associated with the same {\it reference model} (an ultraviolet cut-off version 
of the Luttinger  model, whose precise definition is given in Sect.\ref{sec5.3.1}), which is defined in the two-dimensional continuum, with exactly linear 
effective dispersion relation for the free propagator (in the sense of \eqref{5.64ps} below).
The key fact is that the
reference model displays more symmetries than the dimer model or any of the 
other models in the same universality class: these extra symmetries can be used to show that the beta function for $\l_h$ in the reference model is asymptotically 
zero; as a consequence, the same property is true for the
dimer model, as well as for the other models mentioned above. Let us
now describe more technically how this idea is implemented.

\subsubsection{Asymptotic vanishing of the beta function}
\label{sec:avotbf}
At each step of the multiscale integration procedure, we can  
decompose the single scale propagator \eqref{gf5.51} as the sum of a massless relativistic  propagator plus a rest:
\be
\frac{g^{(h)}(\xx)}{Z_{h-1}}=\frac1{Z_{h-1}}\Big(g^{(h)}_{R}(\xx)+r^{(h)}(\xx)\Big)\;,\label{5.64}
\ee
where 
\be g^{(h)}_{R}(\xx)=\int \frac{d\kk}{(2\p)^2}e^{-i\kk\xx}\tilde
f_h(\kk)(-ik_1+J k_2)^{-1}\label{5.64ps}\ee and $J$ is the diagonal
matrix with diagonal elements $(1,-1,1,-1)$. 
The index $R$ stands for ``relativistic'', which  refers to the fact
that the denominator is exactly linear in $k$. Note that the rest
satisfies improved dimensional estimates as compared to $g^{(h)}_R$: i.e., $\|r^{(h)}(\xx)\|$ satisfies an estimate like
\eqref{L1Linf0} times an extra (gain) factor that can be bounded
proportionally to $2^h+m_h(\V0)/2^h$. Using \eqref{ggg} and the definition of $h^*$ we get $$\frac{m_h(\V0)}{2^h}\le 2\frac{m_h(\V0)}{m_{h^*}(\V0)}\frac{2^{h^*}}{2^h}\le 2\cdot 2^{(h^*-h)(1-c_m|\l|)}.$$
In conclusion, the rest $r^{(h)}$ has an improved dimensional estimate as compared to $g^{(h)}_R$ by a factor proportional to $2^h+2^{(h^*-h)(1-c_m|\l|)}$.
\begin{Remark}
  \label{rem:inturn}
Any observable on scale $h$ can be naturally
decomposed as the sum of a dominant part plus a rest: the dominant
part is expressed in terms of GN trees with all the
endpoints on scale $\le 0$ and their values computed by replacing all
the single-scale propagators by their massless relativistic
approximation $g^{(h)}_{R}$; the rest can be written as a sum of
trees, each of which either has at least one endpoint on scale $1$, or
it has at least one single scale propagator of type $r^{(k)}$ for some $k\ge h$. It is
easy to see that the rest satisfies a better dimensional estimate than
the dominant part (better by an exponential factor $2^{\th
  h}+2^{\th(h^*-h)}$, with $0<\th<1$ as in Remark \ref{rem:memoria}, in the infrared limit).  To see
this, use the estimate above for $||r^{(k)}(\xx)||$ and the short
memory property (Remark \ref{rem:memoria}): just note that $2^k 2^{\theta(h-k)}\le 2^{\theta h}$.
\end{Remark}

In particular,  the beta function can be written as the sum of a dominant part plus a rest, in the sense discussed in this remark: 
\be
\b^\l_h=\b^\l_{h,R}+r^\l_{h}
\ee
where, as long as \eqref{ggg0}-\eqref{ggg} are verified, the rest satisfies 
\be |r^\l_{h}|\le ({\rm const.})\l_h^2 2^{\th h}.\label{eq:rl}\ee
The universal part $\b^\l_{h,R}$ of the beta function has been studied in detail in several 
works. In particular, \cite[Theorem 2 and Eq.(57)]{BM1}
establish the {\it asymptotic vanishing of the beta function}, which is summarized here.
\begin{Proposition}\label{prop:3} For $\l_h$ small enough, let $\bar Z_h(\l_h)$ be the solution to the beta function equation for $Z_h$ with the sequence $(\l_h,\ldots,\l_0)$ replaced by $(\l_h,\ldots,\l_h)$
and $\bar Z_0(\l_h)=1$. Then 
$\b^\l_{h,R}\big((\l_h,\bar Z_h(\l_h)),\ldots,(\l_h,\bar Z_0(\l_h))\big)$
is asymptotically vanishing as $h\to-\io$, i.e., 
  \begin{eqnarray}
    \label{eq:4}
|\b^\l_{h,R}\big((\l_h,\bar Z_h(\l_h)),\ldots,(\l_h,\bar Z_0(\l_h))\big)|\le C_\th|\l_h|^2 2^{\th h},    
  \end{eqnarray}
for $0<\th<1$ (see Remark \ref{rem:memoria}) and a suitable $C_\th>0$.
\end{Proposition}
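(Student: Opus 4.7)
The plan is to exploit the fact that the reference model (a continuum Dirac theory with ultraviolet and infrared cutoffs, obtained from the dimer model by replacing $g^{(h)}/Z_{h-1}$ by $g^{(h)}_R/Z_{h-1}$ on every scale) enjoys a pair of $U(1)$ chiral symmetries $\psi^\pm_{\xx,\omega}\to e^{\pm i\alpha_\omega}\psi^\pm_{\xx,\omega}$, separately for $\omega=\pm 1$, which are \emph{not} symmetries of the interacting dimer model. These symmetries would be exact in absence of cutoffs; in their presence they generate \emph{anomalous} Ward identities, the anomaly coefficient being exactly computable and non-renormalized beyond first order (an Adler-Bardeen type statement, see \cite{BM1,BFM}). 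The strategy is then: (i) write Ward identities for suitable three-legged correlation functions of the reference model; (ii) combine them with a Schwinger-Dyson equation to express the running coupling $\l_h$ in terms of a physical, scale-independent quantity; (iii) deduce that the increment $\b^\l_{h,R}$ is bounded by the size of the cutoff-dependent corrections, which are exponentially small in $h$.

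More concretely, the first step is to introduce the vertex functions $\hat G^{2,1}_{R,\omega}(\kk,\pp)$ and $\hat G^{2,1}_{R,\omega,\omega'}(\kk,\pp)$ of the reference model on the infrared scale $h$, i.e.\ the amputated three-point functions with two external fermion legs of chirality $\omega$ and one external boson leg coupled respectively to $J^{(1)}_\omega$ or $J^{(2)}_{\omega'}$ in the notation of \eqref{eq:dirac1.1}-\eqref{eq:dirac2}. The chiral symmetries, together with the local nature of the quartic interaction $\l_h F_\l$ (written in Dirac form as in \eqref{eq:dirac1}), give, at zero external momentum and to dominant order in the cutoffs, identities of the schematic form
\be
D_\omega(\pp)\hat G^{2,1}_{R,\omega}(\kk,\pp)=\frac{1}{1-\tau(\l_h)}\bigl[\hat S(\kk)-\hat S(\kk+\pp)\bigr]+\mathcal E_h(\kk,\pp),
\label{WI}
\ee
where $D_\omega(\pp)=-ip_0+\omega p_1$, $\hat S$ is the interacting two-point function of the reference model, $\tau(\l_h)$ is an \emph{explicit} polynomial in $\l_h$ arising from the one-loop chiral anomaly, and $\mathcal E_h$ collects the error from the cutoff breaking of chiral symmetry; a similar (but qualitatively different, as it vanishes at zero anomaly coefficient) identity holds for $\hat G^{2,1}_{R,\omega,-\omega}$.

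The second step is to write the Schwinger-Dyson equation for $\l_h$: by inserting the equation of motion in the correlation that defines $l_h$ (cf. \eqref{elle} and \eqref{g5.53}), one obtains an expression for $\l_h$ as an integral over the vertex function $\hat G^{2,1}_R$ times propagators and the propagator of a bilinear composite field. Substituting \eqref{WI} into the Schwinger-Dyson equation, the kinematic part of the identity (the difference $\hat S(\kk)-\hat S(\kk+\pp)$) collapses the integral to a scale-independent constant, and one obtains
\be
\l_h=\mathcal F(\l_h)+\tilde{\mathcal E}_h,
\ee
with $\mathcal F$ an $h$-independent analytic function with $\mathcal F(0)=0$ and $|\tilde{\mathcal E}_h|\le C_\theta\l_h^2\, 2^{\theta h}$, this last bound being a consequence of Remark \ref{rem:inturn} applied to $\mathcal E_h$ and to the sub-leading contributions to the Schwinger-Dyson equation. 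Writing the analogous relation at scale $h-1$ and subtracting yields $\b^\l_{h,R}=\mathcal F(\l_{h-1})-\mathcal F(\l_h)+O(\l_h^2\,2^{\theta h})$. Along the trajectory $(\l_k,\bar Z_k(\l_h))$ considered in the statement, where $\l_k\equiv \l_h$ for all $k$, the first difference vanishes identically and one is left with the desired bound \eqref{eq:4}.

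The main obstacle will be a careful proof of the Ward identity \eqref{WI} with quantitative control of the remainder $\mathcal E_h$: the UV regularization required to define the reference model at the non-perturbative level breaks chiral symmetry, so the naive classical identity $D_\omega(\pp)\hat G^{2,1}_{R,\omega}=\hat S(\kk)-\hat S(\kk+\pp)$ acquires two distinct corrections — an anomalous one, finite and universal (giving the $(1-\tau(\l_h))^{-1}$ factor, with $\tau$ computable to all orders by the Adler--Bardeen argument, see \cite[Sect.~3]{BFM}), and a non-anomalous, UV-cutoff dependent one, which must be shown to be irrelevant in the infrared scaling, i.e.\ to produce only $O(2^{\theta h})$ corrections uniformly in the UV cutoff. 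This analysis, together with the cancellation coming from \eqref{WI} in the Schwinger-Dyson equation, is carried out in detail in \cite{BM1} for the Luttinger model; the adaptation to the present setting is a matter of bookkeeping once the reference model has been identified, which will be done in Section \ref{sec5.3.1}.
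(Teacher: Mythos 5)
Your proposal reconstructs the argument from \cite{BM1}, which is precisely what the paper does: the paper itself does not prove Proposition~\ref{prop:3} but defers to \cite[Theorem~2 and Eq.~(57)]{BM1}, and Section~\ref{sec5.3.1} only explains that the mechanism is the covariance of the reference model's action under local chiral gauge transformations $\psi^\pm_{\xx,\omega}\mapsto e^{\pm i\alpha_\omega(\xx)}\psi^\pm_{\xx,\omega}$, together with the ensuing anomalous Ward identities. Your sketch identifies the correct ingredients (Ward identities with an exactly computable, Adler--Bardeen-type anomaly; Schwinger--Dyson equation; irrelevance of the cutoff-breaking remainder), so the approach matches.

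One step deserves more care. Your closure relation $\l_h=\mathcal F(\l_h)+\tilde{\mathcal E}_h$, derived from substituting the Ward identity into the Schwinger--Dyson equation, is a relation that holds along the \emph{actual} flow generated from the bare datum $\l_M$. Proposition~\ref{prop:3}, however, asks for $\b^\l_{h,R}$ evaluated at the \emph{constant} sequence $(\l_h,\ldots,\l_h)$ with the accompanying $\bar Z_k(\l_h)$, which is in general not a flow trajectory. Writing ``the first difference $\mathcal F(\l_{h-1})-\mathcal F(\l_h)$ vanishes along the constant trajectory'' implicitly substitutes the constant sequence into a relation proved only for the actual flow; as written this risks circularity (one would be using convergence of the flow, which itself depends on the beta-function bound, to prove the beta-function bound). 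The way the gap is bridged in \cite{BM1} is roughly: the Ward identity plus Schwinger--Dyson equation give an a priori identification of $\l_h$ with a scale-independent physical quantity up to $O(2^{\theta h})$; this yields convergence of the actual flow $\l_h\to\l_{-\io}$; and then the short memory property (Remark~\ref{rem:memoria}) shows that $\b^\l_{h,R}$ evaluated at the constant sequence differs from $\l_{h-1}-\l_h$ evaluated along the actual flow by $O(\l_h^2 2^{\theta h})$. It would be worth spelling out this last step, which you only implicitly invoke, so that it is clear the argument is not circular. Aside from this, the proposal is faithful to the route the paper takes.
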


Note that, at the $n$-th order in perturbation theory, $\b^\l_{h,R}$ is the sum of $O(2n!)$ Feynman 
graphs of order $O(|\l_h|^n)$, each of which is {\it not} vanishing as $h\to-\infty$, but a dramatic cancellation implies that their sum is $O(|\l_h|^n2^{\th h})$,
for all $n\ge 2$. A consequence of \eqref{eq:rl} and \eqref{eq:4} and of a lowest order computation 
of $\b^Z_h$, $\b^m_h$, $\b^{Z,j}_h$ is that the flow 
of the interacting dimer model is exponentially convergent, as summarized in the following proposition 
(the proof is a simple corollary of Proposition \ref{prop:3}, see also the comment following \cite[Eq.(57)]{BM1} and \cite[Theorem 2.1]{BFM07}). In reading the following proposition, recall that the beta functions of  
$\l_h, Z_h, Z_h^{(j)}, m_h(\V0)/m$ are independent of $m$ and, therefore, the corresponding flows can be 
extrapolated to $h\to-\infty$ (i.e., in the study of their flow we do not need to stop at $h^*$).

\begin{Proposition}\label{prop:4} For $\l$ small enough, the solution to the beta function equations
\eqref{gpp} satisfies the following:  
\be \lim_{h\to-\infty}\l_h=\l_{-\io}(\l)\label{an}\ee with $\l_{-\io}(\l)$ analytic in $\l$ and such that
\be
|\l_h(\l)-\l_{-\io}(\l)|\le C_\th|\l^2|2^{\th h}\ee
for a suitable $0<\th<1$ as in Remark \ref{rem:memoria} and $C_\th>0$.
Moreover, $\big|\frac{Z_h}{Z_{h-1}}\big|\le 2^{2c_z\l^2}$ and
$|\frac{m_h(\V0)}{m_{h-1}(\V0)}|\le 2^{c_m|\l|}$, 
for suitable constants $c_z,c_m>0$, uniformly in $h$. Finally, 
\be Z_h\sim 2^{\h(\l)h}\;,\qquad Z_h^{(i)}\sim 2^{\h_i(\l)h}\;,\qquad  m_h(\V0)\sim m\, 2^{\h_m(\l) h}\;,\label{ZZ}\ee
where $\sim$ means that the ratio of the two sides is bounded from above and below by two universal positive 
constants, uniformly in $h$, and $\h(\l)$, $\h_1(\l)$,
$\h_2(\l)$ and $\h_m(\l)$ are analytic functions of $\l$, such that $\h(0)=\h'(0)=\h_1(0)=\h_2(0)=\h_m(0)=0$.
Moreover, $\h_1(\l)=\h(\l)$.
\end{Proposition}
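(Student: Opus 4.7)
The plan is to use Proposition \ref{prop:3} as the key input and close a simultaneous bootstrap on conditions \eqref{ggg0}--\eqref{ggg} via a Cauchy argument for the flow.

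First I would concentrate on the flow of $\lambda_h$, which by Remark \ref{rem:beta} depends only on $\{(\lambda_k,Z_k)\}_{k>h}$. Decompose $\beta^\lambda_h=\beta^\lambda_{h,R}+r^\lambda_h$ as in Remark \ref{rem:inturn}, so that $|r^\lambda_h|\le C\lambda_h^2\, 2^{\theta h}$ by \eqref{eq:rl}. Next, compare $\beta^\lambda_{h,R}$ evaluated on the true sequence with its value on the frozen sequence $(\lambda_h,\bar Z_k(\lambda_h))_{k>h}$ that appears in Proposition \ref{prop:3}. The difference $\Delta_h$ can be bounded, via Lipschitz continuity of $\beta^\lambda_{h,R}$ in its arguments (which follows from the uniform absolute convergence of the renormalized expansion, Corollary \ref{cor:an}) combined with the short memory property (Remark \ref{rem:memoria}), by
\[
|\Delta_h|\le C|\lambda|\sum_{k>h}2^{\theta(h-k)}\bigl(|\lambda_k-\lambda_h|+|Z_k/\bar Z_k(\lambda_h)-1|\bigr).
\]
Proposition \ref{prop:3} controls the frozen piece by $C_\theta\lambda_h^2\,2^{\theta h}$. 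The strategy is then an induction downward from $h=0$: assuming inductively that $|\lambda_k-\lambda_j|\le C'\lambda^2\,2^{\theta\max(j,k)}$ and that \eqref{ggg0}--\eqref{ggg} hold for all scales $>h$, one obtains $|\beta^\lambda_h|\le\widetilde C\lambda^2\,2^{\theta h}$, which is summable. Hence $\{\lambda_h\}$ is Cauchy and converges to some $\lambda_{-\infty}(\lambda)$; analyticity in $\lambda$ follows because every finite composition of beta-function steps is analytic and the convergence is uniform on $|\lambda|<\lambda_0$.

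Next I would handle the remaining flows. For $Z_h$, $Z_h^{(j)}$ and $m_h(\mathbf{0})$, a direct lowest-order computation of $\beta^Z_h$, $\beta^{Z,j}_h$, $\beta^m_h$, together with the short memory property, gives $\beta^Z_h=-\eta(\lambda_{-\infty})\log 2+O(2^{\theta h})$ with $\eta(\lambda)=O(\lambda^2)$ analytic (the second-order nature of $\eta$ is the well-known Luttinger-liquid anomalous dimension; see e.g.\ \cite{BGPS,BM0}). Telescoping yields $Z_h\sim 2^{\eta(\lambda)h}$, and analogously for $Z_h^{(j)}$ and $m_h(\mathbf{0})$ with exponents $\eta_j(\lambda)=O(\lambda^2)$, $\eta_m(\lambda)=O(|\lambda|)$. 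The crude one-step bounds $|Z_h/Z_{h-1}|\le 2^{2c_z\lambda^2}$ and $|m_h(\mathbf{0})/m_{h-1}(\mathbf{0})|\le 2^{c_m|\lambda|}$ then close the bootstrap of \eqref{ggg0}--\eqref{ggg}, and \eqref{ZZ} follows.

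The identity $\eta_1(\lambda)=\eta(\lambda)$ is the most delicate point and requires a Ward identity in the continuum reference model (Section \ref{sec5.3.1}, to be introduced below). The source $F^{(1)}$, which in Dirac variables \eqref{eq:dirac1.1} couples to the chiral density $\psi^+_\omega\psi^-_\omega$, is conserved in the Luttinger reference model; gauge invariance then forces the multiplicative renormalization of this density vertex to coincide with the field-strength renormalization, so that $Z_h^{(1)}/Z_h$ stays uniformly bounded in $h$. Transferring this identity from the reference model to the dimer model via the short memory property, and reabsorbing the mismatch into the summable remainders, yields $\eta_1=\eta$. The same argument fails for $F^{(2)}$ \eqref{eq:dirac2}, which involves the non-conserved bilinear $\psi^+_\omega\psi^-_{-\omega}$, so in general $\eta_2\neq\eta$.

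The main obstacle is the self-consistent nature of the bootstrap: the Cauchy estimate on $\lambda_h$ needs $|Z_k/\bar Z_k(\lambda_h)-1|$ to be small in $\Delta_h$, which itself requires that the flow of $Z_h$ be already under control, which in turn relies on $\lambda_h$ being nearly constant. The resolution is to run the bootstrap for $\lambda_h$ and $Z_h$ simultaneously, exploiting the fact that $\beta^Z_h$ is $O(\lambda^2)$ while the difference $\beta^Z_h-\beta^Z_{h,R}(\lambda_h,\bar Z_k(\lambda_h))$ acquires an extra $O(2^{\theta h})$ by Remark \ref{rem:inturn}. The asymptotic vanishing of Proposition \ref{prop:3} is precisely what makes this closed loop work.
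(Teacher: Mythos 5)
Your proposal is essentially correct and follows the same high-level strategy as the paper: Proposition \ref{prop:3} (asymptotic vanishing of $\beta^\lambda_{h,R}$ on the frozen sequence) is the central input; one decomposes $\beta^\lambda_h=\beta^\lambda_{h,R}+r^\lambda_h$, controls the frozen piece by Proposition \ref{prop:3}, the non-relativistic rest by \eqref{eq:rl}, the mismatch between the running sequence and the frozen one via Lipschitz continuity plus the short memory property, and closes a simultaneous inductive bootstrap on $(\lambda_h,Z_h)$; the exponents come from lowest-order computations of the beta functions; and $\eta_1=\eta$ is obtained from a Ward identity of the reference model (the paper cites \cite{BM02}). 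The paper's own proof is extremely terse (essentially a pointer to \cite{BM1} and \cite{BFM07}), so your explicit bootstrap is a fair reconstruction of the intended argument.

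One small imprecision worth flagging: you claim the exponents $\eta_j(\lambda)$ for $Z^{(j)}_h$ are $O(\lambda^2)$. This is true for $j=1$ (since $\eta_1=\eta$ and $\eta'(0)=0$), but it is \emph{not} true for $j=2$: the paper only asserts $\eta_2(0)=0$, i.e., $\eta_2=O(|\lambda|)$. Indeed, the crossover vertex $\psi^+_\omega\psi^-_{-\omega}$ has no protecting Ward identity, and a first-order contraction of $F^{(2)}$ with the quartic vertex $\lambda_h F_\lambda$ gives a nonvanishing $O(\lambda_h)$ contribution to $\beta^{Z,2}_h$. This is precisely why $\kappa(\lambda)=1+\eta_2-\eta$ is a nontrivial function of $\lambda$ already at first order (as required for Theorem \ref{th:dascrivere} to produce $\lambda$-dependent critical exponents). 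For the bound in \eqref{ggg0}--\eqref{ggg} and the bootstrap this makes no difference, but the stated order of vanishing should be corrected.
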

This Proposition implies, in particular, that \eqref{ggg0}-\eqref{ggg} are satisfied for all $h\in[h^*,0]$,
with $h^*=(\log_2m)/(1-\h_m)+O(1)$, as $m\to 0$. Combining this with
Proposition \ref{prop:an} and Corollary \ref{cor:an}, we get that
the kernels of the effective potential on scale $h$ are analytic in
$\l$, uniformly in $h$. The last claim in the proposition, i.e., the
fact that 
$\h_1(\l)=\h(\l)$, is proved in \cite[Theorem 1]{BM02} (where the index $\h_b$ equals our $2(\h-\h_1)$).
\begin{Remark}
\label{rem:zh}
  Note that $|Z_h/Z_{h-1}|\le 2^{2c_z \l^2}$ and
  $|{m_h(\V0)}/{m_{h-1}(\V0)}|\le 2^{c_m|\l|}$ say that $z_h$ and
  $\sigma_h$ are small with
  $\l$, uniformly in $h$ (recall from \eqref{eq:40} that
  $Z_{h-1}/Z_h=1+z_h$), as was required for the multi-scale
  integration to be valid (see comment after \eqref{sipuo}).
\end{Remark}

\begin{Remark}
The flow of the effective constants is stable under small changes in the original energy function of the model; e.g., it remains valid in the presence of a finite range,
rather than purely nearest neighbor, interaction. A small analytical
change in the weights entering the definition of the model induces a
small analytical change in the values of $\l_{-\io}(\l)$, $\h(\l)$ and
$\h_i(\l)$. In this sense, these functions are {\it non-universal},
i.e., they are model-dependent.  However, the critical exponents $\h=\h_1$ and $\h_2$ are {\it universal} (i.e., model-independent)
functions of $\l_{-\io}(\l)$. The proof these claims goes together with the proof of Propositions \ref{prop:3} and \ref{prop:4} and we will not discuss it in details.
However, in Section \ref{sec5.3.1} below, we will explain more technically some of the ideas behind them. 
\end{Remark}

\subsubsection{The reference model: emerging Dirac
  description}\label{sec5.3.1}

In this section we define 
the reference model, which we mentioned so far only vaguely. It  is 
needed both in the proofs of Proposition \ref{prop:4}, and in the explicit computation of the 
dimer-dimer correlation (e.g. Theorem \ref{th:dascrivere}), which is required for 
a sharp estimate of the height fluctuations. The generating function of the reference model are defined by the
following Grassmann functional integral (for lightness of
  notations we give formally the
  expression in infinite volume and massless limit, but to be precise the model is defined on
$[-L,L]^2$ with anti-periodic b.c. on the fields $\psi^\pm_\o$ and
with an infrared regularization, similar to putting $m>0$ in the dimer
model; see \cite{BM} for details): 
\be
\label{eq:WJ}
e^{\SS_R({\bf J})}=
\int P_{Z}(d\psi^{(\le
    M)})e^{\VV(\sqrt{Z}\psi^{(\le M)})+\BBB(\sqrt{Z}\psi^{(\le
      M)},{\bf J})}
\ee
where the Grassmann field is 
$\{\psi^{(\le M)\pm}_{\xx,\o}\}_{\xx\in\mathbb R^2}^{\o=\pm}$, 
$P_{Z}(d\psi^{(\le M)})$ is the Grassmann Gaussian integration with propagator
\be
\int P_{Z_M}(d\psi^{(\le M)})\psi^{(\le M)\varepsilon}_{\xx,\o}\psi^{(\le M)\varepsilon'}_{\yy,\o'}
=\frac{\delta_{\varepsilon,-\varepsilon'}\d_{\o,\o'}}{2Z}
\int \frac{d\kk}{(2\p)^2} e^{-i\kk(\xx-\yy)}\frac{\chi_M(\kk)}{-i
  k_1+\o k_2}\;,
\label{gfuffi}
\ee
and the $\chi_M$ is an ultraviolet cutoff (coherently with our
previous notations, it is a smooth function that vanishes say for $\|\kk\|\ge 2^M$), to be eventually removed, $M\to+\infty$. Moreover, 
\be
\label{eq:Vref}
\VV(\psi)=\l_\infty \int d\xx d\yy\, v(\xx-\yy)\psi^+_{\xx,1}\psi^-_{\xx,1}\psi^+_{\yy,-1}\psi^-_{\yy,-1}\ee
with $v(\xx-\yy)$ a smooth short-range potential (decaying on a length-scale of order 1), and
\bea 
 \BBB(\psi,{\bf J})&=&\frac{Z^{(1)}}{{ Z}}\sum_\o\int d\xx\, J^{(1)}_\o(\xx)\psi^{(\le M)+}_{\xx,\o} \psi^{(\le M)-}_{\xx,\o}\\
&+&\frac{Z^{(2)}}{{Z}}\sum_\o\int d\xx\, J^{(2)}_\o(\xx)\psi^{(\le M)+}_{\xx,\o} \psi^{(\le M)-}_{\xx,-\o}.\label{eq:rel2}
\eea 
We denote the correlation functions of the reference model as
\begin{eqnarray}
  \label{eq:43}
  S^{(j_1,\dots,j_n)}_{R;\,\o_1,\dots,\o_n}(\xx_1,\dots,\xx_n):=\lim_{M\to\infty}\frac
  {\partial^n}{\partial J^{(j_1)}_{\o_1}(\xx_1)\dots \partial
  J^{(j_n)}_{\o_n}(\xx_n)}\mathcal S_R({\bf J})\Big|_{\bf J=0}.
\end{eqnarray}
For instance, 
\begin{eqnarray}
\label{eq:44}
S^{(1,1)}_{R;\,\o,\o'}(\xx,\yy)=\lim_{M\to\infty}(Z^{(1)})^2\media{\psi^+_{\xx,\o}\psi^-_{\xx,\o}; 
\psi^+_{\yy,\o'}\psi^-_{\yy,\o'}}_{R,\l_{\infty}}^{(M)}
\end{eqnarray}
where $\langle\cdot\rangle_{R,\l_{\infty}}^{(M)}$ denotes  (the $L\to\infty$
limit of)  the average with
respect to the measure of density
\[
e^{-\mathcal S_R(\V0)}\,e^{{\mathcal V}(\sqrt{Z}\psi^{(\le M)})} P_{Z}(d\psi^{(\le M)}).
\]
There is a clear analogy between  the ($M\to\io$ limit of the) reference 
model and the dimer model with $m=0$, once the latter is re-expressed in terms of
Dirac variables $(\psi^\pm_{\xx,\o})_{\xx\in\Lambda}$ (see
\eqref{3.12}). Indeed, the corresponding free propagators 
have the same asymptotic behavior at large distances, see Proposition \ref{th:propalibero}.
Also, recall (cf. \eqref{eq:dirac1}--\eqref{eq:dirac2})  that the local parts of the interaction potential and 
of the source term of the
dimer model are given, in terms of Dirac variables, by 
\[
\l_0\sum_\xx\psi^+_{\xx,1}\psi^-_{\xx,1}\psi^+_{\xx,-1}\psi^-_{\xx,-1},\qquad \sum_{\xx,\o}\Big(
J^{(1)}_\o(\xx)\psi^+_{\xx,\o}\psi^-_{\xx,\o}+
J^{(2)}_\o(\xx)\psi^+_{\xx,\o}\psi^-_{\xx,-\o}\Big),
\]
respectively, to be compared with \eqref{eq:Vref}--\eqref{eq:rel2}. 
The analogy is approximate because the fields of the reference model are
defined on the continuum and those of the dimer model on the lattice.
However, the large-distance behavior of the correlation functions do  turn
out to be the same for the two models, see Propositions
\ref{prop:universale} and  \ref{prop:corrispondenza}. For ease of comparison, let us introduce a 
convenient notation for the dimer correlation functions 
expressed in terms of Dirac fields: if 
\be \SS({\bf J})=\lim_{m\to 0}\lim_{\L\nearrow\mathbb Z^2}
\log \frac{\mathcal Z^{(11)}_\L(\l,m,{\bf A})}{\mathcal Z^{(11)}_\L(\l,m,{\bf 0})}\ee
is the ($m\to0$ limit of the $\L\nearrow\mathbb Z^2$ limit of the)
generating function of correlations for the interacting dimer model, we let 
\be  S^{(j_1,\dots,j_n)}_{\o_1,\dots,\o_n}(\xx_1,\dots,\xx_n):=
\frac{\partial^n}{\partial J^{(j_1)}_{\o_1}(\xx_1)\dots \partial
J^{(j_n)}_{\o_n}(\xx_n)}\SS({\bf J})\Big|_{\bf J=0}\label{eq:ss}\ee
be the corresponding correlation functions, where the external fields $J^{(j)}_\o(\xx)$ are related to 
$J_{\xx,j}$ via \eqref{eq:dirac1.1}-\eqref{eq:dirac2}. 

The generating and correlation functions $\SS_R({\bf J})$ and $
S^{(j_1,\dots,j_n)}_{R;\,\o_1,\dots,\o_n}(\xx_1,\dots,\xx_n) $ of the reference model can be expressed in 
terms of trees, whose values are the same 
as those of the dominant trees contributing to the corresponding functions $\SS({\bf J})$ and $
S^{(j_1,\dots,j_n)}_{\o_1,\dots,\o_n}(\xx_1,\dots,\xx_n)$ of the dimer model (once again, here we call 
``dominant"
the trees where the propagators $g^{(h)}$ are replaced by the relativistic propagators $g^{(h)}_R$, as 
discussed in Remark \ref{rem:inturn}). In particular, 
both types of trees are associated only with endpoints of type $\l_h$, $Z^{(1)}_h$, or $Z^{(2)}_h$, and 
the single-scale propagators have exactly the same form, once the identification between Dirac fields 
of continuum and discrete models is used.

A minor difference between the contributions to $\SS_R({\bf J})$ and the dominant contributions to 
$\SS({\bf J})$ 
lies in the fact that the trees contributing to $\SS_R({\bf J})$ have endpoints on all scales $\le M$, rather than $\le 0$; moreover, the sequence of running coupling constants $\l_{h,R},Z_{h,R},Z^{(1)}_{h,R},Z^{(2)}_{h,R}$ of the reference model, corresponding to the initial data $\l_{\infty},Z,Z^{(1)},Z^{(2)}$ is different in general from the corresponding sequence of the dimer model. However, the key observation is the following.
\begin{Proposition} \label{prop:universale}The initial data 
$\l_{\infty},Z,Z^{(1)},Z^{(2)}$ of the reference model can be properly adjusted, so that 
$\l_{h,R},Z_{h,R},Z^{(1)}_{h,R},Z^{(2)}_{h,R}$ are asymptotically the same as the constants of the dimer model, as $h\to-\io$, namely, if $h\le 0$, 
\be 
|\l_h-\l_{h,R}|+\Big|\frac{Z_{h}}{Z_{R,h}}-1\Big|\le C_\th|\l|^22^{\th h}\;, 
\quad \Big|\frac{Z^{(i)}_{h}}{Z^{(i)}_{R,h}}-1\Big|\le C_\th|\l|2^{\th h}\;,\label{5.766}\ee
uniformly in $M$, for some $0<\th<1$ as in Remark \ref{rem:memoria} and a suitable $C_\th>0$, provided $\l$ is sufficiently small. 
In particular, the infrared fixed point of $\l_{h,R}$ is the same as
the one of $\l_h$: $\l_{-\io,R}=\l_{-\io}$. 
$Z_{R,h}$ and $Z_{R,h}^{(i)}$ satisfy the first two of \eqref{ZZ} with 
critical exponents that coincide with those of the dimer model, once
all of them are expressed as functions  of $\l_{-\io}$.
\end{Proposition}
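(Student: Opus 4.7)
\smallskip

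The plan is to exploit the decomposition of both beta functions into a common ``universal'' (relativistic) part plus non-universal remainders, and then match the reference-model initial data to the dimer-model flow by a fixed-point argument. Specifically, by Remark \ref{rem:inturn} one can write, for each of the four effective constants,
\begin{equation*}
  \beta^{\#}_h\bigl(\underline{\xi}_h\bigr)=\beta^{\#}_{h,R}\bigl(\underline{\xi}_h\bigr)+r^{\#}_h,\qquad |r^{\#}_h|\le C_\theta|\lambda|^2 2^{\theta h},
\end{equation*}
where $\beta^{\#}_{h,R}$ is the tree expansion in which every single-scale propagator is replaced by its relativistic approximation $g^{(k)}_R$. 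By construction, the beta function of the reference model is exactly $\beta^{\#}_R$ evaluated on the analogous sequence $\underline{\xi}_{h,R}$ of reference running constants. Thus the two flows obey the same equation, up to a remainder that is exponentially small in the infrared.

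The first main step is to show that, for $\lambda_M$ small, the reference flow admits infrared limits $\lambda_{-\infty,R}$, $\eta_R(\lambda_M)$, $\eta_{i,R}(\lambda_M)$ depending analytically on $\lambda_M$, and that the map $\lambda_M\mapsto \lambda_{-\infty,R}(\lambda_M)$ is an analytic bijection on a neighbourhood of the origin, uniformly in $M$. This is a direct application of Proposition \ref{prop:3} to the reference model together with a contraction argument: the bound $|\beta^{\lambda}_{h,R}|\le C|\lambda_h|^2 2^{\theta h}$ makes the sequence $\{\lambda_{h,R}\}$ Cauchy, and the derivative of the map at the origin equals $1$. One then chooses $\lambda_M$ so that $\lambda_{-\infty,R}=\lambda_{-\infty}(\lambda)$, normalises $Z_M=1$, and chooses $Z^{(i)}_M$ analogously so that at scale $h=0$ the ratios $Z^{(i)}_{R,0}/Z_{R,0}$ match the dimer-model values. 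This fixes the free parameters in the reference model.

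The second step is to compare the two flows at every scale. Define $\delta^\#_h:=\xi^\#_h-\xi^\#_{h,R}$; subtracting the two recursion relations gives
\begin{equation*}
  \delta^\#_{h-1}=\delta^\#_h+\bigl[\beta^\#_{h,R}(\underline{\xi}_h)-\beta^\#_{h,R}(\underline{\xi}_{h,R})\bigr]+r^\#_h.
\end{equation*}
The bracket is Lipschitz in the running constants with a constant of order $|\lambda|$, uniformly in $h$, because $\beta^\#_R$ is at least quadratic at tree level and by the asymptotic freedom estimates of Proposition \ref{prop:3}. One then sets up a contraction in the Banach space of sequences $\{\delta_h^\#\}_{h\le 0}$ with weighted norm $\sup_{h\le 0}2^{-\theta h}|\delta^\#_h|$, with boundary conditions $\delta^\#_0=0$ at the ultraviolet end (from the matching of initial data) and $\delta^\lambda_h\to0$ at the infrared end (from the matching of the fixed points). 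The contraction yields the desired bounds \eqref{5.766}.

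The main technical obstacle is the simultaneous inversion in Step 1, which must be performed uniformly in the ultraviolet cut-off $M\to\infty$: this is where one needs to propagate estimates along the whole reference flow from scale $M$ down to $h$, controlling a possibly long ultraviolet regime with initial data that depend implicitly on the desired infrared target. Once this is established, the identity $\lambda_{-\infty,R}=\lambda_{-\infty}$ is immediate from the bound $|\delta^\lambda_h|\le C_\theta\lambda^2 2^{\theta h}$, while the coincidence of critical exponents follows because $\eta(\lambda)$ and $\eta_i(\lambda)$ are determined, via \eqref{ZZ}, by the infrared limits of $\log_2(Z_{h-1}/Z_h)$ and $\log_2(Z_{h-1}^{(i)}/Z_h^{(i)})$; by the matching of flows these limits depend only on $\lambda_{-\infty}$, and the same universal functions govern both models.
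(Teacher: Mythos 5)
The paper does not give a proof of this Proposition: it cites \cite{BM}, where the analogous statement is proved for a quantum spin chain, so there is no in-text proof to compare against. Your sketch reproduces the strategy used there -- decompose both beta functions into a relativistic ``universal'' piece plus remainder, choose $\lambda_M$ by inverting the analytic map $\lambda_M\mapsto\lambda_{-\infty,R}(\lambda_M)$, normalize the $Z$'s at the ultraviolet end, and compare the two flows scale by scale via a weighted-norm contraction -- and this is essentially the right route. Two points are imprecise, though neither is fatal. First, the assertion that ``the beta function of the reference model is exactly $\beta^{\#}_R$'' is too strong: the reference model's trees carry endpoints on all scales up to $M$, while the dominant dimer trees defining $\beta^{\#}_{h,R}$ have endpoints only up to scale $0$, so the two objects differ by contributions from trees reaching above scale $0$; by short memory (Remark~\ref{rem:memoria}) those are $O(2^{\th h})$ and must be absorbed into the remainder $r^{\#}_h$, not taken to vanish identically. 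Second, your boundary conditions over-determine the $\lambda$ component: once $\lambda_M$ is chosen so that $\delta^\lambda_h\to0$ in the infrared, $\delta^\lambda_0$ is an \emph{output} of the argument (of order $|\lambda|^2$, consistent with \eqref{5.766}) and cannot also be set to zero; only for the $Z$ and $Z^{(i)}$ components does one impose the ultraviolet normalization. Finally, you correctly single out as the real technical burden the uniform-in-$M$ control of the reference flow on the ultraviolet scales $0<h\le M$; note in addition that the Lipschitz estimate you invoke for $\beta^{\#}_{h,R}$ on \emph{varying} sequences is not literally what Proposition~\ref{prop:3} states (which concerns the constant-sequence diagonal), but follows from the analyticity of the kernels (Corollary~\ref{cor:an}) combined with short memory.
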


For the proof, see \cite{BM}, where a similar statement is proven for a quantum spin chain instead of the interacting dimer model. See in particular
\cite[Eq.(79)]{BM}, where $Z^{(th)}_h$ is the same as our $Z_{R,h}$.

\medskip 

As anticipated above, the reason why it is useful to introduce the
reference model at all is that it has more symmetries than the dimer
model.  In particular, its ``action'' $\mathcal V+\mathcal B$ is
formally covariant under a ``local chiral gauge transformation"
$\psi^\pm_{\xx,\omega}\mapsto e^{\pm i
  \alpha_\omega(\xx)}\psi^\pm_{\xx,\omega}$ (here, ``local'' refers to
the fact that the phase transformation depends on the point,
``chiral'' to the fact that it depends on $\omega$, while
``formally'' means ``up to corrections due to the ultraviolet
regularization $\chi_M(\kk)$''). The latter induces exact identities
(known as Ward Identities) between the correlation functions of the
reference model, which in turn induce asymptotic identities between
the correlations of the dimer model.  By playing with these
identities one can prove, among other things,  Proposition \ref{prop:4}, as well as the following equations for the correlation functions.

\begin{Proposition} 
\label{prop:corrispondenza}
Fix the bare parameters
$\lambda_\infty,Z,Z^{(i)}$ as in Proposition \ref{prop:universale}. Then the correlation functions 
of the reference and dimer models are asymptotically the same at large distances; more precisely, 
denoting by $D_{\underline\xx}$ the diameter of 
the set $\underline\xx:=\{\xx_1,\ldots,\xx_n\}$, $n\ge 2$, and by $\d_{\underline\xx}$ the minimal distance among 
the points in $\underline\xx$, if $\d_{\underline\xx}\ge \max\{1, c_0 
D_{\underline\xx}\}$ for some $c_0>0$, then 
\be \big|S^{(j_1,\dots,j_n)}_{R;\,\o_1,\dots,\o_n}(\xx_1,\dots,\xx_n)-S^{(j_1,\dots,j_n)}_{\o_1,\dots,\o_n}
(\xx_1,\dots,\xx_n)\big|\le \frac{C_{n,\th}}{D_{\underline\xx}^{n+\th}},\label{eq:primaprima}\ee
for some $0<\th<1$ as in Remark \ref{rem:memoria} and a suitable $C_{n,\th}>0$, which may depend on $c_0$. 

Moreover, there exist
functions $K_1(\cdot),K_2(\cdot),\kappa_{{2}}(\cdot)$, analytic in their
argument in a neighborhood of zero, such that $K_1(0)=K_2(0)=\kappa_{{2}}(0)=1$, and for all $\xx\neq\yy$
\begin{eqnarray}
  \label{prima}
&&S^{(1,1)}_{R;\,\o,\o'}(\xx,\yy)
=\frac{\d_{\o,\o'}}{(4\p)^2}\frac{K_1(\l_{-\io})}{\big((x_1-y_1)+i \o
  (x_2-y_2)\big)^2}+R^{(1)}_{\o,\o'}(\xx-\yy),\qquad \qquad \\
  \label{eq:45}
&&  S^{(1,2)}_{R;\,\o,\o'}(\xx,\yy)=S^{(2,1)}_{R;\,\o,\o'}(\xx,\yy)=0,\\
\label{primabis}
&&S^{(2,2)}_{R;\,\o,\o'}(\xx,\yy)
=\frac{\d_{\o,-\o'}}{(4\p)^2}\frac{K_2(\l_{-\io})}{|\xx-\yy|^{2\kappa_{{2}}(\l_{-\io})}}+R^{(2)}_{\o,\o'}(\xx-\yy),\qquad \qquad\;
\end{eqnarray}
where, if $n_1,n_2\ge 0$, the rest $R^{(i)}_{\o,\o'}$
satisfies  $$|\dpr_{x_1}^{n_1}\dpr_{x_2}^{n_2}R^{(i)}_{\o,\o'}(\xx)|\le
C_{n_1+n_2,\theta}'|\xx|^{-2-\theta-n_1-n_2}$$ for $\th\in(0,1)$ as in
Remark \ref{rem:memoria} and $C_{n,\theta}'>0$.
Moreover, if $q>2$,
\be S^{(1,1,\dots,1)}_{R;\,\o_1,\dots,\o_q}(\xx_1,\dots;\xx_q)=0\;.
\label{terza}\ee
\end{Proposition}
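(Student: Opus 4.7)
The plan is to establish the four claims of the proposition in turn, exploiting the enhanced symmetries of the reference model that are absent in the dimer model.

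\emph{Equation \eqref{eq:primaprima} (asymptotic equivalence of correlations).} The starting point is the renormalized tree expansion of Section \ref{sezionemortale} applied to both $S^{(j_1,\dots,j_n)}_{\omega_1,\dots,\omega_n}$ and $S^{(j_1,\dots,j_n)}_{R;\,\omega_1,\dots,\omega_n}$: both correlations are sums over GN trees with $n$ special endpoints of the relevant types. By Remark \ref{rem:inturn}, the dimer correlation is the ``dominant'' part (trees with all endpoints on scales $\le 0$ and with propagators $g^{(h)}_R$) plus a remainder that by the short-memory property is smaller by a factor $2^{\theta h}+2^{\theta(h^*-h)}$. The dominant tree expansion for the dimer and the reference model differ only by: (i) endpoints at scale $1$ (present only in the dimer model); (ii) the rest $r^{(h)}$ in the propagator \eqref{5.64}; (iii) the replacement of $(\lambda_h,Z_h,Z_h^{(i)})$ by $(\lambda_{h,R},Z_{R,h},Z_{R,h}^{(i)})$. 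Sources (i)-(ii) give a factor $2^{\theta h}$ via short memory; source (iii) gives the same factor via Proposition \ref{prop:universale}. Since under the hypothesis $\delta_{\underline\xx}\ge c_0 D_{\underline\xx}$ the dominant scale is $2^h\simeq 1/D_{\underline\xx}$, this yields the claimed rate $D_{\underline\xx}^{-n-\theta}$.

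\emph{Equations \eqref{prima}, \eqref{eq:45} and \eqref{primabis}.} Here one works directly in the reference model. The action $\mathcal V$ in \eqref{eq:Vref} is formally invariant under the chiral gauge transformations $\psi^\pm_{\xx,\omega}\mapsto e^{\pm i\alpha_\omega(\xx)}\psi^\pm_{\xx,\omega}$, which (up to the usual cutoff-induced corrections) conserve the chirality $\omega$ independently for $\omega=\pm 1$. The vertex $J^{(1)}_\omega$ is chirality preserving, whereas $J^{(2)}_\omega$ is chirality flipping. As a consequence, any non-vanishing contraction in $S^{(1,2)}$ or $S^{(2,1)}$ would violate conservation of the chirality charge, establishing \eqref{eq:45}. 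For \eqref{prima} and \eqref{primabis}, the multiplicative renormalization of the two vertices is governed by $Z^{(1)}_h/Z_h$ and $Z^{(2)}_h/Z_h$ respectively. By \eqref{ZZ} and $\eta_1=\eta$, $Z^{(1)}_h/Z_h$ flows to a finite non-zero constant $K_1^{1/2}(\lambda_{-\infty})$, while $Z^{(2)}_h/Z_h \sim 2^{(\eta_2-\eta)h}$ yields a non-trivial anomalous dimension. Matching with the dominant tree (a single fermion loop of two relativistic propagators $\mathfrak g_{\omega\omega}$) at the infrared scale $2^h\simeq |\xx-\yy|^{-1}$, together with a Ward identity that fixes the $J^{(1)}$-$J^{(1)}$ correlation to keep the $1/z^2$ structure at all orders, yields \eqref{prima} with amplitude $K_1(\lambda_{-\infty})$. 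For \eqref{primabis} the analogous computation gives $|\xx-\yy|^{-2\kappa_2(\lambda_{-\infty})}$, with $2\kappa_2 = 2 + 2(\eta-\eta_2)$ at the appropriate normalization.

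\emph{Equation \eqref{terza}.} This is the most delicate part. The vanishing of all truncated correlations of $J^{(1)}$ with $q>2$ insertions in the reference model can be obtained by combining the chiral Ward identity
\begin{equation*}
\sum_{j=1,2}\partial_{j}\langle J^{(1)}_\omega(\xx)\,J^{(1)}_{\omega_1}(\xx_1)\cdots J^{(1)}_{\omega_q}(\xx_q)\rangle^T_{R}=\textrm{(anomaly)}+\textrm{(explicit contact terms)}
\end{equation*}
with the Schwinger-Dyson equation for $\psi^+\psi^-$, giving a closed equation for the correlation. One shows that the anomaly is \emph{linear} in the external currents (the non-perturbative analog of the Adler-Bardeen theorem), so that the equation it produces for $q>2$ admits only the zero solution; see \cite{BFM07,BM1} for the analogous argument in the Thirring and Luttinger models. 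Equivalently, in bosonized language $J^{(1)}_\omega$ is the derivative of a free boson, so all its higher cumulants vanish.

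\emph{Main obstacle.} The hard step is \eqref{terza}: one must control the cutoff-induced corrections to the formal chiral Ward identity uniformly in $M\to\infty$, and show that the resulting ``closed equation'' for the $q$-point functions forces them to vanish. The fact that $\lambda_{-\infty}$ is small plays no role here; the result is genuinely non-perturbative and hinges on the exact symmetry of the continuum reference model, which is why the more complicated dimer model had to be compared to it in the first place.
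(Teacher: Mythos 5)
Your proposal takes essentially the same route as the paper, which treats this proposition largely as a collection of imports from prior work: \eqref{eq:primaprima} and \eqref{prima} are cited from \cite{BM}, \eqref{eq:45} is proved by a direct chiral-charge (equivalently, $\omega$-sector) conservation argument identical to yours, and \eqref{primabis}--\eqref{terza} are cited from \cite[Theorem 1.1]{BFM} (with the Sine--Gordon parameter $\zeta$ set to zero so that the bosonized theory is a free GFF --- the same bosonization picture you invoke). Your tree-expansion sketch of \eqref{eq:primaprima} correctly mirrors what \cite{BM} does, and your identification of \eqref{terza} as the non-perturbative, Ward-identity/Adler--Bardeen step is accurate.

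One minor slip: from $Z_h\sim 2^{\eta h}$, $Z^{(2)}_h\sim 2^{\eta_2 h}$ and the dominant-scale factor $2^{2h_c}\sim |\xx-\yy|^{-2}$ at $2^{h_c}\simeq 1/|\xx-\yy|$, the dressed amplitude $(Z^{(2)}_{h_c}/Z_{h_c})^2\sim 2^{2(\eta_2-\eta)h_c}$ gives $S^{(2,2)}\sim |\xx-\yy|^{-2(1+\eta_2-\eta)}$, i.e.\ $\kappa_2 = 1+\eta_2-\eta$ as stated after the proposition; you wrote $2\kappa_2 = 2+2(\eta-\eta_2)$, which has the sign of $\eta-\eta_2$ reversed.
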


Eq.\eqref{eq:primaprima} for $n=2$ and $j_1=j_2=1$, and 
Eq.\eqref{prima} are proved in \cite{BM}: \eqref{eq:primaprima} for $n=2$ and $j_1=j_2=1$ 
is the same as \cite[Eq.(43)]{BM}, while \eqref{prima} is the same as \cite[Eq.(41)]{BM}, 
just expressed in real space rather than momentum space. The rest $R^{(1)}_{\o,\o'}$
can be written in closed form (as apparent from \cite[Eq.(39)]{BM}),
but we do not write it here explicitly, in  order to avoid a further
digression that would not be needed for our purposes. The proof of \eqref{eq:primaprima} for
general values of $n, j_1,\ldots,j_n$ is a corollary of the proof in \cite{BM}. 
Eq.\eqref{eq:45} is a trivial consequence of the
fact that the propagator \eqref{gfuffi} is diagonal in $\o$ and the
interaction \eqref{eq:Vref} contains as many fields with $\omega=+$ as
fields with $\omega=-$.
Eqs.\eqref{primabis} and \eqref{terza} are proven in 
\cite[Theorem 1.1]{BFM}\footnote{to get \eqref{primabis} and
  \eqref{terza} from \cite[Theorem 1.1]{BFM} one has to put to zero
  the parameter $\zeta$ there, in which case the
  Sine-Gordon model appearing in the l.h.s. of \cite[Eq. (1.16)]{BFM}
  reduces to the massless Gaussian Free Field. Analyticity of the
  functions $K_i(\cdot),\kappa_-(\cdot)$ is not stated explicitly there, but it
  follows as byproduct from the proofs.} in  the case where $v(\xx-\yy)$ is replaced by a local 
delta-like interaction (in this case the reference model is called
Thirring model). If instead $v$ is as in \eqref{eq:Vref}, 
then \eqref{primabis} and \eqref{terza} can be proven by comparing the
tree expansions of the Thirring and of the reference model \eqref{eq:WJ}, 
in the same spirit as one compares the expansions of the dimer and reference model,
see discussion before Proposition \ref{prop:universale}.

The exponent $\k_{{2}}$ is related in a simple way to the exponents $\h$ and $\h_2$ of $Z_h$ and $Z_h^{(2)}$: it is equal to {$1+\h_2-\h$}, once $\h$ and $\h_2$ are re-expressed as functions of $\l_{-\io}=\l_{-\io}(\l)$, rather than of $\l$. 

Finally, note that Eq.\eqref{terza} is the analogue of the cancellation \eqref{4.35} that we already used in the 
analysis of the non-interacting dimer model. 

\medskip 

The usefulness of the formulas for the correlation functions in Proposition \ref{prop:corrispondenza} is that they can be used 
to compute sharp estimates for the large distance behavior of the dimer correlation functions. 
These will be exploited in order to complete the proofs of Theorems \ref{th:dascrivere}, \ref{th:maintheorem} and \ref{th:gff}.

\subsection{The two-point dimer correlation: proof of Theorem \ref{th:dascrivere}}
\label{sec:di}
We are finally in the position of proving Theorem \ref{th:dascrivere}. We start from 
\be \media{\openone_{(\xx,\xx+\hat e_{j})};\openone_{(\yy,\yy+\hat e_{j'})}}_{\l}=
\frac{\partial^2}{\partial J_{\xx,j}\partial J_{\yy,j'}}\SS({\bf J})\Big|_{{\bf J}=\V0},\label{5.83}\ee
where $\SS({\bf J})$ is defined in \eqref{eq:ss} (see also discussion after \eqref{5.24x} below).
Recalling the definition of $J^{(i)}_\o(\xx)$ in terms of $J_{\xx,j}$ (cf. \eqref{eq:dirac1.1}-\eqref{eq:dirac2}), we can, if desired, re-express 
\eqref{5.83} in terms of the correlation functions for the Dirac fields \eqref{eq:ss}. More explicitly,
\be \frac{\partial^2}{\partial J_{\xx,j}\partial J_{\yy,j'}}\SS({\bf J})\Big|_{{\bf J}=\V0}=
\sum_{\substack{\o,\o'=\pm\\ i,i'=1,2}}S^{(i,i')}_{\o,\o'}(\xx,\yy)
\frac{\partial J^{(i)}_\o(\xx)}{\partial J_{\xx,j}}
\frac{\partial J^{(i')}_{\o'}(\yy)}{\partial J_{\yy,j'}}.
\label{eq:6.9}\ee
Inserting \eqref{eq:primaprima}
in  \eqref{eq:6.9} and using \eqref{eq:45}, we rewrite: 
\bea && \media{\openone_{(\xx,\xx+\hat e_{j})};\openone_{(\yy,\yy+\hat e_{j'})}}_{\l}=\label{eq:6.97}\\
&&\qquad =
\sum_{i=1}^{2}\sum_{\o,\o'=\pm}S^{(i,i)}_{R;\,\o,\o'}(\xx,\yy)
\frac{\partial J^{(i)}_\o(\xx)}{\partial J_{\xx,j}}
\frac{\partial J^{(i)}_{\o'}(\yy)}{\partial J_{\yy,j'}}+\tilde R_{j,j'}(\xx-\yy),\nonumber\eea
where 
\be |\tilde R_{j,j'}(\xx-\yy)|\le \frac{C_\th}{|\xx-\yy|^{2+\th}},\ee
with $\th \in(0,1)$ as in Remark \ref{rem:memoria}, for some $C_\th>0$. Substituting \eqref{prima} and \eqref{primabis} into \eqref{eq:6.97}, and using the definition of $J^{(i)}_\o(\xx)$ in \eqref{eq:dirac1.1}-\eqref{eq:dirac2}, we obtain \eqref{eq:41bis}, with $K(\l)=K_1(\l_{-\infty})$, $\tilde K(\l)=K_2(\l_{-\infty})$ 
and $\k(\l)=\k_{{2}}(\l_{-\infty})$. This concludes the proof of Theorem \ref{th:dascrivere}, and, therefore, as discussed in Section \ref{sec:varint}, of Theorem \ref{th:maintheorem} for $n=2$.

\section{Height fluctuations in the interacting model: proof of
  Theorems \ref{th:maintheorem} and \ref{th:gff} for $\l\ne0$}
\label{sec:cisiamo}

In this section we use the renormalized tree expansion, the dimensional estimates on the renormalized trees, and the comparison between the dimer and reference models, 
discussed in the previous section, to complete the proof of our main results. 

\subsection{Tree expansion for the correlation functions}

The multiscale construction described in the previous section induces a representation of the multipoint dimer 
correlation functions in terms of a renormalized tree expansion.
We limit ourselves to the discussion of 
the correlations at distinct bonds, the general case being treatable in a similar manner. Using \eqref{2.24x} and the discussion in Section \ref{sec2.1}, we find:
\bea \media{\openone_{b_1};\cdots ;\openone_{b_q}}_{\l}&:=&\lim_{m\to0}\lim_{\L\nearrow\mathbb Z^2}\media{\openone_{b_1};\cdots ;\openone_{b_q}}_{\L;\l,m}=\\
&=&\lim_{m\to0}\lim_{\L\nearrow\mathbb Z^2} \frac{\dpr^k}{\dpr A_{b_1}\cdots \dpr A_{b_q}}
\log \mathcal Z^{(11)}_{\L}(\l,m,{\bf A})
\Big|_{{\bf A}=\V0}=\nonumber\\
&=&\frac{\dpr^q}{\dpr J_{b_1}\cdots \dpr J_{b_q}}\SS({\bf J})\Big|_{{\bf J}=\V0}
\;,\label{5.24x}\eea
where $\SS({\bf J})$ is defined in \eqref{eq:ss} and it can be computed via the iterative 
renormalized expansion described in the proof of Proposition \ref{lem5.1}: in particular, it can 
be written as $\SS({\bf J})=\sum_{h\le 0}\tilde S^{(h)}({\bf J})$, where $\tilde S^{(h)}({\bf J})$
is the single-scale contribution to the generating function, see \eqref{g5.eff}.
Note that in the last line of \eqref{5.24x} we exchanged a
derivative with the limits $\L\nearrow\mathbb Z^2$, $m\to0$. This is
justified by the fact that $\SS({\bf J})$ can be
expressed via an absolutely convergent expansion, uniformly in $\L$
and $m$, as already discussed in Section \ref{sec5.2.1} (see below for more details
about the bounds on the tree values contributing to the correlation
functions). For what follows, recall that as long as $m>0$ the sum
over $h$ runs from $h^*$ to $0$ and that the limit $m\to 0$ corresponds
to $h^*\to -\io$; therefore, in the following formulas, we shall
always replace $h^*$ by $-\io$.

The single-scale contribution $\tilde S^{(h)}({\bf J})$ to 
$\SS({\bf J})$ can be written in a way similar to 
(\ref{g5.17h}):
\be \tilde S^{(h)}({\bf J})=\sum_{q\ge 1}\sum_{j_1,\ldots,j_q}\sum_{\yy_1,\ldots,\yy_q} S^{(h)}_{q,{\bf j}}(\yy_1,\ldots,\yy_q)\prod_{i=1}^q J_{\yy_i,j_i}\;,
\label{3.5qk}\ee
where $S^{(h)}_{q;{\bf j}}(\yy_1,\ldots,\yy_q)$ collects the contributions to $W^{(h)}_{q;{\bf j}}(\yy_1,\ldots,\yy_q)$ involving propagators on scales $>h$ and at least one propagator 
on scale $h+1$. Therefore,
\be  \media{\openone_{(\yy_1,\yy_1+\hat e_{j_1})};\cdots ;\openone_{(\yy_q,\yy_q+\hat e_{j_q})}}_{\l}=q!\sum_{h\le 0} S^{(h)}_{q,{\bf j}}(\yy_1,\ldots,\yy_q)\ee
and, as explained in Section \ref{sezionemortale}, 
$S^{(h)}_{q,{\bf j}}(\yy_1,\ldots,\yy_q)$ can be expressed by a sum over trees $\t\in\TT^{(h)}_{N,n}$
with $n\le q$ {\it special endpoints}\footnote{The reason why $n\le q$
  rather than $n=q$ is that some special endpoints - those on scale 1
  - could be associated with monomials
of order two or more in the ${\bf J}$ fields, i.e. a monomial of type $\tilde\xi(\gamma;R)$ with $|R|>1$, 
as the one depicted graphically in Fig. \ref{fig3}.}
and $N\ge 0$ normal end-points:
\bea &&   \media{\openone_{(\yy_1,\yy_1+\hat e_{j_1})};\cdots ;\openone_{(\yy_q,\yy_q+\hat e_{j_q})}}_{\l}=\label{4.1a}\\
&&=\sum_{h\le 0} \sum_{N\ge 0}\sum_{n=1}^{q}
\,\sum_{\t\in\TT^{(h)}_{N,n}}\,\sum_{\substack{{\bf P}\in\PP_\t:\\
|P_{v_0}|=|P_{v_0}^J|=q}}S_{\t,{\bf P}}(\yy_1,j_1;\cdots;\yy_q,j_q)\;.\nonumber\eea
Here $S_{\t,{\bf P}}(\yy_1,j_1;\cdots;\yy_m,j_m)$ is the tree value,
which can be bounded in a way similar to Eq. (\ref{5.62}). To give the
bound we need a few extra definitions.
Given $\t\in\TT^{(h)}_{N,n}$,
let us denote by $\tau^*$
the minimal subtree of $\t$ connecting all its special endpoints.
For each $v \in \tau^*$,  let $s_v^*$ be the number of  vertices
immediately following $v$ on $\t^*$ such that $|P_v^J|\ge 1$ (i.e., the number of descendants of $v$ in $\tau^*$).
Moreover, let
$V_{nt}(\tau^*)$ be the set of vertices in $\t^*$ with $s_v^*>1$, which are the branching points 
of $\t^*$. For future reference, we also define $v_0^*$ to be the leftmost vertex on $\t^*$ 
and $h_0^*$ its scale. See Fig. \ref{specialTreeFig}.

\begin{figure}
\centering
\begin{tikzpicture}
    \foreach \x in {0,...,12}
    {
        \draw[very thin] (\x ,1) -- (\x , 7.5);
    }
    \draw (0,0.5) node {$h$};
      \draw (-0.5,4) node {$r$};
    \draw (12,0.5) node {$1$};
    \draw (0,4) node (v0) {} -- (1,4) node  [vertex] {} -- (2,4) node [vertex] {} -- (3,4) node [vertex] (v1) {};
    \node [vertex] (v0star) at (4,3) [label=-95:$v_0^*$] {};
    \draw (v1) -- (v0star);
    \draw (v1) \foreach \x in {4,...,12} {-- (\x,3+\x/3) node [vertex] {}};
    \draw (10,19/3) -- (11,6) node [vertex] {};
    \draw (v0star) --  (5,3.5) node [specialEP] (x) {};
    \node at (4,0.5) {$h_0^*$};
    \draw (v0star) -- (5,2.5) node [vertex] {} -- (6,2) node [vertex] (v2) {} -- (7,1.5) node [specialEP] {};
    \draw (v2) \foreach \x in {7,8,9} {-- (\x,-1+\x/2) node [vertex] {}} -- (10,4) node (v3) [vertex,label=100:$v_q$] {};
    \draw (v3) -- (11,4.5) node [specialEP] {};
    \draw (v3) -- (11,3.5) node [vertex] {} -- (12,3) node [vertex] {};
    \draw[rounded corners=0.3cm] (3.2,2.3) -- (7,0.8) -- (10,3.5) -- (11,4) -- (11.5,4.5) -- (11,5) -- (5,4.3) --(3.2,3.5) -- cycle;
    \node at (7.5,0.8) {$\tau^*$};
\end{tikzpicture}
\caption{Example of a tree $\tau\in \TT^{(h)}_{N,n}$ appearing in the expansion 
for the $m$ points correlation function, with $N=3$ and $n=q=3$. The subtree $\tau^*$ 
associated with $\t$ is highlighted.} \label{specialTreeFig}
\end{figure}
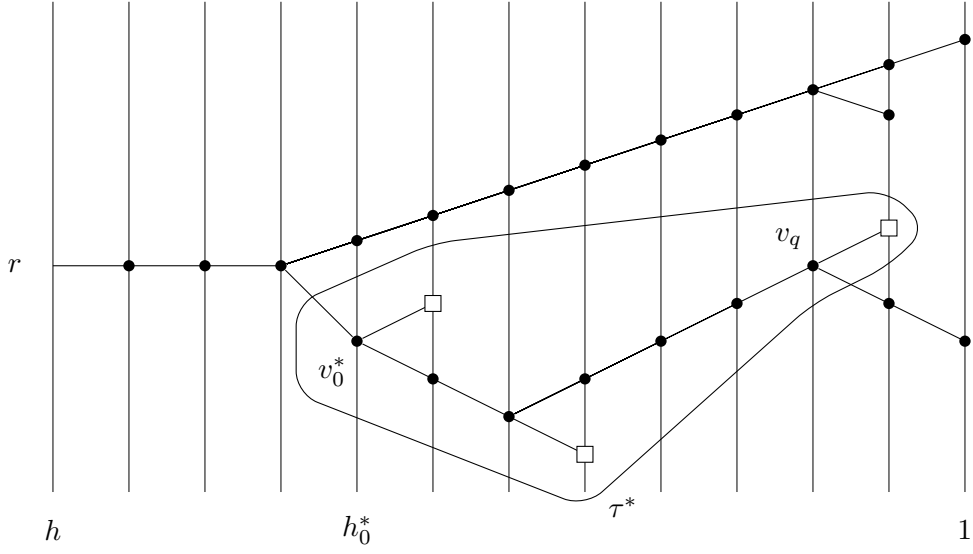

Given these definitions, we can write the bound for 
$S_{\t,{\bf P}}(\yy_1,j_1;\cdots;\yy_q,j_q)$ as 
\bea && |S_{\t,{\bf P}}(\yy_1,j_1;\cdots;\yy_q,j_q)|\le\label{4.basic} \\
&&\qquad \le q!
 C^{N+n}(C|\l|)^{\frac12|I_{v_0}^\psi|-N}\ 2^{h(2-q)}\Big[\!\!\!\prod_{v \in V_{nt}(\tau^*)}\!\!\! \!\! 2^{2(s_v^*-1)h_v} e^{-c \sqrt{2^{h_v}\delta_v}} \Big] \times\nonumber\\
&&\times\Biggl[\prod_{v\ {\rm s.e.p.}}
\Big|\frac{Z_{h_v-1}^{(j_v)}}{Z_{h_v-1}}\Big|\Biggr]\Big[\prod_{\substack{v\,{\rm not}\\ {\rm e.p.}}}\hskip-.02truecm 2^{c\l^2|P_v^\psi|}\, 2^{2-\frac12|P_v^\psi|-|P_v^J|-z(P_v)}
\Big]\;,\nonumber\eea
which is very similar to the bound Eq. (\ref{5.62}) for the renormalized kernels of the effective potential. In particular, $z(P_v)$ is given 
by \eqref{eq:zpv}. Note that the assumptions \eqref{ggg0}-\eqref{ggg} are verified, thanks to Proposition \ref{prop:4}. In comparison with 
\eqref{5.62}, note the presence in \eqref{4.basic} of the product over the vertices ${v \in V_{nt}(\tau^*)}$ of $2^{2(s_v^*-1)h_v} e^{-c \sqrt{2^{h_v}\delta_v}}$, where 
$\d_v$ is the tree distance of the set $\cup_{f\in P_v^J}\{\xx(f)\}$,
i.e. the length of the shortest tree graph on $\mathbb Z^2$ connecting
its points ($\xx(f), f\in P_v^J$, is one of the coordinates $\yy_i$).
In this product, the factors  $2^{2(s_v^*-1)h_v}$ take into account the dimensional gain coming from the 
fact that we are {\it not} summing over the space labels $\yy_i$ of the external fields (the gain is meant in 
comparison with 
Eq.(\ref{5.62}) where, on the contrary, we summed over all the field variables). Moreover, the factors 
$e^{-c \sqrt{2^{h_v}\delta_v}}$ come from the decaying factors
$e^{-c\sqrt{2^{h_w}|\xx(\ell)-\xx'(\ell)|}}$ associated with 
the propagators $g_\ell^{(h_w)}$, with $w\ge v$ and $\ell\in T$ (the notation $g_\ell^{(h_w)}$ is as in Lemma \ref{Lemma:Pf}; the exponential factor comes from the 
estimate on the propagator on scale $h_w$, cf. Lemma \ref{Lemma:Gevrey} and the comment after \eqref{g5.tildef}). 
See \cite[Section 2.3]{BFM07} for a few more details. 

\begin{Remark}
In the following we will actually need improved bounds on the tree values, as compared to \eqref{4.basic}. The 
improvements will be based on a decomposition of the tree values into a dominant part plus a rest, 
combined with a crucial cancellation in the dominant part, induced by \eqref{terza}. Rather than 
presenting the improved bounds directly, we prefer to state \eqref{4.basic} first, and then explain 
how to obtain an extra dimensional gain for the different contributions to \eqref{4.1a}, in order 
to make the ideas behind the proof of these dimensional gains more transparent.
\end{Remark}

As in Section \ref{sec:avotbf}, the tree expansion \eqref{4.1a} is refined by decomposing the single scale propagators as in
\eqref{5.64}. The ``refined tree expansion" brings 
along an extra set of labels, which distinguishes the fields associated with relativistic propagators $g^{(h)}_R$ from those with non-relativistic
propagators $r^{(h)}$. We call ``dominant" the contributions from trees with endpoints on scales $\le 0$ and involving only relativistic propagators, as in Remark \ref{rem:inturn}.
In the perspective of computing the height fluctuations, it is convenient to distinguish two classes of terms among the dominant
ones: those with all the special endpoints of type $Z^{(1)}_{h_v}$,
and the rest. The final decomposition we shall use takes the
following form:
\bea && \media{\openone_{(\yy_1,\yy_1+\hat e_{j_1})};\cdots ;\openone_{(\yy_q,\yy_q+\hat e_{j_q})}}_{\l}=\frac{\partial^q}{\partial J_{\yy_1,j_1}\cdots\partial J_{\yy_q,j_q}}\SS({\bf J})\Big|_{{\bf J}=\V0}=\qquad
\nonumber\\
&&\qquad=\SS^{(1)}_{q,{\bf j}}(\yy_1,\ldots,\yy_q)+
\SS^{(2)}_{q,{\bf j}}(\yy_1,\ldots,\yy_q)+\SS^{(3)}_{q,{\bf j}}(\yy_1,\ldots,\yy_q)\;,\label{5.81}
\eea
where: $\SS^{(1)}$ collects all the dominant contributions from trees whose special endpoints are all of type $Z^{(1)}_h$;
$\SS^{(2)}$ collects all the dominant contributions from trees with at least one special endpoint of type $Z^{(2)}_h$;
$\SS^{(3)}$ collects all the subdominant contributions, i.e., the contributions from trees with at least one endpoint on scale $1$, or at least one 
propagator of type $r^{(h)}$.

\subsection{Multipoint dimer correlation and height cumulants}\label{sec5.6} 

In order to compute the cumulant of order $q>2$ of the height difference, we start from \eqref{4.3}, with $n$ replaced by $q$. Proceeding as in \eqref{4.26}, in the $q$-fold sum 
over the bonds $b_1\in\mathcal C^{(1)}_{\xxi\to\hhe}$, $\ldots$, $b_q\in\mathcal C^{(q)}_{\xxi\to\hhe}$,
we distinguish a contribution that includes the terms where all the bonds are outside the two balls $B_{r_q}(\xxi)$, $B_{r_q}(\hhe)$, from the rest. By construction, the former contribution involves bond configurations 
such that the bonds are all mutually disjoint, and is the most difficult to bound. For simplicity, we limit 
our discussion to these terms, leaving the analysis of the rest to the reader. 
We write them in the form 
\be \sum_{b_1\in\mathcal C^{(1)}_{\xxi\to\hhe}}^*\cdots \sum_{b_q\in\mathcal C^{(q)}_{\xxi\to\hhe}}^* 
(\SS^{(1)}_{q,{\bf j}}(\xx_1,\ldots,\xx_q)+
\SS^{(2)}_{q,{\bf j}}(\xx_1,\ldots,\xx_q)+\SS^{(3)}_{q,{\bf j}}(\xx_1,\ldots,\xx_q)),\ee
where $\xx_i,j_i$ are such that $b_i=(\xx_i, \xx_i+\hat e_{j_i})$, and the $*$ on the sums 
indicate the constraint that all the bonds are
outside $B_{r_q}(\xxi)\cup B_{r_q}(\hhe)$. In the following, we 
analyze the terms coming from $\SS^{(1)}_{q,{\bf j}}$ first, and then we discuss the 
other two contributions. 

\subsubsection{The contributions of type $\SS^{(1)}_{q,{\bf j}}$.}  
For these terms, we use the cancellation \eqref{terza} for the correlations of the 
reference model, which implies that the analog of $\SS^{(1)}_{q,{\bf j}}$ in the reference model, 
to be called $\SS^{(1)}_{R; q,{\bf j}}$, is identically zero:
\begin{eqnarray}
  \label{eq:27}
\SS^{(1)}_{R;q,{\bf j}}(\xx_1,\ldots,\xx_q):=\sum_{\o_1,\ldots,\o_q}S^{(1,\ldots,1)}_{R;\o_1,\ldots,\o_q}
(\xx_1,\ldots,\xx_q)\prod_{l=1}^q\frac{\partial J^{(1)}_{\o_l}(\xx_l)}{J_{\xx_l,j_l}}
\equiv0\;,
\end{eqnarray}
where, for $\xx\in\mathbb Z^2$, $J^{(1)}_\o(\xx)=(-1)^\xx (J_{\xx,1}+i\o J_{\xx,2})$, as in \eqref{eq:dirac1.1}.
Therefore, we can add and subtract $\SS^{(1)}_{R;q,{\bf j}}$, thus finding
\be \SS^{(1)}_{q,{\bf j}}(\xx_1,\ldots,\xx_q)=\SS^{(1)}_{q,{\bf j}}(\xx_1,\ldots,\xx_q)-\SS^{(1)}_{R;q,{\bf j}}(\xx_1,\ldots,\xx_q)\;,\label{5.92}\ee
which implies 
\bea && \Big|\sum_{b_1\in\mathcal C^{(1)}_{\xxi\to\hhe}}^*\cdots \sum_{b_q\in\mathcal C^{(q)}_{\xxi\to\hhe}}^* \SS^{(1)}_{q,{\bf j}}(\xx_1,\ldots,\xx_q)\Big|\le\label{5.100}\\
&&\qquad \le  \sum_{h}\, \sum_{N\ge 0}\sum_{\t\in\TT^{(h)}_{N,q}}^{(1)}\,\sum_{\substack{{\bf P}\in\PP_\t:\\
|P_{v_0}|=|P_{v_0}^J|=q}}
\sum_{b_1\in\mathcal C^{(1)}_{\xxi\to\hhe}}^*\cdots \sum_{b_q\in\mathcal C^{(q)}_{\xxi\to\hhe}}^* \times\nonumber\\
&&\qquad\quad  \times\, 
|S^{dom}_{\t,{\bf P}}(\xx_1,j_1;\cdots;\xx_q,j_q)-S^R_{\t,{\bf P}}(\xx_1,j_1;\cdots;\xx_q,j_q)|\;,\nonumber\eea
Here, the apex $(1)$ on the sum over the trees recalls that 
we are summing over the contributions associated with
$\SS^{(1)}_{q,{\bf j}}$.  
We denoted by $S^{dom}_{\t,{\bf P}}$ the dominant contribution to the value of the tree $\t$ (i.e. the contribution obtained by replacing each propagator $g^{(h)}$ with $g_R^{(h)}$, see \eqref{5.64}), and by  
$S^R_{\t,{\bf P}}$ the tree value computed in the relativistic reference model. The sum over $h$ ranges between 
$-\io$ and $M$, where $M$ is the ultraviolet cutoff of the reference model, to be eventually sent to infinity.

We distinguish three types of 
contributions, that we treat separately:
\begin{enumerate}
\item [(a)] Those associated with the trees with endpoints all on scales 
$\le 0$, each of which comes in the form of a difference between the dominant contribution of the tree value in the dimer 
model, and the corresponding tree value in the reference model. These contributions are the same, modulo 
the fact that the effective constants associated with the endpoints of the tree for the dimer model are 
$\l_h, Z_h, Z^{(1)}_h$, while those in the tree for the reference model are $\l_{R,h}, Z_{R,h},Z^{(1)}_{R,h}$. Recall that the difference between these effective constants is bounded as in Proposition \ref{prop:universale}. Therefore, the contribution associated with each of these trees is bounded in a way similar to \eqref{4.basic}, times an extra 
factor $2^{\th h_w}$, with $w$ the right-most endpoint of the
tree. 
\item [(b)] Those associated with the trees
that have root at scale $h<0$ but have at least one endpoint on scale
$h_v\ge 1$. Since these terms do not appear (by definition) in
$\mathcal S^{(1)}_{q,{\bf j}}$, 
we have $|S^{dom}_{\t,{\bf P}}-S^R_{\t,{\bf P}}|=|S^R_{\t,{\bf P}}|$. 
These terms will turn
  out to be negligible due to the short memory property (Remark \ref{rem:memoria}).
\item[(c)] Those associated with trees with root at scale
  $h\ge 0$. Also in this case, $S^{dom}_{\tau,{\bf P}}=0$.
\end{enumerate}

We claim that the sum in the right side of \eqref{5.100}
can be bounded by 
\bea &&C_q\sum_{h=-\io}^{+\io}2^{h(2-q)}\min\{2^{\th'
  h},e^{-c'\,
{\sqrt{2^h \delta_{min}}}}\}\sum_{N\ge 0}C^{N}|\l|^N
\sum_{\t\in\TT^{(h)}_{N,q}}^{(1)}\,\sum_{\substack{{\bf P}\in\PP_\t:\\
|P_{v_0}|=|P_{v_0}^J|=q}}
\times\nonumber\\
&&\times
 \sum_{b_1\in\mathcal C^{(1)}_{\xxi\to\hhe}}^*\cdots \sum_{b_q\in\mathcal C^{(q)}_{\xxi\to\hhe}}^*\Big[\prod_{v\ {\rm s.e.p.}}\Big|
 \frac{Z^{(1)}_{R;h_v-1}}{Z_{R;h_v-1}}\Big|\Big]\Big[\!\!\!\prod_{v \in V_{nt}(\tau^*)}\!\!\! \!\! 2^{2(s_v^*-1)h_v} e^{-c' \sqrt{2^{h_v}\delta_v}} \Big]
\times\nonumber\\
&&\times
\Big[\prod_{v\,{\rm not}\,{\rm e.p.}} 2^{c\l^2|P_v^\psi|}
 2^{2-\frac12|P_v^\psi|-|P_v^J|-z(P_v)+\th'}\Big]\;\label{sleppa}
\eea
for some positive small $\theta',c'>0$, where we recall that the
``pruned tree'' $\tau^*$ was defined after \eqref{4.1a}, see Figure
\ref{specialTreeFig}.
{ Here, if as usual $b_i=(\xx_i,\xx_i+\hat e_{j_i})$, \[
\delta_{min}=\min_{1\le i\ne j\le q}\min_{b_i,b_j}|\xx_i-\xx_j|,
\]
with the minimum taken over all possible locations of $b_i,b_j$
appearing in the sum \eqref{sleppa}
and $\delta_{min}\ge1$ since the bonds $b_i,b_j$ are disjoint for
$i\ne j$.
}
Let us see why \eqref{sleppa} holds.
First consider the trees of type (a), for which $h<0$: as we explained, each of these trees
satisfies the estimate \eqref{4.basic}, times an extra 
factor $2^{\th h_w}$, with $w$ the right-most endpoint of the
tree. We can replace $2^{\th h_w}$ by $2^{\th' h}$ with some $0<\th'\le \th$, provided we add
$\th'$ to 
the exponent $2-\frac12|P_v^\psi|-|P_v^J|-z(P_v)$ at each vertex that
is not an endpoint. Of course, we will choose $\th'$ sufficiently
small so that the exponents remain strictly negative at each vertex
(recall \eqref{odm}). {Also, we have used $h_v\ge h$ and $\delta_v\ge \delta_{min}$.}
Next consider trees of type (c), for which
$h\ge0$. In this case, the dimensional gain 
arises only from 
the factors $e^{-c\sqrt{2^{h_v}\d_v}}$ in the second line of
\eqref{4.basic}, which are smaller than
{
$e^{-(c/2)\sqrt{2^{h_v}\d_v}}e^{-(c/2)\sqrt{2^h\delta_{min}}}$}.  As for the trees of type (b), the
dimensional gain comes from the short memory property. More precisely, since there is at least 
a vertex on scale $0$, we can extract from the bound \eqref{4.basic} a factor $2^{\th' h}$
provided we add $\theta'$ to every exponent $2-\frac12|P_v^\psi|-|P_v^J|-z(P_v)$.
 
To prove that the $q>2$ cumulants of the height differences do not
diverge with the distance, it remains to show that \eqref{sleppa} is bounded by some constant
depending only on $q$. By Propositions \ref{prop:4} and
\ref{prop:universale}, the critical exponent of $Z^{(1)}_{R;h}$ is
{\it equal} to the one of $Z_{R;h}$, and  the ratios $|Z^{(1)}_{h_v-1}/Z_{h_v-1}|$ can be bounded from above by 
a constant, independent of $h_v$. Moreover, by proceeding as in the proof of \eqref{4.39}, we find that, for a suitable $C'_q>0$,
\begin{eqnarray}
  \label{eq:28}
   \sum_{b_1\in\mathcal C^{(1)}_{\xxi\to\hhe}}^*\cdots \sum_{b_q\in\mathcal C^{(q)}_{\xxi\to\hhe}}^*\prod_{v \in V_{nt}(\tau^*)} e^{-c' \sqrt{2^{h_v}\delta_v}} \le C'_q \prod_{v \in V_{nt}(\tau^*)}
2^{-h_v\bar m_v^J}\;,
\end{eqnarray}
where $\bar m_v^J$ is the number of special endpoints contained in the
cluster $v$ but not in any other cluster $v'>v$ (i.e. the number of
special endpoints immediately following $v$, on scale {$h_{v}+1$}). To
get \eqref{eq:28}, we used
\begin{eqnarray}
  \label{eq:46}
\delta_v\ge c_q\sum_{f\in P^J_v}\min(d(\xx_f,\xxi),d(\xx_f,\hhe))
\end{eqnarray}
for some $c_q>0$, see also the comment before \eqref{z4.40} when $|P_v^J|=2$.
In conclusion,
\eqref{5.100}  is bounded by 
\bea &&C_q''\sum_{h=-\io}^{+\io}2^{h(2-q)}\min\{2^{\th' h},{e^{-c'\,\sqrt{2^{h}\delta_{min}}}}\}\sum_{N\ge 0}C^{N}|\l|^N\times\label{5.600}\\
&&\times
\,\sum_{\t\in\TT^{(h)}_{N,q}}^{(1)}\,\sum_{\substack{{\bf P}\in\PP_\t:\\
|P_{v_0}|=|P_{v_0}^J|=q}}
\Big[\!\!\!\prod_{v \in V_{nt}(\tau^*)}\!\!\! \!\! 2^{h_v(2s_v^*-2-\bar m_v^J)} \Big]
\Big[\prod_{v\,{\rm not}\,{\rm e.p.}}2^{\bar d_v(P_v)}\Big]\;\nonumber
\eea
where $\bar d_v(P_v)=2-|P_v^\psi|(1/2-c\l^2)-|P_v^J|-z_v+\th'$ which,
from \eqref{odm}, is negative and actually
smaller than $-1+\varepsilon$, for any $\e>0$, if $\l$ and $\th'$ are small
enough. 

By proceeding as in the proof of \eqref{4.42w}, we find
\be \prod_{v \in V_{nt}(\tau^*)}\!\!\! \!\! 2^{h_v(2s_v^*-2-\bar m_v^J)} = 2^{h^*_0(q-2)}\prod_{v \in V(\tau^*)}2^{|P_v^J|-2}
\label{ultimissima}
\ee
where $V(\t^*)$ is the set of vertices of $\t^*$ that are not
endpoints
and $h^*_0-1$ is the scale of the root of $\tau^*$. Note that the factor 
$2^{h^*_0(q-2)}$, multiplied by the factor $2^{h(2-q)}$ that appears in 
\eqref{5.600}, equals the product of $2^{q-2}=2^{|P^J_v|-2}$ over all the vertices
on the branch joining the root of $\tau$ with the root of $\tau^*$.
Therefore, \eqref{5.600} is bounded by 
\be C_q''\sum_{h=-\io}^{+\io}\min\{2^{\th' h},{e^{-c'\,\sqrt{2^{h}\delta_{min}}}} \}\sum_{N\ge 0}C^{N}|\l|^N\sum_{\t\in\TT^{(h)}_{N,q}}^{(1)}\,\sum_{\substack{{\bf P}\in\PP_\t:\\
|P_{v_0}|=|P_{v_0}^J|=q}}\prod_{v\,{\rm not}\,{\rm e.p.}}2^{\hat d_v(P_v)}\;,\label{5.602}\ee
where 
\begin{eqnarray}
  \label{eq:dhat}
  \hat d_v(P_v)=\left\{
    \begin{array}{ll}
      -|P_v^\psi|(1/2-c\l^2)-z_v+\th'\quad \text{if}\quad |P_v^J|>0\\
\bar d_v(P_v)\quad\text{otherwise.}
    \end{array}
\right.
\end{eqnarray}
Note that $\hat d_v\le-a<0$ for every $v$ and a suitable constant $a$ independent of $\l$, provided $\l$ and $\th'$ are small enough.
 From this,
it follows that \eqref{5.602} is summable over ${\bf P},\t$ and $h$
{(recall $\delta_{min}\ge1$)},
the result being a finite, $q$-dependent, constant, as desired.

\subsubsection{The contributions of type $\SS^{(2)}_{q,{\bf j}}$.} 
Let us now consider 
$\SS^{(2)}_{q,{\bf j}}$, which is apriori very dangerous, in that each of the trees contributing to
it is bounded as in \eqref{4.basic}, without any extra obvious
gain (i.e., \eqref{terza} is not true if the upper index is not
  $(1,\ldots, 1)$).  Nevertheless, as was the case also for $q=2$ in Section
\ref{sec5.6}, the dimensional gain arises from oscillating factors, when summing over the bonds
$b_j$ in the paths $\mathcal C^{(j)}_{\xxi\to\hhe},j\le q$.

The contribution to the $q$-th cumulant of the height difference from
terms of type $\SS^{(2)}_{q,{\bf j}}$ is of the form
\be  \sum_{\substack{h\le 0\\ N\ge 0}}\sum_{\t\in\TT^{(h)}_{N,q}}^{(2)}\,\sum_{\substack{{\bf P}\in\PP_\t:\\
|P_{v_0}|=|P_{v_0}^J|=q}} \sum_{b_1\in\mathcal C^{(1)}_{\xxi\to\hhe}}^*\cdots \sum_{b_q\in\mathcal C^{(q)}_{\xxi\to\hhe}}^*
\s_{b_1}\cdots\s_{b_q}S^{dom}_{\t,{\bf P}}(\xx_1,j_1;\cdots;\xx_q,j_q)\;,\label{5.1003}\ee
where the notation is analogous to the one used above for the contributions of type $\SS^{(1)}_{q,{\bf j}}$. 
An important difference is that here we do not take absolute values,
since we want to take advantage of the signs $\sigma_b$.

Note in fact that every dominant tree is naturally associated with an 
oscillatory
factor, which is equal to the product of the oscillatory
factors $(-1)^\xx $ or $(-1)^{x_i}$ associated with the
special endpoints of these trees (see Eqs.\eqref{g5.46bis} through \eqref{eq:f2}).
The value of a dominant tree equals
this oscillatory factor times a ``non-oscillatory" value (see below for more details),  obtained by
contracting via relativistic propagators (which by definition
have no oscillatory factors attached) the contributions that are left attached
to all the endpoints.  Now, it is apparent from
\eqref{eq:f1} that all the trees contributing to
$\SS^{(1)}_{q,{\bf j}}(\xx_1,\ldots,\xx_q)$ have the same oscillatory
factor, equal to $(-1)^{\xx_1+\cdots+\xx_q}$. This compensates {\it exactly} with the factor $(-1)^{\xx_1+\cdots+\xx_q}$ from the product of $\sigma_b$, see \eqref{eq:sigmas}.

The situation is different for $\SS^{(2)}_{q,{\bf j}}(\xx_1,\ldots,\xx_q)$:
we recall that the trees involved in this expression 
have at least one special endpoint of type $Z^{(2)}_h$. If we denote by $\{(\xx_i,j_i)\}_{i\in I_2}$ the set of points and directions associated with the endpoints of type $Z^{(2)}_h$ (here $I_2\subset\{1,\ldots,q\}$ is a suitable {\it nonempty} index set), then $S^{dom}_{\t,{\bf P}}(\xx_1,j_1;\cdots;\xx_q,j_q)$
comes with the oscillatory factor $\Big[ \prod_{i\in I_2}(-1)^{(\xx_i)_{j_i}}\Big]\,\Big[\prod_{i\in I_2^c}(-1)^{\xx_i}\Big]$, where $I_2^c=\{1,\ldots,q\}\setminus I_2$. This means that 
\bea && S^{dom}_{\t,{\bf P}}(\xx_1,j_1;\cdots;\xx_q,j_q)=\label{5.104}\\
&&\qquad =\Big[ \prod_{i\in I_2}(-1)^{(\xx_i)_{j_i}}\Big]\,\Big[\prod_{i\in I_2^c}(-1)^{\xx_i}\Big]
\tilde S^{dom}_{\t,{\bf P}}(\xx_1,j_1;\cdots;\xx_q,j_q)\;,\nonumber\eea
where $\tilde S^{dom}_{\t,{\bf P}}$ is a ``non-oscillatory" function, in the sense that it satisfies the following natural scaling properties:
if ${\bf n}=(n_1,n_2)$ and $\dpr_{\xx}^{\bf n}=\dpr_{x_1}^{n_1}\dpr_{x_2}^{n_2}$ with $\dpr_{x_i}$ the discrete derivative in the $i$-th coordinate direction, 
\bea && \Big|\Big[\prod_{i=1}^q\dpr_{\xx_i}^{{\bf n}_i}\Big]\tilde S^{dom}_{\t,{\bf P}}(\xx_1,j_1;\cdots;\xx_q,j_q)\Big|\le
q!
C^{N+q}|\l|^N2^{h(2-q)}\times\nonumber\\
&&\qquad \times\Big[\prod_{v \ {\rm s.e.p.}}C_{n_v}2^{h_vn_v}\frac{Z^{(i_{v})}_{h_v-1}}{Z_{h_v-1}}\Big]
\Big[\!\!\!\prod_{v \in V_{nt}(\tau^*)}\!\!\! \!\! 2^{2(s_v^*-1)h_v} e^{-c \sqrt{2^{h_v}\delta_v}} \Big]\times\nonumber\\
&&\qquad \times
\Big[\prod_{v\,{\rm not}\,{\rm e.p.}}2^{c\l^2|P_v^\psi|} 2^{2-\frac12|P_v^\psi|-|P_v^J|-z(P_v)}
\Big]
\label{5.1098}\eea
where, if $v$ is the special endpoint with label $\xx_v=\xx_{i}$, with $i\in\{1,\ldots,q\}$,  then $n_v=({\bf n}_{i})_1+({\bf n}_{i})_2$. 
This bound differs from \eqref{4.basic} just by the 
dimensional factors $2^{h_vn_v}$, which arise from the action of the derivatives $\dpr_{\xx_i}^{{\bf n}_i}$ on a relativistic propagator $g^{(h_v)}_R$, cf.\eqref{5.64ps}.

Now, using \eqref{eq:sigmas} and \eqref{5.104}, we
 rewrite \eqref{5.1003} as
\bea &&  \sum_{\substack{h\le 0\\N\ge 0}}\sum_{\t\in\TT^{(h)}_{N,q}}^{(2)}\,\sum_{\substack{{\bf P}\in\PP_\t:\\
|P_{v_0}|=|P_{v_0}^J|=q}} \sum_{b_1\in\mathcal C^{(1)}_{\xxi\to\hhe}}^*\cdots \sum_{b_q\in\mathcal C^{(q)}_{\xxi\to\hhe}}^*
\a_{b_1}(-1)^{j_1}\cdots \a_{b_q}(-1)^{j_q}\times\nonumber\\
&&\qquad \times\Big[\prod_{i\in I_2}(-1)^{(\xx_i)_{3-j_i}}\Big]
\tilde S^{dom}_{\t,{\bf P}}(\xx_1,j_1;\cdots;\xx_q,j_q)\;.\label{5.105}\eea
Using the fact that the paths $\mathcal C^{(i)}_{\xxi\to\hhe}$ consist of straight portions, each of which 
is formed by an even number of bonds, we find that 
\bea && \Big|\sum_{b_1\in\mathcal C^{(1)}_{\xxi\to\hhe}}^*\cdots \sum_{b_q\in\mathcal C^{(q)}_{\xxi\to\hhe}}^*
\Big[\prod_{i=1}^q\a_{b_i}(-1)^{j_i}\Big]\Big[\prod_{i\in I_2}(-1)^{(\xx_i)_{3-j_i}}\Big]
\tilde S^{dom}_{\t,{\bf P}}(\xx_1,j_1;\cdots;\xx_q,j_q)\Big|\le\nonumber \\
&&\le 
\sum_{b_1\in\mathcal C^{(1)}_{\xxi\to\hhe}}^*\cdots \sum_{b_q\in\mathcal C^{(q)}_{\xxi\to\hhe}}^*\Big|\Big[\prod_{i\in I_2}\dpr_{(\xx_i)_{3-j_i}}\Big]
\tilde S^{dom}_{\t,{\bf P}}(\xx_1,j_1;\cdots;\xx_q,j_q)\Big|\;.\eea
Finally, we recognize that the summand in the right side of this equation can be bounded by the right side of \eqref{5.1098}, with the factor
$$ \Big[\prod_{v\ {\rm s.e.p.}}C_{n_v}2^{h_vn_v}\frac{Z^{(i_{v})}_{h_{v}}}{Z_{h_{v}}}\Big]$$
replaced in this specific case by 
\be \Big[\prod_{\substack{v\ {\rm s.e.p.}:\\ i_v=1}}\frac{Z^{(1)}_{h_{v}}}{Z_{h_{v}}}\Big]
\Big[\prod_{\substack{v
\ {\rm s.e.p.}:\\ i_v=2}}C_12^{h_v}\frac{Z^{(2)}_{h_{v}}}{Z_{h_{v}}}\Big]
\le C^q\Big[\prod_{\substack{v\ {\rm s.e.p.}:\\ i_v=2}}2^{h_v(1+\h_2-\h)}\Big]\;,\ee
where we used the fact that, thanks to  Proposition \ref{prop:4}, $Z^{(1)}_h/Z_h\le ({\rm const.})$ and $Z^{(2)}_h/Z_h\le ({\rm const.})2^{(\h_2-\h)h}$. Since the number of 
special endpoints of type $Z^{(2)}_h$ is at least $ 1$, the product in the right side of this equation is smaller than $2^{\theta \bar h}$, where 
$\theta$ is a suitable constant between zero and one, and $\bar h$ is the largest among the scales of the special endpoints of type $Z^{(2)}_h$. 
As we did for the terms of type (a) of $\SS^{(1)}_{q,{\bf j}}$, the dimensional gain $2^{\theta \bar h}$ can be ``transferred to the root'', i.e. transformed into  $2^{\theta'  h}$ provided $\th'$ is added to the exponent $2-|P^\psi_v|(1/2-c\l^2)-|P^J_v|-z(P_v)$ of each node. At that point, one proceeds like after \eqref{sleppa}.

\subsubsection{The contributions of type 
  $\SS^{(3)}_{q,{\bf j}}$.} We are finally left with $\SS^{(3)}_{q,{\bf j}}$,
  which can be treated in a way similar to (and actually simpler than) the previous cases:
 the trees contributing to it either
contain a non-relativistic propagator $r^{(h_w)}$, which produce an extra factor $2^{\th h_w}$ (which can be ``transferred to the root" by using the short memory property, as for the terms of type (a) of $\SS^{(1)}_{q,{\bf j}}$);
or contain endpoints on scale 1, in which case the short memory property produces an extra factor 
$2^{\th h}$. In addition to these gains, one should take into account that all the special endpoints of type 2,
possibly appearing in a tree contributing to $\SS^{(3)}_{q,{\bf j}}$, 
whose presence produces a dimensional factor
$2^{h(\h_2-\h)}$ (which may be $\gg 1$, if $\h_2-\h<0$), 
are associated with an oscillatory factor that
effectively acts as a derivative operator, thus improving the
factor $2^{h(\h_2-\h)}$ into $2^{h(1+\h_2-\h)}$, precisely as discussed for
$\SS^{(2)}_{q,{\bf j}}$. Details are left to the reader. Summarizing,
also the contributions of type $\SS^{(3)}_{q,{\bf j}}$ give rise to a
finite ($q$-dependent) constant, which concludes the proof of Theorem \ref{th:maintheorem}.

{
\subsection{Proof of Theorem \ref{th:gff} }
Since convergence of the moments of a random variable $\xi_n$ to those
of a Gaussian random variable $\xi$ implies convergence in law (and
therefore in the sense of the characteristic function) of $\xi_n$
to $\xi$ \cite[Sec. 26 and 30]{bill}, we need only to prove that
\begin{gather}
  \label{eq:23}
  \lim_{\epsilon\to0}\langle h^\epsilon(\phi)\rangle_\lambda=0,\\
\label{eq:n2}
 \lim_{\epsilon\to0}\langle
 h^\epsilon(\phi);h^\epsilon(\phi)\rangle_\lambda=\int\phi(x)\phi(y)G_\lambda(x-y)\,dx\,dy,\\
\label{eq:n>2}
 \lim_{\epsilon\to0}\langle\underbrace{h^\epsilon(\phi);\cdots;h^\epsilon(\phi)
}_{q\ times}\rangle_{\l}=0, \quad q>2.
\end{gather}
Note that  \eqref{eq:23} is trivial (and does not need the limit $\epsilon\to0$) since the height is fixed to zero at the
central face (``the origin'' $\bf 0$) and height gradients have zero expectation by
construction, recall
\eqref{1.2}.

\begin{proof}[Proof of \eqref{eq:n2}]
  Choose a face $\ppe$ at a distance of order
$1/\epsilon$ from the support of
$\phi(\epsilon\cdot)$ and rewrite
\begin{gather}
  \label{eq:facciap}
\media{ h^{\epsilon}(\phi); h^{\epsilon}(\phi)}_\lambda=
\epsilon^{4}\sum_{\hhe_1,\hhe_2}\phi(\epsilon
\hhe_1)\phi(\epsilon \hhe_2)\langle
(h_{\hhe_1}-h_{\ppe});h_{\hhe_2}\rangle_\lambda\\
\label{aa}+
\epsilon^{4}\sum_{\hhe_1,\hhe_2}\phi(\epsilon
\hhe_1)\phi(\epsilon \hhe_2)\langle
h_{\ppe} ;h_{\hhe_2}\rangle_\lambda.
\end{gather}
Let us show first of all that \eqref{aa} is $o(1)$. Indeed, since
$\phi$ is smooth and of zero average, 
\begin{eqnarray}
  \label{eq:11aaa}
\epsilon^2\sum_{\hhe} \phi(\epsilon \hhe)=O(\epsilon).
  \end{eqnarray}
Also, 
 from \eqref{111} of
 Theorem \ref{th:maintheorem} and Cauchy-Schwarz it follows that $|\langle
h_{\ppe} ;h_{\hhe_2}\rangle_\lambda|=O(\log (1/\epsilon))$ (write
$h_{\hhe_2}=h_{\hhe_2}-h_{\bf 0}$ and observe that all $\hhe_2$ in the
sum are at distance $O(1/\epsilon)$ from $\bf 0$; same reasoning for $h_{\ppe}$). In conclusion, the sum in
\eqref{aa} is $O(\epsilon \log (1/\epsilon))=o(1)$.
Remark also that in \eqref{eq:facciap} one can restrict the sum
to
$\hhe_1\ne \hhe_2$. Indeed,
again just observe that $|\langle
(h_{\hhe_1}-h_{\ppe});h_{\hhe_2}\rangle_\lambda|=O(\log (1/\epsilon))$ so
the sum \eqref{eq:facciap} restricted to $\hhe_1=\hhe_2$
is $O(\epsilon^{2}\log (1/\epsilon))=o(1)$.

Let $\mathcal C^{(1)}_{\ppe\to\hhe_1}$ (resp. $\mathcal
C^{(2)}_{\bf 0\to\hhe_2}$) be a path from $\ppe$ to $\hhe_1$ (resp. from $\bf0$ to
$ \hhe_2$) of length at
most $a|\ppe-\hhe_1|$ (resp. $a|\hhe_2|$), chosen such that the distance between
$\mathcal C^{(1)}_{\ppe\to\hhe_1} $ and $\mathcal C^{(2)}_{\bf 0\to\hhe_2} $ is larger than $(1/a) |\hhe_1-\hhe_2|$ for some positive $a>0$,
uniformly in $\hhe_1,\hhe_2$ in the support of $\phi(\epsilon\cdot)$.
From the definition \eqref{1.2} of height function,
\begin{eqnarray}
  \label{eq:4aaa}
\langle
(h_{\hhe_1}-h_{\ppe});h_{\hhe_2}\rangle_\lambda=\sum_{b_1\in \mathcal C^{(1)}_{\ppe\to\hhe_1},b_2\in
  \mathcal C^{(2)}_{{\bf0}\to\hhe_2}}\sigma_{b_1}
\sigma_{b_2}\langle\openone_{b_1};\openone_{b_2}\rangle_\lambda.  
\end{eqnarray}
The r.h.s. of \eqref{eq:4aaa} is given by the r.h.s. of \eqref{5.94}, except
that one should read  $\mathcal C^{(1)}_{\ppe\to\hhe_1}, \mathcal
C^{(2)}_{{\bf 0}\to\hhe_2}$ instead of $\mathcal C^{(1)}_{\xxi\to\hhe}, \mathcal
C^{(2)}_{\xxi\to\hhe}$.

The error term $R_{j_1,j_2}$ gives a contribution $O(|\hhe_1-\hhe_2|^{-\theta})$, and recall that $\th>0$.
The term proportional to $\tilde K$, due to the oscillations, gives a contribution
{$O(|\hhe_1-\hhe_2|^{-2\kappa +1})$ and recall that $\kappa$ is close to $1$}. The
one proportional to $K$ instead, looking at \eqref{5.95}, gives
$K$ times the integral 
\be -\frac1{2\p^2}{\rm Re}\int_{\ppe}^{{\hhe_1}}dz\int_{\bf 0}^{\hhe_2}dw\frac1{(z-w)^2}\label{4.intbis}\ee
plus an
error $O(|\hhe_1-\hhe_2|^{-1})$ coming from replacing the Riemann sum with the
integral. Altogether, all the error terms are of the form
$O(|\hhe_1-\hhe_2|^{-c'})$ for some positive $c'$.
Once the sum over $\hhe_1\ne\hhe_2$ in \eqref{eq:facciap}
is performed, the overall contribution of the error terms is
$O(\epsilon^{c'})$.

It remains to compute the integral, that gives (cf. \eqref{eq:25})
\begin{eqnarray}
  \label{eq:5aaa}
  \frac{1}{2\pi^2}\log\left(
  \frac{|{\hhe_2}-\ppe|}{|\hhe_2-\hhe_1|}\times \frac{|\hhe_1|}{|\ppe|}\right).
\end{eqnarray}

Note 
that 
\begin{eqnarray}
  \label{eq:8aaa}
  \epsilon^4\sum_{\hhe_1,\hhe_2}\phi(\epsilon
\hhe_1)\phi(\epsilon \hhe_2)\left[\log(\epsilon |{\hhe_2}-\ppe|)+\log(\epsilon  |\hhe_1|)-\log (\epsilon|\ppe|)\right]=o(1).
\end{eqnarray}
Indeed, in all of the logarithms one of the two summed variables is
absent, the logarithms are $O(\log (1/\epsilon))$ and $\epsilon^2\sum_{\hhe}\phi(\epsilon\hhe)=O(\epsilon)$.

It remains to look at
\begin{gather}
  \label{eq:9aaa}
  -\frac{K}{2\pi^2} \epsilon^4\sum_{\hhe_1,\hhe_2}\phi(\epsilon
\hhe_1)\phi(\epsilon \hhe_2)\log(\epsilon |{\hhe_2}-\hhe_1|)
\end{gather}
that converges to the r.h.s. of \eqref{eq:n2}.
\end{proof}

\begin{proof}[Proof of \eqref{eq:n>2}]
In analogy with the case $q=2$, choose 
faces $\ppe_1,\dots \ppe_q$ 
    such that all their 
    mutual distances, as well as their distance from the support of $\phi(\epsilon\cdot)$, are
    in
$[c_q/\epsilon,1/(c_q\epsilon)]$ for some suitably small $c_q>0$,
and 
rewrite
\begin{gather}
  \label{eq:10aaa}
\langle\underbrace{h^\epsilon(\phi);\cdots;h^\epsilon(\phi)
}_{q\ times}\rangle_{\l}=\\
\nonumber=
\epsilon^{2q}\sum'_{\hhe_1,\dots,\hhe_q}\phi(\epsilon\hhe_1)\dots\phi(\epsilon\hhe_q)\langle (h_{\hhe_1}-h_{\ppe_1});\dots;(h_{\hhe_q}-h_{\ppe_q})\rangle_\lambda
+o(1),
\end{gather}
with sum restricted to configurations such that
$|\hhe_i-\hhe_j|\ge 1/c_q$. The proof of \eqref{eq:10aaa} is like
for $q=2$: use that 
$|\media{h_{\hhe_1};\dots; h_{\hhe_q}}_\lambda|$ is bounded by some
power of $\log (1/\epsilon)$, as it follows from \eqref{222} and from a
repeated application of Cauchy-Schwarz.
 Let also $\mathcal C^{(i)}_{\ppe_i\to\hhe_i}, i\le q$ be paths from $\ppe_i$ to $\hhe_i$ of length
 at most $(1/c_q)|\ppe_i-\hhe_i|$, with mutual distances at least $c_q
 \min_{i\ne j}|\hhe_i-\hhe_j|$.

We will show that, uniformly in $\hhe_1,\dots,\hhe_q$ in the support
of $\phi(\epsilon\cdot)$,
\begin{eqnarray}
  \label{eq:26}
  |\langle (h_{\hhe_1}-h_{\ppe_1});\dots;(h_{\hhe_q}-h_{\ppe_q})\rangle_\lambda|=O(\min_{i\ne j}|\hhe_i-\hhe_j|^{-\theta})
\end{eqnarray}
for some $\theta>0$. Given this, it is obvious that \eqref{eq:10aaa}
is $o(1)$. 

To get \eqref{eq:26} replace each $ (h_{\hhe_i}-h_{\ppe_i})$ by $\sum_{b_i\in \mathcal
  C^{(i)}_{\ppe_i\to\hhe_i}}
\sigma_b\openone_{b}$ and
recall the decomposition \eqref{5.81}. We will consider only the terms
of type $\mathcal S_{q,\bf j}^{(1)}$, the two others requiring a very
similar argument. The sum over $b_1,\dots,b_q$ of $\mathcal S_{q,\bf
  j}^{(1)}$ is bounded like in \eqref{sleppa}, except of
course that now $b_i\in C^{(i)}_{\ppe_i\to\hhe_i}$. Repeating exactly the steps \eqref{eq:28}-\eqref{ultimissima},
one gets again the upper bound \eqref{5.602}.

Recall that the sum over $N$ in \eqref{5.602} is finite because $\hat d_v\le
-a<0$. While after \eqref{5.602} we used that $\delta_{min}\ge1$, here we will
take advantage of $\delta_{min}\ge c_q\min_{i\ne j}|\hhe_i-\hhe_j|$
because of the way the paths $\mathcal C^{(i)}_{\ppe_i\to\hhe_i}$ were
chosen.
It is then immediate to see that the sum over $h$ in \eqref{5.602} is not only finite, but
actually decays like
\begin{eqnarray}
  \label{eq:13aaa}
  C'_q (\min_{i\ne j}|\hhe_i-\hhe_j|)^{-\theta''}
\end{eqnarray}
for some positive $\theta''$.
Eq. \eqref{eq:26} and therefore also \eqref{eq:n>2}
are proven.
\end{proof}

}
\appendix

\section{The free propagator}\label{appB}

Here  we compute the free propagator with $(\theta,\tau)$ boundary
conditions, proving Lemma \ref{lemma:g}. To this purpose, we need to 
diagonalize the Grassmann quadratic form
$S=S(\psi)=-\frac12\sum_{\xx,\yy\in\L}\psi_\xx(K^{(\theta\tau)}_{\bf
  t^{(m)}})_{\xx,\yy}\psi_\yy$. 
For $\kk\in\cD^{(\theta\tau)}_\L$, we let 
\begin{eqnarray}
  \label{eq:9}
  \hat\psi_\kk=\sum_{\xx\in\L}\psi_\xx e^{i\kk\xx}
\end{eqnarray}
so that 
\begin{eqnarray}
  \label{eq:10}
  \psi_\xx=\frac1{L^2}\sum_{\kk\in\cD^{(\theta,\tau)}_\L}\hat\psi_\kk e^{-i\kk\xx}.
\end{eqnarray}
Note that \eqref{eq:10} holds also when one of the two coordinates of
$\xx$ equals $L/2+1$, in which case it gives the correct boundary
condition 
$\psi_{(L/2+1,y)}=(-1)^\theta\psi_{(-L/2+1,y)}$ and
$\psi_{(x,L/2+1)}=(-1)^\tau\psi_{(x,-L/2+1)}$.

Plugging \eqref{eq:10} into the definition of $S$ and using the
anticommutation relation
$\hat\psi_\kk\hat\psi_{\kk'}=-\hat\psi_{\kk'}\hat\psi_{\kk}$ one finds
with standard computations
\begin{eqnarray}
  \label{eq:11}
  S(\psi)=\frac1{L^2}\sum_{\kk\in\cD^{(\theta\tau)}_\L}\left\{
\hat\psi_\kk\hat\psi_{-\kk}(-i
  \sin k_1+\sin k_2)+
m \hat\psi_\kk\hat\psi_{-\kk+(\pi,0)}\cos k_1
\right\},\quad
\end{eqnarray}
where it is understood that, if $-\kk$ does not
belong to $\cD^{(\theta\tau)}_\L$, one should interpret $-\kk$ as
$-\kk+2\pi(n_1,n_2)\in \cD^{(\theta\tau)}_\L$ for the suitable choice
of $n_i\in\mathbb Z$ (similarly for $-\kk+(\pi,0)$).

We have rewritten $S$ as $-(1/2)\sum_{\kk,\kk'}\hat\psi_\kk
A_{\kk,\kk'}\hat\psi_{\kk'}$ where the matrix $A$ connects $\kk$ only
with $-\kk$ and with $-\kk+(\pi,0)$. To apply \eqref{eq:12} it remains
only to
invert $A$. It is easy to check that the only non-zero elements
of $A^{-1}$ are
\begin{gather}
  \label{eq:13}
  A^{-1}_{\kk,-\kk}={L^2}\,\frac{i \sin k_1+\sin
    k_2}{2D(\kk,m)}\\A^{-1}_{\kk,-\kk+(\pi,0)}={L^2}\,\,\frac{m\cos k_1}
{2D(\kk,m)}.
\end{gather}
Then, formula \eqref{2.27} is obtained simply from \eqref{eq:10}, \eqref{eq:12} and
\[
\int_{(\theta\tau)}P^{(\theta\tau)}_\L(d\psi)\hat \psi_\kk\hat\psi_{\kk'}=A^{-1}_{\kk,\kk'}.
\]

\subsection{Large-distance behavior of $G(\xx)$}
\label{app:a1}
{Here we
prove Proposition \ref{th:propalibero}.  Take $\xx\ne\V0$, $m=0$, and let $\hat G_{\o\o}(\kk)=1/[2({-i\sin k_1+\o\sin k_2})]$ and $\hat{\mathfrak g}_{\o\o}(\kk)=1/[2(-ik_1+\o k_2)]$ 
be the Fourier transforms of the diagonal elements of $G$ and $\mathfrak g$, respectively.
We have
\begin{gather}
  \label{eq:5}
  G_{\o\o}(\xx)=\int
\limits_{\mathbb T^2}\frac{d\kk}{(2\p)^2}\c(\kk){e^{-i\kk\xx}}
\hat{\mathfrak g}_{\o\o}(\kk)
+R_{\o,1}(\xx).
\end{gather}
The function $\hat G_{\o\o}(\kk)-\hat{\mathfrak g}_{\o\o}(\kk)$ is 
$C^\infty$ on the support of $\chi(\cdot)$, except at the origin.  At $\kk=\V0$, one can easily check that
it has bounded first derivatives and that the second derivatives are
bounded by $O(1/|\kk|)$. As a consequence
(given that $\chi(\kk)$ is $C^\infty$), an integration by parts argument shows that the remainder $R_{\o,1}(\xx)$
decays at least as fast as $|\xx|^{-2}$.
Next, we rewrite the first term in the r.h.s. of \eqref{eq:5} as
\begin{gather}
  \label{eq:5bis}
  \int
\limits_{\mathbb R^2}\frac{d\kk}{(2\p)^2}{e^{-i\kk\xx}}\hat{\mathfrak g}_{\o\o}(\kk)+R_{\o,2}(\xx)
\end{gather}
where $R_{\o,2}(\xx)=(2\p)^{-2}\int_{\mathbb R^2}d\kk(1-\chi(\kk))\hat{\mathfrak g}_{\o\o}(\kk)$.
Since $(1-\chi(\kk))$ is
$C^\infty$ and vanishes in a neighborhood of $0$, where
$\hat{\mathfrak g}_{\o\o}$ is singular, again (via integrations by parts) it
is easy to see that $ R_{\o,2}(\xx)$  decays
faster than any inverse power of $|\xx|$. The
sum $R_{\o,1}+R_{\o,2}$ produces the diagonal elements of the remainder $R$ in \eqref{3.5}.
Finally, the integral in \eqref{eq:5bis} is evaluated explicitly by using the residue theorem,
and we obtain \eqref{eq:gfrak}.}

\subsection{Finite-size corrections for the non-interacting system}
\label{app:fscorr}

Here we prove that, as long as $m>0$, the finite-$L$ corrections to the free propagator
$g^{(\theta,\tau)}_\L$ are exponentially small, and that the ratio of Pfaffians 
\eqref{eq:16} tends to $1$ exponentially fast.

Let us start with the Poisson summation formula, that in our notations we can write as follows: if $\hat F$ is a smooth function on the torus $\mathbb T^2$ and $L$ is an even integer, then
\begin{eqnarray}
  \label{eq:Poissum}
  \frac1{L^2}\sum_{\kk\in \cD^{(\theta,\tau)}_\L}\hat F(\kk)=
\sum_{\ell_1,\ell_2\in\mathbb Z}F(\ell_1L,\ell_2L)(-1)^{\theta \ell_1+\tau \ell_2}
\end{eqnarray}
where $\theta,\tau\in\{0,1\}$
and
\begin{eqnarray}
  \label{eq:glg}
  F(\xx)=\frac1{(2\pi)^2}\int_{\mathbb T^2}\hat F(\kk)e^{-i \kk\xx} d\kk.
\end{eqnarray}
If $\hat F(\kk)=e^{-i\kk(\xx-\yy)}{N(\kk,m,y_1)}/{(2D(\kk,m))}$ then the l.h.s. is exactly $g_\L^{(\theta,\tau)}$, cf. \eqref{2.29}. The term $(\ell_1,\ell_2)=(0,0)$ in the r.h.s. is  $g(\xx,\yy)$ (cf. \eqref{eq:14}), while the terms
 $(\ell_1,\ell_2)\ne(0,0)$ give a contribution exponentially small in $L$, since the Fourier transform of the analytic function $\hat F(\kk)$ decays
exponentially.

As for \eqref{eq:16}, from the definition of Pfaffian and the explicit
form  \eqref{eq:11} of $K^{(\theta\tau)}_\L$, 
\begin{eqnarray}
  \label{eq:pfaffone}
 \frac1{L^2}\log  {\rm Pf}K^{(\theta\tau)}_\L=\frac1{4L^2}\sum_{\kk\in \cD^{(\theta\tau)}_\L}
\log[4 D(\kk,m)].
\end{eqnarray}
Using again the Poisson summation formula and the smoothness of $D(\kk,m)$ on the torus, the right-hand side gives
\begin{eqnarray}
  \frac1{4(2\pi)^2}\int_{\mathbb T^2} d\kk \log \Big[4D(\kk,m)\Big]+O(\exp(-c(m) L))
\end{eqnarray}
for some $c(m)>0$ and the claim on the ratios of Pfaffians follows.

\section{Symmetry properties}
\label{app:symm}

In this Appendix we list the symmetry properties of the Grassmann action required for proving the properties of the coefficients $a_\g^{(h)}$, $b_\g^{(h)}$,
$\s_{\g,\g'}^{(h)}$, $l_h$ and $Z_{h;(\g,\g'),j}$ listed after \eqref{g5.37}, after \eqref{elle} and after \eqref{g.Zete}.
It is straightforward to check that the Gaussian integration $P_\L(d\psi)$, the interaction $V^{(0)}_\L$ and the source term 
$\BBB^{(0)}_\L$ are separately invariant under the following symmetry
transformations, irrespective of the Grassmann boundary conditions. 
\begin{enumerate}

\item {\it Parity}: $\psi_{\xx,\g}\to i\psi_{-\xx,\g}$, $m\to -m$  and $J_{\xx,j}\to J_{-\xx-\hat e_j,j}$.
\item {\it Reflections around the horizontal axis}: First change $\k\to\k^*$,  where $\k$ is a generic coefficient 
in the polynomials $V^{(0)}_\L$, $\BBB^{(0)}_\L$, and in the quadratic action entering the definition of $P_\L(d\psi)$; then 
\be 
(\psi_{\xx,1}, \psi_{\xx,2}, \psi_{\xx,3}, \psi_{\xx,4})\mapsto
(\psi_{\tilde\xx,1}, -\psi_{\tilde\xx,2}, -\psi_{\tilde\xx,3},\psi_{\tilde\xx,4})
\;,\qquad {\rm with}\ \tilde\xx=(x_1,-x_2)\;,\ee
and $J_{\xx,1}\to J_{\tilde\xx,1}$, $J_{\xx,2}\to J_{\tilde\xx-\hat e_2,2}$.
\item {\it Quasi-particle interchange $\#1$}: 
\be 
(\psi_{\xx,1}, \psi_{\xx,2},\psi_{\xx,3},\psi_{\xx,4})\mapsto 
(-\psi_{\xx,3}, -\psi_{\xx,4}, \psi_{\xx,1},\psi_{\xx,2})
\ee
while $J_{\xx,j}$ is left unchanged.
\item {\it Quasi-particle interchange $\#2$}: 
\be 
(\psi_{\xx,1}, \psi_{\xx,2},\psi_{\xx,3},\psi_{\xx,4})\mapsto 
(-\psi_{\tilde\xx,2}, \psi_{\tilde\xx,1}, -\psi_{\tilde\xx,4},\psi_{\tilde\xx,3})
\;,\qquad {\rm with}\ \tilde\xx=(x_1,-x_2)\;,\ee
and $J_{\xx,1}\to J_{\tilde\xx,1}$, $J_{\xx,2}\to J_{\tilde\xx-\hat e_2,2}$.
\item If in addition $m=0$,  invariance under {\it reflections in a diagonal line}: First change $\k\to\k^*$,
then transform the Grassmann fields as:
\be (\psi_{\xx,1}, \psi_{\xx,2},\psi_{\xx,3},\psi_{\xx,4})\mapsto 
\sqrt{i}(\psi_{\tilde\xx,1}, -i\psi_{\tilde\xx,4}, -\psi_{\tilde\xx,3},-i\psi_{\tilde\xx,2})\;, {\rm with}\ \tilde\xx=(x_2,x_1)\;,\ee
and the external fields as $J_{\xx,1}\to J_{\tilde\xx,2}, J_{\xx,2}\to
J_{\tilde\xx,1}$.
\end{enumerate}
It is easy to check that the symmetries above are preserved by the
multiscale integration. 
This is based on the observation that if
\begin{eqnarray}
  \label{eq:18}
  e^{V'(\psi)}=\int P(d\phi)e^{V(\psi+\phi)}
\end{eqnarray}
and if both $V$ and $P$ are invariant under the above
symmetries, then $V'$ is also invariant.
Therefore, the effective potentials and
effective source terms on scales $h=-1,-2,$ etc, are also invariant
under the same symmetries. We can then use these symmetries in order
to infer suitable symmetry properties of the kernels of $V^{(h)}$ and
$\BBB^{(h)}$, which in particular imply the desired properties listed
after \eqref{g5.37}, \eqref{elle} and \eqref{g.Zete}.

As an illustration of the general method used to infer properties on the renormalization constants from the symmetries above, let us discuss the consequences of symmetry (5)
on the structure of the diagonal terms in $\LL V^{(h)}_2$. The diagonal terms in $\mathcal P_0 V^{(h)}_2(\psi)$ have the form
\be \sum_{\g}\int \frac{d\kk}{(2\p)^2}\hat \psi_{-\kk,\g}\hat K_{2,(\g,\g)}^{(h)}(\kk+\pp_{\g})\hat \psi^{}_{\kk,\g}\label{c.1}\ee
and they are left invariant by symmetry (5),
 which is applicable since in $\mathcal P_0 V^{(h)}_2(\psi)$ the mass $m$ is set to zero.  That is, \eqref{c.1} is equal to 
\be  \sum_{\g}i(-1)^{\g+1}\int \frac{d\kk}{(2\p)^2}\hat \psi_{\tilde\kk,\tilde\g}\big[\hat K_{2,(\g,\g)}^{(h)}(\kk+\pp_{\g})\big]^*\hat \psi^{}_{-\tilde\kk,\tilde\g}\label{c.2}\ee
where $\tilde 1=1$, $\tilde 2=4$, $\tilde 3=3$, $\tilde 4=2$ and
$\tilde {\kk}=(k_2,k_1)$. Therefore, 
\be \hat K_{2,(\g,\g)}^{(h)}(\kk+\pp_{\g})=i(-1)^\g\big[\hat K_{2,(\g,\g)}^{(h)}(\tilde\kk+\pp_{\tilde\g})\big]^*\;,\ee
which implies that
$a_\g^{(h)}=i(-1)^\g \big[b_{\tilde\g}^{(h)}\big]^*$. To give another example, symmetry (2) implies $a_\gamma^{(h)}=-\big[a_\gamma^{(h)}\big]^*$ and $b_\gamma^{(h)}=\big[
b_\gamma^{(h)}\big]^*$. The remaining reality and symmetry properties
of the renormalization constants can be derived similarly. More
precisely 
(details are left to the reader):
\begin{itemize}
\item symmetry (3) implies $a_1^{(h)}=a_3^{(h)},b_1^{(h)}=b_3^{(h)} $
  as well as $a_2^{(h)}=a_4^{(h)},b_2^{(h)}=b_4^{(h)} $ and $\sigma_{(1,2)}^{(h)}=\sigma_{3,4}^{(h)}$;
\item symmetry (4) implies $a_1^{(h)}=a_2^{(h)},a_3^{(h)}=a_4^{(h)} $
  as well as $b_1^{(h)}=-b_2^{(h)},b_3^{(h)}=-b_4^{(h)} $;
\item symmetry (2) implies
  $(\sigma_{(1,2)}^{(h)})^*=-\sigma_{(1,2)}^{(h)}$ and $l_h\in \mathbb
  R$;
\item symmetry (3) implies
$Z_{h;(1,2),j}=Z_{h;(3,4),j}$ and $ Z_{h;(1,4),j}=Z_{h;(2,3),j}$, $j=1,2$
(recall \eqref{elle});
\item  symmetry (4) for  $j=1$ gives
$Z_{h;(1,3),1}=Z_{h;(2,4),1}$ and
$Z_{h;(1,4),1}=Z_{h;(2,3),1}=0$;
\item symmetries (1) and (4) for  $j=2$ give
$Z_{h;(1,3),2}=-Z_{h;(2,4),2}$ and 
$Z_{h;(1,2),2}=Z_{h;(3,4),2}=0$ (symmetry (1) is applicable since in
the computation of $\mathcal L\BBB$ one puts the mass $m$ to zero);
\item symmetry (2) for $j=1$ gives
$Z_{h;(1,2),1}, Z_{h;(1,3),1}, Z_{h;(2,4),1}, Z_{h;(3,4),1}$ are
purely imaginary;
\item symmetries (1) and (2) for $j=2$ give
$Z_{h;(1,3),2}$ and  $Z_{h;(2,4),2}$ are real, while
$Z_{h;(1,4),2}$ and $ Z_{h;(2,3),2}$ are purely imaginary;
\item symmetry (5) gives
$Z_{h;(1,2),1}=-Z_{h;(1,4),2}$ and
$Z_{h;(1,3),2}=iZ_{h;(1,3),1}$.

\end{itemize}

It is important to observe that, to prove the desired properties of
$\sigma_{\gamma,\gamma'}^{(h)}$, which depend on the mass $m$, one
needs neither
symmetry (5), that holds only for $m=0$,  nor symmetry (1)
which requires $m\to -m$.

\section{Gevrey class cutoff functions}
\label{app:Gevrey}

 We assume that the cut-off functions $\bar \c(\cdot),\c(\cdot) $ are in the Gevrey class of order $2$. We recall that 
a $C^\io(\mathbb{R}^d)$ function $f$ is said to be in the Gevrey class of order $s$ if on every compact $K\subset \mathbb R^d$ 
there are two constants $A=A(K,f)$ and $\mu=\mu(K,f)$ so that for any non-negative integers $n_1, \ldots, n_d$ 
\be 
\| \partial^{n_1}_1 \cdots \partial^{n_d}_d f \|_\infty \le A \mu^{n_1}\cdots \mu^{n_d} (n_1 ! \cdots n_d !)^s.\label{g3.4}
\ee
For $s=1$ we have a common characterization of real-analytic
functions; the class of order 2 includes the function $e^{-1/x_1}{\bf
  1}_{x_1\ge 0}$. A useful feature of Gevrey functions is the fact that the Fourier transform $\tilde f(\xx)$ of a compactly supported Gevrey function 
$f(\kk)$ of order $s$ decays at large distances like a stretched
exponential $e^{-({\rm const.})|\xx|^{1/s}}$. 

The example of a function $\bar \c(\kk)$ to keep in mind is the following. Given $\e>0$, let $f_\e(k):=e^{-({1-{k^2}/{\e^2}})^{-1}}{\bf 1}_{|k|\le \e}$ and $F_\e(k)=\int_{-\infty}^k f_\e(t)dt / \int_{-\infty}^{+\infty} f_\e(t)dt$. Note that $F_\e$ is a smoothed version of the Heaviside step function. It is Gevrey 
of order $2$, and such that $F_\e(k)+F_\e(-k)=1$. For $\e<\p/2$ we let $\theta_\e(k)=F_\e(k+\p/2)F_\e(-k+\p/2)$. We also define $\tilde \c(k):=
\sum_{n\in\mathbb Z}\theta_\e(k+2\p n)$, which we can naturally think as a function on 
the circle $\mathbb T:=\mathbb R\backslash (2\p\mathbb Z)$. 
It is straightforward to check that  
\begin{eqnarray}
  \label{eq:34bis}
  \bar \c(\kk):=\tilde \c(k_1)\tilde\c(k_2)
\end{eqnarray}
satisfies all the properties required, in particular it is positive,
symmetric under $\kk\to-\kk$, 
$\sum_{i=1}^4 \c(\kk-{\bf p_i})=1$ and
the support of $\bar \chi(\kk)$ does not include
$(0,\pi),(\pi,0),(\pi,\pi)$, provided $\e$ is chosen small enough. 

We also introduce  
\begin{eqnarray}
  \label{eq:35}
\c(\kk):=\sum_{{\bf n}\in\mathbb Z^2}\theta_\e(|\kk+2\p{\bf n}|),  
\end{eqnarray}
which
is (as a function on the torus) a rotationally invariant version of $\bar\c(\kk)$.

To prove Lemma \ref{Lemma:Gevrey} one should start from the definition
of $G^{(h)}$, given as in \eqref{eq:matrG} with $\bar\c(\cdot)$
replaced by $f_h(\cdot)$ and  make a change of variables $\kk\to \kk
2^h$ (the prefactor {$2^{h(1+n_1+n_2)}$ in \eqref{L1Linf0} comes from the change of
variables plus the following facts: (i) $D(\kk,m)\sim |\kk|^2$ on the support of $f_h$, (ii)
every discrete derivative $\partial_j$ corresponds to multiplication by $e^{-i k_j}-1$ in Fourier space, and $|e^{-i k_j}-1|\le ({\rm const.}) 2^h$ on the support of $f_h$).}
At that point, one can see $
G^{(h)}(\xx)$ as the Fourier transform of a Gevrey function
of order $2$ computed at  $2^h\xx$, whence the decay factor $\exp(-c\sqrt{2^h\xx})$ in \eqref{L1Linf0}. The
argument is similar for the off-diagonal elements and for $h=h^*$
(thanks to the presence of the infrared cutoff induced by the mass $m\sim 2^{h^*}$), {as well as for the 
propagators $\mathfrak g^{(h)}$ and $R^{(h)}$.}

\bigskip

{\bf Acknowledgements.} {We are grateful to F. Caravenna, C. Garban
and B. Laslier for enlightening discussions.} This research  has received funding from the European Research Council
under the European Union's Seventh Framework Programme, ERC Starting Grant CoMBoS (grant
agreement No. 239694). F. T.  was partially supported by the Marie Curie IEF Action DMCP- Dimers, Markov chains and Critical Phenomena, grant agreement n. 621894.

\end{document}